\documentclass[11pt,a4paper]{amsart}
\usepackage{newtx}
\usepackage{hyperref}
\usepackage{graphicx}
\usepackage{stmaryrd}
\usepackage{tikz}
\usepackage{mathrsfs}
\usepackage{mathdots}
\usepackage{comment}
\usepackage[inline]{enumitem}
\usepackage[T1]{fontenc}
\usepackage[left=2.65cm,right=2.65cm,top=2.65cm,bottom=2.65cm,headheight=15pt,headsep=0.3cm,footskip=0.5cm]{geometry}
\usepackage[style=alphabetic,backend=biber,backref=true]{biblatex}

\makeatletter
\DeclareFontFamily{U}{mathx}{}
\DeclareFontShape{U}{mathx}{m}{n}{%
    <5><6><7><8><9><10><10.95><12><14.4><17.28><20.74><24.88> mathx10%
}{}
\DeclareSymbolFont{mathx}{U}{mathx}{m}{n}
\DeclareMathAccent{\widecheck}{0}{mathx}{"71}
\makeatother

\usetikzlibrary{cd,calc,arrows.meta}

\tikzset{
    rootsys/.style={
        scale=1.5,
        line cap=round,
        line join=round,
        every node/.style={font=\Large}
    },
    rootwall/.style={
        red!70,
        line width=0.9pt
    },
    facet/.style={
        green!50!black,
        line width=2pt
    },
    root/.style={
        black,
        line width=1.4pt,
        -{Stealth[length=7pt,width=8pt]}
    },
    polygon/.style={
        fill=blue!15,
        draw=blue!60!black,
        line width=1.0pt
    },
    thickpolygon/.style={
        fill=blue!15,
        draw=blue!60!black,
        line width=3.0pt
    },
    blueweylchamber/.style={
        fill=blue!15,
        draw=none
    },
    pinkweylchamber/.style={
        fill=pink!60,
        draw=none
    },
    greenweylchamber/.style={
        fill=green!90!black,
        draw=none
    }
}

\newtheorem{theorem}{Theorem}[subsection]

\newtheorem{lemma}[theorem]{Lemma}

\theoremstyle{definition}
\newtheorem{definition}[theorem]{Definition}
\newtheorem{remark}[theorem]{Remark}
\newtheorem{notation}[theorem]{Notation}
\newtheorem{example}[theorem]{Example}

\newcounter{subfig}[theorem]
\renewcommand{\thesubfig}{\textbf{Figure~\thetheorem.} (\alph{subfig})}

\DefineBibliographyStrings{english}{%
    page             = {},
    pages            = {},
}
\DeclareFieldFormat{labelalpha}{\mkbibbold{#1}}
\DeclareFieldFormat{extraalpha}{\mkbibbold{\mknumalph{#1}}}

\renewbibmacro{in:}{}
\DefineBibliographyStrings{english}{
    backrefpage  = {pg.},
    backrefpages = {pg.}
}

\DeclareMathOperator{\ad}{\mathrm{ad}}
\DeclareMathOperator{\cl}{\mathrm{cl}}
\DeclareMathOperator{\conj}{\mathrm{conj}}
\DeclareMathOperator{\conv}{\mathrm{conv}}
\DeclareMathOperator{\ev}{\mathrm{ev}}

\DeclareMathOperator{\id}{\mathrm{id}}
\DeclareMathOperator{\interior}{\mathrm{int}}
\DeclareMathOperator{\pr}{\mathrm{pr}}

\DeclareMathOperator{\relint}{\mathrm{relint}}
\DeclareMathOperator{\sat}{\mathrm{sat}}
\DeclareMathOperator{\st}{\mathrm{st}}
\DeclareMathOperator{\sst}{\mathrm{sst}}
\DeclareMathOperator{\vertices}{\mathrm{vert}}
\DeclareMathOperator{\wt}{\mathrm{wt}}
\DeclareMathOperator{\Ad}{\mathrm{Ad}}
\DeclareMathOperator{\aff}{\mathrm{aff}}
\DeclareMathOperator{\Aut}{\mathrm{Aut}}
\DeclareMathOperator{\Dyn}{\mathrm{Dyn}}
\DeclareMathOperator{\Ext}{\mathrm{Ext}}
\DeclareMathOperator{\Filt}{\mathrm{Filt}}
\DeclareMathOperator{\Fr}{\mathrm{Fr}}
\DeclareMathOperator{\Hom}{\mathrm{Hom}}
\DeclareMathOperator{\Iso}{\mathrm{Iso}}
\DeclareMathOperator{\JH}{\mathrm{JH}}
\DeclareMathOperator{\Lie}{\mathrm{Lie}}
\DeclareMathOperator{\Par}{\mathrm{Par}}

\DeclareMathOperator{\Span}{\mathrm{span}}
\DeclareMathOperator{\Spec}{\mathrm{Spec}}

\newcommand{\Abb}{\mathbb{A}}
\newcommand{\Cbb}{\mathbb{C}}
\newcommand{\Gbb}{\mathbb{G}}
\newcommand{\Nbb}{\mathbb{N}}
\newcommand{\Rbb}{\mathbb{R}}
\newcommand{\Zbb}{\mathbb{Z}}

\newcommand{\ebf}{\mathbf{e}}
\newcommand{\gbf}{\mathbf{g}}
\newcommand{\taubf}{\mathbf{\tau}}
\newcommand{\Bbf}{\mathbf{B}}
\newcommand{\Gbf}{\mathbf{G}}
\newcommand{\GLbf}{\mathbf{GL}}
\newcommand{\SLbf}{\mathbf{SL}}
\newcommand{\Lbf}{\mathbf{L}}
\newcommand{\Pbf}{\mathbf{P}}
\newcommand{\Rbf}{\mathbf{R}}
\newcommand{\Tbf}{\mathbf{T}}
\newcommand{\Zbf}{\mathbf{Z}}

\newcommand{\Acal}{\mathcal{A}}
\newcommand{\Gcal}{\mathcal{G}}
\newcommand{\Kcal}{\mathcal{K}}
\newcommand{\Lcal}{\mathcal{L}}
\newcommand{\Mcal}{\mathcal{M}}
\newcommand{\Ocal}{\mathcal{O}}
\newcommand{\Pcal}{\mathcal{P}}
\newcommand{\Qcal}{\mathcal{Q}}
\newcommand{\Rcal}{\mathcal{R}}
\newcommand{\Tcal}{\mathcal{T}}
\newcommand{\Xcal}{\mathcal{X}}

\newcommand{\cfrak}{\mathfrak{c}}
\newcommand{\dfrak}{\mathfrak{d}}
\newcommand{\vfrak}{\mathfrak{v}}
\newcommand{\Pfrak}{\mathfrak{P}}
\newcommand{\Wfrak}{\mathfrak{W}}

\renewcommand{\leq}{\leqslant}
\renewcommand{\geq}{\geqslant}
\newcommand{\leqparen}{%
    \mathrel{\ooalign{%
            $\leq$\cr
            \hidewidth\raisebox{0.25ex}{\scalebox{1}[0.7]{$\scriptstyle\textbf{(}\hspace{0.7em}$}}\hidewidth\cr
            \hidewidth\raisebox{-0.17ex}{\scalebox{1}[0.7]{$\scriptstyle\hspace{0.6em}\textbf{)}$}}\hidewidth\cr
    }}%
}

\newcommand{\vsf}{\mathsf{v}}
\newcommand{\Gsf}{\mathsf{G}}
\newcommand{\Lsf}{\mathsf{L}}
\newcommand{\Psf}{\mathsf{P}}
\newcommand{\Tsf}{\mathsf{T}}
\newcommand{\Usf}{\mathsf{U}}

\newcommand{\forsc}{\textsc{For}}
\newcommand{\Bun}{\textsc{Bun}}
\newcommand{\Lsc}{\textsc{L}}
\newcommand{\Psc}{\textsc{P}}

\begin{document}

\title{Jordan-Hölder theory for complementary polyhedra and moduli of
parahoric Higgs torsors}

\author{Emanuel Roth}
\address{{\normalfont\itshape Emanuel Roth, \url{eroth2@ed.ac.uk}}\newline
    School of Mathematics and Maxwell Institute, University of Edinburgh \newline
    James Clerk Maxwell Building, Peter Guthrie Tait Road\newline
Edinburgh EH9 3FD, United Kingdom}

\keywords{Algebraic geometry, Complementary polyhedron, Good moduli
    space, Higgs bundle,
    Jordan-Hölder, Moduli theory, Parahoric torsor, Principal bundle,
    S-equivalence,
Vector bundle}
\subjclass{14D20, 14D23}

\begin{abstract}
    We extend Behrend's theory of complementary polyhedra to include
    Jordan-Hölder theory. This provides a uniform approach to
    describing the Jordan-Hölder filtrations and S-equivalence in
    moduli stacks of bundles, following Zhang. As an application, we
    give a description of Jordan-Hölder reductions and S-equivalence
    in the semistable locus of parahoric torsors and parahoric Higgs
    torsors, coming from their good moduli spaces constructed by
    Alper-Halpern-Leistner-Heinloth and Réga. This application uses the Rees
    construction to identify filtrations of moduli stacks with
    reductions of bundles, and induces a Jordan-Hölder stratification
    of the moduli stacks.
\end{abstract}

\maketitle
\section{Introduction}\label{sec_intro}

{
    \renewcommand{\thesubsubsection}{\thesection.\arabic{subsubsection}}
    \renewcommand{\thetheorem}{\thesection.\arabic{theorem}}

    \subsubsection{History of Jordan-Hölder filtrations}\label{intro_jhhistory}
    In the late 19th century, Jordan in
    \cite{jordan_traitedessubstitutionsetdesequationsalgebriques}
    asked how groups $\mathsf{G}$ can be decomposed into
    \textit{simple groups}, i.e., groups without nontrivial normal
    subgroups. Specifically, he showed for a finite permutation group
    $\mathsf{G}$ the existence of a filtration:
    \begin{equation*}
        \{e\}=\mathsf{H}_0\triangleleft\ldots\triangleleft\mathsf{H}_l=\mathsf{G}
    \end{equation*}
    whose elements are successively normal subgroups
    $(\mathsf{H}_{i-1}\triangleleft\mathsf{H}_i)$, and whose quotient
    groups $(\mathsf{H}_i/\mathsf{H}_{i-1})$ are simple. Jordan
    compared two such filtrations $(\mathsf{H}_i)$ and
    $(\mathsf{K}_j)$ of $\mathsf{G}$, and showed that
    $(\mathsf{H}_i/\mathsf{H}_{i-1})$ and
    $(\mathsf{K}_j/\mathsf{K}_{j-1})$ are isomorphic up to
    permutation. Hölder in
    \cite{holder_zuruckfuhrungeinerbeliebigenalgebraischengleichung}
    found a more direct proof of these results, removing Jordan's use
    of permutation groups and generalizing to finite groups.

    These properties are now called \textit{Jordan-Hölder
    filtrations}, and have been constructed for many algebraic and
    geometric objects, such as for Noetherian, Artinian rings, and
    finite-length modules. For a
    Noetherian, Artinian object $A$ inside an Abelian category $\Acal$,
    there exists a filtration of $A$, called a \textit{Jordan-Hölder
    filtration}:
    \begin{equation*}
        0=A_0\subset\ldots\subset A_l=A
    \end{equation*}
    whose quotients $B_i:=A_i/A_{i-1}$ are simple objects, see e.g.
    \cite[{\href{https://stacks.math.columbia.edu/tag/0FCD}{0FCD}}]{stacks-project}.
    For two such filtrations, the \textit{(associated) gradeds}
    $\bigoplus_{i=1}^lB_i$ are isomorphic.

    Jordan-Hölder filtrations are
    important tools in the study of moduli problems in algebraic
    geometry. Using Jordan-Hölder filtrations for Abelian
    categories, Seshadri proved the following, using
    \textit{slope-stability conditions}, as defined in
    \cite[§3]{seshadri_spaceofunitaryvectorbundles} and
    \cite[Definitions 1.2.11 and
    1.2.12]{huybrechts-lehn_geometryofmodulispaces}.

    \begin{theorem}[Jordan-Hölder theory for vector bundles {\cite[Proposition
        3.1]{seshadri_spaceofunitaryvectorbundles}}]\label{thmS}
        For a slope-semistab-\-le vector bundle $E$ over a complex
        smooth projective curve $X$, there exists a filtration of subbundles:
        \begin{equation*}
            0=E_0\subset\ldots\subset E_l=E
        \end{equation*}
        whose quotients $F_i=E_{i}/E_{i-1}$ are all slope-stable, and
        all have the same slope as $E$. Furthermore, two such
        filtrations have isomorphic graded bundles $\bigoplus_{i=1}^lF_i$.
    \end{theorem}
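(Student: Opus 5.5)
The plan is to reduce Theorem~\ref{thmS} to the abstract Jordan-Hölder theorem for Noetherian, Artinian objects in an Abelian category, recalled above. Fix $\mu=\mu(E)=\deg E/\operatorname{rk}E$ and let $\Acal_\mu$ be the full subcategory of coherent sheaves on $X$ consisting of slope-semistable vector bundles of slope $\mu$, together with $0$. The first step is to show that $\Acal_\mu$ is Abelian, and in fact an exact subcategory of $\mathrm{Coh}(X)$ closed under kernels, cokernels and extensions. Given a morphism $f\colon F\to G$ in $\Acal_\mu$, put $K=\ker f$, $I=\operatorname{im} f$ and $C=\operatorname{coker} f$. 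Semistability of $F$ gives $\mu(I)\geq\mu$, since $I$ is a quotient of $F$. Semistability of $G$ gives $\mu(I)\leq\mu$, since $I$ is a subsheaf of $G$; if $I$ is not saturated, its saturation has even larger slope, which is also impossible. Hence $\mu(I)=\mu$ and $I$ is saturated in $G$, so $C$ is torsion-free. On a smooth curve this makes $C$ locally free. Additivity of degree and rank then gives $\mu(K)=\mu(C)=\mu$ whenever these are nonzero, and subobjects and quotients of $K$, respectively $C$, inherit the semistability inequalities from $F$, respectively $G$. The same seesaw argument shows closure under extensions.

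The second step identifies the simple objects and checks the finiteness conditions. A nonzero object of $\Acal_\mu$ is simple exactly when it has no proper nonzero saturated subbundle of slope $\mu$, which is precisely slope-stability. Subobjects of $E$ in $\Acal_\mu$ have strictly smaller rank than $E$ unless they are equal to $E$, because a saturated subsheaf of full rank is the whole bundle. So every chain of subobjects has length at most $\operatorname{rk}E$, and $E$ is both Noetherian and Artinian in $\Acal_\mu$. The existence statement then follows by induction on rank: choose a proper nonzero subobject of minimal rank, which is stable, and pass to the quotient.

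The final step is the uniqueness of the graded object. I would run the standard Schreier/Zassenhaus-type argument, or simply invoke the cited result from the stacks project, inside $\Acal_\mu$. Here one has to note that subobjects in $\Acal_\mu$ are exactly saturated subbundles of slope $\mu$, so that a filtration of subbundles as in the statement is the same thing as a composition series in $\Acal_\mu$. The multiset of simple factors is then independent of the series, and hence the gradeds $\bigoplus_i F_i$ are isomorphic.

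I expect the main obstacle to be the Abelianness step, in particular verifying that images are saturated, so that cokernels are genuinely vector bundles and intersections and sums computed in $\mathrm{Coh}(X)$ stay inside $\Acal_\mu$. This is where both the curve hypothesis, giving torsion-free implies locally free, and the seesaw inequality are used. I would handle it first via the slope inequalities for short exact sequences $0\to A\to B\to C\to 0$, in the form: $\mu(A)\leq\mu(B)$ if and only if $\mu(B)\leq\mu(C)$, with equality in one forcing equality in the other.
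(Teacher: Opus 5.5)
Your proof is correct, but it follows a different route from the paper. You run the classical argument that the introduction attributes to Seshadri ("using Jordan--H\"older filtrations for Abelian categories"). Semistable bundles of slope $\mu$ form an abelian subcategory $\Acal_\mu\subset\mathrm{Coh}(X)$ in which every object has length at most its rank. Its subobjects are the saturated subbundles of slope $\mu$, its simple objects are the stable bundles, and the abstract Jordan--H\"older theorem finishes the job.

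The paper does not prove Theorem~\ref{thmS} directly. It recovers it as the case $\Gsf=\GLbf_r$, $\xi=\Fr(E)$ of Theorem~\ref{thm_theoremR}, which is proved entirely through Behrend's complementary polyhedron $d_{\Ad(\xi)_\eta,\Tbf_\eta}$:
\begin{itemize}
\item Existence is Theorem~\ref{thm_jhfacetexistence}. A minimal facet with vanishing numerical invariants (for $\GLbf_r$, all graded pieces of slope $\mu$) automatically has stable Levi-factor; this is proved by induction on rank via the projections to $Q^\perp$.
\item Uniqueness is Theorem~\ref{thm_jhfacetlevi}. An orthogonal automorphism of the root system carries one Jordan--H\"older facet to the other, and Theorem~\ref{thm_jhlevireductivegroupscheme} lifts it to $\Ad(\xi)$.
\end{itemize}

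The paper's route buys uniformity. It needs no kernels, cokernels or subobjects, so it applies to principal bundles, parahoric torsors and their Higgs versions, where your argument has no analogue.

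Your route buys economy and a sharper uniqueness statement. You determine the multiset of stable factors, hence $\bigoplus_i F_i$ as a vector bundle, directly. In the paper the identifying automorphism may be outer (Remark~\ref{rem_basedrootdata}). This already happens for $E=F\oplus L$ in Example~\ref{ex_mainexamplevectorbundle}, where the two Jordan--H\"older parabolics have non-conjugate types. The paper's output is an isomorphism of Levi-factors, so an extra translation step is needed to get an isomorphism of graded vector bundles. An isomorphism of adjoint group schemes alone would not suffice, since $\Ad$ of every line bundle is $\Gbb_{m,X}$.
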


    Two slope-semistable vector bundles are \textit{S-equivalent}
    when the graded bundles of their Jordan-Hölder filtrations
    are isomorphic. Seshadri applied S-equivalence to the moduli
    theory of vector bundles in \cite[Theorem
    8.1]{seshadri_spaceofunitaryvectorbundles}, by showing that the set of
    S-equivalence classes of slope-semistable bundles is naturally
    equipped with a structure of a normal projective complex
    variety. Ramanathan generalized these results to principal
    bundles in \cite[3.12
    Proposition]{ramanathan_moduliforprincipalbundlesI} and \cite[5.9
    Theorem]{ramanathan_moduliforprincipalbundlesII}, where the latter
    generalizes Seshadri's moduli space of S-equivalence classes in
    \cite[Theorem 8.1]{seshadri_spaceofunitaryvectorbundles}.

    \subsubsection{Motivation for this paper}\label{intro_motivation}
    We are interested in generalizing Jordan-Hölder filtrations,
    stratifications, and
    S-equivalence results to more general moduli problems, such as
    that of \textit{parabolic bundles}, \textit{parahoric torsors},
    and \textit{parahoric Higgs torsors}.
    We take inspiration from Behrend's paper on \textit{complementary
    polyhedra}
    \cite{behrend_semi-stabilityofreductivegroupschemesovercurves},
    which provided a uniform approach to proving the existence and
    uniqueness of \textit{Harder-Narasimhan filtrations} of vector
    bundles (or \textit{canonical reductions} of principal bundles).

    \subsubsection{Complementary polyhedra}\label{intro_complementarypolyhedra}
    \textit{Complementary polyhedra} are maps in the theory of root
    systems (Definition
    \ref{def_complementarypolyhedra}) introduced by Behrend in
    \cite[Definition
    2.1]{behrend_semi-stabilityofreductivegroupschemesovercurves},
    whose stability conditions
    (Definition \ref{def_stabilityofcomplementarypolyhedra}) can
    store the stability profile of vector bundles and principal
    bundles. We present a Jordan-Hölder theory for complementary
    polyhedra, and extend their notions of (semi)-stability from
    \cite{behrend_phdthesis,behrend_semi-stabilityofreductivegroupschemesovercurves},
    to \textit{polystability} (Definition
        \ref{def_stabilityofcomplementarypolyhedra}
    \ref{def_stabilityofcomplementarypolyhedra3}). These stability
    conditions can be viewed visually through the convex hulls $F$ of
    complementary polyhedra, depending on whether $0$ lies in the
    interior of $F$, \textit{relative interior} of $F$
    (Definition \ref{def_interior}), or simply $F$ itself. This gives
    the following correspondences and implications (Lemmas
        \ref{lem_stabilityofcomplementarypolyhedra} and
    \ref{lem_stabilityofcomplementarypolyhedrapolystable}):
    \begin{align}\label{eq_intro}
        \tag{1.3}
        \interior(F)\subset\relint(F)\subset F &  &
        \text{stability}\Rightarrow\text{polystability}\Rightarrow\text{semistability}
    \end{align}
    To prove the existence and uniqueness of Harder-Narasimhan
    filtrations of vector bundles (or canonical reductions of
    principal bundles), Behrend in
    \cite[Corollary
    3.14]{behrend_semi-stabilityofreductivegroupschemesovercurves}
    determines a unique \textit{special facet} of complementary
    polyhedra (Definition \ref{def_specialfacet}) with analogous
    properties. Thus, to construct Jordan-Hölder filtrations or
    reductions, we similarly
    introduce \textit{Jordan-Hölder facets} (Definition
    \ref{def_jhfacet}), prove they exist (Theorem
    \ref{thm_jhfacetexistence}), and show that they are unique up to
    its graded or \textit{Levi-factor} (Theorem \ref{thm_jhfacetlevi}).

    \subsubsection{Applications}\label{intro_applications}
    By associating complementary polyhedra to \textit{reductive group
    schemes} over a curve, such as to \textit{automorphism group schemes} of
    bundles, seen in \cite[Proposition
    6.6]{behrend_semi-stabilityofreductivegroupschemesovercurves},
    many authors have constructed complementary polyhedra for
    various moduli problems:
    \begin{enumerate}[label=(\alph*)]
        \item\label{intro_applications1} Vector bundles and principal
            bundles, by Behrend in
            \cite[§8]{behrend_semi-stabilityofreductivegroupschemesovercurves}.

        \item\label{intro_applications2} Arakelov bundles, by Stuhler
            in \cite{stuhler_canonicalparabolicsubgroupsofarakelovgroupschemes}.

        \item\label{intro_applications3} Principal parabolic bundles,
            by Heinloth and Schmitt in
            \cite[§4]{heinloth-schmitt_cohomologyringsofmodulistacksprincbundles}.

        \item\label{intro_applications4} Higgs bundles, by Wißdorf in
            \cite[§3.5]{wissdorf_phdthesis}.

        \item\label{intro_applications5} Parahoric torsors, by Heinloth in
            \cite[§3.F]{heinloth_hilbertmumfordstability}.

        \item\label{intro_applications6} Parahoric Higgs torsors, by
            Réga in \cite[§5.2.2]{rega_phdthesis}.
    \end{enumerate}
    They have identified stability conditions of the moduli problems
    with that of complementary polyhedra, allowing them to study
    their Harder-Narasimhan filtrations, canonical reductions, and in
    more modern language, to study the \textit{$\Theta$-stratifications} of
    their moduli stacks, in the sense of
    \cite{danielhalpernleistner_onthestructureofinstabilityinmodulitheory,alperDHLheinloth_existenceofmodulispacesforalgebraicstacks}.

    Using our Jordan-Hölder theory for complementary polyhedra, we
    can develop the results in \ref{intro_motivation}. We
    explain this explicitly for principal bundles (Theorem
    \ref{thm_theoremR}), recovering Theorem
    \ref{thmS}, for \textit{parahoric torsors}
    (Definition \ref{def_parahorictorsor}, Theorem
    \ref{thm_jhreductionparahoric}), as studied by Heinloth,
    Pappas-Rapoport, Balaji-Seshadri in
    \cite{heinloth_uniformizationofG-bundles,pappas-rapoport_someqnsaboutG-bundles,heinloth_hilbertmumfordstability,balaji-sesh_parahorictorsors},
    and \textit{parahoric Higgs torsors} (Definition
        \ref{def_weakparahorichiggs} \ref{def_weakparahorichiggs1},
    Theorem \ref{thm_jhreductionparahorichiggs}), as
    studied by Kydonakis-Sun-Zhao, Baraglia-Kamgarpour-Varma, Réga in
    \cite{baraglia-kamgarpour-varma_completeintegrabilityoftheparahorichitchinsystem,kydonakis-sun-zhao_logahorichiggstorsors,rega_phdthesis}.
    The last two cases
    are new to the literature and
    generalize the Jordan-Hölder filtrations for parabolic bundles by
    Henke in \cite[§7.2]{henke_phdthesis}, and for Higgs bundles by Graña
    Otero in \cite{granaotero_jhreductionsforprincipalhiggsbundlesoncurves}.

    Looking at moduli
    stacks, we connect our Jordan-Hölder filtrations of bundles to the
    \textit{good moduli spaces}
    of Alper, Alper-Halpern-Leistner-Heinloth in
    \cite{alper_goodmodulispacesforartinstacks,alperDHLheinloth_existenceofmodulispacesforalgebraicstacks},
    and to the \textit{(quasi)-S-equivalence} of algebraic stacks of Zhang in
    \cite[Definition 2.3]{zhang_Sequivalenceforalgebraicstacks}. We
    do this by introducing the
    notion of \textit{bundle moduli problems} (Definition
        \ref{def_bundlemoduliproblem}, Theorem
    \ref{thm_modulistack}). In explicit cases, we use the
    \textit{Rees construction} to relate \textit{filtrations}
    $f:\Theta:=[\Abb^1/\Gbb_m]\rightarrow\Mcal$ (Definition
    \ref{def_filtration} \ref{def_filtration1}) of moduli stacks of
    bundles to filtrations or parabolic reductions of
    bundles. We make the Rees construction explicit for
    parahoric Higgs torsors (Lemma
    \ref{lem_parahorichiggsreesconstruction}), extending Heinloth's
    explanation in \cite[3.C]{heinloth_hilbertmumfordstability} for
    parahoric torsors (Lemma \ref{lem_reesconstructionparahoric}).

    \subsubsection{Structure of the paper}\label{intro_structure}

    In Subsections
    \ref{subsec_defcomplementarypolyhedra}-\ref{subsec_specialfacets},
    we recall Behrend's complementary
    polyhedra, their stability conditions and their special facets,
    where in Subsection \ref{subsec_specialfacets}, we define Jordan-Hölder
    facets and develop Jordan-Hölder theory for complementary
    polyhedra. In Section
    \ref{sec_jhtheoryreductivegroupschemes}, we use complementary
    polyhedra to study the Jordan-Hölder theory of reductive group
    schemes, in order to apply these results to bundle moduli problems.
    In Section \ref{sec_modulistacks}, we construct
    Jordan-Hölder filtrations and stratifications for moduli stacks,
    with respect to
    complementary polyhedra, packaging these notions together as
    bundle moduli problems in Subsection
    \ref{subsec_bundlemoduliproblem}.
    Section
    \ref{sec_applicationstobundles} gives explicit applications of
    our results, such as to the moduli problems of principal bundles
    (Subsection \ref{subsec_principalbundles}) parahoric torsors
    (Subsection \ref{subsec_parahorictorsors}), and parahoric Higgs
    torsors (Subsection \ref{subsec_parahorichiggstorsors}).
}

\section{Jordan-Hölder facets of complementary polyhedra}\label{sec_jhfacet}

We review the relevant material from
\cite{behrend_semi-stabilityofreductivegroupschemesovercurves} to
define \textit{complementary polyhedra}, and extend Behrend's notion
of special facets to \textit{Jordan-Hölder} facets.

\subsection{Root systems}

We briefly review the notion of root systems:
Let $(V,\langle\_,\_\rangle)$ be a \textit{Euclidean space}, i.e.,
let $V$ be a finite-dimensional $\Rbb$-vector space with an
inner-product $\langle\_,\_\rangle$, and let $\Phi$ be a root system
in $(V,\langle\_,\_\rangle)$, as defined in
\cite[§9.2]{humphreys_introductiontoliealgebrasandrepresenationtheory}.
The \textit{reflections} $s_\alpha\in\Iso_{\Rbb}(V)$ of roots
$\alpha\in\Phi$, as defined in
\cite[§9.1]{humphreys_introductiontoliealgebrasandrepresenationtheory},
generate a finite subgroup of $\Iso_{\Rbb}(V)$, called the
\textit{Weyl group} $\Wfrak_\Phi$.

In
\cite[§10.1]{humphreys_introductiontoliealgebrasandrepresenationtheory},
\textit{bases} or subsets of \textit{simple roots}
$\triangle\subset\Phi$ are defined, which are $\Rbb$-bases of $V$,
and in \cite[§10.1,
Theorem]{humphreys_introductiontoliealgebrasandrepresenationtheory},
they are proven to exist. For the hyperplanes:
\begin{equation*}
    \mathrm{Hy}_\alpha:=\{v\in V\ \vert\ \langle\alpha,v\rangle=0\}
\end{equation*}
of roots $\alpha\in\Phi$, the \textit{(open) Weyl chambers} of $\Phi$
are connected components of
$V\setminus\bigcup_{\alpha\in\Phi}\mathrm{Hy}_\alpha$. It is easy to
see that Weyl chambers are of the form:
\begin{equation*}
    \{v\in V\ \vert\ \forall\alpha\in\triangle:\langle\alpha,v\rangle>0\}
\end{equation*}
for a choice of simple roots $\triangle\subset\Phi$. The Weyl group
$\Wfrak_\Phi$ acts transitively on the set of Weyl chambers, and on
the set of simple roots, as shown in \cite[§10.3,
Theorem]{humphreys_introductiontoliealgebrasandrepresenationtheory}.

\subsection{Definition of complementary
polyhedra}\label{subsec_defcomplementarypolyhedra}

We partition $V=\bigsqcup_{P\in F(\Phi)}P$ into \textit{facets}
$P$, distinguished by the fact that two elements $v,w\in V$ lie in
the same facet if for all roots $\alpha\in\Phi$, one of the
following is true:
\begin{enumerate}[label=(\roman*)]
    \item\label{def_facet1} $v,w\in L_\alpha:=\{v\in
        V\ \vert\ \langle\alpha,v\rangle=0\}$.

    \item\label{def_facet2} $v,w\notin L_\alpha$ and both are in the
        same connected component of $V\setminus L_\alpha$.
\end{enumerate}

Condition \ref{def_facet2} can be interpreted as $v$
and $w$ lying on the same ``side'' away from $L_\alpha$. There are
finitely many facets, including Weyl chambers. Behrend
partially orders the set of facets of $\Phi$ by:
\begin{equation*}
    P\geq Q:\Leftrightarrow P\subset\cl(Q)
\end{equation*}
where $\cl(Q)$ denotes the \textit{closure} of $Q$ in $V$.

\begin{definition}[Vertices {\cite[Definition
    1.2]{behrend_semi-stabilityofreductivegroupschemesovercurves}}]\label{def_vertices}
    \leavevmode
    \begin{enumerate}[label=(\alph*)]
        \item\label{def_vertices1} A \textit{fundamental dominant weight}
            $\lambda$ is an element of $V$ such that there exists simple
            roots $\triangle\subset\Phi$ and $\alpha\in\triangle$ with:
            \begin{align*}
                \langle\alpha,\lambda\rangle=1 &  &
                \forall\beta\in\triangle\setminus\{\alpha\}:\langle\beta,\lambda\rangle=0
            \end{align*}

        \item\label{def_vertices2} For a facet $P$, a fundamental
            dominant weight $\lambda$ is a \textit{vertex} of $P$ if
            $\lambda$ is in the closure $\cl(P)$ of $P$. We denote
            the set of vertices of $P$ by $\vertices(P)$.
    \end{enumerate}
\end{definition}

\begin{example}[Facets of $A_2$]\label{ex_facetvertices}
    We depict the root system $A_2$ in the Euclidean space
    $(\Rbb^2,\langle\_,\_\rangle)$:
    \begin{equation*}
        \begin{tikzpicture}[rootsys]
            %draw Weyl chambers
            \draw[blueweylchamber] (30:1.35) -- (0,0) -- (90:1.35) -- cycle;
            \draw[pinkweylchamber] (90:1.35) -- (0,0) -- (150:1.35) -- cycle;

            %draw facet P
            \draw[facet] (0,0) -- (90:1.35)
            node[anchor=south,xshift=2pt]{$P$};

            %label blue chamber
            \node[text=blue!80!black] at ($(0,0)!1!(60:1.35)$) {$\cfrak$};

            %label pink chamber
            \node[text=pink!70!black,yshift=2pt] at
            ($(0,0)!1!(120:1.34)$) {$\dfrak$};

            %draw root walls
            \foreach \angle in {30,150,210,330}
            {
                \draw[rootwall] (0,0) -- (\angle:1.35);
            }
            \draw[rootwall] (0,0) -- (270:1);

            %draw root vectors
            \draw[root] (0,0) -- (0:1.0)
            node[anchor=west,xshift=2pt]{$\alpha$};
            \foreach \angle in {60,120,180,240,300}
            {
                \draw[root] (0,0) -- (\angle:1.0);
            }

            %draw dominant weights
            \fill[purple!80] (30:2.0/3.0) circle (1.3pt)
            node[anchor=west,xshift=2pt,yshift=-3.1pt]{$\eta$};
            \fill[purple!80] (90:2.0/3.0) circle (1.3pt)
            node[anchor=west,xshift=0.3pt]{$\mu$};
            \fill[purple!80] (150:2.0/3.0) circle (1.3pt)
            node[anchor=east,xshift=-2pt,yshift=-2pt]{$\lambda$};
            \foreach \angle in {210,270,330}
            {
                \fill[purple!80] (\angle:2.0/3.0) circle (1.3pt);
            }

            %draw origin
            \fill[red!80] (0,0) circle (1.3pt);

            %label picture
            \node at (0,-1.15) {\footnotesize
            \textbf{Figure~\thetheorem.}\relax};
        \end{tikzpicture}
    \end{equation*}
    The facets $\cfrak$, $\dfrak$, $P$, and $\{0\}$ have the ordering
    $\{0\}\geq P\geq \cfrak,\dfrak$, where $\cfrak$ and $\dfrak$ are
    Weyl chambers. The six fundamental weights are depicted as the six
    nonzero dots in Figure \ref{ex_facetvertices}, such that:
    \begin{align*}
        \vertices(\cfrak)=\{\eta,\mu\} &  &
        \vertices(\dfrak)=\{\mu,\lambda\} &  & \vertices(P)=\{\mu\} &  &
        \vertices(\{0\})=\emptyset
    \end{align*}
\end{example}

\begin{definition}[Complementary polyhedra {\cite[Definition
    2.1]{behrend_semi-stabilityofreductivegroupschemesovercurves}}]\label{def_complementarypolyhedra}
    A map from the set of Weyl chambers $d:\Wfrak_\Phi\rightarrow V^*$
    is a \textit{complementary polyhedron} of $\Phi$ if:
    \begin{enumerate}[label=(A\arabic*)]
        \item\label{def_complementarypolyhedra1} For all
            $\cfrak,\dfrak\in\Wfrak_\Phi$ with a common vertex
            $\lambda\in\vertices(\cfrak)\cap\vertices(\dfrak)$, we have:
            \begin{equation*}
                d(\cfrak)(\lambda)=d(\dfrak)(\lambda)
            \end{equation*}

        \item\label{def_complementarypolyhedra2} For all
            $\cfrak,\dfrak\in\Wfrak_\Phi$, and $\alpha\in\Phi$, such that
            $\cfrak$ is \textit{$\alpha$-conjugate} to $\dfrak$, i.e.,
            $\cfrak$ is positive with respect to $\alpha$, and $\dfrak$ is
            negative with respect to $\alpha$, then:
            \begin{equation*}
                d(\cfrak)(\alpha)\leq d(\dfrak)(\alpha)
            \end{equation*}
    \end{enumerate}
    We denote $F(P):=\conv\{d(\cfrak)\ \vert\ P\geq\cfrak\}$ for a
    facet $P$, and call $F:=F(\{0\})$ the \textit{convex hull}.
\end{definition}

Since complementary polyhedra are valued in the dual-space $V^*$,
recall that for a root $\alpha\in\Phi$, its \textit{dual-root} in the
\textit{dual-Euclidean space} $(V^*,(\_,\_))$ is given by:
\begin{align*}
    \widecheck{\alpha}:V\rightarrow\Rbb &  & v\mapsto2\langle
    v,\alpha\rangle/\langle\alpha,\alpha\rangle
\end{align*}
forming a root system $\widecheck{\Phi}$ in $(V^*,(\_,\_))$, as seen
in \cite[§III.
9.4]{humphreys_introductiontoliealgebrasandrepresenationtheory}. Through this identification, every fundamental dominant weight $\lambda$ induces a \textit{dual-fundamental dominant weight} $\widecheck{\lambda}$ in $V^*$. For
a facet $P$ of $\Phi$, this induces a \textit{dual-facet}
$\widecheck{P}$ of $\widecheck{\Phi}$, giving a bijection of facets
$F(\Phi)\cong F(\widecheck{\Phi})$. Using this, we give two examples
of complementary polyhedra for the root system $A_2$.

\begin{example}[Complementary polyhedra of
    $A_2$]\label{ex_complementarypolyhedra}
    For the root system $A_2$ in $(\Rbb^2,\langle\_,\_\rangle)$, we
    label the Weyl chamber containing the root $\gamma\in A_2$ by
    $\cfrak_\gamma$. We choose and label simple roots
    $\triangle=\{\alpha,\beta\}$, then define
    $d_a:\Wfrak_{A_2}\rightarrow\Rbb^{2*}$:
    \begin{align*}
        d_a(\cfrak_{-\beta})        =d_a(\cfrak_{\alpha}) &
        =\widecheck{\beta}   & d_a(\cfrak_{-\alpha-\beta})
        =d_a(\cfrak_{-\alpha}) & =\widecheck{\alpha+\beta}     \\
        d_a(\cfrak_{\alpha+\beta})                        &
        =\widecheck{\beta}/2 & d_a(\cfrak_{\beta})
        & =(\widecheck{\alpha+\beta})/2
    \end{align*}
    We also define $d_b:\Wfrak_{A_2}\rightarrow\Rbb^{2*}$:
    \begin{align*}
        d_b(\cfrak_{-\beta}) &
        =d_b(\cfrak_{\alpha})=d_b(\cfrak_{\alpha+\beta})=-\widecheck{\alpha}
        & d_b(\cfrak_{-\alpha-\beta}) &
        =d_b(\cfrak_{-\alpha})=d_b(\cfrak_{\beta})=\widecheck{\alpha}
    \end{align*}
    It can be verified that $d_a$ and $d_b$ are complementary polyhedra
    with the convex hulls:
    \begin{equation*}
        \begin{tikzpicture}[rootsys]
            %draw trapezium
            \filldraw[polygon] (60:0.5) -- (60:1) -- (120:1) --
            (120:0.5) -- cycle;

            %draw coroot vectors
            \draw[root] (0,0) -- (0:1.0)
            node[anchor=west,xshift=2pt]{$\widecheck{\alpha}$};
            \draw[root] (0,0) -- (60:1.0)
            node[pos=0.94,anchor=south
            west,xshift=1pt,yshift=1pt]{$\widecheck{\alpha+\beta}$};
            \draw[root] (0,0) -- (120:1.0)
            node[pos=0.94,anchor=south
            east,xshift=-1pt,yshift=1pt]{$\widecheck{\beta}$};
            \draw[root] (4.0,0) -- ($(4.0,0)+(0:1)$)
            node[anchor=west,xshift=2pt] {$\widecheck{\alpha}$};
            \foreach \angle in {180,240,300}
            {
                \draw[root] (0,0) -- (\angle:1.0);
            }
            \foreach \angle in {60,120,180,240,300}
            {
                \draw[root] (4.0,0) -- ($(4.0,0)+(\angle:1.0)$);
            }

            %draw coroot walls
            \foreach \angle in {30,150,210,330}
            {
                \draw[rootwall] (4.0,0) -- ($(4.0,0)+(\angle:1.35)$);
                \draw[rootwall] (0,0) -- (\angle:1.35);
            }
            \draw[rootwall] (4.0,0) -- ($(4.0,0)+(90:1)$);
            \draw[rootwall] (0,0) -- (90:1);
            \draw[rootwall] (4.0,0) -- ($(4.0,0)+(270:1)$);
            \draw[rootwall] (0,0) -- (270:1);

            %draw interval
            \draw[thickpolygon] ($(4.0,0)+(0:1.0)$) --
            ($(4.0,0)+(0:-1.0)$) -- cycle;
            \fill[blue!80] ($(4.0,0)+(0:1.0)$) circle (1.3pt);
            \fill[blue!80] ($(4.0,0)+(0:-1.0)$) circle (1.3pt);

            %draw origins
            \fill[red!80] (0,0) circle (1.3pt);
            \fill[red!80] (4.0,0) circle (1.3pt);

            %first label
            \stepcounter{subfig}
            \node at (0,-1.15) {\footnotesize \protect\thesubfig};
            \node at (0,-1.35) {\footnotesize Convex hull $F_a$ of $d_a$};

            %second label
            \stepcounter{subfig}
            \node at (4.0,-1.15) {\footnotesize \protect\thesubfig};
            \node at (4.0,-1.35) {\footnotesize Convex hull $F_b$ of $d_b$};
        \end{tikzpicture}
    \end{equation*}
\end{example}

\subsection{Stability of complementary
polyhedra}\label{subsec_stabilitycomplementarypolyhedra}

In \cite{behrend_phdthesis,
behrend_semi-stabilityofreductivegroupschemesovercurves}, Behrend
introduces notions of \textit{(semi)-stability} for complementary
polyhedra, which we extend naturally to \textit{polystability}.
Recall that Behrend defines projections of complementary polyhedra:
For a facet $P$, let $\pr_P:V\rightarrow P^\perp$ denote the projection
with respect to the decomposition $V=\Span_{\Rbb}(P)\oplus P^\perp$.
Through the isomorphism $V\cong V^*$ induced by
$\langle\_,\_\rangle$, we obtain a decomposition
$V^*=\Span_{\Rbb}(\widecheck{P})\oplus\widecheck{P}^\perp$ that
induces the \textit{dual-projection} $\widecheck{\pr}_P:V^*\rightarrow
\widecheck{P}^\perp$. As explained in \cite[Lemma 1.10 and Definition
2.3]{behrend_semi-stabilityofreductivegroupschemesovercurves}, $\Phi$
can be projected to $P^\perp$ to obtain a root system $\Phi_P$ of the
Euclidean subspace $(P^\perp,\langle\_,\_\rangle_P)$ of
$(V,\langle\_,\_\rangle)$, with the complementary polyhedron
$d_P:\Wfrak_{\Phi_P}\rightarrow\widecheck{P}^\perp$ and convex hull
$F_P=\widecheck{\pr}_P(F(P))$. We call $d_P$ the \textit{Levi-factor} of $d$.

\begin{definition}[Degree and stability {\cite[Remark
            5.3.4]{behrend_phdthesis}\label{def_stabilityofcomplementarypolyhedra};\cite[Definitions
    3.1 and 3.3]{behrend_semi-stabilityofreductivegroupschemesovercurves}}]
    Let $d:\Wfrak_\Phi\rightarrow V^*$ be a complementary polyhedron.
    \begin{enumerate}[label=(\alph*)]
        \item\label{def_stabilityofcomplementarypolyhedra1}
            For a facet $P$, let:
            \begin{equation*}
                R(P):=\{\alpha\in\Phi\ \vert\ \forall
                \lambda\in\vertices(P):\langle\alpha,\lambda\rangle\geq0\}
            \end{equation*}
            be the \textit{parabolic subset}, then for any Weyl chamber
            with $P\geq\cfrak$, we define the \textit{degree}:
            \begin{equation*}
                \deg(P):=\sum_{\alpha\in R(P)}d(\cfrak)(\alpha)
            \end{equation*}

        \item\label{def_stabilityofcomplementarypolyhedra2} $d$ is
            \textit{(semi)-stable} if for all facets $P\neq\{0\}$, we have
            $\deg(P)\leqparen0$, i.e., $\deg(P)\leq0$ for
            semistability and $\deg(P)<0$ for stability.

        \item\label{def_stabilityofcomplementarypolyhedra3} $d$ is
            \textit{polystable} if for all facets $P$ with $\deg(P)=0$, we have:
            \begin{enumerate}[label=(P\arabic*)]
                \item\label{def_stabilityofcomplementarypolyhedra31} The
                    Levi-factor
                    $d_P:\Wfrak_{\Phi_P}\rightarrow\widecheck{P}^\perp$
                    is a semistable complementary polyhedron of $\Phi_P$.
                \item\label{def_stabilityofcomplementarypolyhedra32}
                    $\deg(-P)=0$ for the facet $-P$ opposite to $P$.
            \end{enumerate}
    \end{enumerate}
\end{definition}

\begin{notation}\label{rem_remarkonnotation}
    As in Definition \ref{def_stabilityofcomplementarypolyhedra}
    \ref{def_stabilityofcomplementarypolyhedra2}, we write
    \textit{(semi)-stability} and $\leqparen$ to account for both cases, i.e.,
    semistability means $\leq$ and stability means $<$.
\end{notation}

Definition \ref{def_stabilityofcomplementarypolyhedra}
\ref{def_stabilityofcomplementarypolyhedra1} does not depend on the
choice of Weyl chamber $\cfrak$, due to
\ref{def_complementarypolyhedra1}. Behrend constructs an equivalent
characterization of (semi)-stability through convex hulls, which
requires looking at their \textit{interiors} in $V^*$.

\begin{lemma}[Equivalent characterization of (semi)-stability
        {\cite[Proposition
            3.2]{behrend_semi-stabilityofreductivegroupschemesovercurves};
            \cite[Remark
    5.3.4]{behrend_phdthesis}}]\label{lem_stabilityofcomplementarypolyhedra}
    Let $d:\Wfrak_\Phi\rightarrow V^*$ be a complementary polyhedron
    with convex hull $F$.
    \begin{enumerate}[label=(\roman*)]
        \item\label{lem_stabilityofcomplementarypolyhedra1} $d$ is
            semistable if and only if $F$ contains $0$.

        \item\label{lem_stabilityofcomplementarypolyhedra2} $d$ is stable
            if and only if $0\in \interior(F)$.
    \end{enumerate}
\end{lemma}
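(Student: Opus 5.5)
The plan is to turn both statements into statements about the \emph{support function} of $F$, and then evaluate that function on the vertices. Write $h(v):=\min_{x\in F}x(v)$ for $v\in V$. By the separating hyperplane theorem, $0\in F$ if and only if $h(v)\leq 0$ for all $v\in V$. Likewise, $0\in\interior(F)$ if and only if $h(v)<0$ for all $v\neq 0$.

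\textbf{Key lemma (expected main obstacle).} If $v\in\cl(\cfrak)$, then
\begin{equation*}
    d(\cfrak)(v)=h(v)=\min_{\dfrak}d(\dfrak)(v).
\end{equation*}
To prove it, join an arbitrary chamber $\dfrak$ to $\cfrak$ by a minimal gallery $\dfrak=\cfrak_0,\ldots,\cfrak_k=\cfrak$. Each consecutive pair is adjacent across a wall $L_\alpha$, where $\cfrak_{i-1}$ is $\alpha$-negative and $\cfrak_i$ is $\alpha$-positive; minimality means $\alpha$ is chosen so that $\langle\alpha,v\rangle\geq0$.
\begin{itemize}
    \item The two adjacent chambers share all vertices on the common wall, and these span $L_\alpha$. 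By \ref{def_complementarypolyhedra1}, $d(\cfrak_i)-d(\cfrak_{i-1})$ therefore vanishes on $L_\alpha$, so it equals $t\widecheck{\alpha}$ for some $t\in\Rbb$.
    \item By \ref{def_complementarypolyhedra2}, $d(\cfrak_i)(\alpha)\leq d(\cfrak_{i-1})(\alpha)$, which gives $t\leq0$.
    \item Hence $d(\cfrak_i)(v)=d(\cfrak_{i-1})(v)+t\widecheck{\alpha}(v)\leq d(\cfrak_{i-1})(v)$.
\end{itemize}
Iterating along the gallery gives $d(\cfrak)(v)\leq d(\dfrak)(v)$. The care needed is in choosing galleries that only cross walls separating $\dfrak$ from $\cfrak$, so that the sign $\langle\alpha,v\rangle\geq0$ holds at every step. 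This is the standard fact that a minimal gallery crosses each separating hyperplane exactly once.

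\textbf{Degrees as values of $h$.} For a facet $P\neq\{0\}$ and a chamber $\cfrak$ with $P\geq\cfrak$, the roots in $R(P)$ split into two kinds:
\begin{itemize}
    \item Levi roots, with $\langle\alpha,\lambda\rangle=0$ for all $\lambda\in\vertices(P)$. These occur in $\pm$ pairs, so they cancel in $\deg(P)$ because $d(\cfrak)$ is linear.
    \item Unipotent roots, with $\langle\alpha,\lambda\rangle>0$ for some $\lambda\in\vertices(P)$.
\end{itemize}
Thus $\deg(P)=d(\cfrak)(\sigma_P)$, where $\sigma_P\in V$ corresponds to the sum of the unipotent roots. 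The Weyl group of the Levi permutes the unipotent roots, so $\sigma_P$ is orthogonal to the Levi simple roots. It pairs positively with the remaining simple roots of $\cfrak$. Hence $\sigma_P$ is a combination of $\vertices(P)$ with strictly positive coefficients, so $\sigma_P\in P\subset\cl(\cfrak)$. By the key lemma, $\deg(P)=h(\sigma_P)$. In particular, for a ray facet $P_\lambda$ spanned by a single vertex $\lambda$, we get $\deg(P_\lambda)=c\,h(\lambda)$ with $c>0$.

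\textbf{Conclusion.} By the key lemma, $h$ is linear on each closed cone $\cl(\cfrak)$, and this cone is the nonnegative span of $\vertices(\cfrak)$. So $h\leq0$ everywhere if and only if $h(\lambda)\leq0$ for every fundamental dominant weight $\lambda$. Similarly, $h<0$ on $V\setminus\{0\}$ if and only if $h(\lambda)<0$ for all such $\lambda$: a nonzero $v\in\cl(\cfrak)$ is a nonnegative combination of vertices with at least one coefficient positive.

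For (i), suppose $d$ is semistable. Then all ray-facet degrees are $\leq0$, so all $h(\lambda)\leq0$, so $h\leq0$ everywhere, so $0\in F$. Conversely, if $0\in F$ then $h\leq0$ everywhere, so every $\deg(P)=h(\sigma_P)\leq0$. For (ii), run the same argument with strict inequalities, using $\sigma_P\neq0$ for $P\neq\{0\}$.
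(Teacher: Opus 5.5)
Your proof is correct. The paper gives no argument of its own for this lemma, importing it from Behrend's Proposition~3.2; your proof is a sound self-contained version resting on the same two facts the paper draws from Behrend elsewhere: that $F$ lies on one side of each hyperplane $H_\lambda$ of Remark~\ref{rem_jhfacetyp} (your key lemma at $v=\lambda$), and that $\sum_{\alpha\in R(P)}\alpha\in P$ (Behrend's Proposition~1.9, used in the proof of Theorem~\ref{thm_jhfacetexistence}).

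The steps you assert without justification are standard and easy to fill in:
\begin{itemize}
\item For each simple root $\gamma$ of $\cfrak$ outside the Levi, $\widecheck{\gamma}(\sigma_P)\geq 2$. This holds because $\widecheck{\gamma}$ takes the value $2$ on the sum of all positive roots and is $\leq 0$ on every positive Levi root.
\item If $h<0$ on $V\setminus\{0\}$, then $0\in\interior(F)$. Indeed, $h$ is a minimum of finitely many linear forms, hence continuous, hence bounded by some $-\delta<0$ on the unit sphere. So every $x$ of norm less than $\delta$ satisfies $x(v)\geq h(v)$ for all $v$, and therefore lies in $F$.
\end{itemize}
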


In Example \ref{ex_complementarypolyhedra}, Lemma
\ref{lem_stabilityofcomplementarypolyhedra} implies that $d_a$ is
neither stable nor semistable, and implies that $d_b$ is semistable
but not stable. Once we have constructed Jordan-Hölder theory for complementary
polyhedra, we will prove a variant of Lemma
\ref{lem_stabilityofcomplementarypolyhedra} for polystability, which
shows how polystability sits between semistability and stability.

\subsection{Special facets}\label{subsec_specialfacets}

Behrend shows in
\cite{behrend_semi-stabilityofreductivegroupschemesovercurves} that a
complementary polyhedron induces a unique facet, called the
\textit{special facet}, with properties akin to Harder-Narasimhan
filtrations and canonical reductions. For this, he defines
\textit{numerical invariants}.

\begin{definition}[Numerical invariants {\cite[Definitions 3.5 and
    3.7]{behrend_semi-stabilityofreductivegroupschemesovercurves}}]\label{def_numericalinvariant}
    Let $d:\Wfrak_\Phi\rightarrow V^*$ be a complementary polyhedron,
    let $P$ be a facet, and let $\lambda\in\vertices(P)$ be a vertex of
    $P$, then we define:
    \begin{equation*}
        \Psi(P,\lambda):=\{\alpha\in\Phi\ \vert\ \langle\alpha,\lambda\rangle=1,\ \forall\mu\in\vertices(P)\setminus\{\lambda\}:\langle\alpha,\mu\rangle=0\}
    \end{equation*}
    Let $\cfrak$ be any Weyl chamber with $P\geq\cfrak$, then the
    \textit{numerical invariant} is:
    \begin{equation*}
        n(P,\lambda):=\sum_{\alpha\in\Psi(P,\lambda)}d(\cfrak)(\alpha)
    \end{equation*}
\end{definition}

This definition does not depend on the choice of Weyl chamber
$\cfrak$, due to \ref{def_complementarypolyhedra1}. Numerical
invariants recover the complementary polyhedron itself, since:
\begin{equation*}
    d(\cfrak)=\sum_{\lambda\in\vertices(\cfrak)}n(\cfrak,\lambda)\widecheck{\lambda}
\end{equation*}
as explained in \cite[Note
3.8]{behrend_semi-stabilityofreductivegroupschemesovercurves}.

\begin{definition}[Special facets {\cite[Definition
    3.10]{behrend_semi-stabilityofreductivegroupschemesovercurves}}]\label{def_specialfacet}
    For a complementary polyhedron $d:\Wfrak_\Phi\rightarrow V^*$, a
    facet $P$ is a \textit{special facet} if:
    \begin{enumerate}[label=(B\arabic*)]
        \item\label{def_specialfacet1} For all $\lambda\in\vertices(P)$
            we have $n(P,\lambda)>0$.
        \item\label{def_specialfacet2}
            $d_P:\Wfrak_{\Phi_P}\rightarrow\widecheck{P}^\perp$ is a
            semistable complementary polyhedron of $\Phi_P$.
    \end{enumerate}
\end{definition}

In \cite[Lemma
    3.11 and Corollary
3.14]{behrend_semi-stabilityofreductivegroupschemesovercurves},
Behrend provides an equivalent characterization of special facets,
using the following remark, used to prove the existence and
uniqueness of special facets.

\begin{remark}[$y(P)$ of a facet $P$ {\cite[Lemma
    3.11]{behrend_semi-stabilityofreductivegroupschemesovercurves}}]\label{rem_jhfacetyp}
    For any facet $P$, there exists a unique point $y(P)\in V^*$ such that:
    \begin{equation*}
        \{y(P)\}=\Span_{\Rbb}(\widecheck{P})\cap\bigcap_{\lambda\in\vertices(P)}H_\lambda
    \end{equation*}
    where $H_\lambda:=\{x\in
    V^*\ \vert\ (\widecheck{\lambda},x)=(\widecheck{\lambda},d(\cfrak))\}$
    for any Weyl chamber $\cfrak$ with $P\geq\cfrak$ such that
    $\lambda\in\vertices(\cfrak)$. The hyperplane $H_\lambda$ does not
    depend on the choice of Weyl chamber $\cfrak$, due to
    \ref{def_complementarypolyhedra1}.
\end{remark}

\begin{theorem}[Existence and uniqueness of special facets
        {\cite[Lemma 3.11 and Corollary
    3.14]{behrend_semi-stabilityofreductivegroupschemesovercurves}}]\label{thm_specialfacetuniqueness}
    A complementary polyhedron $d:\Wfrak_{\Phi}\rightarrow V^*$ with
    convex hull $F$ has a unique special facet $P$, equivalently
    characterized by $y(P)\in\widecheck{P}\cap F(P)$. Furthermore,
    $y(P)$ is the minimum $\min(F)$.
\end{theorem}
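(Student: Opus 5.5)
Since $F$ is a nonempty compact convex subset of the Euclidean space $(V^*,(\_,\_))$, it has a unique point $\min(F)$ of minimal norm. My plan is to show two things: (a) $y(P)\in\widecheck{P}\cap F(P)$ forces $y(P)=\min(F)$; (b) the facet $P$ whose dual facet $\widecheck{P}$ contains $\min(F)$ satisfies this condition and is special. Then the equivalence of the two characterisations gives uniqueness.

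\emph{Step 1: translating (B1)--(B2) into the geometric condition.} Fix a facet $P$ and a Weyl chamber $\cfrak$ with $P\geq\cfrak$. I would expand $d(\cfrak)=\sum_{\lambda\in\vertices(\cfrak)}n(\cfrak,\lambda)\widecheck{\lambda}$ and split it into the vertices of $P$ and the remaining vertices. The component lying in $\Span_{\Rbb}(\widecheck{P})$ is cut out by the hyperplanes $H_\lambda$, $\lambda\in\vertices(P)$, so it equals $y(P)$. The complementary component is $\widecheck{\pr}_P(d(\cfrak))=d_P(\cfrak_P)$. The coefficients of $y(P)$ in the basis $\{\widecheck{\lambda}\}_{\lambda\in\vertices(P)}$ should be positive multiples of the numerical invariants $n(P,\lambda)$. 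This comparison uses the relation between $\Psi(P,\lambda)$ and the dual basis, and it is where (A1) enters. Granting that, (B1) says exactly that $y(P)\in\widecheck{P}$. By Lemma \ref{lem_stabilityofcomplementarypolyhedra}\ref{lem_stabilityofcomplementarypolyhedra1} applied to $d_P$, (B2) says $0\in F_P=\widecheck{\pr}_P(F(P))$. Since every $d(\cfrak)$ with $P\geq\cfrak$ has the same $\Span(\widecheck{P})$-component $y(P)$, this is equivalent to $y(P)\in F(P)$.

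\emph{Step 2: the minimum lies in $\widecheck{P}$ and satisfies the condition.} Set $m=\min(F)$ and let $P$ be the facet with $m\in\widecheck{P}$. The minimality of $m$ gives $(m,x-m)\geq0$ for all $x\in F$. Pairing this with the vertices $d(\cfrak)$, and using (A2) to compare chambers that are $\alpha$-conjugate across walls through $P$, I would show the following. The vertices $d(\cfrak)$ with $P\geq\cfrak$ lie in the supporting hyperplane $\{x\mid (m,x)=(m,m)\}$, and $m$ lies in their convex hull $F(P)$. Intersecting with the hyperplanes $H_\lambda$ then yields $m=y(P)$. Combined with Step 1, $P$ is special.

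\emph{Step 3: uniqueness.} Suppose $Q$ is any facet with $y(Q)\in\widecheck{Q}\cap F(Q)$. I must show $(y(Q),x-y(Q))\geq0$ for all $x\in F$, which gives $y(Q)=\min(F)$ and hence $Q=P$. It suffices to check this on the vertices $x=d(\dfrak)$. Write $y(Q)=\sum c_\lambda\widecheck{\lambda}$ with $c_\lambda>0$. The inequality then reduces to $(\widecheck{\lambda},d(\dfrak))\geq(\widecheck{\lambda},d(\cfrak))$ for $\lambda\in\vertices(Q)$ and $Q\geq\cfrak$. I would prove this by walking from $\cfrak$ to $\dfrak$ through a gallery of adjacent chambers, applying (A2) at each wall crossing: the pairing with a dominant weight of $Q$ can only increase. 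The subtle points are keeping the walk monotone and handling walls that contain $\lambda$, where (A1) gives equality.

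\textbf{Main obstacle.} This monotonicity along galleries, which is essentially Behrend's convexity argument, is the step I expect to be hardest.
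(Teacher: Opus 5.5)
Your overall route is the right one. The paper gives no proof of this statement and defers to Behrend (Lemma 3.11 and Corollary 3.14): first reformulate (B1)–(B2) through $y(P)$, then identify $y(P)$ with the nearest point $\min(F)$. Steps 1 and 3 are sound.

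\emph{Step 1.} Your coefficient claim holds because $\Psi(P,\lambda)$ is stable under the reflections in roots orthogonal to $P$. Hence $\sum_{\alpha\in\Psi(P,\lambda)}\alpha\in\Span_{\Rbb}(P)$, and pairing with $d(\cfrak)$ only sees its component $y(P)$. This gives the formula $y(P)=\sum_{\lambda}\frac{n(P,\lambda)}{|\Psi(P,\lambda)|}\widecheck{\lambda}$ that the paper later quotes.

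\emph{Step 3.} The monotonicity worry disappears if you use a \emph{minimal} gallery $\cfrak=\cfrak_0,\ldots,\cfrak_k=\dfrak$. Every crossed wall $L_{\alpha_i}$ then separates $\cfrak$ from $\dfrak$. (A1) on the $n-1$ shared vertices forces $d(\cfrak_i)-d(\cfrak_{i-1})=t_i\widecheck{\alpha_i}$. (A2) gives $t_i\geq 0$ when $\alpha_i$ is oriented positively on $\cfrak$. Finally $\langle\alpha_i,\lambda\rangle\geq0$ for $\lambda\in\vertices(\cfrak)$.

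The gap is in Step 2. Comparing chambers across walls through $P$ only relates chambers $\cfrak$ with $P\geq\cfrak$. On these, $(m,d(\cfrak))$ is already constant by (A1), so this mechanism yields neither of your two claims.

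\emph{First claim.} The equality $(m,d(\cfrak))=(m,m)$ needs the global inequality of Step 3, applied to $x=m\in F$. So that lemma has to be proved before Step 2.

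\emph{Second claim.} The assertion ``$m$ lies in $F(P)$'' is not argued at all. Minimality only puts $m$ in the exposed face $E=\{x\in F\mid (m,x)=(m,m)\}$. That face is the convex hull of \emph{all} vertices $d(\dfrak)$ attaining the minimum. Nothing you wrote rules out such a $\dfrak$ outside the star of $P$ with $d(\dfrak)\notin F(P)$, and then $m\in F(P)$ would not follow. You must show $E\subseteq F(P)$.

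One way to do this, using your own gallery machinery:
\begin{enumerate}
\item Given $\dfrak$, let $\cfrak'$ be the chamber with $P\geq\cfrak'$ lying on the same side as $\dfrak$ of every root hyperplane containing $P$. Concretely, take the chamber containing $(1-\epsilon)p+\epsilon q$ for $p\in P$, $q\in\dfrak$ and small $\epsilon>0$.
\item A minimal gallery from $\cfrak'$ to $\dfrak$ crosses only walls $L_\alpha$ not containing $P$. For these, with $\alpha$ positive on $\cfrak'$, we have $(m,\widecheck{\alpha})>0$ because $m\in\widecheck{P}$.
\item Hence $(m,d(\dfrak))-(m,d(\cfrak'))=\sum_i t_i(m,\widecheck{\alpha_i})$ with every term $\geq 0$. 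This vanishes only if all $t_i=0$, that is, $d(\dfrak)=d(\cfrak')\in F(P)$.
\end{enumerate}
Once $E=F(P)$ is established, the rest of Step 2, and with it existence, goes through as you wrote.
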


Due to Theorem \ref{thm_specialfacetuniqueness}, a complementary
polyhedron is semistable if and only if its special facet is $\{0\}$.

\subsection{Jordan-Hölder facets}\label{subsec_JHfacets}

Similar to Behrend's results on special facets, we introduce
\textit{Jordan-Hölder facets} of semistable complementary polyhedra,
with properties similar to Jordan-Hölder filtrations and reductions.
We prove its existence and uniqueness up to Levi-factor.

\begin{definition}[Jordan-Hölder facets]\label{def_jhfacet}
    For a semistable complementary polyhedron $d:\Wfrak_\Phi\rightarrow
    V^*$, a facet $P$ is a \textit{Jordan-Hölder facet} if:
    \begin{enumerate}[label=(J\arabic*)]
        \item\label{def_jhfacet1} For all $\lambda\in\vertices(P)$, we
            have $n(P,\lambda)=0$.

        \item\label{def_jhfacet2}
            $d_P:\Wfrak_{\Phi_P}\rightarrow\widecheck{P}^\perp$ is a
            stable complementary polyhedron of $\Phi_P$.
    \end{enumerate}
\end{definition}

Similar to \cite[Lemma
3.11]{behrend_semi-stabilityofreductivegroupschemesovercurves} for
special facets, we obtain an equivalent geometric characterization of
Jordan-Hölder facets, using \textit{relative interiors}. First, we
recall from \cite[§1, pg. 8]{rockafellar_convexanalysis} that every
subset $F\subset V^*$ has an \textit{affine hull}:
\begin{align*}
    n:=\dim_{\Rbb}(V)& &\aff_{\Rbb}(F):=\left\{x\in
        V^*\ \middle\vert\ \exists\lambda_i\in\Rbb, \exists f_i\in
    F:x=\sum_{i=1}^{n}\lambda_if_i, 1=\sum_{i=1}^{n}\lambda_i\right\}
\end{align*}

\begin{definition}[Relative interiors {\cite[Section
    6]{rockafellar_convexanalysis}}]\label{def_interior}
    For a convex subset $F$ of $V^*$, the \textit{relative interior}
    $\relint(F)\subset F$ is the union of subsets of $F$ open in the
    affine hull
    $\aff_{\Rbb}(F)$.
\end{definition}

\begin{lemma}[Equivalent characterization of Jordan-Hölder
    facets]\label{lem_complementarypolyhedrajordan}
    Let $d:\Wfrak_\Phi\rightarrow V^*$ be a semistable complementary
    polyhedron with convex hull $F$. For a facet $P$, we define the conditions:
    \begin{enumerate}[label=(J\arabic*')]
        \item\label{lem_complementarypolyhedrajordan1}
            $\Span_{\Rbb}(\widecheck{P})\cap F(P)=\{0\}$.

        \item\label{lem_complementarypolyhedrajordan2}
            $\Span_{\Rbb}(\widecheck{P})\cap\relint(F(P))\neq\emptyset$ and
            $V^*=\Span_{\Rbb}(\widecheck{P},F(P))$.
    \end{enumerate}
    Then \ref{def_jhfacet1} is equivalent to
    \ref{lem_complementarypolyhedrajordan1}, and \ref{def_jhfacet2} is
    equivalent to \ref{lem_complementarypolyhedrajordan2}. Thus, $P$ is
    a Jordan-Hölder facet if and only if $P$ fulfills
    \ref{lem_complementarypolyhedrajordan1} and
    \ref{lem_complementarypolyhedrajordan2}.
\end{lemma}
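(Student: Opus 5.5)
We prove the two equivalences separately, in both cases passing through the point $y(P)$ of Remark \ref{rem_jhfacetyp} and the orthogonal decomposition $V^*=\Span_{\Rbb}(\widecheck{P})\oplus\widecheck{P}^\perp$. The key observation is that $F(P)$ lies in the affine subspace $y(P)+\widecheck{P}^\perp$. Indeed, for each $\cfrak\leq P$ and each $\lambda\in\vertices(P)$, axiom \ref{def_complementarypolyhedra1} gives $(\widecheck{\lambda},d(\cfrak))=(\widecheck{\lambda},y(P))$. The $\widecheck{\lambda}$ with $\lambda\in\vertices(P)$ span $\Span_{\Rbb}(\widecheck{P})$, so the $\Span_{\Rbb}(\widecheck{P})$-component of every $d(\cfrak)$ with $P\geq\cfrak$ equals $y(P)$. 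It follows that $\Span_{\Rbb}(\widecheck{P})\cap F(P)$ is either $\emptyset$ or $\{y(P)\}$, and the latter happens exactly when $0\in\widecheck{\pr}_P(F(P))=F_P$. Moreover, translation by $-y(P)$ identifies $F(P)$ with $F_P\subset\widecheck{P}^\perp$.

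For \ref{def_jhfacet1}$\Leftrightarrow$\ref{lem_complementarypolyhedrajordan1}, we use the expansion $d(\cfrak)=\sum_{\mu\in\vertices(\cfrak)}n(\cfrak,\mu)\widecheck{\mu}$ from \cite[Note 3.8]{behrend_semi-stabilityofreductivegroupschemesovercurves}. Pairing it with the dual basis, we get $y(P)=\sum_{\lambda\in\vertices(P)}n(P,\lambda)\widecheck{\lambda}$, as in Behrend's proof of \cite[Lemma 3.11]{behrend_semi-stabilityofreductivegroupschemesovercurves}. Since the $\widecheck{\lambda}$ for $\lambda\in\vertices(P)$ are linearly independent, $y(P)=0$ if and only if all $n(P,\lambda)=0$.

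The direction \ref{lem_complementarypolyhedrajordan1}$\Rightarrow$\ref{def_jhfacet1} is then immediate, because $0\in\Span_{\Rbb}(\widecheck{P})\cap F(P)=\{y(P)\}$. For the converse, $y(P)=0$ by the expansion above, so it remains to show $0\in F(P)$, i.e.\ $0\in F_P$, i.e.\ $d_P$ is semistable by Lemma \ref{lem_stabilityofcomplementarypolyhedra}. This is the step I expect to be the main obstacle, since it is where semistability of $d$ must enter. My plan is as follows. Semistability of $d$ gives $0\in F$, so $\min(F)=0$ and the special facet of $d$ is $\{0\}$ by Theorem \ref{thm_specialfacetuniqueness}. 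I would then argue that the degree of any facet $Q\leq P$ of $\Phi_P$, viewed in the root system $\Phi$, equals $\deg_{d_P}(Q)$ plus a combination of the $n(P,\lambda)$, which vanishes here. Hence $\deg_{d_P}(Q)\leq0$, and $d_P$ is semistable. If this degree comparison proves awkward, the fallback is a convexity argument: since $0\in F$ and every $d(\cfrak)$ satisfies $(\widecheck{\lambda},d(\cfrak))\geq(\widecheck{\lambda},y(P))=0$ for chambers on the appropriate side by \ref{def_complementarypolyhedra2}, a convex combination giving $0$ can only involve chambers $\cfrak\leq P$.

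For \ref{def_jhfacet2}$\Leftrightarrow$\ref{lem_complementarypolyhedrajordan2}, apply Lemma \ref{lem_stabilityofcomplementarypolyhedra} \ref{lem_stabilityofcomplementarypolyhedra2} to $d_P$. Stability of $d_P$ means $0\in\interior(F_P)$, with the interior taken in $\widecheck{P}^\perp$. This holds if and only if $\aff(F_P)=\widecheck{P}^\perp$ and $0\in\relint(F_P)$. Translating by $y(P)$, the first condition says $\aff(F(P))=y(P)+\widecheck{P}^\perp$. Since $y(P)\in\Span_{\Rbb}(\widecheck{P})$, this is equivalent to $\Span_{\Rbb}(\widecheck{P},F(P))=V^*$. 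The second condition says $y(P)\in\relint(F(P))$, which, by the first paragraph, is equivalent to $\Span_{\Rbb}(\widecheck{P})\cap\relint(F(P))\neq\emptyset$. The final sentence of the lemma then follows by combining the two equivalences.
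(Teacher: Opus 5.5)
Your proof is correct in substance and follows the paper's route. Both arguments pass through $y(P)$ and the slice $F(P)\subset y(P)+\widecheck{P}^\perp$; you derive the slice directly from \ref{def_complementarypolyhedra1}, where the paper invokes Behrend's Lemma 2.5. Both identify \ref{def_jhfacet1} with $y(P)=0$. Both get semistability of $d_P$ from $\deg(P)=0$, and your comparison of $\deg(Q)$ with $\deg_{d_P}(\pr_P(Q))$ reproves the step the paper takes from Behrend's Lemma 3.4. Both then apply Lemma \ref{lem_stabilityofcomplementarypolyhedra} to $d_P$, with your translation by $y(P)$ replacing the appeal to Rockafellar's Theorem 6.6.

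Three small repairs are needed.

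First, the formula from Behrend's proof is $y(P)=\sum_{\lambda\in\vertices(P)}\frac{n(P,\lambda)}{|\Psi(P,\lambda)|}\widecheck{\lambda}$, not $\sum n(P,\lambda)\widecheck{\lambda}$. This is harmless, since you only use that the coefficients vanish exactly when the $n(P,\lambda)$ do.

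Second, the condition $\aff(F(P))=y(P)+\widecheck{P}^\perp$ is not equivalent on its own to $\Span_{\Rbb}(\widecheck{P},F(P))=V^*$. For $d_b$ and the wall $P$ between $\cfrak_\alpha$ and $\cfrak_{\alpha+\beta}$, you get $F(P)=\{-\widecheck{\alpha}\}$, and $F_P$ is a single nonzero point that spans the line $\widecheck{P}^\perp$ without its affine hull being that line. So compare the two conjunctions instead, using that $0\in\relint(F_P)$ forces $\aff(F_P)=\Span_{\Rbb}(F_P)$.

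Third, discard the convexity fallback. Its claim that a convex combination giving $0$ can only use chambers with $P\geq\cfrak$ fails already for $d=0$, where any chamber may appear.
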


\begin{proof}
    Due to \cite[Propositions 1.9 and
    3.2]{behrend_semi-stabilityofreductivegroupschemesovercurves},
    \ref{def_jhfacet1} implies that $\deg(P)=0$, so \cite[Lemma
    3.4]{behrend_semi-stabilityofreductivegroupschemesovercurves}
    implies that $d_P$ is semistable, since $d$ is semistable. Then
    \cite[Lemma
    3.11]{behrend_semi-stabilityofreductivegroupschemesovercurves}
    implies that $y(P)\in F(P)$. In the proof of \cite[Lemma
    3.11]{behrend_semi-stabilityofreductivegroupschemesovercurves}, it
    is shown that:
    \begin{equation*}
        y(P)=\sum_{\lambda\in\vertices(P)}\frac{n(P,\lambda)}{|\Psi(P,\lambda)|}\widecheck{\lambda}
    \end{equation*}
    Since the vertices are linearly independent, \ref{def_jhfacet1} is
    equivalent to $y(P)=0$. Then since $y(P)\in F(P)$, \cite[Lemma
    2.5]{behrend_semi-stabilityofreductivegroupschemesovercurves} and
    $y(P)=0$ imply \ref{lem_complementarypolyhedrajordan1}. In the
    other direction, $\Span_{\Rbb}(\widecheck{P})\cap
    F(P)=\{0\}$ implies that $y(P)=0$, and by using $y(P)\in F(P)$, we
    obtain \ref{def_jhfacet1}.

    Let $P$ have \ref{def_jhfacet2}, if
    $\Span_{\Rbb}(\widecheck{P},F(P))\subsetneq V^*$, then $d_P$ cannot
    be stable as $F_P$ would not span $P^\perp$, contradicting Lemma
    \ref{lem_stabilityofcomplementarypolyhedra}
    \ref{lem_stabilityofcomplementarypolyhedra2}. Since $d_P$ is
    stable, Lemma \ref{lem_stabilityofcomplementarypolyhedra}
    \ref{lem_stabilityofcomplementarypolyhedra2} gives us that
    $\interior(F_P)=\relint(F_P)$ is a neighborhood of $0$ in
    $P^\perp$. By \cite[Theorem 6.6]{rockafellar_convexanalysis},
    relative interiors of convex bodies are preserved by linear
    transformations, so $0\in\relint(F_P)=\widecheck{\pr}_P(\relint(F(P)))$
    and $\Span_{\Rbb}(\widecheck{P})=\widecheck{\pr}_P^{-1}(\{0\})$ imply
    $\Span_{\Rbb}(\widecheck{P})\cap\relint(F(P))\neq\emptyset$, so we
    have \ref{lem_complementarypolyhedrajordan2}. In the other
    direction, let $P$ have \ref{lem_complementarypolyhedrajordan2}: We
    know that $\Span_{\Rbb}(\widecheck{P})\cap
    \relint(F(P))\neq\emptyset$, so there is a neighborhood $U$ of
    $\Span_{\Rbb}(\widecheck{P})\cap F(P)$ in $F(P)$, so
    $\widecheck{\pr}_P(U)$ is a neighborhood of $0$ in $F_P$, since
    $F_P=\widecheck{\pr}_P(F(P))$. In particular, since
    $V^*=\Span_{\Rbb}(\widecheck{P},F(P))$, $F_P$ is of maximal
    dimension in $P^\perp$, so $\widecheck{\pr}_P(U)$ is a neighborhood of
    $0$ in $P^\perp$, implying $0\in\interior(F_P)$. By Lemma
    \ref{lem_stabilityofcomplementarypolyhedra}
    \ref{lem_stabilityofcomplementarypolyhedra2}, we have \ref{def_jhfacet2}.
\end{proof}

\begin{remark}[Jordan-Hölder facets and stability]\label{rem_jhfacetstability}
    By Lemma \ref{lem_complementarypolyhedrajordan}, a semistable
    complementary polyhedron $d:\Wfrak_\Phi\rightarrow V^*$ is stable
    if and only if $\{0\}$ is a Jordan-Hölder facet for $d$. This is
    because \ref{lem_complementarypolyhedrajordan1} becomes $0\in F$,
    and \ref{lem_complementarypolyhedrajordan2} becomes
    $0\in\relint(F)$ and $\Span_{\Rbb}(F)=V^*$, altogether equivalent
    to $0\in\interior(F)$, so the claim follows by Lemma
    \ref{lem_stabilityofcomplementarypolyhedra}
    \ref{lem_stabilityofcomplementarypolyhedra2}.
\end{remark}

\begin{example}[Jordan-Hölder facets of $A_2$]\label{ex_jh}
    We adopt the same notation as in Example \ref{ex_complementarypolyhedra}.
    \begin{enumerate}[label=(\alph*)]
        \item\label{ex_jh1} Let
            $d_b:\Wfrak_{A_2}\rightarrow\Rbb^{2*}$ be the semistable
            complementary polyhedron from Example
            \ref{ex_complementarypolyhedra}, with convex hull $F_b$.
            Using Lemma \ref{lem_complementarypolyhedrajordan}, we
            claim that the facets $P_1$ and $P_2$, whose duals are labeled
            in Figure \ref{ex_jh} \ref{ex_jh1} below, are all the
            Jordan-Hölder facets:
            \begin{equation*}
                \begin{tikzpicture}[rootsys]
                    %draw coroot vectors
                    \draw[root] (0,0) -- (0:1.0)
                    node[anchor=west,xshift=2pt]{$\widecheck{\alpha}$};
                    \draw[root] (0,0) -- (60:1.0)
                    node[pos=0.94,anchor=south
                    west,xshift=1pt,yshift=1pt]{$\widecheck{\alpha+\beta}$};
                    \draw[root] (0,0) -- (120:1.0)
                    node[pos=0.94,anchor=south
                    east,xshift=-1pt,yshift=1pt]{$\widecheck{\beta}$};
                    \foreach \angle in {180,240,300}
                    {
                        \draw[root] (0,0) -- (\angle:1.0);
                    }

                    %draw coroot walls
                    \foreach \angle in {30,150,210,330}
                    {
                        \draw[rootwall] (0,0) -- (\angle:1.35);
                    }

                    %draw Jordan-Hölder facets
                    \draw[facet] (0,0) -- (90:1.0)
                    node[anchor=south] {$\widecheck{P}_1$};
                    \draw[facet] (0,0) -- (270:1.0)
                    node[anchor=north] {$\widecheck{P}_2$};

                    %draw interval
                    \draw[thickpolygon] ($(0,0)+(0:1.0)$) --
                    ($(0,0)+(0:-1.0)$) -- cycle;
                    \fill[blue!80] ($(0,0)+(0:1.0)$) circle (1.3pt);
                    \fill[blue!80] ($(0,0)+(0:-1.0)$) circle (1.3pt);

                    %draw origin
                    \fill[red!80] (0,0) circle (1.3pt);

                    %label
                    \stepcounter{subfig}
                    \node at (0,-1.65) {\footnotesize \protect\thesubfig};
                \end{tikzpicture}
            \end{equation*}
            We first
            claim that the only facets fulfilling
            \ref{lem_complementarypolyhedrajordan1} are $P_1$, $P_2$, and
            $\{0\}$: No Weyl chambers fulfill
            \ref{lem_complementarypolyhedrajordan1}, since no Weyl chamber
            evaluates $d_b$ to $0$. Any $1$-dimensional facet $P$ not equal
            to $P_1$ or $P_2$ has $F_b(P)\subset\{\alpha,-\alpha\}$, so
            \ref{lem_complementarypolyhedrajordan1} is not fulfilled for
            $P$. For the remaining facets, we have $0\in
            F_b(P_1)=F_b(P_2)=F_b(\{0\})=F_b$, so
            \ref{lem_complementarypolyhedrajordan1} is fulfilled. From the
            list of $P_1$, $P_2$, and $\{0\}$, the facet $\{0\}$ is the
            only facet that does not have
            \ref{lem_complementarypolyhedrajordan2}, since
            $0\in\relint(F_b)$, and $(\widecheck{P}_1,F_b(P_1))$ and
            $(\widecheck{P}_2,F_b(P_2))$ span $\Rbb^{2*}$, but
            $(\{0\},F_b)$ does not span $\Rbb^{2*}$.

        \item\label{ex_jh2} Let $d:\Wfrak_{A_2}\rightarrow\Rbb^{2*}$
            be given by:
            \begin{align*}
                d(\cfrak_{-\beta})=d(\cfrak_{\alpha})=\widecheck{\beta} &  &
                d(\cfrak_{-\alpha-\beta})=\widecheck{\alpha+\beta} &  &
                d(\cfrak_{-\alpha})=d(\cfrak_{\beta})=\widecheck{\alpha} &  &
                d(\cfrak_{\alpha+\beta})=0
            \end{align*}
            which is a complementary polyhedron with convex hull $F$ given by:
            \begin{equation*}
                \begin{tikzpicture}[rootsys]
                    %draw and label Jordan-Hölder facet
                    \draw[greenweylchamber] (30:1.35) -- (0,0) --
                    (90:1.35) -- cycle;
                    \node[text=green!50!black, xshift=5pt] at
                    ($(0,0)!1!(60:1.35)$) {$\widecheck{\cfrak_{\alpha+\beta}}$};

                    %draw rhombus
                    \filldraw[
                        fill=blue!15,
                        draw=blue!60!black,
                        line width=1.0pt
                    ]
                    (0,0) -- (0:1) -- (60:1) -- (120:1) -- cycle;

                    %draw coroots
                    \draw[root] (0,0) -- (0:1)
                    node[anchor=west,xshift=2pt] {$\widecheck{\alpha}$};
                    \draw[root] (0,0) -- (120:1)
                    node[anchor=south east,xshift=-1pt,yshift=2pt]
                    {$\widecheck{\beta}$};
                    \draw[root] (0,0) -- (60:1.0);
                    \foreach \angle in {180,240,300}
                    {
                        \draw[root] (0,0) -- (\angle:1.0);
                    }

                    %draw coroot walls
                    \foreach \angle in {30,90,150,330}
                    {
                        \draw[rootwall] (0,0) -- (\angle:1.35);
                    }
                    \draw[facet] (0,0) -- (210:1.35)
                    node[anchor=east] {$\widecheck{P}_1$};
                    \draw[facet] (0,0) -- (270:1)
                    node[anchor=north] {$\widecheck{P}_2$};

                    %draw origin
                    \fill[red!80] (0,0) circle (1.3pt);

                    %label
                    \stepcounter{subfig}
                    \node at (0,-1.65) {\footnotesize \protect\thesubfig};
                \end{tikzpicture}
            \end{equation*}
            The duals of the facets $\cfrak_{\alpha+\beta}$, $P_1$, and
            $P_2$, are highlighted in Figure \ref{ex_jh}
            \ref{ex_jh2}. Using Lemma \ref{lem_complementarypolyhedrajordan}, we
            claim that $\cfrak_{\alpha+\beta}$ is the only Jordan-Hölder
            facet. We
            first show that the only facets fulfilling
            \ref{lem_complementarypolyhedrajordan2} are $P_1$, $P_2$, and
            all Weyl chambers: The case of Weyl chambers is clear, since
            their duals all span $\Rbb^{2*}$. We calculate for $P_1$
            and $P_2$ that:
            \begin{align*}
                F(P_1)=\conv(\widecheck{\alpha},\widecheck{\alpha+\beta}) &
                & F(P_2)=\conv(\widecheck{\beta},\widecheck{\alpha+\beta})
            \end{align*}
            whose relative interiors intersect with
            $\Span_{\Rbb}(\widecheck{P}_1)$ and $\Span_{\Rbb}(\widecheck{P}_2)$
            respectively, as seen in Figure \ref{ex_jh} \ref{ex_jh2}.
            Furthermore, since $(\widecheck{P}_1,F(P_1))$ and
            $(\widecheck{P}_2,F(P_2))$ span $\Rbb^{2*}$, we follow that
            $P_1$ and $P_2$ have \ref{lem_complementarypolyhedrajordan2}.
            To exclude the remaining facets from having
            \ref{lem_complementarypolyhedrajordan2}, we calculate that:
            \begin{align*}
                F(-P_1)=\conv(0,\widecheck{\beta}) &  &
                F(-P_2)=\conv(0,\widecheck{\alpha})
            \end{align*}
            whose relative interiors do not intersect with
            $\Span_{\Rbb}(-\widecheck{P}_1)$ and
            $\Span_{\Rbb}(-\widecheck{P}_2)$
            respectively. Furthermore, the spans of the remaining
            $1$-dimensional facets do not intersect with $\relint(F)$ at
            all. To test \ref{lem_complementarypolyhedrajordan1} on $P_1$,
            $P_2$, and all Weyl chambers, we calculate that:
            \begin{align*}
                \Span_{\Rbb}(\widecheck{P}_1)\cap F(P_1)\neq\{0\} &  &
                \Span_{\Rbb}(\widecheck{P}_2)\cap F(P_2)\neq\{0\}
            \end{align*}
            so $P_1$ and $P_2$ do not fulfill
            \ref{lem_complementarypolyhedrajordan1}. Since
            $\cfrak_{\alpha+\beta}$ is the only Weyl chamber such that
            $d(\cfrak_{\alpha+\beta})=0$, $\cfrak_{\alpha+\beta}$ is the
            only facet with \ref{lem_complementarypolyhedrajordan1} and
            \ref{lem_complementarypolyhedrajordan2}, i.e.,
            $\cfrak_{\alpha+\beta}$ is the only Jordan-Hölder facet.
    \end{enumerate}
\end{example}

Using how numerical invariants are preserved under projections in
\cite[Lemma
3.9]{behrend_semi-stabilityofreductivegroupschemesovercurves}, we can
prove the existence of Jordan-Hölder facets through induction.

\begin{theorem}[Existence of Jordan-Hölder facets]\label{thm_jhfacetexistence}
    For a semistable complementary polyhedron $d:\Wfrak_\Phi\rightarrow
    V^*$, $P$ is a Jordan-Hölder facet $P$ if and only if it is a
    minimal facet of $\Phi$ with \ref{def_jhfacet1} (in terms of
    the order of facets $\geq$). In particular, a Jordan-Hölder facet exists.
\end{theorem}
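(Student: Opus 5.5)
The plan is to reduce both directions to the interaction between facets of $\Phi$ lying below $P$ and facets of the projected root system $\Phi_P$, and then use Remark \ref{rem_jhfacetstability} applied to the Levi-factor $d_P$.

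\emph{Preliminaries.} Recall from Behrend's theory that the facets $Q\leq P$ of $\Phi$, i.e.\ those with $P\subset\cl(Q)$, correspond bijectively and order-preservingly to the facets $Q_P:=\pr_P(Q)$ of $\Phi_P$. Under this correspondence $\vertices(Q)$ is $\vertices(P)$ together with lifts of $\vertices(Q_P)$. By \cite[Lemma 3.9]{behrend_semi-stabilityofreductivegroupschemesovercurves}, the numerical invariants satisfy $n(Q,\mu)=n_P(Q_P,\pr_P\mu)$ for $\mu\in\vertices(Q)\setminus\vertices(P)$. We also need that the numerical invariants at the old vertices $\lambda\in\vertices(P)$ do not change when passing to a smaller facet $Q$ in the situation at hand. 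To get this, I would use the formula
\[
y(P)=\sum_{\lambda\in\vertices(P)}\frac{n(P,\lambda)}{|\Psi(P,\lambda)|}\widecheck{\lambda}
\]
from the proof of Lemma \ref{lem_complementarypolyhedrajordan}. Whenever all the $n$ vanish we have $y=0$, and the characterization \ref{lem_complementarypolyhedrajordan1} is monotone: if $Q\leq P$ then $F(Q)\subset F(P)$, and $\Span_{\Rbb}(\widecheck{Q})\cap F(Q)=\{0\}$ can be tested through the hyperplanes $H_\lambda$. The cleanest route is to prove the following claim.

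\textbf{Claim.} If $P$ satisfies \ref{def_jhfacet1} (so $d_P$ is semistable, as shown in the proof of Lemma \ref{lem_complementarypolyhedrajordan}), then a facet $Q\leq P$ satisfies \ref{def_jhfacet1} for $d$ if and only if $Q_P$ satisfies \ref{def_jhfacet1} for $d_P$.

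To prove the Claim, the vertices in $\vertices(Q)\setminus\vertices(P)$ are handled by Lemma 3.9. For $\lambda\in\vertices(P)$, I would show that $n(Q,\lambda)$ and $n(P,\lambda)$ are related by $y(Q)$ projecting to $y(P)$ along the decomposition $V^*=\Span_{\Rbb}(\widecheck{P})\oplus\widecheck{P}^\perp$. Since $y(Q)=y(P)+y_P(Q_P)$ with $y(P)=0$, linear independence of the dual vertices gives the equivalence.

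\emph{Proof of the theorem.} For ($\Leftarrow$), let $P$ be minimal among facets with \ref{def_jhfacet1}. Then $d_P$ is semistable. If $d_P$ were not stable, Remark \ref{rem_jhfacetstability} says $\{0\}$ is not a Jordan-Hölder facet of $d_P$. Since $\{0\}$ trivially satisfies \ref{def_jhfacet1}, this means \ref{def_jhfacet2} fails for $\{0\}$, i.e.\ $d_P$ is not stable. By Lemma \ref{lem_stabilityofcomplementarypolyhedra} there is then a facet $R\neq\{0\}$ of $\Phi_P$ with $\deg_P(R)=0$. From $\deg_P(R)=0$ and semistability, Behrend's argument (via the special facet of the opposite-sign or the face of $F_P$ containing $0$) produces a facet $R'\neq\{0\}$ of $\Phi_P$ with all $n_P(R',\cdot)=0$: take a nonzero facet $R'$ whose dual spans the normal cone of a proper face of $F_P$ containing $0$. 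By the Claim, the lift $Q<P$ of $R'$ then satisfies \ref{def_jhfacet1}, contradicting minimality. So \ref{def_jhfacet2} holds. For ($\Rightarrow$), let $P$ be Jordan-Hölder and suppose some $Q<P$ satisfies \ref{def_jhfacet1}. By the Claim, $Q_P\neq\{0\}$ satisfies $n_P=0$, so $\deg_P(Q_P)=0$ by \cite[Propositions 1.9 and 3.2]{behrend_semi-stabilityofreductivegroupschemesovercurves}. This contradicts the stability of $d_P$. Existence follows because $\{0\}$ satisfies \ref{def_jhfacet1} (it has no vertices) and there are finitely many facets, so a minimal one exists.

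\emph{Main obstacle.} The hardest step is producing, from a non-stable semistable $d_P$, a nonzero facet with all numerical invariants zero, not merely degree zero. I would first try to obtain it from the supporting hyperplane of $F_P$ through $0\in\partial F_P$. Choose a dual vector $\widecheck{\mu}$ in a Weyl chamber closure that is nonnegative on $F_P$ and zero at $0$. Then take $R'$ to be the facet containing the corresponding point of $V$, and verify $y(R')=0\in F(R')$ using Remark \ref{rem_jhfacetyp}.
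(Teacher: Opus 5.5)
Your argument is correct in outline and takes a genuinely different route from the paper. The paper inducts on the rank. After the $A_1$ case, it shows that if $\{0\}$ is minimal with \ref{def_jhfacet1} then $d$ is stable, because for a $1$-dimensional facet $Q$ with vertex $\lambda$, $\deg(Q)$ and $n(Q,\lambda)$ have the same sign. Otherwise it projects along each $1$-dimensional facet $Q_i\geq P$, applies the induction hypothesis to $\pr_{Q_i}(P)$ for $d_{Q_i}$, and reassembles \ref{def_jhfacet2} for $P$ from the conditions \ref{lem_complementarypolyhedrajordan2} of the projections via Rockafellar's Theorem 6.6.

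You instead project once, along $P$ itself. Your Claim converts ``$P$ is minimal with \ref{def_jhfacet1} for $d$'' into ``$\{0\}$ is minimal with \ref{def_jhfacet1} for $d_P$''. After that only the $\{0\}$-case is needed, applied to the semistable $d_P$. This removes both the induction and the convex-geometric reassembly, which is the least transparent part of the paper's proof.

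The Claim is sound, but state its proof cleanly. By Remark \ref{rem_jhfacetyp}, $y(P)$ is the orthogonal projection of $d(\cfrak)$, for $\cfrak\leq P$, onto $\Span_{\Rbb}(\widecheck{P})$. For $Q\leq P$ one has the orthogonal decomposition $\Span_{\Rbb}(\widecheck{Q})=\Span_{\Rbb}(\widecheck{P})\oplus\Span_{\Rbb}(\widecheck{Q_P})$. Hence $y(Q)=y(P)+y_P(Q_P)$, and \ref{def_jhfacet1} is equivalent to $y=0$. Drop the aside that the invariants at the old vertices ``do not change''. It is false in general: in $A_2$, for $P$ the ray through $\mu$ and a chamber $\cfrak\leq P$ with second vertex $\eta$, one has $n(P,\mu)=2n(\cfrak,\mu)+n(\cfrak,\eta)$. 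You never actually use it.

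The step you flag as the main obstacle is exactly the paper's $\{0\}$-case, and it needs no supporting hyperplane: simply take $R'=R$. Let $\lambda_1,\ldots,\lambda_k$ be the vertices of $R$ and $\cfrak\leq R$.
\begin{itemize}
\item By Behrend's Proposition 1.9, the sum of the roots in the parabolic subset of $R$ equals $\sum_i c_i\lambda_i$ with all $c_i>0$. So $\deg_P(R)=\sum_i c_i\,d_P(\cfrak)(\lambda_i)$.
\item For each $i$, the ray $R_i$ through $\lambda_i$ satisfies $\cfrak\leq R_i$, and its root sum is $c_i'\lambda_i$ with $c_i'>0$. So $d_P(\cfrak)(\lambda_i)=\deg_P(R_i)/c_i'\leq 0$ by semistability.
\item Hence $\deg_P(R)=0$ forces $d_P(\cfrak)(\lambda_i)=0$ for all $i$, i.e.\ $y_P(R)=0$, i.e.\ $R$ itself satisfies \ref{def_jhfacet1}.
\end{itemize}
In other words, under semistability, $\deg=0$ and \ref{def_jhfacet1} coincide for every facet.

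Your hyperplane plan could also be completed, but only with an extra fact: $d(\cfrak)(v)=\min_{x\in F}x(v)$ for $v\in\cl(\cfrak)$, which follows from a gallery argument using \ref{def_complementarypolyhedra1} and \ref{def_complementarypolyhedra2}. Your sketch does not supply this, so as written the step ``verify $y(R')=0$'' is unjustified.
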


\begin{proof}
    We first prove the case of $\Phi:=A_1=:\{\alpha,-\alpha\}$, the
    only rank $1$ root system up to isomorphism, where $\alpha>0$ is
    the positive root in $V=\Rbb$. We label the Weyl chambers by
    $\Rbb_{> 0}$ and $\Rbb_{<0}$ with vertices
    ${\lambda}:=\vertices(\Rbb_{> 0})$ and
    ${-\lambda}=\vertices(\Rbb_{<0})$. Due to
    \ref{def_complementarypolyhedra2}, we have $d(\Rbb_{>
    0})=\widecheck{\eta}$ and $d(\Rbb_{<0})=\widecheck{\mu}$, for some
    $\eta,\mu\in\Rbb$ such that $\eta\leq\mu$, and since $d$ is
    semistable, we know that $\eta\leq0\leq\mu$, using Lemma
    \ref{lem_stabilityofcomplementarypolyhedra}
    \ref{lem_stabilityofcomplementarypolyhedra1}. Since $n(\Rbb_{>
    0},\lambda)=d(\Rbb_{> 0})(\alpha)$ and $n(\Rbb_{<
    0},-\lambda)=d(\Rbb_{< 0})(-\alpha)$, $\Rbb_{>0}$ is a
    Jordan-Hölder facet if and only if $n(\Rbb_{> 0},\lambda)=0$,
    $\Rbb_{<0}$ is a Jordan-Hölder facet if and only if
    $n(\Rbb_{<0},-\lambda)=0$, and $\{0\}$ is a Jordan-Hölder facet if
    and only if $n(\Rbb_{> 0},\lambda)\neq0$ and $n(\Rbb_{<0},-\lambda)\neq0$.
    The claim of the theorem follows.

    In the general situation, if $P:=\{0\}$ is minimal with
    \ref{def_jhfacet1}, then equivalently, for every $1$-dimensional
    facet $Q$ of $\Phi$ with vertex $\lambda$, we have
    $n(Q,\lambda)\neq0$. By \cite[Proposition
    1.9]{behrend_semi-stabilityofreductivegroupschemesovercurves},
    and by using that $d$ is semistable, we
    know that $\sum_{\alpha\in R(Q)}\alpha\in Q$, so
    $n(Q,\lambda)\neq0$ is in equivalence with:
    \begin{equation*}
        \deg(Q)=\sum_{\alpha\in R(Q)}d(\cfrak)(\alpha)<0
    \end{equation*}
    for any Weyl chamber $\cfrak$ such that $Q\geq\cfrak$. Using
    \cite[Remark 5.3.4]{behrend_phdthesis}, $d$
    is equivalently stable, so by Remark \ref{rem_jhfacetstability},
    $\{0\}$ is a Jordan-Hölder facet.

    Otherwise, $\{0\}$ is not minimal with \ref{def_jhfacet1}, and we
    perform induction on the rank $n$ of the root system $\Phi$, i.e.,
    the dimension $\dim_{\Rbb}(V)$, so we assume the theorem is true
    for rank $n-1$ root systems. Let $P$ be a minimal facet of $\Phi$
    with \ref{def_jhfacet1}, which exists since $\{0\}$ always fulfills
    \ref{def_jhfacet1}, but now $P\neq\{0\}$ by assumption. To prove
    that the minimality condition for $P$ is equivalent to being
    Jordan-Hölder, we enumerate the $1$-dimensional facets
    $Q_1,\ldots,Q_l$, such that $Q_i\geq P$ for all $i=1,\ldots,l$. Let
    $\pr_{Q_i}:V\rightarrow Q^\perp_i$ denote the projection to $Q_i$ for all
    $i=1,\ldots,l$, then by using that $P$ is the unique facet with
    $Q_i\geq P$ for all $i=1,\ldots,l$, \cite[Lemma
    3.9]{behrend_semi-stabilityofreductivegroupschemesovercurves}
    implies that the minimality of $P$ with \ref{def_jhfacet1} is
    equivalent to the minimality of $\pr_{Q_i}(P)$ with \ref{def_jhfacet1}
    for $d_{Q_i}$, for all $i=1,\ldots,l$. Since $\Phi_{Q_i}$ are rank
    $n-1$ root systems for all $i=1,\ldots,l$, by the induction
    assumption, this is equivalent to $\pr_{Q_i}(P)$ being a Jordan-Hölder
    facet of $d_{Q_i}$, for all $i=1,\ldots,l$. By Definition \ref{def_jhfacet},
    this is equivalent to $\pr_{Q_i}(P)$ with \ref{def_jhfacet1}, for
    all  $i=1,\ldots,l$,
    and $\pr_{Q_i}(P)$ with \ref{def_jhfacet2}, for all
    $i=1,\ldots,l$, with \ref{def_jhfacet2} equivalent to
    \ref{lem_complementarypolyhedrajordan2} by Lemma
    \ref{lem_complementarypolyhedrajordan}:
    \begin{align*}
        \Span_{\Rbb}(\widecheck{\pr}_{Q_i}(\widecheck{P}))\cap\relint(\widecheck{\pr}_{Q_i}(F(P)))\neq\emptyset
        &  & \widecheck{Q}^\perp_i\text{ is spanned by
        }(\widecheck{\pr}_{Q_i}(\widecheck{P}),\widecheck{\pr}_{Q_i}(F(P)))
    \end{align*}
    \sloppy
    for all $i=1,\ldots,l$, where $F$ is the convex hull of $d$. Using
    \cite[Theorem 6.6]{rockafellar_convexanalysis}, implying that
    $\relint(\widecheck{\pr}_{Q_i}(F(P)))$ is equal to
    $\widecheck{\pr}_{Q_i}(\relint(F(P)))$, and using the fact that $V^*$ is
    spanned by $(\widecheck{Q}^\perp_i,F(Q_i),i=1,\ldots,l)$, the
    minimality of $P$ with \ref{def_jhfacet1} is equivalent to
    $\Span_{\Rbb}(\widecheck{P})\cap\relint(F(P))$ being nonempty, and that
    $V^*$ is spanned by $(\widecheck{P},F(P))$. In short, the
    minimality of $P$ with \ref{def_jhfacet1} is equivalent to
    \ref{def_jhfacet1} and \ref{def_jhfacet2}, using Lemma
    \ref{lem_complementarypolyhedrajordan}.
\end{proof}

Now we prove that there exist automorphisms of the root system that
identify Jordan-Hölder facets. This is the analog of the uniqueness
of Jordan-Hölder filtrations up to its graded, as described in
the introduction.

\begin{theorem}[Uniqueness of Jordan-Hölder facets up to
    Levi-factor]\label{thm_jhfacetlevi}
    For a semistable complementary polyhedron $d:\Wfrak_\Phi\rightarrow
    V^*$, and Jordan-Hölder facets $P_1$ and $P_2$, then:
    \begin{enumerate}[label=(\roman*)]
        \item\label{thm_jhfacetlevi1} The orthogonal spaces of the
            facets are equal, i.e., $P_1^\perp=P_2^\perp$.

        \item\label{thm_jhfacetlevi2} There exists a linear orthogonal
            automorphism $\tau:V\rightarrow V$, such that $\tau(\Phi)=\Phi$
            and $\tau(P_1)=P_2$.

        \item\label{thm_jhfacetlevi3} The induced Levi-factors of the
            complementary polyhedron $d$ are equal, i.e., $d_{P_1}=d_{P_2}$.
    \end{enumerate}
\end{theorem}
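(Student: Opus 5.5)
I would work in the dual picture $V^*$ with the convex hull $F$, and reduce everything to a statement about one canonical face of $F$. Set
\[
G:=\bigcap\{\,E\ \text{face of } F \mid 0\in E\,\},
\]
the unique face of $F$ containing $0$ in its relative interior. This exists because $d$ is semistable, so $0\in F$ by Lemma \ref{lem_stabilityofcomplementarypolyhedra}. The core claim is that for every Jordan-Hölder facet $P$ one has
\[
\Span_{\Rbb}(\widecheck{P})^\perp=\aff_{\Rbb}(G)=\Span_{\Rbb}(G).
\]
Equivalently, $P^\perp$ corresponds to the linear span of the minimal face of $F$ through $0$. Once this holds, (i) follows immediately, since the right-hand side does not depend on $P$.

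To prove the claim, start from \ref{lem_complementarypolyhedrajordan1}. By \ref{def_jhfacet1} and the formula for $y(P)$ in the proof of Lemma \ref{lem_complementarypolyhedrajordan}, each $\widecheck{\lambda}$ with $\lambda\in\vertices(P)$ satisfies $(\widecheck{\lambda},x)\geq 0$ for all $x\in F$; this is Behrend's inequality from \cite[Lemma 2.5]{behrend_semi-stabilityofreductivegroupschemesovercurves}. It also satisfies $(\widecheck{\lambda},x)=0$ exactly on $F(P)$, because $n(P,\lambda)=0$ makes the hyperplane $H_\lambda$ pass through $0$. Hence $F(P)=F\cap\bigcap_\lambda H_\lambda$ is an exposed face of $F$ containing $0$, so $G\subseteq F(P)$.

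Next use \ref{lem_complementarypolyhedrajordan2}. The projection $\widecheck{\pr}_P$ maps $F(P)$ onto $F_P$, which contains $0$ in its interior, and it kills $\Span(\widecheck P)$. Intersecting with $\Span(\widecheck P)^\perp$ then shows that the face of $F(P)$ through $0$ is all of $F(P)\cap \Span(\widecheck P)^\perp$. This intersection has full dimension $\dim P^\perp$. Since $F(P)\subset \Span(\widecheck{P})^\perp$ after the shift by $y(P)=0$, this gives $G=F(P)$ with $\Span(G)=\widecheck P^\perp$.

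The main obstacle is checking that $F(P)$ lies in $\widecheck{P}^\perp$, i.e.\ that the chambers $\cfrak\leq P$ have $d(\cfrak)\perp\widecheck P$. To settle it, I would expand
\[
d(\cfrak)=\sum_{\mu\in\vertices(\cfrak)} n(\cfrak,\mu)\,\widecheck{\mu}
\]
and use \cite[Lemma 3.9]{behrend_semi-stabilityofreductivegroupschemesovercurves} to compare with $d_P$. Failing that, I would prove the weaker identity $\widecheck{\pr}_P(G)=F_P$ and derive (i) from the fact that the linear span of $F(P)$ together with $\widecheck{P}$ is spanned with $\widecheck P$ as a complement to the face.

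For (iii), once (i) holds, $\Phi_{P_1}=\Phi_{P_2}$ as root systems in the same space $P_1^\perp=P_2^\perp$. The Levi-factors $d_{P_i}$ are then determined by the face $G$: each Weyl chamber of $\Phi_{P_i}$ corresponds to chambers $\cfrak\leq P_i$, and its value is $\widecheck{\pr}(d(\cfrak))$. I would show that these values are exactly the vertices of $G$, assigned to the chambers of $\Phi_{P_1}$ according to the normal fan. For this I would use \ref{def_complementarypolyhedra1} together with the characterisation of a complementary polyhedron by its values on vertices, namely Behrend's $d(\cfrak)=\sum n\widecheck\lambda$ formula. Both $d_{P_1}$ and $d_{P_2}$ are complementary polyhedra of the same root system with the same hull, compatible in this way, so they agree.

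For (ii), $P_1$ and $P_2$ are facets of $\Phi$ with the same span, namely $(P_1^\perp)^\perp$. Facets with a common linear span are the chambers of the subsystem $\Phi\cap P_1^{\perp\perp}$. The Weyl group of that subsystem is generated by reflections $s_\alpha$ with $\alpha\in\Span(P_1)$, and it acts transitively on those chambers by \cite[§10.3]{humphreys_introductiontoliealgebrasandrepresenationtheory}. Such a $w$ is an orthogonal automorphism preserving $\Phi$, it fixes $P_1^\perp$ pointwise, and it can be chosen with $w(P_1)=P_2$. Take $\tau:=w$.
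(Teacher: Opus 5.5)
Your treatment of (i) and (iii) is sound and takes a different route from the paper. The argument for (ii), however, breaks at the sentence ``Facets with a common linear span are the chambers of the subsystem $\Phi\cap P_1^{\perp\perp}$''. A facet $P$ with $\Span_{\Rbb}(P)=W$ is a connected component of $W\setminus\bigcup_\beta(L_\beta\cap W)$. Here $\beta$ runs over all roots not vanishing on $W$, not only over $\Phi\cap W$, so this arrangement is in general strictly finer than that of the subsystem. The paper's Example \ref{ex_jh}\ref{ex_jh1} already shows this. There $P_1,P_2$ are the two opposite rays of the line $L_\alpha$, which contains no root. So $\Phi\cap\Span_{\Rbb}(P_1)=\emptyset$ has trivial Weyl group, although $P_1\neq P_2$. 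Even the full Weyl group $\Wfrak_\Phi$ cannot help: the vertex $\lambda$ of $P_1$ is a fundamental dominant weight, and $-\lambda\notin\Wfrak_\Phi\lambda$ in $A_2$. This is why Remark \ref{rem_outerautomorphism} needs an outer automorphism.

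Moreover, no other choice of $\tau$ repairs this, because (ii) itself fails already in rank $3$. Take $\Phi=A_3$ in $\{x\in\Rbb^4\mid\sum_ix_i=0\}$ and $\alpha=e_1-e_2$. Let $d$ be the analogue of $d_b$: $d(\cfrak)=-\widecheck{\alpha}$ if $\alpha$ is positive on $\cfrak$, and $d(\cfrak)=\widecheck{\alpha}$ otherwise. It satisfies \ref{def_complementarypolyhedra1} and \ref{def_complementarypolyhedra2} for the same reasons as $d_b$, and $0\in F$. Every $2$-dimensional facet $P\subset L_\alpha$ is Jordan-H\"older:
\begin{itemize}
\item its vertices lie in $L_\alpha$, so $y(P)=0$ and \ref{def_jhfacet1} holds;
\item $F_P=F(P)=\conv(\pm\widecheck{\alpha})$ has $0$ in its interior in $\widecheck{P}^\perp=\Rbb\widecheck{\alpha}$, so \ref{def_jhfacet2} holds.
\end{itemize}
Take $P_1=\{x_1=x_2<x_3<x_4\}$ and $P_2=\{x_3<x_1=x_2<x_4\}$. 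For bundles this is $N\oplus L\oplus L'$, with $N$ stable of rank $2$ and all summands of degree $0$, filtered by parabolics of types $(2,1,1)$ and $(1,2,1)$. An orthogonal $\tau$ with $\tau(\Phi)=\Phi$ and $\tau(P_1)=P_2$ would map $\vertices(P_1)$ onto $\vertices(P_2)$ and $\Phi\cap\lambda^\perp$ onto $\Phi\cap\tau(\lambda)^\perp$. But $P_1$ has a vertex on the ray through $(-1,-1,1,1)$, whose $\Phi\cap\lambda^\perp$ is of type $A_1\times A_1$. Both vertices of $P_2$, on the rays through $(-1,-1,-1,3)$ and $(1,1,-3,1)$, have $\Phi\cap\lambda^\perp$ of type $A_2$. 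So the paper's step extending $\tau'$ from $Q_1^\perp$ to $V$ cannot work here either. What can be proved, and what uniqueness of the Levi-factor actually needs, is (i) together with (iii): equal Levi root systems $\Phi\cap P_i^\perp$ and $d_{P_1}=d_{P_2}$.

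For (i) and (iii), your route genuinely differs from the paper's. The paper inducts on rank through one-dimensional facets $Q_i\geq P_i$ and the projections $d_{Q_i}$. You instead identify $\widecheck{P}^\perp$ with the span of the minimal face $G$ of $F$ containing $0$, which is non-inductive and explains why $P^\perp$ is canonical. Two adjustments are needed.

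\begin{itemize}
\item Your ``main obstacle'' is already settled by your first step. By \ref{def_complementarypolyhedra1}, $d(\cfrak)\in H_\lambda$ whenever $P\geq\cfrak$, and $H_\lambda=\widecheck{\lambda}^\perp$ once $y(P)=0$, so $F(P)\subset\widecheck{P}^\perp$. Since $\widecheck{\lambda}\geq0$ on $F$, the set $F\cap\widecheck{P}^\perp$ is an exposed face. It contains $F(P)=F_P$, which has $0$ in its interior relative to $\widecheck{P}^\perp$ by \ref{def_jhfacet2}. Hence this face is $G$ and $\Span_{\Rbb}(G)=\widecheck{P}^\perp$.
\item For (iii), the formula $d(\cfrak)=\sum n\widecheck{\lambda}$ does not show that $d$ is determined by its hull. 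Use instead the minimization property behind your inequality: along a minimal gallery, \ref{def_complementarypolyhedra1} and \ref{def_complementarypolyhedra2} show that $d(\cfrak)$ minimizes every $v\in\cl(\cfrak)$ over $F$. Take $w\in P_i$ and $v$ generic in a chamber $\cfrak'$ of $\Phi\cap P_i^\perp$. Then $w+\epsilon v$ lies in the chamber $\cfrak$ with $P_i\geq\cfrak$ over $\cfrak'$. For small $\epsilon>0$, its minimizer over $F$ is the unique minimizer of $v$ on $G=\{x\in F\mid x(w)=0\}$. Hence $d_{P_i}(\cfrak')$ does not depend on $i$.
\end{itemize}
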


\begin{proof}
    We first prove the case of $\Phi:=A_1$, the only rank $1$ root
    system up to isomorphism, using the notation of the proof of
    Theorem \ref{thm_jhfacetexistence}. The only situation where
    $P_1\neq P_2$ is when $d=0$ and
    $\{P_1,P_2\}=\{\Rbb_{>0},\Rbb_{<0}\}$, hence
    $P_1^\perp=P_2^\perp=\{0\}$ and $d_{P_1}=d_{P_2}=0$ are automatic.
    The linear orthogonal automorphism $\tau:\Rbb\rightarrow \Rbb$ is
    given by $-\id_{\Rbb}$.

    We perform induction on the rank $n$ of the root system $\Phi$,
    i.e., the dimension $\dim_\Rbb(V)$, so we assume the theorem is
    true for rank $n-1$ root systems. If $d$ is stable, the theorem
    is trivial, so we assume $d$ is not stable. First, we claim that there exist
    $1$-dimensional facets $Q_1$ and $Q_2$ of $\Phi$, such that
    $Q_1\geq P_1$ and $Q_2\geq P_2$, with:
    \begin{center}
        \begin{enumerate*}[label=(C\arabic*)]
        \item\label{thm_jhfacetlevi1claim}
            $Q_1^\perp = Q_2^\perp$
            \hspace{10em}
        \item\label{thm_jhfacetlevi2claim}
            $d_{Q_1} = d_{Q_2}$
        \end{enumerate*}
    \end{center}
    \sloppy
    Let $F$ denote the convex hull of $d$. If no such $Q_1$ and $Q_2$ exist
    with \ref{thm_jhfacetlevi1claim}, then
    $0\in\interior(\conv(F(Q_1),F(Q_2)))$ for every $1$-dimensional
    facet $Q_1$ and $Q_2$ with $Q_1\geq P_1$ and $Q_2\geq P_2$,
    implying that there exists a facet $P$ with $P\geq P_1,P_2$, such
    that $0\in\interior(F(P))$. Thus, $y(P)$ from Remark
    \ref{rem_jhfacetyp} is $0$, and by \cite[Lemma
    3.11]{behrend_semi-stabilityofreductivegroupschemesovercurves},
    and the fact that relative interiors of convex bodies are
    preserved by linear
    transformations, from \cite[Theorem
    6.6]{rockafellar_convexanalysis}, $d_P$ is stable. Thus, by
    Remark \ref{rem_jhfacetstability}, $P\neq\{0\}$, since $d$ is not
    stable. Using \cite[Lemma
    3.9]{behrend_semi-stabilityofreductivegroupschemesovercurves} and
    Theorem \ref{thm_jhfacetexistence} for the projection
    $\pr_P:V\rightarrow P^\perp$, $\pr_P(P_1)$ and $\pr_P(P_2)$ are
    Jordan-Hölder facets of $d_P$, so $\pr_P(P_1)=\pr_P(P_2)=\{0\}$.
    Thus, $P_1,P_2\subset P^\perp$, in contradiction to $P\neq\{0\}$
    and $P\geq P_1,P_2$.
    As a result, there exists $Q_1$ and $Q_2$ with
    \ref{thm_jhfacetlevi1claim}. Since $P_1$ and $P_2$ are
    Jordan-Hölder facets, they both have \ref{def_jhfacet1}, and due to
    Remark \ref{rem_jhfacetyp}, we have:
    \begin{equation*}
        \Span_{\Rbb}(\widecheck{Q}_1)\cap
        F(Q_1)=\Span_{\Rbb}(\widecheck{Q}_2)\cap
        F(Q_2)=\{0\}
    \end{equation*}
    Since $Q_1^\perp=Q_2^\perp$, we have
    $\Span_{\Rbb}(\widecheck{Q}_1)=\Span_{\Rbb}(\widecheck{Q}_2)$ and
    thus $Q_1=Q_2$ or $Q_1=-Q_2$, which implies
    $\Span_{\Rbb}(\widecheck{Q}_1)\cap
    F=\Span_{\Rbb}(\widecheck{Q}_2)\cap F=\{0\}$ and
    $F\subset\widecheck{Q}_1^\perp=\widecheck{Q}_2^\perp$. If
    $Q_1=Q_2$, then \ref{thm_jhfacetlevi2claim} is trivial, so we
    assume $Q_1=-Q_2$. Let $\rho:V\rightarrow V$ be the linear
    orthogonal automorphism preserving $\Phi$ that maps $Q_1$ to
    $Q_2$ and fixes $Q_1^\perp=Q_2^\perp$, then for
    \ref{thm_jhfacetlevi2claim} it suffices to show that for every
    Weyl chamber $\cfrak\in\Wfrak_\Phi$ with $Q_1\geq\cfrak$, we have
    $d(\cfrak)=d(\rho(\cfrak))$. Let $\pr_{Q_i}:V\rightarrow
    Q_i^\perp$, $i=1,2$, denote the projections, which have
    $\pr_{Q_1}=\pr_{Q_2}$, and let $\Wfrak^{Q_i,\cfrak}_\Phi$ denote
    the Weyl chambers $\dfrak$ of $\Phi$ with
    $\pr_{Q_i}(\dfrak)=\pr_{Q_i}(\cfrak)$. Because Weyl chambers are
    connected, the closure of the union of Weyl chambers in
    $\Wfrak^{Q_i,\cfrak}_\Phi$ is connected, so there exists an
    ordered list $(\cfrak,\dfrak_1,\ldots,\dfrak_l,\rho(\cfrak))$ of
    pairwise distinct Weyl chambers in $\Wfrak^{Q_i,\cfrak}_\Phi$,
    such that each consecutive pair shares $n-1$ vertices. By
    applying \ref{def_complementarypolyhedra1} to all shared vertices
    of each consecutive pair, and by using that
    $F\subset\widecheck{Q}_1^\perp=\widecheck{Q}_2^\perp$, we see
    that the value of $d$ stays constant along each consecutive pair,
    so $d(\cfrak)=d(\dfrak_1)=\ldots=d(\dfrak_l)=d(\rho(\cfrak))$.

    For $Q_1$ and $Q_2$ with \ref{thm_jhfacetlevi1claim} and
    \ref{thm_jhfacetlevi2claim}, we have that
    $\pr_{Q_1}(P_1)$ and $\pr_{Q_2}(P_2)$ are Jordan-Hölder facets of
    $d_{Q_1}$, using \cite[Lemma
    3.9]{behrend_semi-stabilityofreductivegroupschemesovercurves} and
    Theorem \ref{thm_jhfacetexistence}.
    Since $\Phi_{Q_1}$ is a root system of rank $n-1$, by the induction
    assumption, we have $\pr_{Q_1}(P_1)^\perp=\pr_{Q_2}(P_2)^\perp$, and there
    exists a linear orthogonal automorphism $\tau':Q_1^\perp\rightarrow
    Q_1^\perp$, such that $\tau'(\Phi_{Q_1})=\Phi_{Q_1}$ and
    $\tau'(\pr_{Q_1}(P_1))=\pr_{Q_2}(P_2)$, and we have
    $(d_{Q_1})_{\pr_{Q_1}(P_1)}=(d_{Q_2})_{\pr_{Q_2}(P_2)}$. Since by
    \ref{thm_jhfacetlevi1claim}, $Q_1=Q_2$ or $Q_1=-Q_2$,
    \ref{thm_jhfacetlevi1} and \ref{thm_jhfacetlevi3} follow, and
    $\tau'$ extends to the desired $\tau$ from \ref{thm_jhfacetlevi2}.
\end{proof}

\begin{remark}[Outer automorphisms]\label{rem_outerautomorphism}
    Theorem \ref{thm_jhfacetlevi} does not claim that $\tau$ is an
    element of the Weyl group of $\Phi$, i.e., an \textit{inner automorphism}
    of $\Phi$. Indeed, in Example \ref{ex_jh} \ref{ex_jh1}, $\tau$ is the
    reflection along the $\alpha$-axis, which preserves $A_2$ but is not
    in the Weyl group. In this case, we call $\tau$ an \textit{outer
    automorphism} of $A_2$. We are now able to prove that a variant of
    Lemma \ref{lem_stabilityofcomplementarypolyhedra} works for polystability.
\end{remark}

\begin{lemma}[Equivalent characterization of
    polystability]\label{lem_stabilityofcomplementarypolyhedrapolystable}
    Let $d:\Wfrak_\Phi\rightarrow V^*$ be a semistable complementary
    polyhedron with convex hull $F$, then $d$ is polystable if and only
    if $0\in\relint(F)$.
\end{lemma}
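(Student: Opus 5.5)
We prove both implications by working with the convex geometry of $F$ and the faces of $F$ cut out by degree-zero facets. Two facts from Behrend (\cite[Propositions 1.9 and 3.2, Lemma 3.4]{behrend_semi-stabilityofreductivegroupschemesovercurves}) will be used throughout. First, for a facet $P\neq\{0\}$ the degree $\deg(P)$ is a positive multiple of a linear functional that is nonpositive on $F$ when $d$ is semistable. Second, $\deg(P)=0$ holds exactly when this functional vanishes at $0$ and thereby defines a supporting hyperplane of $F$ through $0$. We write $\ell_P\in V$ for this functional, a positive combination of the vertices of $P$. Then $\deg(P)=0$ means that $\{\ell_P=0\}$ supports $F$ at $0$, and $F\cap\{\ell_P=0\}$ is a face of $F$ containing $0$, essentially $F(P)\cap\widecheck{P}^\perp$.

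\emph{($\Leftarrow$).} Suppose $0\in\relint(F)$ and let $P\neq\{0\}$ have $\deg(P)=0$. A supporting hyperplane through a relative interior point of $F$ must contain all of $F$, so $\ell_P\equiv0$ on $F$. This gives (P2) at once, since $\deg(-P)$ is computed from $-\ell_P$ up to a positive multiple and therefore also vanishes. For (P1), the condition $F\subset\{\ell_P=0\}$ shows that the semistable Levi-factor $d_P$ has $0\in F_P=\widecheck{\pr}_P(F(P))$. Indeed, the point $y(P)$ of Remark \ref{rem_jhfacetyp} is $0$, and \cite[Lemma 3.11]{behrend_semi-stabilityofreductivegroupschemesovercurves} together with semistability of $d$ (\cite[Lemma 3.4]{behrend_semi-stabilityofreductivegroupschemesovercurves}) gives semistability of $d_P$. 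So $d$ is polystable.

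\emph{($\Rightarrow$).} Suppose $d$ is polystable but $0\notin\relint(F)$. Since $0\in F$, the point $0$ lies on a proper face $G$ of $F$ relative to $\aff(F)=\Span(F)$. Choose a Jordan-Hölder facet $P$, which exists by Theorem \ref{thm_jhfacetexistence}. By Lemma \ref{lem_complementarypolyhedrajordan}, $\Span(\widecheck{P})\cap F(P)=\{0\}$ and $0$ is in the relative interior of the slice. The plan is to show that the minimal face of $F$ containing $0$ is exactly $F\cap\widecheck{P}^\perp$, and that this equals $F$ precisely when $\deg(-P)=0$. Then (P2) applied to $P$ forces $F\subset\widecheck{P}^\perp$. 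Using (J2'), this gives $F=F\cap\widecheck{P}^\perp$ with $0$ in its relative interior, so $0\in\relint(F)$, a contradiction.

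Concretely, we would compare the facets containing $P$ with the opposite facet $-P$. The condition $\deg(P)=\deg(-P)=0$ means that $\ell_P\le0$ and $-\ell_P\le0$ on $F$, so $F\subset\{\ell_P=0\}=\widecheck{P}^\perp$. Then $F=F(P)\cap\widecheck{P}^\perp$, and (J2') gives that $0$ lies in the relative interior of $F$. Here the stability of $d_P$ means $0\in\interior(F_P)$, and $F_P$ is identified with $F$ via $\widecheck{\pr}_P$, which is the identity on $\widecheck{P}^\perp$. If $P=\{0\}$, then $d$ is stable and $0\in\interior(F)$ directly.

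The main obstacle is justifying the identity $F=F(P)$ once $F\subset\widecheck{P}^\perp$; we need every $d(\cfrak)$ to be a value at a chamber adjacent to $P$. We would argue as in the proof of Theorem \ref{thm_jhfacetlevi}. When $F\subset\widecheck{P}^\perp$, the values $d(\cfrak)$ are constant along chains of chambers sharing vertices outside $P$. This uses \ref{def_complementarypolyhedra1} and the fact that any vertex $\lambda$ of $P$ pairs to zero with $F$. It shows that every chamber value already occurs among the chambers $\cfrak$ with $P\geq\cfrak$.
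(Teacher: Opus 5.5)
The overall route is the paper's. For ($\Leftarrow$), a degree-zero facet gives a supporting hyperplane of $F$ through $0$, which must contain $F$ when $0\in\relint(F)$. For ($\Rightarrow$), you take a Jordan-H\"older facet $P$, use (P2) to get $\deg(-P)=0$, deduce $F\subseteq\widecheck{P}^\perp$, and conclude from the stability of $d_P$.

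Your ($\Leftarrow$) direction is fine. There you only need $\ell_P\equiv 0$ on $F$. Also, (P1) follows directly from semistability of $d$ and Behrend's Lemma 3.4, as in the paper, so the detour through $y(P)=0$ is unnecessary; it also silently relies on the vertexwise fact below.

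\textbf{The gap in ($\Rightarrow$).} The step $F\subset\{\ell_P=0\}=\widecheck{P}^\perp$ fails as written.
\begin{itemize}
\item $\{\ell_P=0\}$ is a hyperplane, while $\widecheck{P}^\perp$ has codimension $\dim P$. They differ as soon as $\dim P\geq 2$.
\item This does occur for polystable $d$: for $d\equiv 0$, every Weyl chamber is a Jordan-H\"older facet.
\item Knowing only that $F$ lies in one hyperplane does not let you compare $F_P=\widecheck{\pr}_P(F(P))$ with $F$, so the conclusion does not follow.
\end{itemize}

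What is missing is the vertexwise argument, which the paper carries out via the one-dimensional facets $Q$ with $\vertices(Q)=\{\lambda\}$.
\begin{itemize}
\item Write $\ell_P=\sum_{\lambda\in\vertices(P)}c_\lambda\lambda$ with $c_\lambda>0$. You state this yourself but never use it.
\item For $P\geq\cfrak$, the point $d(\cfrak)$ minimizes every $\lambda\in\vertices(P)$ on $F$ simultaneously. Each minimum is $\leq 0$ because $0\in F$.
\item Hence $\deg(P)=0$ forces $\min_F\lambda=0$ for every vertex. Likewise $\deg(-P)=0$ forces $\max_F\lambda=0$.
\item Only this gives $F\subseteq\widecheck{P}^\perp$.
\end{itemize}
Before invoking (P2) you should also say why $\deg(P)=0$: this is (J1) combined with Behrend's Propositions 1.9 and 3.2. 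Your sign is reversed, since $F\subseteq\{\ell_P\geq\deg(P)\}$ with $d(\cfrak)$ the minimizer, but this is harmless by symmetry.

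\textbf{The ``main obstacle'' is not needed.} Once $F\subseteq\widecheck{P}^\perp$, the projection $\widecheck{\pr}_P$ is the identity on $F$. So $F_P=F(P)\subseteq F\subseteq\widecheck{P}^\perp$. Stability of $d_P$ means $F_P$ contains a neighbourhood of $0$ in $\widecheck{P}^\perp$. Hence $\aff(F)=\widecheck{P}^\perp$ and $0\in\relint(F)$, which is exactly how the paper concludes.

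The equality $F=F(P)$ is true. Your chamber-chain argument works if you stay inside one chamber of the subarrangement of walls containing $P$. The walls crossed then do not contain $P$, and across them (A1) and $F\subseteq\widecheck{P}^\perp$ force $d$ to be constant. But the equality is superfluous. So is the proof-by-contradiction framing with the face $G$, which is never used.
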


\begin{proof}
    In the forward direction, since $\{0\}$ is a facet such that
    $\deg(\{0\})=0$, \ref{def_stabilityofcomplementarypolyhedra31}
    implies that $d=d_{\{0\}}$ is semistable. For a Jordan-Hölder facet
    $P$, which exists due to Theorem \ref{thm_jhfacetexistence}, we
    have $\deg(P)=0$, using that $P$ has \ref{def_jhfacet1} and
    \cite[Propositions 1.9 and
    3.2]{behrend_semi-stabilityofreductivegroupschemesovercurves}. From
    \ref{def_stabilityofcomplementarypolyhedra32}, the opposite facet
    $-P$ has $\deg(-P)=0$. For every vertex $\lambda\in\vertices(P)$,
    there exists a $1$-dimensional facet $Q$ such that
    $\vertices(Q)=\{\lambda\}$. Since $d$ is semistable,
    $\deg(Q),\deg(-Q)\leq0$, and by \cite[Propositions 1.9 and
    3.2]{behrend_semi-stabilityofreductivegroupschemesovercurves},
    $\deg(P),\deg(-P)=0$ implies that $\deg(Q),\deg(-Q)=0$ and
    $n(Q,\lambda),n(-Q,-\lambda)=0$. From this, and the
    proof of Lemma \ref{lem_complementarypolyhedrajordan}, it follows that
    $F\subset P^\perp$, and since $P$ is a Jordan-Hölder facet, $0$
    lies in the interior of $F_P$, by Lemma
    \ref{lem_stabilityofcomplementarypolyhedra}
    \ref{lem_stabilityofcomplementarypolyhedra2}. Thus, we have
    $0\in\relint(F)$.

    In the reverse direction, from $0\in\relint(F)$, we have $0\in F$,
    which gives \ref{def_stabilityofcomplementarypolyhedra31} due to
    Lemma \ref{lem_stabilityofcomplementarypolyhedra}
    \ref{lem_stabilityofcomplementarypolyhedra1} and \cite[Lemma
    3.4]{behrend_semi-stabilityofreductivegroupschemesovercurves}. To
    prove \ref{def_stabilityofcomplementarypolyhedra32}, let $P$ be a
    facet such that $\deg(P)=0$, then we must show that $\deg(-P)=0$.
    Since $0\in\relint(F)$ and $\deg(P)=0$, we have $F\subset P^\perp$,
    which implies $\deg(-P)=0$.
\end{proof}

Combining Lemmas \ref{lem_stabilityofcomplementarypolyhedra} and
\ref{lem_stabilityofcomplementarypolyhedrapolystable}, we have the
summary (\ref{eq_intro}) from the introduction.

\begin{example}[Equivalent characterization of
    polystability]\label{ex_complementarypolyhedronpolystable}
    Returning to Example \ref{ex_complementarypolyhedra}, the
    semist-\-able complementary polyhedron
    $d_b:\Wfrak_{A_2}\rightarrow\Rbb^{2*}$ is polystable by applying
    Lemma \ref{lem_stabilityofcomplementarypolyhedrapolystable}, since
    $0\in\relint(F_b)$.
\end{example}

Due to Lemma \ref{lem_stabilityofcomplementarypolyhedrapolystable},
we get the following equivalent characterization of polystability
through Jordan-Hölder facets, reducing the number of facets to check
for polystability.

\begin{lemma}[Jordan-Hölder characterization of
    polystability]\label{lem_polystabilityfinalremark}
    Let $d:\Wfrak_\Phi\rightarrow V^*$ be a semistable complementary
    polyhedron, then $d$ is polystable if and only if for every
    Jordan-Hölder facet $P$, the opposite facet $-P$ is also a
    Jordan-Hölder facet.
\end{lemma}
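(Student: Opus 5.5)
We prove both directions through Lemma \ref{lem_stabilityofcomplementarypolyhedrapolystable}, so throughout we work with the condition $0\in\relint(F)$ rather than with the definition of polystability.

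\emph{Forward direction.} Assume $d$ is polystable and let $P$ be a Jordan-Hölder facet. The proof of Lemma \ref{lem_stabilityofcomplementarypolyhedrapolystable} shows that \ref{def_jhfacet1} gives $\deg(P)=0$. Condition \ref{def_stabilityofcomplementarypolyhedra32} then gives $\deg(-P)=0$, and, as in that same proof, $F\subset\widecheck{P}^\perp$. We now show that $-P$ is a Jordan-Hölder facet by checking \ref{lem_complementarypolyhedrajordan1} and \ref{lem_complementarypolyhedrajordan2} from Lemma \ref{lem_complementarypolyhedrajordan}.
\begin{enumerate}[label=(\alph*)]
\item We need $F(-P)=-\,$-free reflection of $F(P)$ in a suitable sense. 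Let $s$ be the orthogonal reflection of $V$ that is $-\id$ on $\Span_{\Rbb}(P)$ and the identity on $P^\perp$. This map need not preserve $\Phi$, so instead we argue with $\tau$-free methods: every chamber $\cfrak\leq -P$ and every chamber $\dfrak\leq P$ have the same projection data. Concretely, since $F\subset\widecheck{P}^\perp$, each value $d(\cfrak)$ lies in $\widecheck{P}^\perp$.
\item Applying $\widecheck{\pr}_P$, which is then the identity on $F$, we get $F(P)=F_P$ and $F(-P)=F_{-P}$. Since $P^\perp=(-P)^\perp$ and $\Phi_P=\Phi_{-P}$, we will show $d_P=d_{-P}$.
\end{enumerate}
To prove $d_P=d_{-P}$, we run the chamber-path argument from the proof of Theorem \ref{thm_jhfacetlevi}. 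Connect a chamber under $P$ to a chamber under $-P$ with the same projection through chambers sharing $n-1$ vertices. Condition \ref{def_complementarypolyhedra1}, together with $F\subset\widecheck{P}^\perp$ (so that values are determined by their pairing with vertices in $P^\perp$-directions), keeps $d$ constant along the path.

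Given $d_P=d_{-P}$, the rest is formal:
\begin{itemize}
\item $F(-P)=F(P)\subset\widecheck{P}^\perp$, so $\Span_{\Rbb}(\widecheck{P})\cap F(-P)=\{0\}$, since $0\in F(P)$ by the stability of $d_P$. This is \ref{lem_complementarypolyhedrajordan1}.
\item $d_{-P}=d_P$ is stable, which is \ref{def_jhfacet2}.
\end{itemize}

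\emph{Reverse direction.} Assume that for every Jordan-Hölder facet $P$, the facet $-P$ is also Jordan-Hölder. Fix one such $P$; it exists by Theorem \ref{thm_jhfacetexistence}. By \ref{lem_complementarypolyhedrajordan1} applied to $P$ and to $-P$, and by the formula for $y$ in the proof of Lemma \ref{lem_complementarypolyhedrajordan}, the numerical invariants $n(P,\lambda)$ and $n(-P,-\lambda)$ vanish for all vertices $\lambda\in\vertices(P)$.

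Following the forward argument of Lemma \ref{lem_stabilityofcomplementarypolyhedrapolystable}, this gives $\deg(Q)=\deg(-Q)=0$ for the one-dimensional facets $Q\geq P$, and hence $F\subset\widecheck{P}^\perp$. Then $F=F_P$, up to the identification induced by $\widecheck{\pr}_P$. Since $d_P$ is stable, Lemma \ref{lem_stabilityofcomplementarypolyhedra} \ref{lem_stabilityofcomplementarypolyhedra2} gives $0\in\interior(F_P)$ inside $\widecheck{P}^\perp$. Moreover $\aff_{\Rbb}(F)=\widecheck{P}^\perp$, so $0\in\relint(F)$, and Lemma \ref{lem_stabilityofcomplementarypolyhedrapolystable} concludes that $d$ is polystable.

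\emph{Main obstacle.} The hardest step is the inclusion $F\subset\widecheck{P}^\perp$ in both directions. It requires unwinding \cite[Propositions 1.9, 3.2]{behrend_semi-stabilityofreductivegroupschemesovercurves}: the vanishing of the invariants at $\pm\lambda$ must force every chamber value $d(\cfrak)$, and not only those with $P\geq\cfrak$ or $-P\geq\cfrak$, to be orthogonal to $\widecheck{\lambda}$. I would first try to derive this from \ref{def_complementarypolyhedra2}, which squeezes $d(\cfrak)(\lambda)$ between the values on chambers adjacent to $P$ and to $-P$.
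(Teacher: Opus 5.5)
Your proposal is correct in outline but takes a genuinely different route from the paper in both directions.

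\textbf{Forward direction.} The paper never compares $d_P$ with $d_{-P}$. From $0\in\relint(F)$ it gets $F\subset\widecheck{P}^\perp$, hence $y(-P)=0$, which is \ref{def_jhfacet1} for $-P$. It then obtains \ref{def_jhfacet2} for free from Theorem~\ref{thm_jhfacetexistence}, by transporting minimality along $R\mapsto -R$. You instead prove $d_{-P}=d_P$, so $F(-P)=F(P)$, and check \ref{lem_complementarypolyhedrajordan1} and \ref{def_jhfacet2} directly. This bypasses the minimality characterization, whose transport under negation tacitly needs \ref{def_jhfacet1} to be symmetric for all facets below $-P$. It also yields the extra information $d_{-P}=d_P$, in the spirit of Theorem~\ref{thm_jhfacetlevi}.

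\textbf{Reverse direction.} The paper checks \ref{def_stabilityofcomplementarypolyhedra31} and \ref{def_stabilityofcomplementarypolyhedra32} from the definition. It reduces to $1$-dimensional facets and uses a Jordan-Hölder facet $Q\leq P$ together with $-Q$. You go through Lemma~\ref{lem_stabilityofcomplementarypolyhedrapolystable}, squeezing $F$ into $\widecheck{P}^\perp$ and using $0\in\interior(F_P)$. Since you use only one pair $\pm P$, your argument proves more: a single Jordan-Hölder facet whose opposite is also Jordan-Hölder already forces polystability.

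\textbf{Points to tighten.}

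First, the chamber path must be a \emph{minimal} gallery. It should run from $\cfrak$ with $P\geq\cfrak$ to the chamber $\cfrak'$ with $-P\geq\cfrak'$ that has the same signs on the roots orthogonal to $P$. The walls it crosses are then exactly the $L_\gamma$ with $\gamma$ not orthogonal to $P$, so $L_\gamma+\Span_{\Rbb}(P)=V$. Then \ref{def_complementarypolyhedra1} on the $n-1$ shared vertices, together with vanishing on $\Span_{\Rbb}(P)$, forces $d$ to be constant along the gallery. Across a wall containing $P$ this fails and $d$ genuinely jumps: for $d_b$ with $P=P_1$, it jumps between $\cfrak_{\alpha+\beta}$ and $\cfrak_\beta$. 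So your parenthetical justification is not the operative reason.

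Second, $F=F_P$ is neither proved nor needed. The inclusions $F(P)\subseteq F\subseteq\widecheck{P}^\perp$ already give $F_P=F(P)\subseteq F$, so a $\widecheck{P}^\perp$-neighbourhood of $0$ lies in $F$.

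Third, your ``main obstacle'' is exactly the step taken in the paper's proof of Lemma~\ref{lem_stabilityofcomplementarypolyhedrapolystable}, and your instinct is right. Take a minimal gallery leaving a chamber $\cfrak$ with $\lambda\in\vertices(\cfrak)$. By \ref{def_complementarypolyhedra1} and \ref{def_complementarypolyhedra2}, each step across $L_\gamma$, with $\gamma$ positive on $\cfrak$, adds a nonnegative multiple of $\langle\gamma,\_\rangle$. Since $\langle\gamma,\lambda\rangle\geq0$, this gives $F\subset\{x\in V^*\,\vert\,x(\lambda)\geq d(\cfrak)(\lambda)\}$. Vanishing at both $\lambda$ and $-\lambda$ then squeezes $F$ into $\{x\in V^*\,\vert\,x(\lambda)=0\}$.

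Finally, remove the abandoned items (a)--(b).
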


\begin{proof}
    For the forward direction, since $0\in\relint(F)$ by Lemma
    \ref{lem_stabilityofcomplementarypolyhedrapolystable}, we have
    $F\subset P^\perp$ using \ref{lem_complementarypolyhedrajordan2}
    from Lemma \ref{lem_complementarypolyhedrajordan}. Recall that
    \ref{def_jhfacet1} for $P$ is equivalent to $y(P)=0$, through the
    proof of Lemma \ref{lem_complementarypolyhedrajordan}. Together
    with $F\subset P^\perp$, $y(P)=0$ implies $y(-P)=0$, so $-P$ has
    \ref{def_jhfacet1} by the proof of Lemma
    \ref{lem_complementarypolyhedrajordan}. Since taking opposite
    facets preserves the order of facets $\geq$, $-P$ is minimal with
    \ref{def_jhfacet1}, so $-P$ is a Jordan-Hölder facet due to Theorem
    \ref{thm_jhfacetexistence}.

    For the reverse direction, since $d$ is semistable, we have
    \ref{def_stabilityofcomplementarypolyhedra31} due to \cite[Lemma
    3.4]{behrend_semi-stabilityofreductivegroupschemesovercurves}. For
    \ref{def_stabilityofcomplementarypolyhedra32}, due to
    \cite[Propositions 1.9 and
    3.2]{behrend_semi-stabilityofreductivegroupschemesovercurves}, we
    can assume that $P$ is a $1$-dimensional facet without loss of
    generality. Thus, $\deg(P)=0$ implies that $P$ has
    \ref{def_jhfacet1}, so there exists a Jordan-Hölder facet $Q$ with
    $P\geq Q$, by Theorem \ref{thm_jhfacetexistence}. Since $-Q$ is a
    Jordan-Hölder facet by assumption, we get $\deg(-P)=0$, due to
    \cite[Propositions 1.9 and
    3.2]{behrend_semi-stabilityofreductivegroupschemesovercurves}.
\end{proof}

\begin{example}[Jordan-Hölder characterization of polystability]
    From Example \ref{ex_complementarypolyhedra}, the complementary
    polyhedron $d_b:\Wfrak_{A_2}\rightarrow\Rbb^{2*}$ is polystable
    precisely because $d_b$ is semistable and the only two
    Jordan-Hölder facets $P_1$ and $P_2$ of $d_b$, identified in
    Example \ref{ex_jh} \ref{ex_jh1}, are opposite to each other.
\end{example}

\section{Jordan-Hölder theory for reductive group schemes over a
curve}\label{sec_jhtheoryreductivegroupschemes}

To connect bundle moduli problems to the moduli problems stated in
the introduction (\ref{intro_applications}), we need to induce
complementary polyhedra from bundles that store their stability
profiles. In the case of principal bundles, this was achieved by
Behrend in
\cite[§6-§8]{behrend_semi-stabilityofreductivegroupschemesovercurves}
by studying the (semi)-stability of \textit{reductive group schemes}
over curves, since as an example, automorphism group schemes of
bundles over curves form reductive group schemes over curves. For parahoric
torsors, similar results were achieved by Heinloth in
\cite[§3.D]{heinloth_hilbertmumfordstability}.

Let $K$ be a field. We will study bundles over a smooth projective
$K$-curve $X$, but state some results more generally for a $K$-scheme
$S$ whenever possible.

\subsection{Definition and stability of reductive group
schemes}\label{subsec_definitionandstabilityofreductivegroupschemes}

We recall the notion of a reductive group scheme.

\begin{definition}[Reductive group schemes {\cite[Definition
    3.1.1]{conrad_reductivegroupschemes}}]\label{def_reductivegroupscheme}
    A smooth affine group scheme $\Gbf$ over $S$ is a \textit{reductive
    group scheme} if for all $K$-points $s$ of $S$, and the induced
    geometric $\overline{K}$-point $\overline{s}$, where
    $\overline{K}$ is an algebraic closure of $K$, the fiber
    $\Gbf_{\overline{s}}$ is a connected reductive group.
\end{definition}

We have notions of \textit{Cartan}, \textit{Borel}, and
\textit{parabolic subgroup schemes} of $\Gbf$, from \cite[Exp.
XXVI]{sga3} and \cite[§5]{conrad_reductivegroupschemes}, that
generalize the usual notions for reductive groups.
Behrend defines \textit{degree} and \textit{(semi)-stability} for
reductive group schemes over $X$, which we extend to \textit{polystability}.

\begin{definition}[Degree and stability {\cite[Definitions 4.1 and
    4.4]{behrend_semi-stabilityofreductivegroupschemesovercurves}}]\label{def_groupschemestability}
    Let $\Gbf$ be a reductive group scheme over $X$.
    \begin{enumerate}[label=(\alph*)]
        \item\label{def_groupschemestability1} For a smooth affine group
            scheme $\Pbf$ over $X$, we define the \textit{degree}:
            \begin{equation*}
                \deg(\Pbf):=\deg(\Lie(\Pbf))
            \end{equation*}
            where $\Lie(\Pbf)$ is seen as a vector bundle over $X$.

        \item\label{def_groupschemestability2} $\Gbf$ is
            \textit{(semi)-stable} if for every proper parabolic subgroup
            scheme $\Pbf\subsetneq\Gbf$, we have
            $\deg(\Pbf)\leqparen0$, using Notation \ref{rem_remarkonnotation}.

        \item\label{def_groupschemestability3} $\Gbf$ is
            \textit{polystable} if for all parabolic subgroup schemes
            $\Pbf$ of $\Gbf$ with $\deg(\Pbf)=0$:
            \begin{enumerate}[label=(G\arabic*)]
                \item\label{def_groupschemestability31} The \textit{Levi-factor}
                    $\Lbf:=\Pbf/\Rbf_u(\Pbf)$ (quotient by the
                    \textit{unipotent radical}) is semistable.

                \item\label{def_groupschemestability32} There exists a
                    parabolic subgroup scheme $\Pbf'$ of $\Gbf$
                    \textit{opposite} to $\Pbf$, i.e., $\Pbf\cap\Pbf'$ is
                    isomorphic to $\Lbf$ through the extension
                    $\Pbf\rightarrow\Lbf=\Pbf/\Rbf_u(\Pbf)$ to the Levi-factor.
            \end{enumerate}
    \end{enumerate}
\end{definition}

Let $\Gbf$ be a \textit{split} reductive group scheme over $S$, i.e.,
there exists a \textit{maximal torus} $\Tbf$ of $\Gbf$, so
$\Tbf\cong\Gbb_{m,S}^r$ for $r$ maximal. As seen in
\cite[§5.1]{conrad_reductivegroupschemes}, we obtain a root system
$\Phi(\Gbf,\Tbf)$ of
$(V_{\Gbf,\Tbf},\langle\_,\_\rangle_{\Gbf,\Tbf})$, with
$V_{\Gbf,\Tbf}:=\Xcal^*(\Tbf)\otimes\Rbb$, where:
\begin{equation}\label{eq}
    \tag{4.1.1}
    \Xcal^*(\Tbf):=\{\chi\in\Hom_S(\Tbf,\Gbb_{m,S})\ \vert\ \chi(\Zbf(\Gbf))=\ebf\}
\end{equation}
is the lattice of \textit{characters} trivial on the center, and
$\langle\_,\_\rangle_{\Gbf,\Tbf}$ is an adjoint invariant inner-product.

For any reductive group scheme $\Gbf$ over $S$, there exists a
surjective étale morphism $\pi:Y\rightarrow S$ such that the
reductive group scheme $\Gbf_Y:=\pi^*\Gbf$ over $Y$ is split, due to
\cite[Lemma 5.1.3]{conrad_reductivegroupschemes}. We can classify
parabolic subgroup schemes of $\Gbf$ using these Zariski-generic
splittings: Let $\Par(\Gbf)$ denote the scheme over $S$, as defined
in \cite[Exp. XXVI, 3.2]{sga3}, whose global sections
$\Gamma(\Par(\Gbf))$ correspond to parabolic subgroup schemes of
$\Gbf$. Let $\Par_{\Tbf_Y}(\Gbf)$ denote the subscheme of
$\Par(\Gbf)$ whose global sections $\Gamma(\Par_{\Tbf_Y}(\Gbf))$
correspond to parabolic subgroup schemes $\Pbf$ of $\Gbf$ whose
pullbacks $\Pbf_Y:=\pi^*\Pbf$ contain $\Tbf_Y$. Recall that a
parabolic subgroup scheme $\Pbf$ of $\Gbf$ defines a \textit{type}
$t(\Pbf)$, as in \cite[Remark
5.1]{behrend_semi-stabilityofreductivegroupschemesovercurves}, which
is a global section of the \textit{power scheme}
$\Pfrak(\Dyn(\Gbf))$ of the \textit{Dynkin scheme}
$\Dyn(\Gbf)$. Behrend proves the following lemma.

\begin{lemma}[Root systems of reductive group schemes
        {\cite[Proposition
    6.2]{behrend_semi-stabilityofreductivegroupschemesovercurves}}]\label{lem_rootsystemreductivegroupscheme}
    Let $\Gbf$ be a reductive group scheme over $S$, and let
    $\pi:Y\rightarrow S$ be a surjective étale morphism such that
    $\Gbf_Y:=\pi^*\Gbf$ splits with maximal torus $\Tbf_Y$.
    \begin{enumerate}[label=(\roman*)]
        \item\label{lem_rootsystemreductivegroupscheme1} There is a
            bijective map
            $R:\Gamma(\Par_{\Tbf_Y}(\Gbf))\rightarrow\{\textit{Parabolic
            subsets of }\Phi(\Gbf_Y,\Tbf_Y)\}$ such that for any parabolic
            subgroup scheme $\Pbf$ of $\Gbf$, with $\Pbf_Y:=\pi^*\Pbf$
            containing $\Tbf_Y$, we have:
            \begin{equation*}
                \Lie(\Pbf_Y)=\Lie(\Tbf_Y)\oplus\bigoplus_{\alpha\in
                R(\Pbf)}\Lie((\Gbf_{Y})_\alpha)
            \end{equation*}

        \item\label{lem_rootsystemreductivegroupscheme2} There is an
            order-preserving bijective map
            $\underline{R}:\Gamma(\Par_{\Tbf_Y}(\Gbf))\rightarrow\{\textit{Facets
            of }\Phi(\Gbf_Y,\Tbf_Y)\}$ (with respect to inclusion of
                subgroups and
            $\geq$) such that $R(\Pbf)=R(\underline{R}(\Pbf))$, for the
            parabolic subset $R(\underline{R}(\Pbf))$ of
            $\underline{R}(\Pbf)$ from Definition
            \ref{def_stabilityofcomplementarypolyhedra}
            \ref{def_stabilityofcomplementarypolyhedra1}.

        \item\label{lem_rootsystemreductivegroupscheme3} For any
            parabolic subgroup scheme $\Pbf$ of $\Gbf$, such that $\Pbf_Y$
            contains $\Tbf_Y$, there is a bijection:
            \begin{align*}
                \lambda:\pi_0(t(\Pbf))\rightarrow\vertices(\underline{R}(P))
                &  & \vfrak\mapsto\lambda_\vfrak
            \end{align*}
    \end{enumerate}
\end{lemma}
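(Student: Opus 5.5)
This lemma is attributed to Behrend's Proposition 6.2, so the plan is to reduce to the split case and then use the standard structure theory of parabolic subgroups of split reductive group schemes. Étale descent then moves the statements back down to $S$.

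\textbf{Part (i).} I would work on $Y$ with $\Gbf_Y$ split and maximal torus $\Tbf_Y$. A parabolic $\Pbf$ of $\Gbf$ with $\Tbf_Y\subset\Pbf_Y$ pulls back to a parabolic $\Pbf_Y$ of $\Gbf_Y$ containing $\Tbf_Y$. By \cite[Exp.\ XXVI]{sga3}, such a parabolic is determined by the closed subset of roots occurring in $\Lie(\Pbf_Y)$, at least after localizing on $Y$ to make things connected. The root decomposition of $\Lie(\Gbf_Y)$ then gives
\[
\Lie(\Pbf_Y)=\Lie(\Tbf_Y)\oplus\bigoplus_{\alpha\in R(\Pbf)}\Lie((\Gbf_Y)_\alpha),
\]
and the subsets arising this way are exactly the parabolic subsets of $\Phi(\Gbf_Y,\Tbf_Y)$. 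This gives injectivity of $R$. For surjectivity, each parabolic subset $R$ yields a subgroup $\Pbf_{R}\subset\Gbf_Y$ generated by $\Tbf_Y$ and the root groups $(\Gbf_Y)_\alpha$ with $\alpha\in R$. I would descend it to $S$ using uniqueness together with the cocycle condition on $Y\times_SY$. The descent datum is compatible because both pullbacks contain the pulled-back torus and have the same root set.

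\textbf{Part (ii).} I would compose $R$ with the classical bijection between parabolic subsets of $\Phi$ and facets of $\Phi$: a facet $P$ corresponds to $R(P)$ from Definition~\ref{def_stabilityofcomplementarypolyhedra}. This is standard root-system combinatorics, since every parabolic subset equals $\{\alpha\mid\langle\alpha,v\rangle\geq0\}$ for $v$ in a unique facet. It remains to check that the composite is order-preserving: $\Pbf\subset\Pbf'$ iff $R(\Pbf)\subset R(\Pbf')$ iff $\underline{R}(\Pbf)\geq\underline{R}(\Pbf')$. Smaller parabolic subsets correspond to larger-dimensional facets, whose closures contain the smaller facets.

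\textbf{Part (iii).} Vertices of the facet $\underline{R}(\Pbf)$ correspond to the simple roots in the complement of the Levi part, i.e.\ to the nodes of the Dynkin diagram not in the type of the Levi. Behrend's type $t(\Pbf)$ is the subscheme of the Dynkin scheme cut out by this complement, which is finite étale over $S$. Its connected components $\pi_0(t(\Pbf))$ are Galois orbits of such nodes. The main obstacle is exactly here. The bijection to vertices requires that the relevant part of the Dynkin scheme splits: over a connected base one needs each component to correspond to a single node. I would handle this by first restricting to the case where $t(\Pbf)$ becomes constant compatibly with $\Tbf_Y$, using that the torus $\Tbf_Y$ being defined over the étale cover fixes a pinning of the chamber containing $\underline{R}(\Pbf)$. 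Then I would identify components with fundamental weights $\lambda_{\vfrak}$ dual to those simple roots. I expect the careful bookkeeping of the étale descent to be the delicate point, and would lean on Behrend's original argument there.
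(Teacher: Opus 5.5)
The paper gives no proof of this lemma; it only cites Behrend's Proposition 6.2. Behrend works with a rationally split $\Gbf$ over a smooth curve $X$, with $\pi$ the inclusion of the generic point $\eta$ and $\Tbf_{\eta}$ split over $K(X)$ itself. Your proposal instead tries to prove the lemma for an arbitrary surjective étale $Y\to S$, and there is a genuine gap at the load-bearing step, descending $\Pbf_R$ along $Y\to S$.

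The two pullbacks of $\Pbf_R$ to $Y\times_SY$ contain $p_1^*\Tbf_Y$ and $p_2^*\Tbf_Y$, which need not coincide. Even when $\Tbf_Y=\Tbf\times_SY$, the two induced identifications of $\Phi(\Gbf_Y,\Tbf_Y)$ with the roots over $Y\times_SY$ differ by the monodromy action on the characters of $\Tbf$. So ``same root set'' holds only if $R$ is stable under that action.

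Here is a concrete failure. Take $\Gbf=\SLbf_2$ over $\Rbb$, $\Tbf=\mathrm{SO}_2$ and $Y=\Spec\Cbb$. The two Borels of $\SLbf_{2,\Cbb}$ containing $\Tbf_{\Cbb}$ are swapped by complex conjugation. Hence $\Gamma(\Par_{\Tbf_Y}(\Gbf))=\{\Gbf\}$, whereas $\{\pm\alpha\}$ has three parabolic subsets.

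The same phenomenon breaks (iii). Take $\Gbf=\mathrm{Res}_{\Cbb/\Rbb}\SLbf_2$ and the restriction of scalars of the upper triangular Borel. Its chamber has two vertices, but its type is all of $\Dyn(\Gbf)\cong\Spec\Cbb$, which is connected.

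So for an arbitrary étale $\pi$, (i) and (iii) are false as stated. Your plan to restrict ``to the case where $t(\Pbf)$ becomes constant'' changes the statement rather than proving it. A pinning over $Y$ cannot help, because $\pi_0(t(\Pbf))$ is computed over $S$.

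What is missing is the global hypothesis of Behrend's setting, used through properness rather than descent. Note that $\eta\to X$ is not even surjective, so there is no $Y\times_SY$ to descend along.

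For (i): $\Par(\Gbf)\to X$ is projective \cite[Exp.~XXVI, 3.5]{sga3} and $X$ is a regular curve. By the valuative criterion, restriction $\Gamma(X,\Par(\Gbf))\to\Par(\Gbf)(K(X))$ is bijective. Thus the $\Pbf$ with $\Pbf_{\eta}\supset\Tbf_{\eta}$ are exactly the parabolics of $\Gbf_{\eta}$ containing the split torus $\Tbf_{\eta}$. These correspond to parabolic subsets by the theory over a field, and the Lie algebra formula is read off over $\eta$.

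For (iii): $\Dyn(\Gbf)$ is finite étale over $X$ and constant over $\eta$, hence constant over the normal connected curve $X$. So $t(\Pbf)$ is a disjoint union of copies of $X$, one for each simple root outside the Levi, i.e.\ one for each vertex of $\underline{R}(\Pbf)$.

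Your root-system combinatorics in (ii) is fine, but the displayed direction is reversed. With $P\geq Q\Leftrightarrow P\subset\cl(Q)$, the inclusion $\Pbf\subset\Pbf'$ corresponds to $\underline{R}(\Pbf')\geq\underline{R}(\Pbf)$, which is what your verbal explanation already says.
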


When $\Gbf$ is \textit{rationally split} or \textit{rationally
trivial}, i.e., split
Zariski-generically over $\eta:=\Spec(K(X))$, Behrend constructs an
equivalent condition for the (semi)-stability of $\Gbf$ using
complementary polyhedra. For a parabolic subgroup scheme $\Pbf$ of
$\Gbf$ with vertex $\vfrak\in\pi_0(t(\Pbf))$, we use the notion of
the \textit{numerical invariant} $n(\Pbf,\vfrak)$ of $\Pbf$ from
\cite[Definition 5.5]{behrend_semi-stabilityofreductivegroupschemesovercurves}.
\begin{lemma}[Complementary polyhedra of reductive group schemes
        {\cite[Proposition
    6.6]{behrend_semi-stabilityofreductivegroupschemesovercurves}}]\label{lem_Gdcomplementarypolyhedron}
    Let $\Gbf$ be a rationally split reductive group scheme over $X$,
    i.e., split over the Weil-restriction $\pi:X_{\eta}\rightarrow X$,
    so $\Gbf_{\eta}:=\pi^*\Gbf$ splits with maximal torus
    $\Tbf_{\eta}$. The following is a complementary polyhedron:
    \begin{align*}
        d_{\Gbf_{\eta},\Tbf_{\eta}}:\Wfrak_{\Phi(\Gbf_{\eta},\Tbf_{\eta})}\rightarrow
        V^*_{\Gbf_{\eta},\Tbf_{\eta}} &  &
        \cfrak\mapsto\sum_{\vfrak\in\pi_0(t(\underline{R}^{-1}(\cfrak)))}n(\underline{R}^{-1}(\cfrak),\vfrak)\widecheck{\lambda}_\vfrak
    \end{align*}
\end{lemma}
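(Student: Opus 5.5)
We have to show that the map $d:=d_{\Gbf_\eta,\Tbf_\eta}$ satisfies \ref{def_complementarypolyhedra1} and \ref{def_complementarypolyhedra2}. Throughout, chambers, facets and vertices are transported to Borel subgroups, parabolic subgroups and connected components of types via the bijections $\underline{R}$ and $\lambda$ of Lemma \ref{lem_rootsystemreductivegroupscheme}. The first step is to identify the coefficients. Since the dual weights $\widecheck{\lambda}_\vfrak$, $\vfrak\in\pi_0(t(\Bbf))$, are dual to the simple coroots of the chamber $\cfrak=\underline{R}(\Bbf)$, the defining formula gives
\begin{equation*}
    d(\cfrak)(\lambda_\vfrak)\sim n(\Bbf,\vfrak),
\end{equation*}
up to the fixed normalization factor $(\widecheck{\lambda}_\vfrak,\widecheck{\lambda}_\vfrak)$, which depends only on the vertex. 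Equivalently, $d(\cfrak)(\alpha)$ for a simple root $\alpha$ of $\cfrak$ is expressed through $n(\Bbf,\vfrak)$, which by \cite[Definition 5.5]{behrend_semi-stabilityofreductivegroupschemesovercurves} is a degree of a line bundle built from the root subgroup $\Lie((\Gbf_\eta)_\alpha)$ extended over $X$, namely its saturation inside $\Lie(\Bbf)$.

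For \ref{def_complementarypolyhedra1}, let $\cfrak,\dfrak$ share the vertex $\lambda=\lambda_\vfrak$. Let $Q$ be the $1$-dimensional facet spanned by $\lambda$, so that $Q\geq\cfrak,\dfrak$, and let $\Pbf=\underline{R}^{-1}(Q)$ be the corresponding maximal parabolic, which contains both Borels $\Bbf_\cfrak$ and $\Bbf_\dfrak$. I would then show that the numerical invariant is insensitive to passing to a smaller parabolic, that is, $n(\Bbf_\cfrak,\vfrak)=n(\Pbf,\vfrak)=n(\Bbf_\dfrak,\vfrak)$. This is the group scheme version of the invariance already used after Definition \ref{def_numericalinvariant}. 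Concretely, $n(\Pbf,\vfrak)$ is the degree of $\Lie$ of the piece of $\Rbf_u(\Pbf)$ of $\lambda$-weight $1$, which is $\Psi(Q,\lambda)$. This piece is a subquotient that depends only on $\Pbf$. Both Borels compute the same degree because $\deg$ is additive in the filtrations $\Lie(\Bbf)\supset\ldots$ coming from Lemma \ref{lem_rootsystemreductivegroupscheme}\ref{lem_rootsystemreductivegroupscheme1}.

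For \ref{def_complementarypolyhedra2}, it suffices to treat adjacent chambers, so that $\cfrak$ and $\dfrak=s_\alpha\cfrak$ differ across a single wall $\mathrm{Hy}_\alpha$. Any $\alpha$-conjugate pair is connected by a gallery, and the inequality has to be propagated along that gallery, using linearity and \ref{def_complementarypolyhedra1} for the shared vertices. In the adjacent case, the Borels $\Bbf_\cfrak$ and $\Bbf_\dfrak$ lie in a common minimal parabolic $\Pbf$ whose Levi factor has semisimple rank one. By Behrend's rank-one reduction, i.e. \cite[Lemma 3.9]{behrend_semi-stabilityofreductivegroupschemesovercurves} on the polyhedron side and passing to $\Pbf/\Rbf_u(\Pbf)$ on the group side, the claim reduces to a reductive group scheme with root system $A_1$. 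There the two root line bundles $L_\alpha\subset\Lie(\Bbf_\cfrak)$ and $L_{-\alpha}\subset\Lie(\Bbf_\dfrak)$ are saturations of the generic root spaces. The Killing form pairing $L_\alpha\otimes L_{-\alpha}\to\Ocal_X$ is nonzero, hence $\deg L_\alpha+\deg L_{-\alpha}\leq0$. This is exactly $d(\cfrak)(\alpha)\leq -d(\dfrak)(-\alpha)=d(\dfrak)(\alpha)$.

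The main obstacle is this last step. It requires checking that the normalization relating $n$ to $d(\cfrak)(\alpha)$ turns the pairing inequality into precisely \ref{def_complementarypolyhedra2}, including the factor $2/\langle\alpha,\alpha\rangle$ coming from the dual root. It also requires checking that the reduction to the Levi factor is compatible with the numerical invariants. For the latter I would argue via the triviality of $\Rbf_u(\Pbf)$ on the relevant weight spaces.
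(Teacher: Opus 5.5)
The paper gives no argument of its own here; it only cites Behrend, Prop.~6.6. Checking (A1) and (A2) directly on Borel subgroup schemes is the natural route, but two of your steps fail.

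\textbf{(A1).} From the defining formula, $d(\cfrak)(\alpha_\vfrak)=n(\Bbf_\cfrak,\vfrak)$ holds exactly for the simple root $\alpha_\vfrak$ of $\cfrak$ dual to $\lambda_\vfrak$. By contrast, $d(\cfrak)(\lambda_\vfrak)=\sum_{\mathfrak{w}}n(\Bbf_\cfrak,\mathfrak{w})\,\widecheck{\lambda}_{\mathfrak{w}}(\lambda_\vfrak)$ involves all numerical invariants of $\Bbf_\cfrak$, because distinct fundamental weights are in general not orthogonal. So your first display is wrong.

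The identity you reduce (A1) to, $n(\Bbf_\cfrak,\vfrak)=n(\Pbf,\vfrak)=n(\Bbf_\dfrak,\vfrak)$, is also false. Take $\Gbf=\Ad(E)$ with $E=L_a\oplus L_b\oplus L_c$ (degrees $a,b,c$) and the torus of the splitting. Let $\Pbf$ be the stabilizer of $L_a$, and let $\Bbf_\cfrak,\Bbf_\dfrak$ be the stabilizers of the flags through $L_a\oplus L_b$, resp.\ $L_a\oplus L_c$. The three numbers are $a-b$, $2a-b-c$ and $a-c$, although (A1) holds there. They are degrees of different bundles: a root line for the simple root dual to $\lambda_\vfrak$ (a different root for $\cfrak$ and $\dfrak$), versus all of $W(\Pbf,\vfrak)$.

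What is true, and is exactly (A1), is
\[ d(\cfrak)(\lambda_\vfrak)=c_\vfrak^{-1}n(\Pbf,\vfrak) \quad \text{for every chamber with } Q\geq\cfrak, \]
where $\sum_{\beta\in\Psi(Q,\lambda_\vfrak)}\beta=c_\vfrak\lambda_\vfrak$ with $c_\vfrak>0$. This follows from invariance under the Weyl group of the Levi (cf.\ Behrend's Prop.~1.9). It also requires $n(\Pbf,\vfrak)=\sum_{\beta\in\Psi(Q,\lambda_\vfrak)}d(\cfrak)(\beta)$.

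Your additivity along a $\Bbf_\cfrak$-stable filtration of $W(\Pbf,\vfrak)$ only yields $\sum_\beta\deg L^\cfrak_\beta$. Here the $L^\cfrak_\beta$ must be the weight lines in the graded pieces of $\Lie(\Rbf_u(\Bbf_\cfrak))$; saturations of generic root lines in $\Lie(\Bbf_\cfrak)$ merely inject into these and can have smaller degree. You still have to show that $\beta\mapsto\deg L^\cfrak_\beta$ is additive on positive roots, so that it agrees with $d(\cfrak)(\beta)$ for non-simple $\beta$. This is where rational splitness (the inner form) enters: $\Bbf_\cfrak/\Rbf_u(\Bbf_\cfrak)$ is a split torus, and the $L^\cfrak_\beta$ are associated to characters of a single torsor under it.

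\textbf{(A2).} Your rank-one computation is fine for adjacent chambers. The normalization worry is empty: for $\alpha$ simple on $\cfrak$ one has $d(\cfrak)(\alpha)=\deg L_\alpha$ and $d(\dfrak)(-\alpha)=\deg L_{-\alpha}$ on the nose. Rely only on the group-side identification with the two Borels of $\Pbf/\Rbf_u(\Pbf)$ for the minimal parabolic $\Pbf$; Behrend's Lemma~3.9 presupposes that $d$ is already a complementary polyhedron.

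The gap is the sentence that the inequality ``has to be propagated along a gallery''. Crossing a wall $L_\gamma$ changes $d(\cdot)(\alpha)$ by $t\,\widecheck{\gamma}(\alpha)$ with $t\geq0$, and this has no fixed sign. Worse, read literally as in Definition~\ref{def_complementarypolyhedra} (every $\alpha$-positive chamber against every $\alpha$-negative one), the inequality is not even true. In the paper's own $d_b$, $\cfrak_{\alpha+\beta}$ is $\beta$-positive and $\cfrak_{-\alpha-\beta}$ is $\beta$-negative, yet $d_b(\cfrak_{\alpha+\beta})(\beta)=1>-1=d_b(\cfrak_{-\alpha-\beta})(\beta)$.

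So $\alpha$-conjugacy must be taken in Behrend's restricted sense.
\begin{itemize}
\item If it means adjacency across $L_\alpha$, your rank-one case is all of (A2).
\item If it means $\dfrak=s_\alpha\cfrak$ with $\alpha$ not simple on $\cfrak$, you need a real argument. First take a minimal gallery from $\cfrak$ to $\dfrak$. Every wall it crosses separates $\cfrak$ from $\dfrak$, so by the adjacent (A1) and (A2) each jump is $t\,\widecheck{\gamma}$ with $t\geq0$ and $\widecheck{\gamma}>0$ on $\cfrak$. This gives $d(\cfrak)(v)\leq d(\dfrak)(v)$ for all $v\in\cfrak$ and all chambers $\dfrak$. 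Then apply this at $v$ and at $s_\alpha v=v-\widecheck{\alpha}(v)\alpha\in\dfrak$, using $\widecheck{\alpha}(v)>0$.
\end{itemize}
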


In Lemma \ref{lem_Gdcomplementarypolyhedron}, $\Gbf$ must be
rationally split, since Behrend's proof requires that there exists a finite
surjective étale pullback of $\Gbf$ that is an \textit{inner form},
as defined in \cite[Exp. XXIV, 1]{sga3}. Behrend proves that the
(semi)-stability of $\Gbf$ is equivalent to the (semi)-stability of
$d_{\Gbf_{\eta},\Tbf_{\eta}}$, using the following: In the setup of
Lemma \ref{lem_rootsystemreductivegroupscheme}, for any cocharacter
$\boldsymbol{\lambda}_Y:\Gbb_{m,Y}\rightarrow\Gbf_Y$, we can induce a
subgroup scheme $\Pbf(\boldsymbol{\lambda}_Y)$ of $\Gbf$, such that
for any $K$-scheme $T$:
\begin{equation*}
    \Pbf(\boldsymbol{\lambda}_Y)_Y(T)=\{\gbf\in\Gbf_Y(T)\ \vert\ \lim_{a\rightarrow0}\boldsymbol{\lambda}_Y(a)\gbf\boldsymbol{\lambda}_Y(a)^{-1}\text{
    exists}\}
\end{equation*}
using \cite[Proposition 5.2.3]{conrad_reductivegroupschemes}, and
Lemma \ref{lem_rootsystemreductivegroupscheme}
\ref{lem_rootsystemreductivegroupscheme1} and
\ref{lem_rootsystemreductivegroupscheme2}.

\begin{lemma}[Stability conditions of reductive group schemes
        {\cite[Propositions 6.8 and Lemma
    7.1]{behrend_semi-stabilityofreductivegroupschemesovercurves}}]\label{lem_Gdstableequivalence}
    Let $\Gbf$ be a rationally split reductive group scheme over $X$,
    i.e., split over the Weil-restriction $\pi:X_{\eta}\rightarrow X$,
    so $\Gbf_{\eta}:=\pi^*\Gbf$ splits with  maximal torus $\Tbf_{\eta}$.
    \begin{enumerate}[label=(\roman*)]
        \item\label{lem_Gdstableequivalence1} $\Gbf$ is (semi)-stable if
            and only if $d_{\Gbf_{\eta},\Tbf_{\eta}}$ is (semi)-stable.
        \item\label{lem_Gdstableequivalence2} $\Gbf$ is polystable if and
            only if $d_{\Gbf_{\eta},\Tbf_{\eta}}$ is polystable.
    \end{enumerate}
\end{lemma}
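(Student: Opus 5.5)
Part (i) is Behrend's result (\cite[Propositions 6.8 and Lemma 7.1]{behrend_semi-stabilityofreductivegroupschemesovercurves}), so I will only recall its mechanism and concentrate on (ii). The mechanism is as follows. By Lemma \ref{lem_rootsystemreductivegroupscheme}, parabolic subgroup schemes $\Pbf$ of $\Gbf$ with $\Tbf_\eta\subset\Pbf_\eta$ correspond order-preservingly to facets $P=\underline{R}(\Pbf)$ of $\Phi:=\Phi(\Gbf_\eta,\Tbf_\eta)$. Under this correspondence $\deg(\Pbf)=\deg(P)$ for $d:=d_{\Gbf_\eta,\Tbf_\eta}$: one writes $\Lie(\Pbf)$ through Lemma \ref{lem_rootsystemreductivegroupscheme} \ref{lem_rootsystemreductivegroupscheme1} and uses that the numerical invariants recover $d$. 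Every parabolic $\Pbf$ contains a maximal torus that splits generically, so after conjugating over $\eta$ (Behrend's reduction, which keeps degrees unchanged) it suffices to test parabolics in $\Par_{\Tbf_\eta}(\Gbf)$.

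For (ii) I would translate each of the conditions \ref{def_groupschemestability31} and \ref{def_groupschemestability32} into the corresponding conditions \ref{def_stabilityofcomplementarypolyhedra31} and \ref{def_stabilityofcomplementarypolyhedra32}, facet by facet. The zero-degree parabolics $\Pbf$ containing $\Tbf_\eta$ generically correspond exactly to facets $P$ with $\deg(P)=0$.

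\textbf{Step 1 (Levi-factors).} I would identify the Levi-factor $\Lbf=\Pbf/\Rbf_u(\Pbf)$ as a rationally split reductive group scheme over $X$ with generic maximal torus the image of $\Tbf_\eta$. Its root system is the Levi root system $\{\alpha\in\Phi:\alpha\in R(P)\cap R(-P)\}$, which should be identified with $\Phi_P$ via $\pr_P$ up to the center-trivial normalisation in (\ref{eq}). The key claim is $d_{\Lbf_\eta,\Tbf_\eta}=d_P$. To prove it, I would compare numerical invariants: parabolics of $\Lbf$ correspond to parabolics $\Qbf\subset\Pbf$ of $\Gbf$, and their Lie algebras differ by the fixed summand $\Lie(\Rbf_u(\Pbf))$. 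This matches Behrend's description of $d_P$ via \cite[Lemma 3.9]{behrend_semi-stabilityofreductivegroupschemesovercurves}. With the claim in hand, part (i) applied to $\Lbf$ gives \ref{def_groupschemestability31}$\Leftrightarrow$\ref{def_stabilityofcomplementarypolyhedra31}.

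\textbf{Step 2 (opposites).} This is the main obstacle. In one direction, if $\deg(-P)=0$ I would like $\Pbf':=\underline{R}^{-1}(-P)$ to be the opposite parabolic, but that only makes sense generically. Globally I need $\Pbf\cap\Pbf'\to\Lbf$ to be an isomorphism over all of $X$, not just over $\eta$. My first approach is to take the schematic closure of the generic opposite parabolic $\Pbf'_\eta$ inside $\Par(\Gbf)$, which extends over the smooth curve by properness of $\Par(\Gbf)$. I would then show that transversality to $\Pbf$ holds everywhere by a degree computation. The multiplication map $\Rbf_u(\Pbf')\times\Pbf\to\Gbf$ is an open immersion generically, so the induced map $\Lie(\Pbf)\oplus\Lie(\Rbf_u(\Pbf'))\to\Lie(\Gbf)$ is an injection of vector bundles. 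Its degree defect is $\deg\Lie\Gbf-\deg\Pbf-\deg\Rbf_u(\Pbf')$, and this should vanish exactly when $\deg(P)+\deg(-P)=0$, because $\deg(\Lie\Gbf)=0$ (since $\Lie\Gbf$ is self-dual via the invariant form) and $\deg\Lie\Lbf=0$. A generically injective map of bundles of equal rank and equal degree is an isomorphism, which gives \ref{def_groupschemestability32}.

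Conversely, given an opposite $\Pbf'$, the same splitting $\Lie\Gbf\cong\Lie\Rbf_u(\Pbf)\oplus\Lie\Lbf\oplus\Lie\Rbf_u(\Pbf')$ shows $\deg\Pbf'=-\deg\Rbf_u(\Pbf)=\deg\Pbf=0$. After conjugating generically so that $\Pbf'_\eta\supset\Tbf_\eta$, its facet is $-P$, and so $\deg(-P)=0$. The delicate point is making sure these generic conjugations preserve the relevant degrees and the relation between $\Pbf'$ and $-P$. For this I would rely on Behrend's argument that the degree depends only on the generic fibre of the parabolic together with $\Gbf$.
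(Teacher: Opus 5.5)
The gap is the reduction you take over from the paper: ``after conjugating over $\eta$ (which keeps degrees unchanged) it suffices to test parabolics in $\Par_{\Tbf_\eta}(\Gbf)$''. A parabolic subgroup scheme is the closure of its generic fibre, so its degree depends only on $\Pbf_\eta$. But conjugating by $g\in\Gbf(\eta)$ changes $\Pbf_\eta$, and $g$ does not extend to an automorphism of $\Gbf$ over $X$. The paper's proof asserts such an extension via projectivity, and it fails for the same reason: automorphisms of $\Gbf$ preserve degrees of parabolics, and degrees do change.

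Concretely, on $X=\mathbb{P}^1$ let $\Gbf=\GLbf(E)$ with $E=\Ocal(1)\oplus\Ocal(-1)$. Let $\Tbf_\eta$ be the torus of the splitting $E_\eta=\ell_1\oplus\ell_2$, where $\ell_j$ is the generic fibre of the subbundle $\Ocal(-1)\xrightarrow{(s_j,1)}E$ for distinct $s_1,s_2\in H^0(\Ocal(2))$. Both Borels in $\Par_{\Tbf_\eta}(\Gbf)$ then have degree $-2$, so $d_{\Gbf_\eta,\Tbf_\eta}$ is stable. Yet the Borel stabilizing $\Ocal(1)$ has degree $2$.

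Similarly, take $E=\Ocal_X^{\oplus 2}$, let $\ell_1$ be the generic fibre of $\Ocal_X\oplus 0$, and let $\ell_2$ be spanned by $(f,1)$ with $f\in K(X)$ nonconstant. The facet $P$ of the stabilizer $\Pbf$ of $\Ocal_X\oplus 0$ has $\deg(P)=0$. But $\ell_2$ saturates to $\Ocal_X(-(f)_\infty)$, so $\deg(-P)<0$, and $d_{\Gbf_\eta,\Tbf_\eta}$ is not polystable although $\Gbf$ is. Here the global opposite $\Pbf'$, the stabilizer of $0\oplus\Ocal_X$, is carried to $\underline{R}^{-1}(-P)$ only by $\bigl(\begin{smallmatrix}1&f\\0&1\end{smallmatrix}\bigr)\in\Rbf_u(\Pbf)(\eta)$, and its degree drops. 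This is exactly your ``delicate point'' in the converse of Step 2. Appealing to the fact that degrees depend only on generic fibres cannot repair it.

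So for a fixed arbitrary $\Tbf_\eta$, only the implication from (semi)-stability of $\Gbf$ to that of $d_{\Gbf_\eta,\Tbf_\eta}$ survives. Step 1 inherits the same problem when you apply (i) to $\Lbf$. The remaining implications require letting the torus depend on the parabolics under test (e.g.\ $\Tbf_\eta\subset\Pbf_\eta\cap\Pbf'_\eta$), which the statement as formulated does not permit.

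Apart from this, your route matches the paper's. Step 1 is what the paper obtains from Behrend's Proposition 6.9, and part (i) is delegated to Behrend in both. Your degree-defect argument in Step 2 is a real improvement. The paper simply asserts that, for $\Pbf,\Pbf'\in\Par_{\Tbf_\eta}(\Gbf)$, $\Pbf'$ is opposite to $\Pbf$ if and only if $\underline{R}(\Pbf')=-\underline{R}(\Pbf)$. That is false globally: in the second example, $\underline{R}^{-1}(-P)$ and $\Pbf$ have the same fibre at the poles of $f$. Your injection $\Lie(\Pbf)\oplus\Lie(\Rbf_u(\Pbf''))\hookrightarrow\Lie(\Gbf)$ has cokernel of length $-(\deg\Pbf+\deg\Pbf'')$. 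This shows that global oppositeness holds exactly when both degrees vanish, once you add the standard check that pointwise transversality of the Lie algebras forces the fibres to be opposite. But the argument only covers pairs already in $\Par_{\Tbf_\eta}(\Gbf)$, so it does not close the gap above.
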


\begin{proof}
    For any parabolic subgroup scheme $\Pbf'$ of $\Gbf$, we have by
    \cite[Corollary 5.2.3]{conrad_reductivegroupschemes} that
    $\Pbf'=\Pbf(\boldsymbol{\lambda}_\eta')$ for some cocharacter
    $\boldsymbol{\lambda}'_\eta:\Gbb_{m,\eta}\rightarrow\Gbf_{\eta}$
    which restricts to a maximal torus $\Tbf'_{\eta}$ contained in
    $\Pbf'_{\eta}:=\pi^*\Pbf'$. Since $\Tbf_{\eta}$ and $\Tbf'_{\eta}$
    are maximal tori, there exists a conjugation $\conj_{\eta}$ of
    $\Gbf_{\eta}$ that maps $\Tbf'_{\eta}$ to $\Tbf_{\eta}$. The group
    scheme $\Ad_{\Tbf_{\eta}}(\Gbf)$ over $X$ of automorphisms
    preserving $\Tbf_{\eta}$ is projective over $X$, using \cite[Exp.
    XXVI, 3.5]{sga3}. Hence, $\conj_{\eta}$ extends uniquely to an isomorphism
    $\conj:\Gbf\rightarrow\Gbf$. For
    $\Pbf:=\Pbf(\conj\circ\boldsymbol{\lambda}'_\eta)$, $\conj_{\eta}$
    maps $\Pbf'_{\eta}$ to $\Pbf_{\eta}:=\pi^*\Pbf$, and extends to an
    isomorphism between $\Pbf$ and $\Pbf'$, so we have
    $\deg(\Pbf')=\deg(\Pbf)$. Therefore, we only need to test the
    inequalities $\deg(\Pbf)\leqparen0$ for parabolic subgroup schemes $\Pbf$ of
    $\Gbf$ where $\Pbf_{\eta}:=\pi^*\Pbf$ contains $\Tbf_{\eta}$,
    with Notation \ref{rem_remarkonnotation}.

    For \ref{lem_Gdstableequivalence1}, the claim follows by
    \cite[Propositions 6.8 and Lemma
    7.1]{behrend_semi-stabilityofreductivegroupschemesovercurves},
    since $\deg(\Pbf)$ and $\deg(\underline{R}(\Pbf))$ have the same
    sign. We now prove \ref{lem_Gdstableequivalence2}: It is clear that
    conditions \ref{def_groupschemestability31} and
    \ref{def_stabilityofcomplementarypolyhedra31} are equivalent, due
    to \ref{lem_Gdstableequivalence1} and the fact from
    \cite[Proposition
    6.9]{behrend_semi-stabilityofreductivegroupschemesovercurves} that
    for any parabolic subgroup scheme $\Pbf$ of $\Gbf$ where
    $\Pbf_\eta$ contains $\Tbf_\eta$, the extension
    $\Pbf\rightarrow\Lbf=\Pbf/\Rbf_u(\Pbf)$ projects the root system:
    \begin{equation*}
        \Phi(\Gbf_\eta,\Tbf_\eta)\text{ of
        }(V_{\Gbf_\eta,\Tbf_\eta},\langle\_,\_\rangle_{\Gbf_\eta,\Tbf_\eta})
    \end{equation*}
    down to the following root system, with a canonical isomorphism of
    root systems:
    \begin{align*}
        \Phi(\Gbf_\eta,\Tbf_\eta)_{\underline{R}(\Pbf)}\text{ of
        }(\underline{R}(\Pbf)^\perp,\langle\_,\_\rangle_{\Gbf_\eta,\Tbf_\eta})
        &  & \cong &  & \Phi(\Lbf_\eta,\Tbf_\eta)\text{ of
        }(V_{\Lbf_\eta,\Tbf_\eta},(\langle\_,\_\rangle)_{\underline{R}(\Pbf)})
    \end{align*}
    To prove the equivalence between \ref{def_groupschemestability32}
    and \ref{def_stabilityofcomplementarypolyhedra32}, for parabolic
    subgroup schemes $\Pbf$ and $\Pbf'$, where $\Pbf_\eta$ and
    $\Pbf'_\eta$ contain $\Tbf_\eta$, $\Pbf'$ is opposite to $\Pbf$ if
    and only if $\underline{R}(\Pbf')=-\underline{R}(\Pbf)$, due to
    Lemma \ref{lem_rootsystemreductivegroupscheme}
    \ref{lem_rootsystemreductivegroupscheme2}. By \cite[Propositions
        6.8 and Lemma
    7.1]{behrend_semi-stabilityofreductivegroupschemesovercurves} we
    know that $\deg(\Pbf)=0$ if and only if
    $\deg(\underline{R}(\Pbf))=0$, and that $\deg(\Pbf')=0$ if and only
    if $\deg(\underline{R}(\Pbf'))=\deg(-\underline{R}(\Pbf))=0$. Then
    the claim follows.
\end{proof}

\begin{remark}[Maximal parabolics]\label{rem_maximalparabolicssuffice}
    In the setup of Lemmas \ref{lem_Gdcomplementarypolyhedron} and
    \ref{lem_Gdstableequivalence}, for the stability conditions of
    $\Gbf$, it suffices to only check the inequalities and conditions
    \ref{def_groupschemestability31} and
    \ref{def_groupschemestability32} for maximal parabolic subgroup
    schemes $\Pbf$ of $\Gbf$. To prove this claim for (semi)-stability,
    in \cite[Propositions 1.9 and
    3.2]{behrend_semi-stabilityofreductivegroupschemesovercurves}, it
    is shown that $d_{\Gbf_{\eta},\Tbf_{\eta}}$ from Lemma
    \ref{lem_Gdcomplementarypolyhedron} is (semi)-stable if and only if
    for all $1$-dimensional facets $P$ of
    $\Phi(\Gbf_{\eta},\Tbf_{\eta})$, we have $\deg(P)\leqparen0$,
    with Notation \ref{rem_remarkonnotation}. Since
    Lemma \ref{lem_rootsystemreductivegroupscheme}
    \ref{lem_rootsystemreductivegroupscheme2} gives an order-preserving
    bijection $\underline{R}$, maximal parabolics $\Pbf$ of $\Gbf$,
    where $\Pbf_{\eta}:=\pi^*\Pbf$ contains $\Tbf_{\eta}$, correspond
    to $1$-dimensional facets $\underline{R}(\Pbf)$ of
    $\Phi(\Gbf_{\eta},\Tbf_{\eta})$. With these facts, the claim
    follows from Lemma \ref{lem_Gdstableequivalence}
    \ref{lem_Gdstableequivalence1}. For polystability, the same
    arguments also are compatible with conditions
    \ref{def_groupschemestability31}, \ref{def_groupschemestability32},
    \ref{def_stabilityofcomplementarypolyhedra31}, and
    \ref{def_stabilityofcomplementarypolyhedra32}.
\end{remark}

\subsection{Jordan-Hölder theory for reductive group
schemes over a curve}\label{subsec_JHreductivegroupschemes}
Similar to how Behrend constructs \textit{canonical parabolics} for
reductive group schemes in \cite[Theorem
7.3]{behrend_semi-stabilityofreductivegroupschemesovercurves}, we
apply our Jordan-Hölder theory for complementary polyhedra to
reductive group schemes to construct \textit{Jordan-Hölder
parabolics}. This allows us to apply our Jordan-Hölder theory for
complementary polyhedra to the automorphism groups of the moduli
problems stated in the introduction (\ref{intro_applications}),
inducing Jordan-Hölder filtrations.

We define Jordan-Hölder parabolics of reductive group schemes $\Gbf$,
with the same properties as Jordan-Hölder facets of
$d_{\Gbf_{\eta},\Tbf_{\eta}}$ from Lemma
\ref{lem_Gdcomplementarypolyhedron} when $\Gbf$ is rationally split.
For this, recall that for a parabolic subgroup scheme $\Pbf$ of
$\Gbf$, the unipotent radical $\Rbf_u(\Pbf)$ has a filtration of
subgroups, called the \textit{composition series}:
\begin{equation*}
    \Rbf_u(\Pbf)_0\supset\Rbf_u(\Pbf)_1\supset\ldots
\end{equation*}
as defined in
\cite[Exp. XXVI, 2.1]{sga3} and
\cite[Proposition
5.4]{behrend_semi-stabilityofreductivegroupschemesovercurves}. The
quotients form direct sums of vector bundles over $X$, as seen in
\cite[Theorem 5.4.3]{conrad_reductivegroupschemes}, for which the
direct sums of  $\Rbf_u(\Pbf)_0/\Rbf_u(\Pbf)_1$ are called
\textit{elementary vector bundles} in \cite[Definition
5.5]{behrend_semi-stabilityofreductivegroupschemesovercurves}.
Elementary vector bundles can be naturally indexed by vertices in
$\pi_0(t(\Pbf))$, labeled $W(\Pbf,\vfrak)$, whose degrees are
\textit{numerical invariants} $n(\Pbf,\vfrak):=\deg(W(\Pbf,\vfrak))$,
as defined in \cite[Definition
5.5]{behrend_semi-stabilityofreductivegroupschemesovercurves}.

\begin{definition}[Jordan-Hölder parabolics]\label{def_jordanholderparabolic}
    Let $\Gbf$ be a reductive group scheme over $X$, then a parabolic
    group scheme $\Pbf$ of $\Gbf$ is \textit{Jordan-Hölder} if:
    \begin{enumerate}[label=(\roman*)]
        \item\label{def_jhparabolic1} The numerical invariants
            $n(\Pbf,\vfrak)$ are $0$ for every vertex
            $\vfrak\in\pi_0(t(\Pbf))$.

        \item\label{def_jhparabolic2} $\Lbf$ is stable, where $\Lbf$ is
            the Levi-factor of $\Pbf$.
    \end{enumerate}
\end{definition}

In our moduli problems, we often use different stability conditions
for reductive group schemes than those of Definition
\ref{def_groupschemestability}. Motivated by Lemma
\ref{lem_Gdstableequivalence}, we define (semi)-stability and
polystability conditions of $\Gbf$ through prescribed complementary
polyhedra, directly generalizing Definition \ref{def_jordanholderparabolic}.

\begin{definition}[Jordan-Hölder parabolics with respect to
    $d$]\label{def_jhparabolic}
    Let $\Gbf$ be a reductive group scheme over $S$ and let
    $\pi:Y\rightarrow S$ be a surjective étale morphism such that
    $\Gbf_Y:=\pi^*\Gbf$ splits with maximal torus $\Tbf_Y$.  Let
    $d:\Wfrak_{\Phi(\Gbf_Y,\Tbf_Y)}\rightarrow V^*_{\Gbf_Y,\Tbf_Y}$ be
    a complementary polyhedron. A parabolic group scheme $\Pbf$ of
    $\Gbf$ is \textit{Jordan-Hölder with respect to $d$} if
    $\Pbf_Y:=\pi^*\Pbf$ contains $\Tbf_Y$, and $\underline{R}(\Pbf_Y)$
    is a Jordan-Hölder facet of $d$.
\end{definition}

When $\Gbf$ is a rationally split reductive group scheme over $X$,
due to Lemma \ref{lem_Gdstableequivalence},
Jordan-Hölder parabolics $\Pbf$ of $\Gbf$ with respect to
$d_{\Gbf_{\eta},\Tbf_{\eta}}$ are precisely Jordan-Hölder parabolics
$\Pbf$ of $\Gbf$, in the sense of Definition
\ref{def_jordanholderparabolic}, such that $\Pbf_\eta:=\pi^*\Pbf$
contains $\Tbf_\eta$.
Through our Jordan-Hölder theory for complementary polyhedra, we have
the following theorem proving the existence of Jordan-Hölder
parabolics, and their uniqueness up to isomorphism of Levi-factor.
\begin{theorem}[Jordan-Hölder parabolics with respect to
    $d$]\label{thm_jhlevireductivegroupscheme}
    Let $\Gbf$ be a rationally split reductive group scheme over $S$,
    and let $\pi:Y\rightarrow S$ be a surjective étale morphism such
    that $\Gbf_Y:=\pi^*\Gbf$ splits with maximal torus $\Tbf_Y$.  Let
    $d:\Wfrak_{\Phi(\Gbf_Y,\Tbf_Y)}\rightarrow V^*_{\Gbf_Y,\Tbf_Y}$ be
    a complementary polyhedron.
    \begin{enumerate}[label=(\roman*)]
        \item\label{thm_jhlevireductivegroupscheme1} There exists a
            Jordan-Hölder parabolic $\Pbf$ of $\Gbf$ with respect to $d$.

        \item\label{thm_jhlevireductivegroupscheme2} For Jordan-Hölder
            parabolics $\Pbf_1$ and $\Pbf_2$ of $\Gbf$ with respect to $d$,
            there exists an automorphism $\taubf:\Gbf\rightarrow\Gbf$ that
            projects to an isomorphism
            $\taubf_{\Lbf}:\Lbf_1\rightarrow\Lbf_2$ between their
            Levi-factors $\Lbf_1=\Pbf_1/\Rbf_u(\Pbf_1)$ and
            $\Lbf_2=\Pbf_2/\Rbf_u(\Pbf_2)$.
    \end{enumerate}
\end{theorem}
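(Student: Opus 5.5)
We reduce both parts to the combinatorial statements of Theorems \ref{thm_jhfacetexistence} and \ref{thm_jhfacetlevi}, transporting facets back to parabolic subgroup schemes via the bijection $\underline{R}$ of Lemma \ref{lem_rootsystemreductivegroupscheme}. One point to note first: Definition \ref{def_jhfacet} only makes sense for semistable $d$. So we read the statement as implicitly assuming that $d$ is semistable (equivalently, $0\in F$ by Lemma \ref{lem_stabilityofcomplementarypolyhedra}). Without this, no Jordan-Hölder facet exists and part \ref{thm_jhlevireductivegroupscheme2} is vacuous.

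For part \ref{thm_jhlevireductivegroupscheme1}, Theorem \ref{thm_jhfacetexistence} gives a Jordan-Hölder facet $P$ of $d$, namely a minimal facet satisfying \ref{def_jhfacet1}. By Lemma \ref{lem_rootsystemreductivegroupscheme}~\ref{lem_rootsystemreductivegroupscheme2}, the map $\underline{R}:\Gamma(\Par_{\Tbf_Y}(\Gbf))\to\{\text{facets of }\Phi(\Gbf_Y,\Tbf_Y)\}$ is bijective. Hence there is a unique parabolic subgroup scheme $\Pbf:=\underline{R}^{-1}(P)$ of $\Gbf$ with $\Pbf_Y\supset\Tbf_Y$ and $\underline{R}(\Pbf_Y)=P$. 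This $\Pbf$ is Jordan-Hölder with respect to $d$ by Definition \ref{def_jhparabolic}. The only care needed is that $\underline{R}$ is defined on parabolics of $\Gbf$ itself and not merely of $\Gbf_Y$, which is exactly what Lemma \ref{lem_rootsystemreductivegroupscheme} provides. This is also where rational splitness is used, since descent of the facet to a subgroup scheme over $S$ is built into that lemma.

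For part \ref{thm_jhlevireductivegroupscheme2}, let $P_i:=\underline{R}(\Pbf_i)$ for $i=1,2$. These are Jordan-Hölder facets of $d$, so Theorem \ref{thm_jhfacetlevi} gives an orthogonal automorphism $\tau$ of $V$ with $\tau(\Phi)=\Phi$ and $\tau(P_1)=P_2$; it also gives $P_1^\perp=P_2^\perp$ and $d_{P_1}=d_{P_2}$. The plan is to realize $\tau$ as an automorphism $\taubf$ of $\Gbf$, in three steps.

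First, we lift $\tau$ to an automorphism $\taubf_Y$ of the split group $\Gbf_Y$ that preserves $\Tbf_Y$. Write $\tau=w\circ\sigma$, where $w$ lies in the Weyl group and $\sigma$ preserves a base $\triangle$. Then $w$ lifts to conjugation by an element of $N_{\Gbf_Y}(\Tbf_Y)$. The diagram automorphism $\sigma$ lifts to a pinned automorphism by the isomorphism and existence theorems for split reductive group schemes (SGA3 Exp.~XXIII, XXIV). One must check that $\sigma$ extends from the root datum modulo center to the full root datum. We handle this by noting that $\tau$ only needs to act on $\Xcal^*(\Tbf)\otimes\Rbb$ as in \eqref{eq}. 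If necessary, we correct $\tau$ by an element fixing $P_1^\perp$; here we use the freedom in Theorem \ref{thm_jhfacetlevi}, where $\tau$ is built from reflections and $\pm\id$ on rank-one pieces.

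Second, we descend $\taubf_Y$ to $\Gbf$. Following the proof of Lemma \ref{lem_Gdstableequivalence}, the automorphisms of $\Gbf$ preserving $\Tbf$ are controlled by a scheme that is projective over the base (SGA3 Exp.~XXVI, 3.5). So an automorphism defined over the generic splitting extends uniquely to $\Gbf$.

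Third, since $\taubf_Y$ maps the root subgroups indexed by $R(P_1)$ to those indexed by $R(\tau P_1)=R(P_2)$, Lemma \ref{lem_rootsystemreductivegroupscheme}~\ref{lem_rootsystemreductivegroupscheme1} gives $\taubf(\Pbf_1)=\Pbf_2$. Because $\taubf$ preserves unipotent radicals, it induces $\taubf_{\Lbf}:\Lbf_1\to\Lbf_2$.

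The main obstacle is the lifting and descent of $\tau$. When $\tau$ is an outer automorphism (Remark \ref{rem_outerautomorphism}), it need not lift to $\Gbf_Y$ in general, for instance when the root datum is not symmetric, and it need not commute with the descent datum of a nonsplit $\Gbf$. I would first try to avoid the outer part altogether. By Theorem \ref{thm_jhfacetlevi}~\ref{thm_jhfacetlevi1} the Levi subgroups $\Pbf_1\cap\Pbf_1^-$ and $\Pbf_2\cap\Pbf_2^-$ both contain $\Tbf_Y$ and have the same root system $\Phi\cap P_1^\perp$. Hence the identity of $\Gbf$ already induces an isomorphism of the Levi subgroups containing $\Tbf$, and composing with the canonical maps $\Pbf_i\cap\Pbf_i^-\cong\Lbf_i$ gives $\taubf_{\Lbf}$ with $\taubf=\id$. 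This second route sidesteps the descent issue entirely, and I would pursue it primarily. Its remaining check is that the opposite parabolic $\Pbf_i^-=\underline{R}^{-1}(-P_i)$ exists over $S$, which again follows from the bijectivity of $\underline{R}$.
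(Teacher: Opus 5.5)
Part (i) is the paper's argument. You are right that semistability of $d$ is implicitly needed; the paper's proof uses it too, through Theorem~\ref{thm_jhfacetexistence}.

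\textbf{Part (ii): the two routes.} The paper takes your first route. It lifts the $\tau$ of Theorem~\ref{thm_jhfacetlevi}~\ref{thm_jhfacetlevi2} to $\Gbf_Y$ by the isomorphism theorem. It then extends $\tau$ to $\Gbf$ by citing projectivity of $\Ad_{\Tbf_Y}(\Gbf)$, and takes $\taubf_{\Lbf}$ to be the map induced by $\taubf(\Pbf_1)=\Pbf_2$.

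Your second route is genuinely different and uses only $P_1^\perp=P_2^\perp$:
\begin{itemize}
\item Since $R(-P_i)=-R(P_i)$, each $(\Pbf_i\cap\Pbf_i^-)_Y$ is the subgroup generated by $\Tbf_Y$ and the root groups of $\Phi\cap P_1^\perp$, so the two coincide.
\item Closed subschemes descend uniquely along the faithfully flat $\pi$. Hence $M:=\Pbf_1\cap\Pbf_1^-=\Pbf_2\cap\Pbf_2^-$ already over $S$, and $\Lbf_1\cong M\cong\Lbf_2$. State this descent step explicitly; it is the only descent you need.
\end{itemize}
What you give up is that $\taubf=\id$ does not send $\Pbf_1$ to $\Pbf_2$. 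So ``projects to'' has to be read as ``restricts to the common Levi subgroup $M$, composed with the quotient maps''. What you gain is that you never lift or descend a possibly outer automorphism.

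\textbf{Why you should drop the first route.} The trade is forced, so discard the first route rather than keep it in reserve. Here is an example where it cannot succeed.
\begin{itemize}
\item Work on a curve of genus $\geq 2$. Take $E=A\oplus B\oplus C$ with $A,B$ line bundles of degree $0$ and $C$ stable of rank $2$ and degree $0$. Take $\Tbf_\eta$ from $E_\eta=A_\eta\oplus B_\eta\oplus\ell_1\oplus\ell_2$ with $C_\eta=\ell_1\oplus\ell_2$.
\item The facets of the flags $A\subset A\oplus B\subset E$ and $A\subset A\oplus C\subset E$ are both Jordan-Hölder for $d_{\Ad(E)_\eta,\Tbf_\eta}$. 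All graded pieces have degree $0$, and the Levi part is stable because the $\ell_i^{\mathrm{sat}}\subset C$ have negative degree.
\item The first facet has vertices of types $\omega_1,\omega_2$; the second has vertices of types $\omega_1,\omega_3$. In $A_3$ the squared lengths are $1$ for $\omega_2$ versus $3/4$ for $\omega_1,\omega_3$. Hence no orthogonal $\tau$ with $\tau(\Phi)=\Phi$ maps one facet to the other.
\item At the group level, no automorphism of $\mathrm{GL}(E)$ maps a parabolic of type $\{1,2\}$ to one of type $\{1,3\}$. Inner automorphisms preserve the type, and the outer one sends $\{1,2\}$ to $\{2,3\}$.
\end{itemize}
So Theorem~\ref{thm_jhfacetlevi}~\ref{thm_jhfacetlevi2}, on which the paper's proof of (ii) rests, fails in this example, while part~\ref{thm_jhfacetlevi1} still holds. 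Statement (ii) cannot hold with $\taubf(\Pbf_1)=\Pbf_2$. Your second route proves it in the form that is true, and that Levi isomorphism is what the later applications actually use.
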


\begin{proof}
    For \ref{thm_jhlevireductivegroupscheme1}, by applying Lemma
    \ref{lem_rootsystemreductivegroupscheme}
    \ref{lem_rootsystemreductivegroupscheme2}, a parabolic subgroup
    scheme $\Pbf$ of $\Gbf$, such that $\Pbf_Y$ contains $\Gbf_Y$, is a
    Jordan-Hölder parabolic with respect to $d$ if and only if
    $\underline{R}(\Pbf)$ is a Jordan-Hölder facet of $d$. The claim
    follows from the existence of Jordan-Hölder facets of $d$ from
    Theorem \ref{thm_jhfacetexistence}.

    For \ref{thm_jhlevireductivegroupscheme2}, by Lemma
    \ref{lem_rootsystemreductivegroupscheme}
    \ref{lem_rootsystemreductivegroupscheme2}, $\Pbf_1$ and $\Pbf_2$
    correspond to Jordan-Hölder facets $P_1$ and $P_2$ of $d$. Since
    the group scheme of automorphisms $\Ad_{\Tbf_Y}(\Gbf)$ is
    projective over $S$, using \cite[Exp. XXVI, 4.1.1]{sga3}, by
    \cite[(1.5.1) and Theorem 6.1.17]{conrad_reductivegroupschemes},
    the automorphism $\tau:V_{\Gbf_Y,\Tbf_Y}\rightarrow
    V_{\Gbf_Y,\Tbf_Y}$ from Theorem \ref{thm_jhfacetlevi} induces an
    isomorphism of $\Gbf_Y$ that extends to an automorphism
    $\taubf:\Gbf\rightarrow\Gbf$, since $\tau$ preserves
    $\Phi(\Gbf_Y,\Tbf_Y)$. The properties of $\tau$, $P_1$, and $P_2$,
    from Theorem \ref{thm_jhfacetlevi} give precisely the desired
    properties of $\taubf$ and $\taubf_\Lbf$ in
    \ref{thm_jhlevireductivegroupscheme2}, where the latter
    $\taubf_\Lbf$ uses that the group scheme
    $\Iso_{\Tbf_Y}(\Lbf_1,\Lbf_2)$ of automorphisms from $\Lbf_1$ to
    $\Lbf_2$, preserving $\Tbf_Y$, is projective over $S$, as a
    consequence of \cite[Exp. XXVI, 4.1.1]{sga3}.
\end{proof}

\begin{remark}[Based root data]\label{rem_basedrootdata}
    The isomorphism $\taubf:\Lbf_1\rightarrow\Lbf_2$ is not necessarily a
    restriction of a conjugation from $\Gbf$. This is because
    conjugations of $\Gbf$ preserving $\Tbf_Y$ correspond to elements of
    the Weyl group of $\Phi(\Gbf_Y,\Tbf_Y)$, and the automorphism
    $\tau:V_{\Gbf_Y,\Tbf_Y}\rightarrow V_{\Gbf_Y,\Tbf_Y}$ is not
    necessarily in the Weyl group, as seen in Remark
    \ref{rem_outerautomorphism}. Instead, $\tau$ corresponds to an element in
    the linear orthogonal automorphisms preserving
    $\Phi(\Gbf_Y,\Tbf_Y)$, which is the Weyl
    group together with \textit{based root data}, as discussed in
    \cite[§1.5]{conrad_reductivegroupschemes}.
\end{remark}

\begin{remark}[Isomorphism of Levi-factors]
    In the setup of Lemma \ref{lem_Gdstableequivalence}, Theorem
    \ref{thm_jhlevireductivegroupscheme} implies that any two
    Jordan-Hölder parabolics $\Pbf_1$ and $\Pbf_2$ of $\Gbf$, in the
    sense of Definition \ref{def_jordanholderparabolic}, have
    isomorphic Levi-factors: As a consequence of Lemma
    \ref{lem_Gdstableequivalence} and its proof, $\Pbf_1$ and $\Pbf_2$
    does not need to share a prescribed maximal torus $\Tbf_\eta$, as
    in the situation of Theorem \ref{thm_jhlevireductivegroupscheme}.
    We are allowed to choose $\Tbf_\eta$ to be contained inside
    $(\Pbf_1)_\eta:=\pi^*\Pbf_1$ and $(\Pbf_2)_\eta:=\pi^*\Pbf_2$,
    which is possible due to \cite[Exp. XXVI, 4.1.1]{sga3}, and then
    apply Theorem \ref{thm_jhlevireductivegroupscheme}.
\end{remark}

In the setup of Theorem \ref{thm_jhlevireductivegroupscheme},
We obtain another characterization of the polystability of $d$,
inspired by Lemma \ref{lem_polystabilityfinalremark}.

\begin{lemma}[Equivalent characterization of
    polystability]\label{lem_polystabilityequivalence}
    Let $\Gbf$ be a reductive group scheme over $S$, and let
    $\pi:Y\rightarrow S$ be a surjective étale morphism such that
    $\Gbf_Y:=\pi^*\Gbf$ splits with maximal torus $\Tbf_Y$. Let
    $d:\Wfrak_{\Phi(\Gbf_Y,\Tbf_Y)}\rightarrow V^*_{\Gbf_Y,\Tbf_Y}$ be
    a complementary polyhedron, then $d$ is polystable if and only if
    for every parabolic subgroup scheme $\Pbf$ such that
    $\Pbf_Y:=\pi^*\Pbf$ contains $\Tbf_Y$, and $\deg(\underline{R}(\Pbf_Y))=0$:
    \begin{enumerate}[label=(H\arabic*)]
        \item\label{lem_polystabilityequivalence1}
            $d_{\underline{R}(\Pbf_Y)}$ is a semistable complementary
            polyhedron of $\Phi(\Gbf_Y,\Tbf_Y)_{\underline{R}(\Pbf_Y)}$.
        \item\label{lem_polystabilityequivalence2} There exists a
            parabolic subgroup scheme $\Pbf'$ of $\Gbf$ \textit{opposite}
            to $\Pbf$, i.e., $\Pbf\cap\Pbf'$ is isomorphic to $\Lbf$
            through the projection $\Pbf\rightarrow\Lbf=\Pbf/\Rbf_u(\Pbf)$.
    \end{enumerate}
    Furthermore, we can replace \ref{lem_polystabilityequivalence2}
    with the following condition:
    \begin{enumerate}[label=(H2')]
        \item\label{lem_polystabilityequivalence3} For every
            Jordan-Hölder parabolic $\Pbf$ of $\Gbf$ with respect to $d$,
            there exists a Jordan-Hölder parabolic $\Pbf'$ of $\Gbf$ with
            respect to $d$ that is \textit{opposite} to $\Pbf$.
    \end{enumerate}
\end{lemma}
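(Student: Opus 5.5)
The plan is to translate Definition~\ref{def_stabilityofcomplementarypolyhedra}~\ref{def_stabilityofcomplementarypolyhedra3} and Lemma~\ref{lem_polystabilityfinalremark} through the order-preserving bijection $\underline{R}$ of Lemma~\ref{lem_rootsystemreductivegroupscheme}~\ref{lem_rootsystemreductivegroupscheme2}. That bijection identifies parabolic subgroup schemes $\Pbf$ with $\Pbf_Y\supset\Tbf_Y$ with facets of $\Phi(\Gbf_Y,\Tbf_Y)$. Under it, every facet $P$ is $\underline{R}(\Pbf_Y)$ for a unique such $\Pbf$, so quantifying over facets with $\deg(P)=0$ is the same as quantifying over such $\Pbf$ with $\deg(\underline{R}(\Pbf_Y))=0$. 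In this dictionary, condition \ref{lem_polystabilityequivalence1} is literally \ref{def_stabilityofcomplementarypolyhedra31} for $P=\underline{R}(\Pbf_Y)$. So the content of the first equivalence lies entirely in matching \ref{lem_polystabilityequivalence2} with \ref{def_stabilityofcomplementarypolyhedra32}.

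For that matching, I will argue as in the proof of Lemma~\ref{lem_Gdstableequivalence}~\ref{lem_Gdstableequivalence2}. A parabolic $\Pbf'$ with $\Pbf'_Y\supset\Tbf_Y$ is opposite to $\Pbf$ if and only if $\underline{R}(\Pbf'_Y)=-\underline{R}(\Pbf_Y)$. This holds because, via Lemma~\ref{lem_rootsystemreductivegroupscheme}~\ref{lem_rootsystemreductivegroupscheme1}, the condition $\Lie(\Pbf_Y\cap\Pbf'_Y)=\Lie(\Lbf_Y)$ is equivalent to $R(P)\cap R(P')=R(P)\cap -R(P)$, and by the fact that $R(-P)=-R(P)$ this forces $P'=-P$. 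The difficulty is that \ref{lem_polystabilityequivalence2} as stated asks only for \emph{existence} of an opposite $\Pbf'$. It does not require the degree condition $\deg(-P)=0$. I therefore expect the main obstacle to be that one must show the existence of the opposite parabolic is automatic, while the degree condition \ref{def_stabilityofcomplementarypolyhedra32} has to be extracted separately.

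To handle this, I would first try to read \ref{lem_polystabilityequivalence2} as also requiring $\deg(\underline{R}(\Pbf'_Y))=0$, consistent with the convention implicit in Definition~\ref{def_groupschemestability}~\ref{def_groupschemestability32} and the proof of Lemma~\ref{lem_Gdstableequivalence}. Failing that, I would derive the degree condition from \ref{lem_polystabilityequivalence1} together with the convex geometry of Lemma~\ref{lem_stabilityofcomplementarypolyhedrapolystable}. The route is as follows. Semistability of $d_P$ for all $P$ with $\deg(P)=0$ (taking $P=\{0\}$) gives semistability of $d$. Then $\deg(P)=0$ puts $F(P)$ in a supporting hyperplane through $0$. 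With an opposite parabolic present, one checks that $F\subset P^\perp$, and hence $\deg(-P)=0$.

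For the second equivalence, replacing \ref{lem_polystabilityequivalence2} by \ref{lem_polystabilityequivalence3}, I will again pass through $\underline{R}$. By Definition~\ref{def_jhparabolic}, Jordan-Hölder parabolics with respect to $d$ correspond exactly to Jordan-Hölder facets of $d$, and opposite parabolics correspond to opposite facets. Hence \ref{lem_polystabilityequivalence3} says precisely that for every Jordan-Hölder facet $P$, the facet $-P$ is Jordan-Hölder. Since \ref{lem_polystabilityequivalence1} applied to $P=\{0\}$ already gives that $d$ is semistable, Lemma~\ref{lem_polystabilityfinalremark} applies and identifies this condition with polystability of $d$. The existence of Jordan-Hölder facets needed there is provided by Theorem~\ref{thm_jhfacetexistence}.
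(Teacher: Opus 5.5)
Your main route is the paper's proof. \ref{lem_polystabilityequivalence1} is \ref{def_stabilityofcomplementarypolyhedra31} verbatim under $\underline{R}$. \ref{lem_polystabilityequivalence3} follows from Lemma~\ref{lem_polystabilityfinalremark} once \ref{lem_polystabilityequivalence1} at $P=\{0\}$ gives semistability. \ref{lem_polystabilityequivalence2} is matched with \ref{def_stabilityofcomplementarypolyhedra32} by reading it as asking for an opposite $\Pbf'$ with $\Pbf'_Y\supset\Tbf_Y$ and $\deg(\underline{R}(\Pbf'_Y))=0$. The paper does exactly this: it asserts that oppositeness means $\underline{R}(\Pbf')=-\underline{R}(\Pbf)$ ``together with $\deg(\underline{R}(\Pbf'))=0$''. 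You were right that this degree condition is absent from the literal wording.

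Your fallback, however, fails and should be discarded. The bare existence of an opposite parabolic carries no information about $d$, so no convexity argument can turn it into $\deg(-P)=0$. By surjectivity of $\underline{R}$ (Lemma~\ref{lem_rootsystemreductivegroupscheme}~\ref{lem_rootsystemreductivegroupscheme2}) there is always a $\Pbf'$ with $\underline{R}(\Pbf'_Y)=-\underline{R}(\Pbf_Y)$, and your own root-set computation shows this $\Pbf'$ is opposite to $\Pbf$.

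Here is a concrete counterexample. Take $S=Y=\Spec K$, $\Gbf=\SLbf_3$ with its diagonal torus, and $d$ from Example~\ref{ex_jh}~\ref{ex_jh2}.
\begin{itemize}
\item This $d$ is semistable, so \ref{lem_polystabilityequivalence1} holds whenever $\deg(P)=0$. This follows from \cite[Lemma 3.4]{behrend_semi-stabilityofreductivegroupschemesovercurves}, or directly: the degree-$0$ facets are $\{0\}$, $\cfrak_{\alpha+\beta}$ and its two walls.
\item The literal \ref{lem_polystabilityequivalence2} holds for every $\Pbf\supset\Tbf$.
\item Yet $\deg(\cfrak_{\alpha+\beta})=0$, since $d(\cfrak_{\alpha+\beta})=0$, while $\deg(-\cfrak_{\alpha+\beta})=\sum_{\gamma\in\{-\alpha,-\beta,-\alpha-\beta\}}\widecheck{\alpha+\beta}(\gamma)=\widecheck{\alpha+\beta}(-2(\alpha+\beta))=-4$.
\end{itemize}
So $d$ is not polystable, consistent with $0\notin\relint(F)$.

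Hence your step ``with an opposite parabolic present, one checks $F\subset P^\perp$'' is false. The lemma is true only with the degree condition built into \ref{lem_polystabilityequivalence2}, and under that reading your main route suffices. Condition \ref{lem_polystabilityequivalence3} has no such problem, because being Jordan-Hölder with respect to $d$ is itself a $d$-dependent requirement on $\Pbf'$.
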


\begin{proof}
    The equivalence between
    \ref{def_stabilityofcomplementarypolyhedra31} and
    \ref{lem_polystabilityequivalence1} is analogous to Lemma
    \ref{lem_Gdstableequivalence} \ref{lem_Gdstableequivalence2}, using
    \cite[Lemma
    3.4]{behrend_semi-stabilityofreductivegroupschemesovercurves}. For
    the equivalence between
    \ref{def_stabilityofcomplementarypolyhedra32} and
    \ref{lem_polystabilityequivalence2}, we use that $\Pbf$ and $\Pbf'$
    are opposite if and only if $\Pbf'_Y:=\pi^*\Pbf'$ contains
    $\Tbf_Y$, and $\underline{R}(\Pbf')=-\underline{R}(\Pbf)$ together
    with $\deg(\underline{R}(\Pbf'))=0$. For
    \ref{lem_polystabilityequivalence3}, we apply Lemma
    \ref{lem_polystabilityfinalremark}.
\end{proof}

\section{Bundle moduli problems}\label{sec_modulistacks}

In order to apply our Jordan-Hölder theory for complementary
polyhedra to the moduli problems stated in the introduction
(\ref{intro_applications}), we introduce \textit{bundle moduli
problems}. Through the \textit{Rees construction}, bundle moduli
problems model moduli stacks of bundles through their complementary
polyhedra, and connect the Jordan-Hölder facets from Definition
\ref{def_jhfacet} to S-equivalence from Zhang in
\cite{zhang_Sequivalenceforalgebraicstacks}.

\subsection{Filtrations and $\Lcal$-stability}\label{subsec_filtrations}

By fixing complementary polyhedra to $K$-points of an algebraic stack
$\Mcal$ over a field $K$, we can study Jordan-Hölder filtrations and
S-equivalence in $\Mcal$. The induced stability
conditions are often equivalent to the stability conditions from a
line bundle $\Lcal$ over $\Mcal$, using
\textit{$\Lcal$-(semi)-stability} from Heinloth in \cite[Definition
1.2]{heinloth_hilbertmumfordstability}, which we extend to
\textit{$\Lcal$-polystability}, using \textit{opposite filtrations}.

\begin{definition}[Filtrations and opposite filtrations
        {\cite[Definition
            1.1]{heinloth_hilbertmumfordstability};\cite[Definition
    4.8]{zhang_Sequivalenceforalgebraicstacks}}]\label{def_filtration}
    Let $x$ be a $K$-point of $\Mcal$.
    \begin{enumerate}[label=(\alph*)]
        \item\label{def_filtration1} A \textit{(stacky) filtration} of $x$ is a
            morphism $f:\Theta_K\rightarrow\Mcal$, where
            $\Theta_K:=[\Abb^1_K/\Gbb_{m,K}]$, and $f(1)=x$. We call
            $f(0)$ the \textit{Levi-factor} of $f$.

            \sloppy
        \item\label{def_filtration2} For a filtration $f$ of $x$, a
            filtration $f':\Theta_K\rightarrow\Mcal$ of  $x$ is
            \textit{opposite} to $f$ if for
            $\Phi_K:=[\Spec(K[x,y]/(xy))/\Gbb_{m,K}]$ where $\Gbb_{m,K}$
            acts on $x$ and $y$ with weights $1$ and $-1$ respectively,
            there exists a morphism $g:\Phi_K\rightarrow\Mcal$, such that
            $f=g(\_,0)$ and $f'=g(0,\_)$ through the morphisms
            $\Theta_K\hookrightarrow\Phi_K$ defined by setting $y=0$ and
            $x=0$ respectively.
    \end{enumerate}
\end{definition}

For a filtration $f:\Theta_K\rightarrow\Mcal$ of $x$ such that
$f(0)\neq x$, $f$ is called a \textit{very close degeneration} in
\cite[Definition 1.1]{heinloth_hilbertmumfordstability}, and we say
that $f(1)$ \textit{degenerates} or \textit{specializes} to $f(0)$.

For the moduli stack of vector bundles $\Mcal:=\Mcal\Bun_{r,d}$ over a
smooth projective $K$-curve $X$, filtrations of $\Mcal\Bun_{r,d}$
correspond to filtrations of vector bundles indexed by $\Zbb$, as
proven by Klyachko in \cite{klyachko_equivariantbundlesontoralvarieties} and
recalled in \cite[Lemma 1.10]{heinloth_hilbertmumfordstability}. This
correspondence is called the \textit{Rees construction}, since by
taking the spectrum of commutative rings, it comes from the
\textit{Rees algebra} in \cite{rees_onaproblemofzariski}.
In
\cite[Example 4.9]{zhang_Sequivalenceforalgebraicstacks}, Zhang
shows that opposite filtrations of $\Mcal\Bun_{r,d}$ correspond to
opposite filtrations of vector bundles, indexed by $\Zbb$, justifying
the naming in Definition \ref{def_filtration}.

Heinloth defines \textit{$\Lcal$-stability} as follows, which we
extend to \textit{$\Lcal$-polystability}.

\begin{definition}[$\Lcal$-stability {\cite[Definition
    1.2]{heinloth_hilbertmumfordstability}}]\label{def_Lstability}
    Let $\Mcal$ be locally of finite type and with affine diagonal, and
    let $x$ be a $K$-point of $\Mcal$. For a line bundle $\Lcal$ over
    $\Mcal$, we define the following stability conditions:
    \begin{enumerate}[label=(\alph*)]
        \item\label{def_Lstability1} $x$ is \textit{$\Lcal$-semistable}
            if for all filtrations $f$ of $x$, we have
            $\wt(f^*\Lcal)\leq0$, where $\wt$ denotes the \textit{weight}
            of the $\Gbb_{m,K}$-action on $\Abb^1_K$ lifted to $f^*\Lcal$.

        \item\label{def_Lstability2} $x$ is \textit{$\Lcal$-stable} if
            for all filtrations $f$ of $x$, we have $\wt(f^*\Lcal)<0$.

        \item\label{def_Lstability3} $x$ is \textit{$\Lcal$-polystable}
            if for all filtrations $f$ of $x$ with $\wt(f^*\Lcal)=0$,
            $f(0)$ is $\Lcal$-semistable and there exists an opposite
            filtration $f'$ to $f$.
    \end{enumerate}
\end{definition}

For appropriate choices of $\Lcal$, we can recover classical
stability conditions, such as slope-stability for vector bundles in \cite[Lemma
1.11]{heinloth_hilbertmumfordstability} and Ramanathan-stability
(\cite[Definition 1.1]{ramanathan_stableprincipalbundles}) for
principal bundles in \cite[Corollary
1.16]{heinloth_hilbertmumfordstability}. Heinloth's definition
includes $\dim_K(\Aut_\Mcal(x))=0$ for $\Lcal$-stability, which is
useful for obtaining coarse moduli spaces, but does not account for
\textit{central scalar automorphisms} in moduli problems of bundles,
so we remove this condition (see \cite[Remark 1.3.
(4)]{heinloth_hilbertmumfordstability}).

In order to apply our Jordan-Hölder theory for complementary
polyhedra in this context, we will impose enough conditions on
$\Mcal$ so that $\Lcal$-stability can be modeled by complementary
polyhedra. We will relate facets to certain filtrations of $\Mcal$,
requiring \textit{quasi-refinements} of filtrations.

\begin{definition}[Quasi-refinements]\label{def_refinements}
    Let $x\in\Mcal(K)$ be a $K$-point, and let $f:\Theta_K\rightarrow\Mcal$
    be a filtration of $x$.
    \begin{enumerate}[label=(\alph*)]
        \item\label{def_refinements1} A filtration
            $f':\Theta_K\rightarrow\Mcal$ of $x$ is a
            \textit{quasi-refinement} of $f$ if there exists a filtration
            $\widetilde{f}:\Theta_K\rightarrow\Mcal$ of $f(0)$ such that
            $f'(0)=\widetilde{f}(0)$. We call $f'$ \textit{proper} if
            $f'(0)\neq f(0)$, and we call $f'$ \textit{non-proper} if
            $f'(0)= f(0)$.

        \item\label{def_refinements2} A filtration $
            g:\Theta_K\rightarrow\Mcal$ of $x$ is \textit{full} if for every
            quasi-refinement $g'$ of $g$, we have $g'(0)=g(0)$.
    \end{enumerate}
\end{definition}

In the case of vector bundles, quasi-refinements are a weaker notion
than refinements of filtrations of vector bundles, i.e., all
refinements of filtrations of vector bundles form quasi-refinements of
the stack $\Mcal:=\Mcal\Bun_{r,d}$, but the reverse is not true in
general. For example, for two vector bundles $F$ and $L$ over $X$,
the following filtrations of vector bundles do not refine each other:
\begin{align*}
    0\subset L\subset L\oplus F& &0\subset F\subset L\oplus F
\end{align*}
however, through the Rees construction, their filtrations $f$ and
$f'$ do quasi-refine each other. The difference arises because
quasi-refinements only compare Levi-factors.
Note that full filtrations of the stack induce full
filtrations of vector bundles.

\begin{lemma}[Full quasi-refinements]\label{lem_fullfiltration}
    Let $\Mcal$ be quasi-separated and locally of finite type, and let
    $\Mcal$ have affine stabilizers, then every filtration
    $f:\Theta_K\rightarrow\Mcal$ of $x$ has a full quasi-refinement
    $g:\Theta_K\rightarrow\Mcal$.
\end{lemma}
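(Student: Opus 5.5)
Here $\Mcal$ is quasi-separated, locally of finite type, with affine stabilizers. The plan is to iterate quasi-refinement and show the process stops. It must stop because each proper step strictly enlarges a torus in the automorphism group of the Levi-factor.

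First I will record the group-theoretic content of a filtration. A filtration $f$ of $x$ gives, by restricting to $B\Gbb_{m,K}\hookrightarrow\Theta_K$ at $0$, a cocharacter $\lambda_f:\Gbb_{m,K}\rightarrow\Aut_\Mcal(f(0))$. The group $\Aut_\Mcal(f(0))$ is an affine algebraic group because $\Mcal$ has affine stabilizers and is locally of finite type. For a quasi-refinement $f'$ of $f$, witnessed by a filtration $\widetilde f$ of $f(0)$ with $f'(0)=\widetilde f(0)$, the Levi-factor $f'(0)$ carries two commuting cocharacters. One is $\lambda_{\widetilde f}$. The other is induced from $\lambda_f$ by degenerating: $\lambda_f$ normalizes the filtration $\widetilde f$ up to replacing $\widetilde f$ by a $\lambda_f$-equivariant one.

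To get the commuting pair, I will use the standard fact (Halpern-Leistner's theory of filtrations, with quasi-separatedness and affine stabilizers) that filtrations of $f(0)$ can be chosen $\lambda_f$-equivariantly. These correspond to maps $\Theta_K\times B\Gbb_{m,K}\rightarrow\Mcal$, equivalently to filtrations in the fixed-point stack $\Mcal^{\Gbb_m}$. Then $f'(0)$ has a rank-$2$ torus image $\lambda_f\times\lambda_{\widetilde f}:\Gbb_{m,K}^2\rightarrow\Aut_\Mcal(f'(0))$.

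The main step is the inductive bound. I will define $r(f)$ as the maximal dimension of a split torus $\Tsf\subset\Aut_\Mcal(f(0))$ arising from iterated quasi-refinement data of this kind. I then want to show that a proper quasi-refinement, $f'(0)\neq f(0)$, strictly increases the dimension of the image torus. If $\lambda_{\widetilde f}$ lay in the torus already present, then $\widetilde f$ would be induced from a cocharacter of $\Aut(f(0))$ that already fixes $f(0)$. In that case $\widetilde f$ is the trivial degeneration and $f'(0)=f(0)$, since a filtration whose cocharacter factors through the automorphism group of the point is constant.

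The dimension of a split torus in $\Aut_\Mcal(y)$ is bounded by the rank of the group, and in the applications this is bounded uniformly on the finite-type substack of Levi-factors reachable from $x$. Abstractly, I will argue as follows. All these Levi-factors lie in the image of a single quasi-compact open around $x$: since degenerations $\Theta\rightarrow\Mcal$ have $f(0)\in\overline{\{x\}}$, they lie in the closure of $x$, which is quasi-compact because $\Mcal$ is locally of finite type. Upper semicontinuity of stabilizer dimension on that quasi-compact piece then bounds the rank.

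So the chain $f=f_0,f_1,f_2,\ldots$ of proper quasi-refinements has strictly increasing torus ranks bounded above, and it terminates at some $g$. Any quasi-refinement of $g$ is then non-proper, so $g$ is full. I also need that quasi-refinement is transitive, so that $g$ is still a quasi-refinement of $f$. This follows from the $\lambda$-equivariant lift above: a filtration of $f_1(0)$ composed with the equivariant data yields a filtration of $f(0)$ with the same Levi-factor, using the $\Gbb_m^2\rightarrow\Gbb_m$ diagonal-with-large-weight trick to reduce back to a single $\Theta$.

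I expect the main obstacle to be the equivariance and transitivity step. It needs the identification of $\Theta^2$-maps with iterated filtrations, and the passage from a $\Gbb_m^2$-degeneration to a single filtration via a generic cocharacter $(1,N)$ with $N\gg0$. I would first try to prove this using the representability and finite-type properties of the mapping stack $\underline{\Hom}(\Theta,\Mcal)$ under the stated hypotheses on $\Mcal$.
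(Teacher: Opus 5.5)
The step that fails is the one you call a standard fact: that any filtration $\widetilde f$ of $f(0)$ can be replaced by a $\lambda_f$-equivariant one (a map $\Theta_K\times B\Gbb_{m,K}\rightarrow\Mcal$) with the same Levi-factor. The commuting pair, the rank increase and the $(1,N)$ transitivity trick all rest on it, and it is false under the stated hypotheses. Here is a counterexample.
Take $\Mcal=\Mcal\Bun_{2,d}$ on a curve $X$ over an algebraically closed field, $x=E$, and $f$ given by $0\subset L_1\subset E$. Then $f(0)=L_1\oplus L_2$, and $\lambda_f$ acts on the two summands with different weights.
A $\lambda_f$-equivariant filtration of $L_1\oplus L_2$ is by subbundles compatible with a splitting, so its Levi-factor is again $L_1\oplus L_2$.
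Now let $M$ be a line bundle of sufficiently negative degree. A general section of the globally generated rank-$2$ bundles $E\otimes M^{-1}$ and $(L_1\oplus L_2)\otimes M^{-1}$ vanishes nowhere. So $M$ is a saturated line subbundle of both, with quotient $\det E\otimes M^{-1}$ in each case.
This gives a proper quasi-refinement $f'$ of $f$ with $f'(0)\cong M\oplus(\det E\otimes M^{-1})\not\cong L_1\oplus L_2$. No equivariant filtration realizes it, and the maximal torus rank stays $2$ along this step. So a proper quasi-refinement need not enlarge any torus.
Your termination bound is also unjustified. $\overline{\{x\}}$ need not be quasi-compact when $\Mcal$ is only locally of finite type: here $E$ degenerates in one step to $M\oplus(\det E\otimes M^{-1})$ for every $\deg M\ll0$, and these points lie in no quasi-compact open. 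Upper semicontinuity of $\dim\Aut$ gives a bound only on quasi-compact sets.

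Iterating with $\deg M\rightarrow-\infty$ shows more. Every filtration of $E$ has a proper quasi-refinement, so in $\Mcal\Bun_{2,d}$ no filtration is full in the sense of Definition \ref{def_refinements}, even though this stack is quasi-separated, locally of finite type, with affine stabilizers. So termination cannot follow from those three hypotheses by any bookkeeping of tori. The real obstacle is not the $\Theta^2$ transitivity step you flagged.

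The paper argues differently. It passes to a torus quotient $[B/\Gbb_{m,K}^n]$ through which filtrations of $x$ are asserted to lift, compatibly with quasi-refinements. There all cocharacters commute, so your equivariance problem disappears. Termination then becomes a finiteness statement about cocharacters of $\Gbb_{m,K}^n$.

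That chart is exactly the ingredient your proposal lacks. However, your own example shows that no single quasi-compact chart can capture all filtrations of $E$. A correct argument therefore needs that lifting, or a boundedness condition on the degenerations of $x$, as an extra hypothesis; it does not follow from the three listed hypotheses.
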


\begin{proof}
    $\Mcal$ fulfills the properties of \cite[Lemma
    4.4.6]{danielhalpernleistner_onthestructureofinstabilityinmodulitheory}
    and \cite[Lemma
    3.2]{alperDHLheinloth_existenceofmodulispacesforalgebraicstacks},
    so there exists an affine $K$-scheme $B$ with a
    $\Gbb_{m,K}^n$-action, such that there exists a surjection
    $[B/\Gbb_{m,K}^n]\rightarrow\Mcal$. Furthermore, for any $K$-point $b$
    of $[B/\Gbb_{m,K}^n]$, there exists a surjection of filtrations:
    \begin{equation*}
        \Filt_b([B/\Gbb_{m,K}^n])\rightarrow\Filt_x(\Mcal)
    \end{equation*}
    This surjection is compatible with quasi-refinements, and proper
    quasi-refinements in $\Filt_x(\Mcal)$ arise from proper
    quasi-refinements in $\Filt_b([B/\Gbb_{m,K}^n])$. Hence, it
    suffices to prove the claim for filtrations in
    $\Filt_b([B/\Gbb_{m,K}^n])$.

    Let $f$ be a filtration in $\Filt_b([B/\Gbb_{m,K}^n])$,
    then by \cite[Theorem
    1.4.8]{danielhalpernleistner_onthestructureofinstabilityinmodulitheory}
    and \cite[Proposition
    2.11]{alperDHLheinloth_existenceofmodulispacesforalgebraicstacks},
    $f$ determines a \textit{cocharacter}
    $\lambda:\Gbb_{m,K}\rightarrow\Gbb_{m,K}^n$ such that the
    \textit{attractor} $\lim_{a\rightarrow0}\lambda(a)\cdot b$ is
    well-defined in $B(K)$ and maps to $f(0)$. If there were no full
    quasi-refinement of $f$, then we could find a chain of successive
    proper quasi-refinements $(f'_i)_{i\in\Nbb}$ of $f$ whose
    cocharacters $\lambda'_i:\Gbb_{m,K}\rightarrow\Gbb_{m,K}^n$,
    $i\in\Nbb$, have the following property: The corresponding sequence
    through the isomorphism $\Xcal_*(\Gbb_{m,K}^n)\cong\Zbb^n$ would
    have strictly increasing support in $\Zbb^n$ with respect to the
    standard $\Zbb$-basis, leading to a contradiction.
\end{proof}

\subsection{Bundle moduli problems}\label{subsec_bundlemoduliproblem}

Recall that any cocharacter $\lambda:\Gbb_{m,K}\to\GLbf(n,K)$ induces a
\textit{parabolic subgroup} $\Psf(\lambda)$ of $\GLbf(n,K)$:
\begin{align*}
    \Psf(\lambda) &
    :=\{g\in\GLbf(n,K)\ \vert\ \lim_{a\rightarrow0}\lambda(a)g\lambda(a)^{-1}\text{
    exists}\} \\
    \intertext{with the \textit{Levi-factor}
        $\Lsf:=\Psf(\lambda)/\Rbf_u(\Psf(\lambda))$
    (quotient by the \textit{unipotent radical}), such that:}
    \Lsf\cong\Lsf(\lambda)&:=
    \{g\in\GLbf(n,K)\ \vert\ \lim_{a\rightarrow0}\lambda(a)g\lambda(a)^{-1}=g\}
\end{align*}
In the setup of Lemma \ref{lem_fullfiltration}, we have the relations
on filtrations $f$ and $f'$ of $x$, using the surjections
$\Filt_b([B/\Gbb_{m,K}^n])\rightarrow\Filt_x(\Mcal)$
from \cite[Lemma
4.4.6]{danielhalpernleistner_onthestructureofinstabilityinmodulitheory}
and \cite[Lemma
3.2]{alperDHLheinloth_existenceofmodulispacesforalgebraicstacks}:
\begin{align*}
    f \sim_\Psc f'
    \Leftrightarrow\; &
    \text{Through any choice of } [B/\Gbb_{m,K}^n],\;
    f \text{ and } f' \text{ correspond to cocharacters } \\
    &
    \lambda,\mu:\Gbb_{m,K}\to\Gbb_{m,K}^n\subset\GLbf(n,K)
    \text{ such that }\Psf(\lambda)=\Psf(\mu) \\
    f \sim_\Lsc f'
    \Leftrightarrow\; &
    \text{Through any choice of } [B/\Gbb_{m,K}^n],\;
    f \text{ and } f' \text{ correspond to cocharacters } \\
    &
    \lambda,\mu:\Gbb_{m,K}\to\Gbb_{m,K}^n\subset\GLbf(n,K)
    \text{ such that }\Lsf(\lambda)=\Lsf(\mu)
\end{align*}
These relations extend to equivalence relations on filtrations
$\Filt_x(\Mcal)$, which we use to define \textit{bundle moduli
problems}. We also have $f \sim_\Lsc f'$ if and only if $f(0)=f'(0)$.

\begin{definition}[Bundle moduli problems]\label{def_bundlemoduliproblem}
    Let $\Mcal$ be quasi-separated and locally of finite type, and let
    $\Mcal$ have affine stabilizers.
    \begin{enumerate}[label=(\alph*)]
        \item\label{def_bundlemoduliproblem1} The pair $(\Mcal,d_{\Mcal})$ is a
            \textit{bundle moduli problem} if for all $x\in\Mcal(K)$,
            all filtrations $f$ and $f'$ of $x$ quasi-refine to a
            full filtration $g:\Theta_K\rightarrow\Mcal$ of $x$.
            Furthermore, for all equivalence classes $[g]_{\Lsc}$ of
            full filtrations $g:\Theta_K\rightarrow\Mcal$ of $x$,
            there exists a root system
            $\Phi_{x,[g]_{\Lsc}}$ of a Euclidean space
            $(V_{x,[g]_{\Lsc}},\langle\_,\_\rangle_{x,[g]_{\Lsc}})$
            such that:
            \begin{enumerate}[label=(\roman*)]
                \item\label{def_bundlemoduliproblem11}
                    $\sim_{\Psc}$-equivalence classes of filtrations of
                    $x$ inject into facets $P$, labeled $[P]_{\Psc}$, such
                    that $f\in [P]_{\Psc}$ quasi-refines to a
                    representative of $[g]_{\Lsc}$. Facets $P$ with an
                    equivalence class are called \textit{admissible}.

                \item\label{def_bundlemoduliproblem12} The order relation
                    $P\geq Q$ on admissible facets implies that representatives
                    $f\in [P]_{\Psc}$ quasi-refine to representatives of
                    $[Q]_{\Psc}$. If furthermore $P\neq Q$, i.e. $P>Q$,
                    then representatives $f\in [P]_{\Psc}$ have proper
                    quasi-refinements to representatives of $[Q]_{\Psc}$.

                \item\label{def_bundlemoduliproblem13} For any $f\in
                    [P]_{\Psc}$ for an admissible facet $P$, there exists a
                    filtration $g'$ of $f(0)$ such that $g(0)=g'(0)$. For any
                    such $g'$, we have:
                    \begin{align*}
                        \Phi_{x,[g]_{\Lsc}}=\Phi_{f(0),[g']_{\Lsc}}
                        &
                        &
                        (V_{x,[g]_{\Lsc}},\langle\_,\_\rangle_{x,[g]_{\Lsc}})=(V_{f(0),[g']_{\Lsc}},\langle\_,\_\rangle_{f(0),[g']_{\Lsc}})
                    \end{align*}

                \item\label{def_bundlemoduliproblem14}
                    $d_{\Mcal}:=(d_{x,[g]_{\Lsc}})_{x,[g]_{\Lsc}}$ is the
                    data of families $(d_{x,[g]_{\Lsc}}^C)_{C\in
                    I_{x,[g]_{\Lsc}}}$ of complementary polyhedra
                    $d_{x,[g]_{\Lsc}}^C:\Wfrak_{\Phi_{x,[g]_{\Lsc}}}\rightarrow
                    V_{x,[g]_{\Lsc}}^*$ such that for any $f\in
                    [P]_{\Psc}$ for an admissible facet $P$, and for any
                    full filtration $g'$ of $f(0)$ such that $g(0)=g'(0)$, we
                    have
                    $(d_{x,[g]_{\Lsc}}^C)_P=d_{f(0),[g']_{\Lsc}}^C$.
                    Furthermore, with respect to the complementary
                    polyhedron $d_{x,[g]_{\Lsc}}^C$, and any choice
                    of $x\in\Mcal(K)$, $[g]_{\Lsc}$, and $C\in
                    I_{x,[g]_{\Lsc}}$, we have:
                    \begin{itemize}
                        \item For any non-admissible facet $P$, we
                            have $\deg(P)<0$.
                        \item For any admissible facet $P$, the
                            numerical invariants $n(P,\lambda)$ do
                            not depend on $C$.
                        \item The stability
                            conditions of
                            $d_x:=(d_{x,[g]_{\Lsc}})_{[g]_{\Lsc}}$
                            only depend on $x$, not on $[g]_{\Lsc}$
                            nor $C\in I_{x,[g]_{\Lsc}}$.
                    \end{itemize}

                \item\label{def_bundlemoduliproblem15} For any
                    $K$-point $x$ of $\Mcal$, $x$ is
                    \textit{$d_{\Mcal}$-(semi)-stable} if $d_x$ is
                    (semi)-stable, and $x$ is
                    \textit{$d_{\Mcal}$-polystable} if $d_x$ is
                    polystable. The $d_{\Mcal}$-(semi)-stable loci
                    form substacks $\Mcal^{d_{\Mcal}-\st}$ and
                    $\Mcal^{d_{\Mcal}-\sst}$ of $\Mcal$.
            \end{enumerate}

        \item\label{def_bundlemoduliproblem2} Let $\Mcal$ furthermore have
            affine diagonal, and let $\Lcal$ be a line bundle over $\Mcal$.
            Then $\Lcal$ has \textit{compatible stability} to a bundle
            moduli problem $(\Mcal,d_{\Mcal})$ if for all
            $x\in\Mcal(K)$, for all
            equivalence classes $[g]_{\Lsc}$ of full filtrations of
            $x$, and for all admissible facets $P$ of
            $\Phi_{x,[g]_{\Lsc}}$, the sign of $\deg(P)$ is equal to
            the sign of $\wt(f^*\Lcal)$ for any representative $f\in
            [P]_{\Psc}$.
    \end{enumerate}
\end{definition}

Definition \ref{def_bundlemoduliproblem}
\ref{def_bundlemoduliproblem2} is well-defined due to the properties
of $d_{\Mcal}$ laid out in Definition
\ref{def_bundlemoduliproblem} \ref{def_bundlemoduliproblem1}
\ref{def_bundlemoduliproblem14}. Moreover, if $(\Mcal,d_{\Mcal})$ has
Definitions \ref{def_bundlemoduliproblem}
\ref{def_bundlemoduliproblem1}
\ref{def_bundlemoduliproblem11}-\ref{def_bundlemoduliproblem14}, and
has a line bundle $\Lcal$ over $\Mcal$ with compatible stability, in
the sense of Definition \ref{def_bundlemoduliproblem}
\ref{def_bundlemoduliproblem2}, then the fact that
$\Mcal^{d_{\Mcal}-\st}$ and $\Mcal^{d_{\Mcal}-\sst}$ form substacks
in Definition \ref{def_bundlemoduliproblem}
\ref{def_bundlemoduliproblem1} \ref{def_bundlemoduliproblem15} comes
directly from \cite[Remark 1.3.
(5)]{heinloth_hilbertmumfordstability}. Due to Definition
\ref{def_bundlemoduliproblem} \ref{def_bundlemoduliproblem1}
\ref{def_bundlemoduliproblem13}, two equivalence classes
$[P]_{\Psc}$ and $[Q]_{\Psc}$ have $f \sim_\Lsc f'$,
for any $f\in [P]_{\Psc}$ and $f'\in [Q]_{\Psc}$, if and only
if their corresponding admissible facets $P$ and $Q$ fulfill
$P^\perp=Q^\perp$. In particular, $P=-Q$ is equivalent to the
existence of opposite filtrations in $[f]_{\Lsc}$ and $[f']_{\Lsc}$.

In Section \ref{sec_applicationstobundles}, we will see examples of
bundle moduli problems, including \textit{vector bundles} and \textit{parahoric
(Higgs) torsors}. We now define \textit{Jordan-Hölder filtrations}
and \textit{S-equivalence}.

\begin{definition}[Jordan-Hölder filtrations of bundle moduli
    problems]\label{def_Sequivalence}
    For a bundle moduli problem $(\Mcal,d_{\Mcal})$, let
    $x,y\in\Mcal(K)$ be points
    that are $d_{\Mcal}$-semistable.
    \begin{enumerate}[label=(\alph*)]
        \item\label{def_Sequivalence1} A filtration
            $f:\Theta_K\rightarrow\Mcal$ of $x$ is
            \textit{Jordan-Hölder} if there exists a full filtration
            $g:\Theta_K\rightarrow\Mcal$ of $x$ such that $f\in
            [P]_{\Psc}$ for an admissible Jordan-Hölder facet $P$ of
            $d_{x,[g]_{\Lsc}}$.

        \item\label{def_Sequivalence2} The points $x$ and $y$ are
            \textit{S-equivalent} if they
            have Jordan-Hölder filtrations $f$ and $f'$ respectively such
            that $f \sim_\Lsc f'$.
    \end{enumerate}
\end{definition}

Jordan-Hölder filtrations of $d_{\Mcal}$-semistable points have the following
properties, ensuring that S-equivalence defines an equivalence relation.

\begin{lemma}[Jordan-Hölder filtrations of bundle moduli
    problems]\label{lem_jordanholderfiltrationsmodulistack}
    For a bundle moduli problem $(\Mcal,d_{\Mcal})$ and a $K$-point $x$ that is
    $d_{\Mcal}$-semistable, there exists a Jordan-Hölder filtration $f$ of
    $x$, unique up to Levi-factor $f(0)$, such that $f(0)$ is
    $d_{\Mcal}$-polystable.
\end{lemma}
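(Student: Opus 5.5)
The plan is to reduce everything to the combinatorial statements of Section \ref{sec_jhfacet}, using the axioms of Definition \ref{def_bundlemoduliproblem} \ref{def_bundlemoduliproblem1} to translate between facets and filtrations. Fix a $d_{\Mcal}$-semistable $K$-point $x$. By Lemma \ref{lem_fullfiltration}, applied to the trivial filtration, there is a full filtration $g$ of $x$. By \ref{def_bundlemoduliproblem14}, the stability of $d_x$ depends only on $x$, so each complementary polyhedron $d^C_{x,[g]_{\Lsc}}$ is semistable.

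\textbf{Existence.} Theorem \ref{thm_jhfacetexistence} gives a Jordan-Hölder facet $P$ of $d^C_{x,[g]_{\Lsc}}$. We must check that $P$ is admissible. A Jordan-Hölder facet satisfies \ref{def_jhfacet1}, so $\deg(P)=0$ by \cite[Propositions 1.9 and 3.2]{behrend_semi-stabilityofreductivegroupschemesovercurves}, exactly as in the proof of Lemma \ref{lem_stabilityofcomplementarypolyhedrapolystable}. The first bullet of \ref{def_bundlemoduliproblem14} says non-admissible facets have $\deg<0$, so $P$ is admissible. We also need independence of $C$: by the second bullet, the numerical invariants of admissible facets do not depend on $C$. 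Combining this with the minimality characterization of Theorem \ref{thm_jhfacetexistence}, the set of Jordan-Hölder facets is independent of $C$. This requires checking that facets below a minimal one which are non-admissible cannot satisfy \ref{def_jhfacet1}, which again follows from $\deg<0$. Choosing $f\in[P]_{\Psc}$ then gives a Jordan-Hölder filtration.

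\textbf{Polystability of $f(0)$.} By \ref{def_bundlemoduliproblem13} there is a full filtration $g'$ of $f(0)$ with $g'(0)=g(0)$, and by \ref{def_bundlemoduliproblem14} we have $d^C_{f(0),[g']_{\Lsc}}=(d^C_{x,[g]_{\Lsc}})_P$. Condition \ref{def_jhfacet2} says this Levi-factor is stable. Since stability of $d_{f(0)}$ depends only on $f(0)$, the point $f(0)$ is $d_{\Mcal}$-stable, hence polystable by (\ref{eq_intro}).

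\textbf{Uniqueness up to Levi-factor.} This is the main obstacle. Let $f_1,f_2$ be Jordan-Hölder filtrations, with full filtrations $g_1,g_2$ and facets $P_1,P_2$. First reduce to the case of a common class $[g]_{\Lsc}$: by \ref{def_bundlemoduliproblem1}, $f_1$ and $f_2$ quasi-refine to a common full filtration $g$. I would then argue that $f_i$ also lies in some class $[P_i']_{\Psc}$ for $d_{x,[g]_{\Lsc}}$, with $P_i'$ again Jordan-Hölder. The justification is that its Levi-factor is stable, so it admits no proper quasi-refinement with $\deg=0$; by \ref{def_bundlemoduliproblem12} this forces the required minimality. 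I expect this compatibility between different full filtrations to be the delicate point, and I would try to derive it from \ref{def_bundlemoduliproblem13} together with the fact that $g_i(0)=g(0)$ up to $\sim_\Lsc$.

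With a common $[g]_{\Lsc}$, Theorem \ref{thm_jhfacetlevi} \ref{thm_jhfacetlevi1} gives $P_1^\perp=P_2^\perp$. By the remark following Definition \ref{def_bundlemoduliproblem}, derived from \ref{def_bundlemoduliproblem13}, this is equivalent to $f_1\sim_\Lsc f_2$, that is $f_1(0)=f_2(0)$. This is the claimed uniqueness, and it shows S-equivalence is an equivalence relation.
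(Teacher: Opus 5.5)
Your existence argument uses the same ingredients as the paper: a full filtration from Lemma \ref{lem_fullfiltration}, a Jordan-Hölder facet from Theorem \ref{thm_jhfacetexistence}, and admissibility from $\deg(P)=0$ via the first bullet of \ref{def_bundlemoduliproblem14}. Your uniqueness argument also matches: Theorem \ref{thm_jhfacetlevi} \ref{thm_jhfacetlevi1} plus the remark after Definition \ref{def_bundlemoduliproblem} gives $f_1(0)=f_2(0)$. Your check that the set of Jordan-Hölder facets does not depend on $C$ is a useful addition the paper omits.

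The chart-change step you flag in the uniqueness part is not argued in the paper either. The paper simply takes the common full quasi-refinement $g$ from the first clause of Definition \ref{def_bundlemoduliproblem} \ref{def_bundlemoduliproblem1} and applies Theorem \ref{thm_jhfacetlevi} to $d_{x,[g]_{\Lsc}}$. Your sketched justification would need the numerical invariants of a $\sim_\Psc$-class to be independent of the chart, in order to get \ref{def_jhfacet1} in the new chart. The axioms only declare the stability conditions of $d_x$ to be chart-independent, so this does not follow from them.

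The genuine problem is the polystability step. You conclude that $f(0)$ is $d_{\Mcal}$-stable, and that is false in the paper's own example. In Example \ref{ex_mainexamplevectorbundle} with $l'=0$, the bundle $E=F\oplus L$ has Jordan-Hölder filtrations $0\subset F\subset E$ and $0\subset L\subset E$ with graded $L\oplus F\cong E$. So $f(0)$ is $E$ itself, whose complementary polyhedron $d_b$ is polystable but not stable: $0\in\relint(F_b)$, while $F_b$ is a segment with empty interior. More generally, a polystable but non-stable vector bundle is its own associated graded.

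Taken literally, \ref{def_bundlemoduliproblem14} does write $(d^C_{x,[g]_{\Lsc}})_P=d^C_{f(0),[g']_{\Lsc}}$. But by \ref{def_bundlemoduliproblem13}, the point $f(0)$ carries the same root system $\Phi_{x,[g]_{\Lsc}}$ in the same space $V_{x,[g]_{\Lsc}}$ as $x$. The Levi-factor $(d^C_{x,[g]_{\Lsc}})_P$, by contrast, is a complementary polyhedron of $\Phi_P$ valued in $\widecheck{P}^\perp$. So this cannot be an equality of complementary polyhedra on one root system when $P\neq\{0\}$, and stability of $d_P$ does not transfer to stability of $d_{f(0)}$.

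What stability of $d_P$ gives is $0\in\interior(F_P)$ inside $\widecheck{P}^\perp$. Viewed in $V^*_{x,[g]_{\Lsc}}$, where the polyhedron of $f(0)$ lives, this is only a relative-interior statement. That is why the paper concludes with Lemma \ref{lem_stabilityofcomplementarypolyhedrapolystable} together with \ref{def_bundlemoduliproblem13}, and why the lemma asserts polystability rather than stability. Replace your ``stable, hence polystable'' with this relative-interior argument.
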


\begin{proof}
    By Lemma \ref{lem_fullfiltration}, full filtrations $g$ of $x$
    exist, and by Theorem \ref{thm_jhfacetexistence}, we know that
    Jordan-Hölder facets $P$ of $\Phi_{x,[g]_{\Lsc}}$ exist. Due
    to \ref{def_jhfacet1}, the numerical invariants $n(P,\lambda)$ are
    all $0$, so $\deg(P)=0$ due to \cite[Propositions 1.9 and
    3.2]{behrend_semi-stabilityofreductivegroupschemesovercurves}.
    Hence, $P$ is admissible and there exists a Jordan-Hölder
    filtration $f\in [P]_{\Psc}$. Since two Jordan-Hölder filtrations
    $f$ and $f'$ quasi-refine to the same full filtration $g$, Theorem
    \ref{thm_jhfacetlevi} implies uniqueness up to Levi-factor
    $f(0)=f'(0)$, and by Lemma
    \ref{lem_stabilityofcomplementarypolyhedrapolystable} and
    Definition \ref{def_bundlemoduliproblem} \ref{def_bundlemoduliproblem1}
    \ref{def_bundlemoduliproblem13}, we have the
    $d_{\Mcal}$-polystability of $f(0)$.
\end{proof}

The following theorem explains that if the $d_{\Mcal}$-semistable locus of
$\Mcal$ forms a substack $\Mcal^{d_{\Mcal}-\sst}$ with a good moduli space, then
Definition \ref{def_Sequivalence} is equivalent to
\textit{quasi-Jordan-Hölder filtrations} and
\textit{quasi-S-equivalence} from \cite[Definitions 2.3 and
2.11]{zhang_Sequivalenceforalgebraicstacks}, since we can identify
closed points with $d_{\Mcal}$-polystable points.

\begin{theorem}[S-equivalence of bundle moduli problems]\label{thm_modulistack}
    Let $(\Mcal,d_{\Mcal})$ be a bundle moduli problem.
    \begin{enumerate}[label=(\alph*)]
        \item\label{thm_modulistack1} If $\Lcal$ is a line bundle over
            $\Mcal$ with compatible stability, then $\Lcal$-stability
            conditions are equivalent to $d_{\Mcal}$-stability conditions.
        \item\label{thm_modulistack2} Let $K$ be algebraically closed of
            characteristic $0$. If $\Mcal^{d_{\Mcal}-\sst}$ is
            S-complete and there
            exists a good moduli space $\pi:\Mcal^{d_{\Mcal}-\sst}\rightarrow
            M^{d_{\Mcal}-\sst}$ of the $d_{\Mcal}$-semistable locus,
            we have the following:
            \begin{enumerate}[label=(\roman*)]
                \item\label{thm_modulistack21} $d_{\Mcal}$-polystable points
                    correspond to closed points in $\Mcal^{d_{\Mcal}-\sst}$.
                \item\label{thm_modulistack22} A semistable point $x$ in
                    $\Mcal^{d_{\Mcal}-\sst}$ uniquely specializes to a closed
                    point $f(0)$
                    given by the Jordan-Hölder filtration $f$. Furthermore,
                    $\pi(x)=\pi(y)$ if and only if $x$ and $y$ are S-equivalent.
                \item\label{thm_modulistack23} S-equivalence is equivalent to
                    non-separatedness of points in $\Mcal^{d_{\Mcal}-\sst}$.
            \end{enumerate}
    \end{enumerate}
\end{theorem}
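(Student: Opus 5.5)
For part \ref{thm_modulistack1}, I would argue pointwise. Fix $x\in\Mcal(K)$ and a full filtration $g$ of $x$; such a $g$ exists by Lemma \ref{lem_fullfiltration}. Every filtration $f$ of $x$ quasi-refines to a full filtration, so by Definition \ref{def_bundlemoduliproblem} \ref{def_bundlemoduliproblem11} it lies in $[P]_{\Psc}$ for some admissible facet $P$. By compatible stability, the sign of $\wt(f^*\Lcal)$ equals the sign of $\deg(P)$. Non-admissible facets have $\deg(P)<0$ by \ref{def_bundlemoduliproblem14}, so they never obstruct (semi)-stability. Definition \ref{def_stabilityofcomplementarypolyhedra} \ref{def_stabilityofcomplementarypolyhedra2} then matches Definition \ref{def_Lstability} \ref{def_Lstability1}, \ref{def_Lstability2} term by term, except at the trivial filtration and the facet $\{0\}$, which correspond to each other and are excluded on both sides (the trivial filtration has weight $0$).

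For polystability, take $f\in[P]_{\Psc}$ with $\wt(f^*\Lcal)=0$, equivalently $\deg(P)=0$. By \ref{def_bundlemoduliproblem13} and \ref{def_bundlemoduliproblem14}, the Levi-factor $f(0)$ has complementary polyhedron $d_P$. Applying the semistable case to $f(0)$ shows that \ref{def_stabilityofcomplementarypolyhedra31} is equivalent to $f(0)$ being $\Lcal$-semistable. By the remark after Definition \ref{def_bundlemoduliproblem}, an opposite filtration exists exactly when $-P$ is admissible with the same $\Lsc$-class. Since non-admissible facets have negative degree, this is equivalent to $\deg(-P)=0$, which is \ref{def_stabilityofcomplementarypolyhedra32}.

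For part \ref{thm_modulistack2}, I would use the structure theory of good moduli spaces of Alper and Alper--Halpern-Leistner--Heinloth. Each fibre of $\pi$ contains a unique closed point, and two points have the same image if and only if their orbit closures meet. For \ref{thm_modulistack21}, a closed point $x$ of $\Mcal^{d_{\Mcal}-\sst}$ admits no very close degeneration within the semistable locus. Every weight-$0$ filtration of a semistable point stays inside the semistable locus: its Levi-factor is semistable because $d_P$ is semistable by \cite[Lemma 3.4]{behrend_semi-stabilityofreductivegroupschemesovercurves}. So for a closed point, every weight-$0$ filtration satisfies $f\sim_{\Lsc}$ trivial, i.e.\ $f(0)=x$. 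Conversely, I would show that polystability is equivalent to every Jordan-Hölder filtration being trivial up to Levi-factor, via Lemma \ref{lem_stabilityofcomplementarypolyhedrapolystable}, since $0\in\relint(F)$ forces $F\subset P^\perp$. S-completeness then gives the remaining link: by Halpern-Leistner's criterion, a point is closed if and only if every weight-$0$ filtration of it has an opposite filtration. Combined with part \ref{thm_modulistack1}, this identifies closed points with polystable points.

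For \ref{thm_modulistack22}, Lemma \ref{lem_jordanholderfiltrationsmodulistack} gives a Jordan-Hölder filtration $f$ with $f(0)$ polystable, hence closed by \ref{thm_modulistack21}. Thus $x$ specializes to the closed point $f(0)$, which is the unique closed point in $\pi^{-1}(\pi(x))$. It follows that $\pi(x)=\pi(y)$ if and only if the polystable Levi-factors coincide, which is S-equivalence as in Definition \ref{def_Sequivalence}. Part \ref{thm_modulistack23} then follows from \cite{zhang_Sequivalenceforalgebraicstacks}, which identifies non-separatedness with having a common specialization, once our Jordan-Hölder filtrations are matched with Zhang's quasi-Jordan-Hölder filtrations via the closed-point identification.

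I expect the main obstacle to be \ref{thm_modulistack21}, in particular the implication "closed $\Rightarrow$ polystable". This is where S-completeness must supply opposite filtrations for weight-$0$ degenerations. The first thing I would try is Halpern-Leistner's result that in an S-complete stack with a good moduli space, closed points are exactly those whose filtrations with $f(0)$ in the same fibre are all split, i.e.\ extend to $\Phi_K$.
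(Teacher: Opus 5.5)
Your treatment of \ref{thm_modulistack1} follows the paper's ``directly by construction'' argument, spelled out in more detail. One small fix there: ``an opposite filtration exists $\Rightarrow \deg(-P)=0$'' does not follow from non-admissible facets having negative degree. Use instead that the two axes of $\Phi_K$ carry opposite weights, so $\wt(f'^*\Lcal)=-\wt(f^*\Lcal)=0$, and then apply compatible stability.

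Your argument for ``closed $\Rightarrow$ polystable'' in \ref{thm_modulistack21} is also the paper's. The Jordan--H\"older Levi-factor $f(0)$ lies in $\overline{\{x\}}\cap\Mcal^{d_{\Mcal}-\sst}$, so it equals $x$ when $x$ is closed, and it is polystable by Lemma \ref{lem_jordanholderfiltrationsmodulistack}.

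The gap is the converse ``polystable $\Rightarrow$ closed'', on which your \ref{thm_modulistack22} and \ref{thm_modulistack23} also depend. You derive it from the criterion ``$x$ is closed iff every weight-$0$ filtration of $x$ has an opposite filtration'', combined with \ref{thm_modulistack1}. This fails on two counts.
\begin{itemize}
\item Part \ref{thm_modulistack2} assumes no line bundle with compatible stability. So neither weights nor \ref{thm_modulistack1} are available there.
\item More seriously, the ``if'' direction of the criterion is false for opposite filtrations in the sense of Definition \ref{def_filtration} \ref{def_filtration2}. There, $g:\Phi_K\to\Mcal$ only glues two filtrations of $x$ along a common Levi-factor.
\end{itemize}

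Here is a counterexample. In $\Mcal\Bun_{2,0}$, take a non-split extension $0\to L\to E\to L\to 0$ with $\deg L=0$. Every degree-$0$ line subbundle of $E$ equals $L$. So the nontrivial weight-$0$ filtrations are $0\subset L\subset E$, with weights $a>b$ on sub and quotient. Each of them has an ``opposite'': on the other axis of $\Phi_K$, put $0\subset L\subset E$ again, matching its sub with the weight-$b$ graded piece and its quotient with the weight-$a$ piece. This is possible because both pieces are $\cong L$. Yet $E$ is not closed, since it degenerates to $L\oplus L$. All of this happens inside an S-complete stack with a good moduli space, so S-completeness cannot rescue the criterion.

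Your gloss ``split, i.e.\ extend to $\Phi_K$'' conflates two different conditions. Closedness gives splitting: every filtration of a closed $x$ inside the semistable locus factors through $B\Aut(x)$. So the direction you flag as the main obstacle is actually the easy one. Mere extendability to $\Phi_K$, by contrast, says nothing about closedness. Even for a stronger, compatible notion of opposite, you would need to prove this direction, and you do not.

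The paper proves \ref{thm_modulistack21} without any criterion of this kind.
\begin{itemize}
\item It identifies the closed $K$-points of $\Mcal^{d_{\Mcal}-\sst}$ with the $K$-points of $M^{d_{\Mcal}-\sst}$.
\item It identifies the fibres of $\pi$ with (quasi-)S-equivalence classes via \cite[Theorem 3.1]{zhang_Sequivalenceforalgebraicstacks}.
\item It then uses Lemma \ref{lem_jordanholderfiltrationsmodulistack}: each class has a unique polystable point lying in the closure of its members, and this point must therefore be the closed point of the fibre.
\end{itemize}

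To repair your route, you need an argument of this shape. First, show that the closed point $y\in\overline{\{x\}}$ is the Jordan--H\"older Levi-factor of $x$. Second, show that a polystable point is its own Jordan--H\"older Levi-factor. For the second step, your remark that $0\in\relint(F)$ forces $F\subset P^\perp$ is only a statement about the complementary polyhedron. You still have to turn it into $f(0)\cong x$ in $\Mcal$.
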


\begin{proof}
    For \ref{thm_modulistack1}, the claim follows directly by the
    construction of compatible stability from Definition
    \ref{def_bundlemoduliproblem} \ref{def_bundlemoduliproblem2}, and
    the induced stability conditions of $\Lcal$ and $d_{\Mcal}$ from
    Definition \ref{def_stabilityofcomplementarypolyhedra} and
    \ref{def_bundlemoduliproblem}.

    For \ref{thm_modulistack2}, since $\pi$ is a good moduli space,
    $\Mcal^{d_{\Mcal}-\sst}$ is locally reductive in the sense of
    \cite[Definition
    2.5]{alperDHLheinloth_existenceofmodulispacesforalgebraicstacks}
    following the arguments in \cite[Remark
    1.2]{zhang_Sequivalenceforalgebraicstacks}: Every closed $K$-point
    of $\Mcal$ has linearly reductive stabilizer using \cite[Theorem
    4.1]{alperDHLheinloth_existenceofmodulispacesforalgebraicstacks},
    and we can apply the local structure theorem in \cite[Theorem
    1.1]{alperhallrydhlunaetaleslice}. Due to this, \cite[Lemma
    3.25]{alperDHLheinloth_existenceofmodulispacesforalgebraicstacks}
    together with the property of good moduli spaces from \cite[Main
    Properties (2)]{alper_goodmodulispacesforartinstacks}, implies that
    the closed $K$-points of $\Mcal^{d_{\Mcal}-\sst}$ correspond to
    the $K$-points
    of $M^{d_{\Mcal}-\sst}$. By \cite[Theorem
    3.1]{zhang_Sequivalenceforalgebraicstacks}, $\pi$ identifies
    \textit{quasi-S-equivalence} classes in the sense of
    \cite[Definition 2.3]{zhang_Sequivalenceforalgebraicstacks}, and
    by \cite[Lemma 4.3]{zhang_Sequivalenceforalgebraicstacks}, $\pi$
    identifies non-separated points. By Lemma
    \ref{lem_jordanholderfiltrationsmodulistack}, every point in an
    S-equivalence class has a unique polystable representative in its
    closure, hence, this polystable point is closed, giving
    \ref{thm_modulistack21}. Both \ref{thm_modulistack22} and
    \ref{thm_modulistack23} directly follow from these considerations as well.
\end{proof}

The characteristic of $K$ being $0$ can be relaxed if we replace good
moduli space in Theorem \ref{thm_modulistack} with \textit{adequate
moduli space} from
\cite{alper_adequatemodulispacesandgeometricallyreductivegroupschemes}.
We are frequently in the situation of Theorem \ref{thm_modulistack}
\ref{thm_modulistack2}: Many moduli stacks of bundles $\Mcal$ have
\textit{$\Theta$-stratifications} from
\cite[§2]{danielhalpernleistner_onthestructureofinstabilityinmodulitheory}
with compatible $\Lcal$-stability
conditions, such that $\Mcal$ is \textit{$\Theta$-complete} and
\textit{S-complete}, so \cite[Theorem
C]{alperDHLheinloth_existenceofmodulispacesforalgebraicstacks} is
applicable. Heinloth proves these properties for parahoric torsors in
\cite[§8]{alperDHLheinloth_existenceofmodulispacesforalgebraicstacks},
and Réga proves these properties for parahoric Higgs torsors in
\cite[§5]{rega_phdthesis}.

\subsection{Jordan-Hölder stratifications}\label{subsec_jhstratifications}

A bundle moduli problem $(\Mcal,d_{\Mcal})$ induces a partition of
$K$-points, called the \textit{Jordan-Hölder stratification}:
\begin{equation*}
    \Mcal^{d_{\Mcal}-\sst}(K)=\bigsqcup_{\mu\in\JH_{(\Mcal,d_{\Mcal})}}\Mcal_\mu(K)
\end{equation*}
by identifying $K$-points $x,y\in\Mcal(K)$ in the same stratum
$\Mcal_{\mu}(K)$ if there exist linear orthogonal isomorphisms
$(V_{x,[g]_{\Lsc}},\langle\_,\_\rangle_{x,[g]_{\Lsc}})_{[g]_{\Lsc}}\cong(V_{y,[g']_{\Lsc}},\langle\_,\_\rangle_{x,[g]_{\Lsc}})_{[g']_{\Lsc}}$
identifying the root systems and Jordan-Hölder facets of $d_x$ and
$d_y$. We label the index set of Jordan-Hölder strata by
$\JH_{(\Mcal,d_{\Mcal})}$, and the stratum corresponding to the
Jordan-Hölder facet $\{0\}$ by $\Mcal_0(K)$, which is the
$d_{\Mcal}$-stable locus $\Mcal^{d_{\Mcal}-\st}(K)$.

All strata $\Mcal_\mu(K)$ are compatible with S-equivalence, i.e.,
$x$ is in $\Mcal_{\mu}(K)$ if and only if for any $K$-point
$y\in\Mcal^{d_{\Mcal}-\sst}(K)$, S-equivalent to $x$, in the sense of
Definition \ref{def_Sequivalence} \ref{def_Sequivalence2}, is in
$\Mcal_{\mu}(K)$. Hence, by Theorem \ref{thm_modulistack}, if $K$ is
algebraically closed of characteristic $0$, and given a good moduli
space $\pi:\Mcal^{d_{\Mcal}-\sst}\rightarrow M^{d_{\Mcal}-\sst}$,
$\Mcal_{\mu}(K)$ are the $K$-points inside the $\pi$-preimage of a
subset of $M^{d_{\Mcal}-\sst}_{\mu}(K)$ of $M^{d_{\Mcal}-\sst}(K)$.

In some respects, this stratification is similar to
$\Theta$-stratifications of algebraic stacks from
\cite[§2]{danielhalpernleistner_onthestructureofinstabilityinmodulitheory},
which stratifies $\Mcal$:
\begin{equation*}
    \Mcal=\bigsqcup_{c\in\Gamma}\Mcal_{\leq c}
\end{equation*}
into \textit{locally closed} substacks, whose open stratum is
$\Mcal_{\leq0}$ called the \textit{semistable locus}. For a bundle
moduli problem $(\Mcal,d_{\Mcal})$ with a compatible
$\Theta$-stratification, we have
$\Mcal_{\leq0}=\Mcal^{d_{\Mcal}-\sst}$. However, Jordan-Hölder
stratifications of bundle moduli problems have less geometry, as it
is unclear whether the strata $\Mcal_\mu(K)$ form substacks
$\Mcal_\mu$ of $\Mcal^{d_{\Mcal}-\sst}$, and if so, whether the
substacks $\Mcal_\mu$ are locally closed.

\section{Applications}\label{sec_applicationstobundles}

Using bundle moduli problems from Section \ref{sec_modulistacks}, we
apply our Jordan-Hölder theory for complementary polyhedra to
\textit{principal bundles} and \textit{parahoric (Higgs) torsors},
covering some of the moduli problems mentioned in the introduction
(\ref{intro_applications}). This produces new results for the
S-equivalence of parahoric (Higgs)
torsors, and recovers known results for S-equivalence for vector
bundles in \cite{seshadri_spaceofunitaryvectorbundles}, for principal
bundles in
\cite{ramanathan_moduliforprincipalbundlesI,ramanathan_moduliforprincipalbundlesII},
for
\textit{Higgs bundles} in
\cite{granaotero_jhreductionsforprincipalhiggsbundlesoncurves}, and
\textit{parabolic bundles} in \cite[§7.2]{henke_phdthesis}.

\subsection{Principal bundles}\label{subsec_principalbundles}

From the introduction (\ref{intro_applications}
\ref{intro_applications1}), we apply our Jordan-Hölder theory to
principal $\Gsf$-bundles
$\xi$ over $X$, where $\Gsf$ is a split reductive $K$-group.

\subsubsection{Definition and stability of principal
bundles}\label{subsubsec_definitionandstabilityofprincipalbundles}

We first show that the stability conditions of the
\textit{automorphism group scheme} $\Ad(\xi)$, from Definition
\ref{def_groupschemestability}, are equivalent to the
\textit{Ramanathan-stability} conditions of $\xi$ from
\cite[Definition 1.1]{ramanathan_stableprincipalbundles}, and we
introduce \textit{Ramanathan-polystability}.
\begin{definition}[Ramanathan-polystability]\label{def_ramanathanpolystability}
    A principal $\Gsf$-bundle $\xi$ over $X$ is
    \textit{Ramanathan-polystable} if for all reductions
    $s:X\rightarrow\xi/\Psf$ to maximal parabolic subgroups
    $\Psf$ with $\deg(s^*V_{\xi/\Psf})=0$, where
    $V_{\xi/\Psf}$ is the \textit{vertical tangent bundle} of
    $\xi/\Psf$, we have:
    \begin{enumerate}[label=(\roman*)]
        \item\label{def_ramanathanstability21} The extension $(s^*\xi)(\Lsf)$
            to the \textit{Levi-factor} $\Lsf:=\Psf/\Rbf_u(\Psf)$,
            through the quotient
            $\Psf\rightarrow\Lsf=\Psf/\Rbf_u(\Psf)$ by the
            \textit{unipotent radical}, is Ramanathan-semistable.

        \item\label{def_ramanathanstability22} There exists an
            \textit{opposite} reduction $s':X\rightarrow\xi/\Psf'$ to
            $s$, where $\Psf'$ is \textit{opposite} to $\Psf$,
            i.e., $\Psf\cap\Psf'$ is isomorphic to $\Lsf$
            through the projection
            $\Psf\rightarrow\Lsf=\Psf/\Rbf_u(\Psf)$,
            and there exists an automorphism
            $\tau:\Gsf\rightarrow\Gsf$ sending $\Psf$ to
            $\Psf'$, inducing an isomorphism
            $f:\xi/\Psf\rightarrow\xi/\Psf'$, such that
            $s'=f\circ s$.
    \end{enumerate}
\end{definition}

We can identify the Ramanathan-stability conditions of $\xi$ with the
stability conditions of the automorphism group scheme $\Ad(\xi)$.

\begin{lemma}[Stability conditions for principal bundles
    {\cite[§8]{behrend_semi-stabilityofreductivegroupschemesovercurves}}]\label{lem_ramanathanreductivestabilityequivalence}
    Let $\xi$ be a principal $\Gsf$-bundle over $X$.
    \begin{enumerate}[label=(\roman*)]
        \item\label{lem_ramanathanreductivestabilityequivalence1} $\xi$
            is Ramanathan-(semi)-stable if and only if $\Ad(\xi)$ is
            (semi)-stable.

        \item\label{lem_ramanathanreductivestabilityequivalence2} $\xi$
            is Ramanathan-polystable if and only if $\Ad(\xi)$ is polystable.
    \end{enumerate}
\end{lemma}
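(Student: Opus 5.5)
The plan is to translate reductions of $\xi$ into parabolic subgroup schemes of $\Ad(\xi)$, and then compare the defining conditions one by one. The dictionary is standard (Behrend §8). A reduction $s:X\rightarrow\xi/\Psf$ to a parabolic $\Psf\subset\Gsf$ gives a parabolic subgroup scheme $\Pbf_s:=\Ad(s^*\xi)\subset\Ad(\xi)$, where $s^*\xi$ is the induced $\Psf$-bundle. Every parabolic subgroup scheme of $\Ad(\xi)$ arises in this way from a unique reduction, since $\Par(\Ad(\xi))$ of a given type is $\xi/\Psf$. Under this correspondence:
\begin{itemize}
\item maximal parabolics correspond to maximal parabolics;
\item $\Lie(\Pbf_s)=s^*\xi\times^{\Psf}\Lie(\Psf)$;
\item the vertical tangent bundle satisfies $s^*V_{\xi/\Psf}\cong\Lie(\Ad(\xi))/\Lie(\Pbf_s)$.
\end{itemize}

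\textbf{Part (i).} Since $\Gsf$ is reductive, the Killing-type invariant form makes $\Lie(\Ad(\xi))$ self-dual, so it has degree $0$. Hence
\begin{equation*}
\deg(\Pbf_s)=-\deg(s^*V_{\xi/\Psf}).
\end{equation*}
Ramanathan's condition $\deg(s^*V_{\xi/\Psf})\geq0$ (resp. $>0$) for all maximal reductions is therefore exactly $\deg(\Pbf)\leq0$ (resp. $<0$) for all maximal parabolics $\Pbf$. By Remark \ref{rem_maximalparabolicssuffice}, testing maximal parabolics suffices for $\Ad(\xi)$. This remark needs $\Ad(\xi)$ to be rationally split, which holds because $\xi$ is Zariski-locally trivial over the generic point. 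That gives (i).

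\textbf{Part (ii).} Polystability is checked only on degree-zero maximal reductions, again by Remark \ref{rem_maximalparabolicssuffice}. There are two conditions to match.
\begin{itemize}
\item \emph{Levi condition.} The Levi-factor $\Pbf_s/\Rbf_u(\Pbf_s)$ is $\Ad((s^*\xi)(\Lsf))$. By part (i) applied to the reductive group $\Lsf$ (Behrend's correspondence works for any split reductive group), semistability of this Levi-factor matches \ref{def_ramanathanstability21} with \ref{def_groupschemestability31}.
\item \emph{Opposite condition.} Given an opposite reduction $s'=f\circ s$, the scheme $\Pbf_{s'}$ is a parabolic subgroup scheme. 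Over trivializing opens, $\Pbf_s\cap\Pbf_{s'}$ is locally the intersection of opposite parabolics $\Psf\cap\Psf'\cong\Lsf$, so this gives \ref{def_groupschemestability32}.
\end{itemize}

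\textbf{Main obstacle: the converse of the opposite condition.} Suppose $\Pbf'$ is opposite to $\Pbf_s$ in $\Ad(\xi)$. Then $\Pbf'=\Pbf_{s'}$ for a reduction $s'$ to some parabolic $\Psf'$ whose type is the opposite type. The difficulty is to produce $\tau$ and $f$ with $s'=f\circ s$.

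My plan is to use that $\Pbf_s\cap\Pbf'$ is a Levi subgroup scheme. This gives a reduction $\sigma$ of $\xi$ to $\Lsf$, lifting $s$, with $s'$ induced from $\sigma$ via $\Lsf\subset\Psf'$. Choose $\Psf'$ to be the parabolic opposite to $\Psf$ containing $\Lsf$. Then $\Psf'$ is $\Gsf$-conjugate to the image of $\Psf$ under an automorphism $\tau$ (for instance $w_0$ composed with a Chevalley-type automorphism, or simply conjugation if the types match). The map $f:\xi/\Psf\rightarrow\xi/\Psf'$ is then built from $\tau$ on the $\Lsf$-reduction $\sigma$.

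The care needed is in making $f$ globally well-defined: I expect $f$ to be defined only on sections factoring through $\sigma$. So I would read ``inducing an isomorphism $f$'' in Definition \ref{def_ramanathanpolystability} as an identification compatible with $s$, and check that $s'=f\circ s$ holds fiberwise by the uniqueness of the opposite parabolic containing a given Levi subgroup.
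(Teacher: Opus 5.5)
Your route is the paper's: the SGA3 dictionary $s\mapsto\Pbf_s=\Ad(s^*\xi)$, reduction to maximal parabolics via Remark~\ref{rem_maximalparabolicssuffice}, and a term-by-term translation of the conditions. Part (i) and the Levi condition in (ii) are fine. Your identity $\deg\Pbf_s=-\deg(s^*V_{\xi/\Psf})$ is exactly the link to Ramanathan's definition that the paper's displayed equation leaves implicit. (To get $\deg\ad(\xi)=0$ in all characteristics, use that $\det\circ\Ad$ is a trivial character of the connected reductive $\Gsf$; a Killing-type form can degenerate.)

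\textbf{The gap is the opposite condition,} which the paper's proof also only asserts. Your forward step ``locally $\Pbf_s\cap\Pbf_{s'}$ is $\Psf\cap\Psf'$'' is false for $s'=f\circ s$.
\begin{enumerate}[label=(\arabic*)]
\item Maps $\xi/\Psf\to\xi/\Psf'$ that are natural in $\xi$ are, evaluated on the trivial torsor, $\Gsf$-equivariant maps $\Gsf/\Psf\to\Gsf/\Psf'$. So they exist only when $\Psf'=h\Psf h^{-1}$, and then $f$ is $p\Psf\mapsto ph^{-1}\Psf'$. An outer $\tau$ only gives a map to the twisted torsor $\xi\times^{\Gsf,\tau}\Gsf$, e.g.\ $\Fr(E^*)$ for inverse-transpose on $\GLbf_n$. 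Hence for $\GLbf_n$ and type $(k,n-k)$ with $2k\neq n$ the clause cannot even be formulated.
\item In the inner case, $[ph^{-1},q]=[p,h^{-1}qh]$ in $\Ad(\xi)$ and $h^{-1}\Psf'h=\Psf$. Therefore $\Ad((f\circ s)^*\xi)=\Ad(s^*\xi)$. For $\GLbf_2$, $f\circ s$ is the same line subbundle, recorded through the opposite Borel.
\item So, read literally, (ii) is false. Take $\Gsf=\GLbf_2$ and a nonsplit extension $0\to L_0\to E\to L_0'\to0$ of degree-zero line bundles. The only degree-zero maximal reduction is $L_0$, and its Levi bundle is trivially semistable. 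The section $s'=f\circ s$ with $\tau=\mathrm{Int}(h)$, $h$ the Weyl element, satisfies the clause. Yet $\Ad(\Fr(E))$ violates~\ref{def_groupschemestability32}, since $L_0$ has no complement in $E$.
\end{enumerate}
For the same reason, no choice of $\tau$ can make your converse close.

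\textbf{The fix.} The clause must be replaced, not read generously. What you call ``$f$ defined only on sections factoring through $\sigma$'' is really the correct definition: $s'$ is opposite to $s$ if both are induced from a single reduction $\sigma$ of $\xi$ to $\Lsf=\Psf\cap\Psf'$. Equivalently, $\Ad(s^*\xi)\cap\Ad(s'^*\xi)\to\Ad((s^*\xi)(\Lsf))$ is an isomorphism. No $\tau$ or $f$ is involved.

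With this definition, your forward computation is correct once the local frames are taken to be sections of $\sigma$. For the converse:
\begin{itemize}
\item[] $\Lbf:=\Pbf_s\cap\Pbf'$ is a Levi subgroup scheme of $\Pbf_s$.
\item[] Levi subgroups of $\Psf$ are conjugate and $N_{\Psf}(\Lsf)=\Lsf$, so the Levi subgroup schemes of $\Ad(s^*\xi)$ are the sections of $s^*\xi/\Lsf$. Thus $\Lbf$ yields $\sigma$.
\item[] Let $\Psf'$ be the parabolic opposite to $\Psf$ through $\Lsf$, and set $s':=\sigma\times^{\Lsf}\Psf'$. Then $\Pbf_{s'}$ is opposite to $\Pbf_s$ and contains $\Lbf$.
\item[] By uniqueness of the opposite parabolic through a given Levi (SGA3, Exp.~XXVI, \S4), $\Pbf_{s'}=\Pbf'$.
\end{itemize}
This argument is missing from both your proposal and the paper.
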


\begin{proof}
    Due to \cite[Exp. XXIV, 1.5]{sga3}, we know that $\Ad(\xi)$ is
    rationally split, so we fix a maximal torus $\Tbf_{\eta}$ of
    $\Ad(\xi)_{\eta}$, where $\eta:=\Spec(K(X))$. For
    \ref{lem_ramanathanreductivestabilityequivalence1}, by Remark
    \ref{rem_maximalparabolicssuffice}, it suffices to check
    inequalities for maximal parabolic subgroup schemes of $\Ad(\xi)$.
    Let $\Psf$ be a maximal parabolic subgroup of $\Gsf$,
    and let $s:X\rightarrow\xi/\Psf$ be a reduction to
    $\Psf$. By \cite[Exp. XXVI, 3.20]{sga3},
    $s:X\rightarrow\xi/\Psf$ corresponds to a maximal parabolic
    subgroup scheme $\Ad(s^*\xi)$ of $\Ad(\xi)$. Since:
    \begin{equation*}
        \deg(\ad(s^*\xi))=\deg(\Lie(\Ad(s^*\xi)))=\deg(\Ad(s^*\xi))
    \end{equation*}
    we have $\deg(\ad(s^*\xi))\leqparen0$ if and only if
    $\deg(\Ad(s^*\xi))\leqparen0$, with Notation
    \ref{rem_remarkonnotation}.

    For \ref{lem_ramanathanreductivestabilityequivalence2}, by Remark
    \ref{rem_maximalparabolicssuffice}, it suffices to check the
    required conditions for maximal parabolics. Let $\Psf$ be a
    maximal parabolic subgroup of $\Gsf$, and let
    $s:X\rightarrow\xi/\Psf$ be a reduction such that
    $\deg(s^*V_{\xi/\Psf})=0$. By \cite[Exp. XXVI, 3.20]{sga3},
    $s:X\rightarrow\xi/\Psf$ corresponds to a parabolic subgroup
    scheme $\Pbf$ of $\Ad(\xi)$, then using
    \ref{lem_ramanathanreductivestabilityequivalence1}, the
    Ramanathan-polystability conditions of $s^*\xi$ translate to
    \ref{def_groupschemestability31} and
    \ref{def_groupschemestability32} for $\Pbf$.
\end{proof}

\subsubsection{Jordan-Hölder theory for principal
bundles}\label{subsubsec_JHprincipalbundles}

Through our results, we can reprove the Jordan-Hölder theorem of
Ramanathan for principal bundles.

\begin{theorem}[Jordan-Hölder theorem for principal bundles
        {\cite[3.12
    Proposition]{ramanathan_moduliforprincipalbundlesI}}]\label{thm_theoremR}
    For a principal $\Gsf$-bundle $\xi$ over $X$, there exists a
    reduction $s:X\rightarrow\xi/\Psf$ to a parabolic subgroup
    $\Psf$ of $\Gsf$ such that:
    \begin{enumerate}[label=(\roman*)]
        \item\label{thm_theoremR1} The reduction $s$ is
            \textit{admissible}, i.e., for any character
            $\chi:\Psf\rightarrow\Cbb^\times$ that is trivial on the
            center $\Zbf(\Psf)$, we have $\deg(\chi(s^*\xi))=0$.

        \item\label{thm_theoremR2} The extension to the Levi-factor
            $(s^*\xi)(\Lsf)$ is Ramanathan-stable.
    \end{enumerate}
    These reductions have the same extension $(s^*\xi)(\Lsf)$ to
    the Levi-factor, up to an isomorphism.
\end{theorem}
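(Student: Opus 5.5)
The plan is to transport the statement to the automorphism group scheme $\Ad(\xi)$ and apply Theorem \ref{thm_jhlevireductivegroupscheme}. By \cite[Exp. XXIV, 1.5]{sga3}, $\Ad(\xi)$ is rationally split, so I fix a maximal torus $\Tbf_\eta$ of $\Ad(\xi)_\eta$ and take the complementary polyhedron $d:=d_{\Ad(\xi)_\eta,\Tbf_\eta}$ of Lemma \ref{lem_Gdcomplementarypolyhedron}. The natural hypothesis is that $\xi$ is Ramanathan-semistable, since admissible reductions with stable Levi-factor cannot exist otherwise. Under this hypothesis, Lemma \ref{lem_ramanathanreductivestabilityequivalence} \ref{lem_ramanathanreductivestabilityequivalence1} and Lemma \ref{lem_Gdstableequivalence} \ref{lem_Gdstableequivalence1} show that $d$ is semistable, so Jordan-Hölder facets are defined.

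For existence, I apply Theorem \ref{thm_jhlevireductivegroupscheme} \ref{thm_jhlevireductivegroupscheme1}, which gives a parabolic subgroup scheme $\Pbf\subset\Ad(\xi)$ with $\Pbf_\eta\supset\Tbf_\eta$ and $\underline{R}(\Pbf_\eta)$ a Jordan-Hölder facet of $d$. By \cite[Exp. XXVI, 3.20]{sga3}, parabolic subgroup schemes of $\Ad(\xi)$ of type $t$ correspond to reductions $s:X\to\xi/\Psf$, where $\Psf$ has type $t$, with $\Pbf=\Ad(s^*\xi)$. Under this correspondence, the Levi-factor $\Lbf=\Pbf/\Rbf_u(\Pbf)$ is $\Ad((s^*\xi)(\Lsf))$.

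The two conditions then translate as follows.
\begin{enumerate}[label=(\alph*)]
\item Condition \ref{def_jhfacet1} says $n(\Pbf,\vfrak)=0$ for all $\vfrak\in\pi_0(t(\Pbf))$. These numerical invariants are the degrees of the elementary vector bundles $W(\Pbf,\vfrak)$. Each $W(\Pbf,\vfrak)$ is associated to $s^*\xi$ through a representation on which the center of $\Lsf$ acts by a character. The characters of $\Psf$ trivial on $\Zbf(\Psf)$ (equivalently, on the center of $\Gsf$) are rationally spanned by the determinants of these graded pieces of $\Lie\Rbf_u$; via the vertex bijection of Lemma \ref{lem_rootsystemreductivegroupscheme} \ref{lem_rootsystemreductivegroupscheme3}, these correspond to the dual fundamental weights $\widecheck{\lambda}_\vfrak$. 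Hence the vanishing of all $n(\Pbf,\vfrak)$ is equivalent to $\deg(\chi(s^*\xi))=0$ for all such $\chi$, which is \ref{thm_theoremR1}.
\item Condition \ref{def_jhfacet2} says $d_P$ is stable. By Lemma \ref{lem_Gdstableequivalence} applied to $\Lbf$, using the identification of root systems from \cite[Proposition 6.9]{behrend_semi-stabilityofreductivegroupschemesovercurves} recalled in its proof, this is equivalent to $\Lbf$ being stable. Lemma \ref{lem_ramanathanreductivestabilityequivalence} \ref{lem_ramanathanreductivestabilityequivalence1} for the $\Lsf$-bundle $(s^*\xi)(\Lsf)$ then gives \ref{thm_theoremR2}.
\end{enumerate}

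For uniqueness, take two such reductions $s_1,s_2$. Their parabolics $\Pbf_1,\Pbf_2$ need not contain a common torus. As in the remark following Theorem \ref{thm_jhlevireductivegroupscheme}, I choose $\Tbf_\eta$ inside $(\Pbf_1)_\eta\cap(\Pbf_2)_\eta$, which is possible since two parabolics always contain a common maximal torus \cite[Exp. XXVI, 4.1.1]{sga3}. The translation above shows both are Jordan-Hölder with respect to $d$. Theorem \ref{thm_jhlevireductivegroupscheme} \ref{thm_jhlevireductivegroupscheme2} then gives an isomorphism $\Lbf_1\cong\Lbf_2$ induced by an automorphism of $\Ad(\xi)$.

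The main obstacle is upgrading this isomorphism of adjoint group schemes to an isomorphism of the $\Lsf$-bundles $(s_1^*\xi)(\Lsf_1)\cong(s_2^*\xi)(\Lsf_2)$. Two issues arise. First, $\tau$ may be an outer automorphism (Remark \ref{rem_basedrootdata}), so $\Lsf_1$ and $\Lsf_2$ are only identified via an automorphism of $\Gsf$. Second, $\Ad$ forgets the central part. To handle both, I would work with the bundles themselves rather than only their adjoint groups, using the restriction of the based-root-datum automorphism $\tau$ to identify the structure groups. I would then use the fact that $\tau$ fixes $P_1^\perp=P_2^\perp$ and preserves $d$; by the connectivity argument in the proof of Theorem \ref{thm_jhfacetlevi}, this shows that the $\Zbf(\Lsf)$-parts of the two Levi bundles, which are determined by their degree data, agree. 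Finally, I would compare the Levi bundles as subquotients of $\xi$, in the spirit of Seshadri's graded-object argument.
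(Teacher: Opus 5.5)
Your existence argument is the paper's own. It uses rational splitting of $\Ad(\xi)$, semistability of $d_{\Ad(\xi)_\eta,\Tbf_\eta}$, Theorem~\ref{thm_jhlevireductivegroupscheme}~\ref{thm_jhlevireductivegroupscheme1}, and the correspondence of \cite[Exp.~XXVI, 3.20]{sga3} between parabolic subgroup schemes of $\Ad(\xi)$ and reductions of $\xi$. Your translation of \ref{def_jhfacet1} into admissibility goes through the determinant characters of the graded pieces of $\Lie\Rbf_u(\Psf)$ underlying the $W(\Pbf,\vfrak)$; these are characters of $\Psf$ trivial on $\Zbf(\Psf)=\Zbf(\Gsf)$. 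This is more explicit than the paper's one-line assertion, and it also supplies the converse needed to see that two Jordan--Hölder reductions give Jordan--Hölder parabolics. Your uniqueness argument agrees with the paper up to the application of Theorem~\ref{thm_jhlevireductivegroupscheme}~\ref{thm_jhlevireductivegroupscheme2}. At that point the paper simply declares the Levi bundles isomorphic. You are right that this does not follow: $\Lbf_i=\Ad((s_i^*\xi)(\Lsf_i))$, and an isomorphism of adjoint group schemes does not determine the underlying $\Lsf$-bundle. Your proposed repair, however, does not close this gap, so the uniqueness assertion remains unproved.

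The repair fails because everything it uses is numerical: $d$, $\tau$, the equality $P_1^\perp=P_2^\perp$, and the connectivity argument from the proof of Theorem~\ref{thm_jhfacetlevi} record only degrees of associated bundles. In particular, the claim that the $\Zbf(\Lsf)$-part of a Levi bundle is determined by its degree data is false. Take $\Gsf=\GLbf(2,K)$ and a Borel reduction. The Levi bundle is a pair of line bundles $(M_1,M_2)$, its adjoint group scheme is $\Gbb_{m,X}^2$ whatever the pair is, and the complementary polyhedron sees only $\deg M_1-\deg M_2$. So the inputs of your last step, namely an isomorphism $\Lbf_1\cong\Lbf_2$ plus equality of degree data, are identical for $(M_1,M_2)$ and $(M_1\otimes N_1,M_2\otimes N_2)$ with arbitrary $N_1,N_2\in\mathrm{Pic}^0(X)$. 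They therefore cannot yield an isomorphism of Levi bundles.

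What is missing is a genuinely bundle-level comparison that exploits the fact that both Levi bundles are subquotients of the same $\xi$. For vector bundles this is the Schur-type lemma that a nonzero map between stable bundles of equal slope is an isomorphism, applied across the two filtrations as in the classical proof of Theorem~\ref{thmS}. For $\Gsf$-bundles you need an analogue of that comparison, for instance through the generic relative position of the two reductions. Your final sentence gestures at this, but that is exactly the step carrying the content, and it is not carried out.
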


This generalizes the Jordan-Hölder theorem of vector bundles from
Seshadri in Theorem \ref{thmS} and \cite[Proposition
3.1]{seshadri_spaceofunitaryvectorbundles}. We call such reductions
\textit{Jordan-Hölder reductions} of $\xi$.

\begin{proof}
    For a Ramanathan-semistable principal $\Gsf$-bundle $\xi$, we
    know that $\Ad(\xi)$ is rationally split due to \cite[Exp. XXIV,
    1.5]{sga3}. Thus, we can apply Theorem
    \ref{thm_jhlevireductivegroupscheme}
    \ref{thm_jhlevireductivegroupscheme1} to $\Ad(\xi)$, with
    $d_{\Ad(\xi)_{\eta},\Tbf_{\eta}}$ from Lemma
    \ref{lem_Gdcomplementarypolyhedron}, since
    $d_{\Ad(\xi)_{\eta},\Tbf_{\eta}}$ is semistable due to Lemmas
    \ref{lem_Gdstableequivalence} \ref{lem_Gdstableequivalence1} and
    \ref{lem_ramanathanreductivestabilityequivalence}
    \ref{lem_ramanathanreductivestabilityequivalence1}. This gives us
    Jordan-Hölder parabolics $\Pbf$ of $\Ad(\xi)$. We now use
    \cite[Exp. XXVI, 3.20]{sga3} to associate parabolic reductions
    $s:X\rightarrow\xi/\Psf$ to $\Pbf$, such that
    $\Pbf=\Ad(s^*\xi)$. Since $\Pbf$ is Jordan-Hölder, Lemmas
    \ref{lem_Gdstableequivalence} \ref{lem_Gdstableequivalence1} and
    \ref{lem_ramanathanreductivestabilityequivalence}
    \ref{lem_ramanathanreductivestabilityequivalence1} translate the
    stability of the Levi-factor $\Lbf$ to the Ramanathan-stability of
    the Levi-factor $(s^*\xi)(\Lsf)$, applying the correspondence
    of \cite[Exp. XXVI, 3.20]{sga3} to the Levi-factors. The numerical
    invariants of $\Pbf$ are $0$, so we get that $\deg(\chi(s^*\xi))=0$
    for all admissible characters $\chi:\Psf\rightarrow
    K^\times$. For the uniqueness of the Levi-factor, for two Jordan-Hölder
    reductions $s_1:X\rightarrow\xi/\Psf_1$ and
    $s_1:X\rightarrow\xi/\Psf_2$, there exists a maximal torus
    $\Tbf_\eta$ inside the corresponding Jordan-Hölder parabolics
    $(\Pbf_{1})_\eta$ and $(\Pbf_{2})_\eta$, by \cite[Exp. XXVI,
    4.1.1]{sga3}. Then Theorem \ref{thm_jhlevireductivegroupscheme}
    \ref{thm_jhlevireductivegroupscheme2} implies that there exists an
    isomorphism between $\Pbf_{1}$ and $\Pbf_{2}$, thus the
    Levi-factors $(s_1^*\xi)(\Lsf_1)$ and
    $(s_2^*\xi)(\Lsf_2)$ are isomorphic.
\end{proof}

Theorem \ref{thm_theoremR} gives us an equivalent characterization of
Ramanathan-polystability.

\begin{lemma}[Equivalent characterization of
    Ramanathan-polystability]\label{lem_ramanathanpolystability}
    A Ramanathan-semistable principal $\Gsf$-bundle $\xi$ over
    $X$ is Ramanathan-polystable if and only if:
    \begin{enumerate}[label=(\roman*)]
        \item\label{lem_ramanathanpolystability1} $\xi$ is isomorphic to
            the extension of a Ramanathan-stable principal
            $\Lsf$-bundle $\xi^{\Lsf}$, where $\Lsf$ is a
            Levi-factor of a parabolic of $\Gsf$.

        \item\label{lem_ramanathanpolystability2} For all
            \textit{admissible} characters $\chi:\Lsf\rightarrow
            K^\times$, i.e., characters trivial on the center
            $\Zbf(\Lsf)$, we have $\deg(\chi(\xi^{\Lsf}))=0$.
    \end{enumerate}
\end{lemma}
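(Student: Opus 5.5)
Both directions go through the dictionary $\xi\leftrightarrow\Ad(\xi)\leftrightarrow d:=d_{\Ad(\xi)_\eta,\Tbf_\eta}$. By Lemma \ref{lem_ramanathanreductivestabilityequivalence} \ref{lem_ramanathanreductivestabilityequivalence2} and Lemma \ref{lem_Gdstableequivalence} \ref{lem_Gdstableequivalence2}, $\xi$ is Ramanathan-polystable if and only if $d$ is polystable. We then use the Jordan-Hölder characterization of polystability, Lemma \ref{lem_polystabilityfinalremark} (in its group-scheme form, Lemma \ref{lem_polystabilityequivalence} with \ref{lem_polystabilityequivalence3}), together with the Jordan-Hölder reductions of Theorem \ref{thm_theoremR}.

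\emph{Forward direction.} Assume $\xi$ is Ramanathan-polystable. Take a Jordan-Hölder reduction $s:X\to\xi/\Psf$ from Theorem \ref{thm_theoremR}, with Jordan-Hölder parabolic $\Pbf=\Ad(s^*\xi)$ and facet $P=\underline{R}(\Pbf)$. By Lemma \ref{lem_polystabilityfinalremark}, $-P$ is also a Jordan-Hölder facet, so there is an opposite Jordan-Hölder parabolic $\Pbf'$ with $\Pbf\cap\Pbf'\cong\Lbf$.

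The intersection $\Pbf\cap\Pbf'$ is a Levi subgroup scheme of $\Ad(\xi)$ lifting $\Lbf$. Via \cite[Exp. XXVI, 3.20]{sga3} (or directly: $s$ and $s'$ together give a reduction of $\xi$ to $\Psf\cap\Psf'\cong\Lsf$), this yields a reduction $\xi^{\Lsf}$ of $\xi$ to $\Lsf$ with $\xi\cong\xi^{\Lsf}\times^{\Lsf}\Gsf$ and $\xi^{\Lsf}\cong(s^*\xi)(\Lsf)$. Theorem \ref{thm_theoremR} \ref{thm_theoremR2} gives \ref{lem_ramanathanpolystability1}. Admissible characters of $\Lsf$ pull back to admissible characters of $\Psf$, so Theorem \ref{thm_theoremR} \ref{thm_theoremR1} gives \ref{lem_ramanathanpolystability2}.

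\emph{Reverse direction.} Assume $\xi\cong\xi^{\Lsf}\times^{\Lsf}\Gsf$ with $\xi^{\Lsf}$ stable and all admissible degrees zero. Choose a parabolic $\Psf$ with Levi $\Lsf$ and its opposite $\Psf'$. Extending $\xi^{\Lsf}$ gives reductions $s,s'$ of $\xi$ to $\Psf$ and $\Psf'$. Condition \ref{lem_ramanathanpolystability2} says the numerical invariants of $\Pbf=\Ad(s^*\xi)$ and of $\Pbf'$ vanish, since they are degrees of line bundles from characters trivial on the center, cf. the proof of Theorem \ref{thm_theoremR}. The Levi-factor $\Ad(\xi^{\Lsf})$ is stable by Lemma \ref{lem_ramanathanreductivestabilityequivalence}. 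Hence $\Pbf$ and $\Pbf'$ are Jordan-Hölder parabolics that are opposite, i.e., $P$ and $-P$ are Jordan-Hölder facets.

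To conclude via Lemma \ref{lem_polystabilityfinalremark}, we need that \emph{every} Jordan-Hölder facet has a Jordan-Hölder opposite. By Theorem \ref{thm_jhfacetlevi} \ref{thm_jhfacetlevi1}, every Jordan-Hölder facet $Q$ satisfies $Q^\perp=P^\perp$. Alternatively, argue through Lemma \ref{lem_stabilityofcomplementarypolyhedrapolystable}: (J1') for $P$ and $-P$ together with $y(\pm P)=0$ forces $F\subset P^\perp$, and (J2') then gives $0\in\relint(F)$, so $d$ is polystable. This convex-geometric route avoids enumerating the other Jordan-Hölder facets, and it is the one I would take.

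The main obstacle is the bookkeeping that identifies the vanishing of numerical invariants with the vanishing of degrees of admissible characters, and the opposite-parabolic intersection with a genuine $\Lsf$-reduction of $\xi$. For the latter I would use that $\Pbf\cap\Pbf'$ is smooth with Levi quotient isomorphism, and take its transporter in $\xi$.
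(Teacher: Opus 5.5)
Your proof is correct and shares the paper's skeleton. It uses the dictionary $\xi\leftrightarrow\Ad(\xi)\leftrightarrow d$ from Lemmas \ref{lem_ramanathanreductivestabilityequivalence} and \ref{lem_Gdstableequivalence}, the Jordan-Hölder reductions of Theorem \ref{thm_theoremR}, and, in the forward direction, \ref{lem_polystabilityequivalence3} to get an opposite Jordan-Hölder parabolic whose intersection with $\Pbf$ gives the $\Lsf$-reduction.

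The genuine difference is how you close the reverse direction. The paper cites \ref{lem_polystabilityequivalence3} there too. But from \ref{lem_ramanathanpolystability1}--\ref{lem_ramanathanpolystability2} one only builds a single opposite pair $\Ad(\xi^{\Lsf}\times^{\Lsf}\Psf)$, $\Ad(\xi^{\Lsf}\times^{\Lsf}\Psf')$. Condition \ref{lem_polystabilityequivalence3} (equivalently Lemma \ref{lem_polystabilityfinalremark}) instead demands an opposite partner for \emph{every} Jordan-Hölder facet, and the paper does not say how to pass from one pair to all.

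Your route through Lemma \ref{lem_stabilityofcomplementarypolyhedrapolystable} needs only the one pair. For $\lambda\in\vertices(P)$, $y(P)=y(-P)=0$ says that $H_\lambda$ and $H_{-\lambda}$ of Remark \ref{rem_jhfacetyp} both pass through $0$. Each such hyperplane supports $F$: $d(\cfrak)(\mu)\leq x(\mu)$ for all $x\in F$ when $\mu\in\vertices(\cfrak)$, which follows from \ref{def_complementarypolyhedra1} and \ref{def_complementarypolyhedra2} along a minimal gallery. Hence $F\subset\widecheck{P}^\perp$. Then \ref{lem_complementarypolyhedrajordan2} for $P$ gives $\Span_{\Rbb}(F(P))=\widecheck{P}^\perp=\aff_{\Rbb}(F)$ and $0\in\relint(F(P))\subset\relint(F)$. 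This is exactly the forward half of the proof of Lemma \ref{lem_stabilityofcomplementarypolyhedrapolystable}, so it is the more economical and transparent closing step. The paper's phrasing buys only uniformity: one citation serves both directions and carries over verbatim to Lemmas \ref{lem_parahoricpolystability} and \ref{lem_parahorichiggspolystability}.

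Your fallback via Theorem \ref{thm_jhfacetlevi} \ref{thm_jhfacetlevi1} would not close the argument on its own. Knowing $Q^\perp=P^\perp$ makes $-Q$ Jordan-Hölder only once you already have $F\subset\widecheck{P}^\perp$ (to get $y(-Q)=0$) and check minimality. So you were right to prefer the convex route.

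On the step you flag as the main obstacle: the parabolic $\Pbf'$ attached to the facet $-P$ is a priori only generically opposite to $\Pbf$. For example, $L_0,L'\subset F$ in Example \ref{ex_mainexamplevectorbundle} split $F_\eta$ but not $F$. Here opposition over all of $X$ follows from a degree count:
\begin{itemize}
\item $\Lie(\Pbf)$ and $\Lie(\Pbf')$ have degree $0$, since their numerical invariants vanish.
\item Their intersection injects, generically bijectively, into $\Lie(\Lbf)$, which has degree $0$ like $\ad(\xi)$.
\item Therefore the image of $\Lie(\Pbf)\oplus\Lie(\Pbf')\to\ad(\xi)$ is a full-rank subsheaf of degree $\geq0$, hence all of $\ad(\xi)$.
\end{itemize}
This is what the paper's \ref{lem_polystabilityequivalence3} tacitly packages.
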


\begin{proof}
    Using Lemmas \ref{lem_Gdstableequivalence}
    \ref{lem_Gdstableequivalence2} and
    \ref{lem_ramanathanreductivestabilityequivalence}
    \ref{lem_ramanathanreductivestabilityequivalence2}, $\xi$ is
    Ramanathan-polystable if and only if the complementary polyhedron
    $d_{\Ad(\xi)_\eta,\Tbf_{\eta}}$ from Lemma
    \ref{lem_Gdcomplementarypolyhedron} is polystable. By
    \ref{lem_polystabilityequivalence3} from Lemma
    \ref{lem_polystabilityequivalence}, and by the existence of
    Jordan-Hölder reductions in Theorem \ref{thm_theoremR}, the
    equivalence follows.
\end{proof}

\begin{remark}[Bundle moduli
    problem]\label{rem_principalbundlepolystability}
    For the moduli stack $\Mcal\Bun_{\Gsf}$ of principal
    $\Gsf$-bundles over $X$, it is known that
    $\Mcal\Bun_{\Gsf}$ is algebraic and locally of finite type, as
    seen in \cite[Theorem
    7.11]{casalainamartin-wise_anintroductiontomodulistacksviewtowardshiggsbundles},
    and $\Mcal\Bun_{\Gsf}$ also has affine diagonal, due to the proof of
    \cite[Proposition 1]{heinloth_uniformizationofG-bundles}. From
    Definition \ref{def_bundlemoduliproblem}
    \ref{def_bundlemoduliproblem1}, we claim that
    $(\Mcal\Bun_{\Gsf},d_{\Mcal\Bun_{\Gsf}})$ is a bundle moduli
    problem with $d_{\Mcal\Bun_{\Gsf}}$ induced by
    $d_{\Gbf_\eta,\Tbf_\eta}$ from
    Lemma \ref{lem_Gdcomplementarypolyhedron} on automorphism group
    schemes $\Gbf:=\Ad(\xi)$ of the principal $\Gsf$-bundles $\xi$.
    The properties of Definition \ref{def_bundlemoduliproblem}
    \ref{def_bundlemoduliproblem1}
    \ref{def_bundlemoduliproblem11}-\ref{def_bundlemoduliproblem14}
    arise from Lemmas \ref{lem_Gdstableequivalence} and
    \ref{lem_ramanathanreductivestabilityequivalence}. In this case,
    all facets $P$ of $\Phi(\Ad(\xi)_\eta,\Tbf_\eta)$ are admissible,
    in the sense of Definition \ref{def_bundlemoduliproblem}
    \ref{def_bundlemoduliproblem1} \ref{def_bundlemoduliproblem11}.
    For Definition \ref{def_bundlemoduliproblem}
    \ref{def_bundlemoduliproblem1} \ref{def_bundlemoduliproblem15},
    we define the line bundle $\Lcal_{\det}$ on $\Mcal\Bun_{\Gsf}$
    by the following fibers: For every principal $\Gsf$-bundle
    $\xi$ (up to isomorphism), we have the graded vector space of
    cohomology $H^*(X,\ad(\xi))$, which induces the dual-determinant
    $\Lcal_{\det,\xi}:=\det(H^*(X,\ad(\xi)))^*$. As shown in
    \cite[Corollary 1.16]{heinloth_hilbertmumfordstability},
    $\Lcal_{\det}$-stability conditions are equivalent to the usual
    Ramanathan-stability conditions, and by Definitions
    \ref{def_Lstability} \ref{def_Lstability3} and
    \ref{def_ramanathanpolystability}, this equivalence extends to
    polystability. Thus, $\Lcal_{\det}$ has compatible stability in the
    sense of Definition \ref{def_bundlemoduliproblem}
    \ref{def_bundlemoduliproblem2}.

    Theorem \ref{thm_modulistack} identifies S-equivalence and
    Jordan-Hölder filtrations of
    $(\Mcal\Bun_{\Gsf},d_{\Mcal\Bun_{\Gsf}})$ with the
    S-equivalence and Jordan-Hölder reductions from Theorem \ref{thm_theoremR}.
\end{remark}

To study the Jordan-Hölder stratification of
$(\Mcal\Bun_{\Gsf},d_{\Mcal\Bun_{\Gsf}})$, let $\JH_{\Gsf}$ be the
set of isomorphism classes of Levi-factors of $\Gsf$. For
$\mu\in\JH_{\Gsf}$, we define
$(\Mcal\Bun_{\Gsf})^{d_{\Mcal\Bun_{\Gsf}}-\sst}_\mu(K)$ as the set of
$K$-points of Ramanathan-semistable principal $\Gsf$-bundles $\xi$
with a Jordan-Hölder reduction $s:X\rightarrow\xi/\Psf$, such that
$\Lsf:=\Psf/\Rbf_u(\Psf)$ is in the isomorphism class $\mu$. By
construction, it is clear that this induces the same Jordan-Hölder
stratification as in Subsection \ref{subsec_jhstratifications}.

\begin{lemma}[Jordan-Hölder stratification]\label{lem_jhstratificationbung}
    Let $K$ be algebraically closed of characteristic $0$. The
    Jordan-Hölder stratification of
    $(\Mcal\Bun_{\Gsf},d_{\Mcal\Bun_{\Gsf}})$, from Subsection
    \ref{subsec_jhstratifications}, consists of locally closed substacks.
\end{lemma}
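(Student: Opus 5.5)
Since $\JH_{\Gsf}$ is finite, the natural plan is to show that each closed union of strata is a closed substack of the semistable locus. The strata are then differences of closed substacks.

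First we order $\JH_{\Gsf}$ by the partial order induced by inclusion of Levi-factors up to conjugation or outer automorphism. We write $\mu\preceq\nu$ if a representative $\Lsf_\nu$ contains a representative $\Lsf_\mu$; the order is well defined because Theorem \ref{thm_jhlevireductivegroupscheme} identifies Levi-factors only up to automorphisms of $\Gsf$. For each $\mu$, we then define the union
\begin{equation*}
    \Mcal_{\preceq\mu}:=\bigcup_{\nu\preceq\mu}(\Mcal\Bun_{\Gsf})^{d_{\Mcal\Bun_{\Gsf}}-\sst}_\nu
\end{equation*}
and show it is the set of semistable $\xi$ admitting an admissible reduction (in the sense of Theorem \ref{thm_theoremR} \ref{thm_theoremR1}) to some parabolic $\Psf$ whose Levi $\Lsf$ lies in the class $\mu$, with $(s^*\xi)(\Lsf)$ Ramanathan-semistable. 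To see this, refine such a reduction by a Jordan-Hölder reduction of the semistable $\Lsf$-bundle $(s^*\xi)(\Lsf)$, using Theorem \ref{thm_theoremR} applied to $\Lsf$. The refined reduction is a Jordan-Hölder reduction of $\xi$ with Levi contained in $\Lsf$. The converse inclusion is immediate, since a Jordan-Hölder reduction of type $\nu\preceq\mu$ can be coarsened to a parabolic with Levi $\Lsf_\mu$, and the degree-zero conditions pass to the coarsening.

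Next we show each $\Mcal_{\preceq\mu}$ is closed in $\Mcal^{\sst}:=(\Mcal\Bun_{\Gsf})^{d-\sst}$. It is the image of the morphism
\begin{equation*}
    \coprod_{\Psf,\,\deg}\Mcal\Bun_{\Psf}^{\deg=0,\ \Lsf\text{-}\sst}\rightarrow\Mcal^{\sst}
\end{equation*}
where the coproduct runs over the finitely many standard parabolics $\Psf$ of type $\mu$, and $\deg$ ranges over the degree data forced to be zero on admissible characters. For a fixed topological type of $\xi$, this leaves finitely many components. The key point is properness of $\Mcal\Bun_{\Psf}^{\deg=0,\sst}\rightarrow\Mcal^{\sst}$. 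I would verify it with the valuative criterion, in the spirit of Langton and of Ramanathan's argument. Given a family $\xi_R$ over $X_R$, with $R$ a DVR, and a degree-zero reduction over the generic point, the reduction extends over all of $X_R$ after removing codimension-two points, because $\xi/\Psf$ is projective. Since the degree is locally constant in flat families and the limit reduction has degree at least the generic one, semistability of $\xi_0$ forces the degree to stay $0$. Semistability of the Levi then follows, because any destabilizing reduction of the Levi would destabilize $\xi_0$. Since $\Mcal^{\sst}$ is of finite type on each topological type, the image is closed.

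Finally we conclude. The stratum is
\begin{equation*}
    \Mcal_\mu=\Mcal_{\preceq\mu}\setminus\bigcup_{\nu\prec\mu,\ \nu\neq\mu}\Mcal_{\preceq\nu}.
\end{equation*}
To justify this equality, let $\xi\in\Mcal_{\preceq\mu}$, and suppose it lies in no smaller closed union. Then the Jordan-Hölder Levi produced by refining is exactly $\mu$, and by the uniqueness in Theorem \ref{thm_theoremR} the stratum is well-defined. The right-hand side is closed minus a finite union of closed sets, hence locally closed, and we give it the reduced induced substack structure. The main obstacle is the properness step, in particular the extension of parabolic reductions across the special fiber and the preservation of the zero-degree condition. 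If the direct valuative argument proves awkward, I would instead argue via Theorem \ref{thm_modulistack}: $\Mcal_{\preceq\mu}$ is the $\pi$-preimage of the image of the good moduli space of $\Mcal\Bun_{\Lsf}^{\sst}$ with degree zero in $M^{\sst}$. That image is closed because the induced map of good moduli spaces is finite, and $\pi$-preimages of closed sets are closed.
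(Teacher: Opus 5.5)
Your strategy differs from the paper's. The paper realizes each stratum directly as the $\pi$-preimage of the image of the stable locus $(M\Bun_{\Lsf})^{\st}$ under extension of structure group, and argues that this map $\iota$ is an immersion. You instead use closed ``at most $\mu$'' loci and take differences. That can work, but one step you call immediate is false: the identification of $\Mcal_{\preceq\mu}=\bigcup_{\nu\preceq\mu}\Mcal_\nu$ with the set $Z_\mu$ of semistable $\xi$ admitting an admissible reduction to a parabolic with Levi in class $\mu$ and semistable Levi-extension.

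Your refinement argument does give $Z_\mu\subseteq\Mcal_{\preceq\mu}$. Coarsening fails, however, because a parabolic containing a Jordan-Hölder parabolic can only merge \emph{consecutive} graded pieces. Here is a concrete counterexample.
\begin{itemize}
\item Take $\Gsf=\GLbf_4$ and $X$ of genus $g\geq 2$. Let $F_1,F_3$ be degree-$0$ line bundles and $F_2$ a stable rank-$2$ degree-$0$ bundle.
\item Both $\Ext^1(F_2,F_1)$ and $\Ext^1(F_3,F_2)$ have dimension $2(g-1)>0$. So you can build a semistable $E$ with filtration $0\subset F_1\subset E_2\subset E$ and graded pieces $F_1,F_2,F_3$. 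Choose the extension classes so that their images in these two groups are nonzero.
\item Then $E$ is uniserial: its only degree-$0$ subbundles are $0,F_1,E_2,E$, of ranks $0,1,3,4$.
\item Its Jordan-Hölder Levi $\GLbf_1\times\GLbf_2\times\GLbf_1$ lies in a conjugate of $\GLbf_2\times\GLbf_2$. Hence $E\in\Mcal_{\preceq\mu}$ for $\mu=[\GLbf_2\times\GLbf_2]$.
\item But an admissible reduction of type $\mu$ is a degree-$0$ rank-$2$ subbundle, and $E$ has none. So $E\notin Z_\mu$.
\end{itemize}
Its polystable degeneration $F_1\oplus F_2\oplus F_3$ does lie in $Z_\mu$, so $Z_\mu$ is not even a union of strata. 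Consequently your properness argument, even if completed, proves closedness of $Z_\mu$, not of $\Mcal_{\preceq\mu}$.

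The repair is cheap, because you never need the equality. You only use two facts:
\begin{itemize}
\item $\Mcal_\nu\subseteq Z_\nu$, since a Jordan-Hölder reduction is admissible with stable Levi-extension;
\item $Z_\mu\subseteq\bigcup_{\nu\preceq\mu}\Mcal_\nu$, which is your refinement argument.
\end{itemize}
Together with antisymmetry of $\preceq$, these give $\Mcal_\mu=Z_\mu\setminus\bigcup_{\nu\prec\mu}Z_\nu$. So local closedness follows from closedness of the $Z_\nu$.

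For that closedness, phrase your valuative argument as stability of $Z_\mu$ under specialization. What you actually produce is a reduction on the special fibre, not over all of $X_R$. Combined with constructibility of $Z_\mu$ on each finite-type piece, this gives closedness.

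Your fallback route identifies the union correctly: $\Mcal_{\preceq\mu}$ is the $\pi$-preimage of the image of degree-$0$ semistable $\Lsf_\mu$-bundles. This works because at a polystable point the Jordan-Hölder Levi yields a reduction to the Levi subgroup itself, so no coarsening is needed. There, however, finiteness of the induced map of good moduli spaces carries the whole argument and is only asserted.

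Once repaired, your route trades the paper's immersion claim for $\iota$ for a Langton-type specialization statement about degree-$0$ reductions.
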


\begin{proof}
    We have a good moduli space
    $\pi:(\Mcal\Bun_{\Gsf})^{d_{\Mcal\Bun_{\Gsf}}-\sst}\rightarrow
    (M\Bun_{\Gsf})^{d_{\Mcal\Bun_{\Gsf}}-\sst}$, then for
    $\mu\in\JH_{\Gsf}$,
    $(\Mcal\Bun_{\Gsf})^{d_{\Mcal\Bun_{\Gsf}}-\sst}_\mu$ is the
    $\pi$-preimage of a subspace
    $(M\Bun_{\Gsf})^{d_{\Mcal\Bun_{\Gsf}}-\sst}_\mu$ of
    $(M\Bun_{\Gsf})^{d_{\Mcal\Bun_{\Gsf}}-\sst}$ defined as follows:
    Due to Theorem \ref{thm_modulistack} and Lemma
    \ref{lem_ramanathanpolystability},
    $(M\Bun_{\Gsf})^{d_{\Mcal\Bun_{\Gsf}}-\sst}_\mu$ is the image of
    a morphism
    $\iota:(M\Bun_{\Lsf})^{d_{M\Bun_{\Lsf}}-\st}\rightarrow
    (M\Bun_{\Gsf})^{d_{\Mcal\Bun_{\Gsf}}-\sst}$ extending the
    structure group through the canonical inclusion
    $\Lsf\hookrightarrow\Gsf$. Thus,
    $(\Mcal\Bun_{\Gsf})^{d_{\Mcal\Bun_{\Gsf}}-\sst}_\mu$ forms a
    substack of $(\Mcal\Bun_{\Gsf})^{d_{\Mcal\Bun_{\Gsf}}-\sst}$.

    To show that $(\Mcal\Bun_{\Gsf})^{d_{\Mcal\Bun_{\Gsf}}-\sst}_\mu$
    is a locally closed substack, we claim that $\iota$ is an
    immersion of algebraic spaces. By the properties of Jordan-Hölder
    filtrations, $\iota$ is a monomorphism. For a $K$-scheme $T$ and
    a family $x\in M\Bun_{\Gsf}(T)$ of principal $\Gsf$-bundles, the
    locus $T'$ in $T$, for which $x/\Psf\rightarrow T'\times X$
    admits a section, parametrizes the family $x_{T'}$ over $T'$
    precisely in the image of $\iota$. It suffices to show that $T'$
    is locally closed in $T$: By trivializing locally inside
    $T'\times X$, sections of $x/\Psf\rightarrow T'\times X$ map into
    Bruhat cells of $\Gsf/\Psf$, which are locally closed, hence,
    $T'$ is locally closed in $T$. Since $\iota$ is an immersion of
    algebraic spaces locally of finite type,
    $(\Mcal\Bun_{\Gsf})^{d_{\Mcal\Bun_{\Gsf}}-\sst}_\mu$ is a locally
    closed substack.
\end{proof}

To apply Theorem \ref{thm_theoremR} in practice for
vector bundles $E$ (via their frame bundles $\Fr(E)$), we must
calculate the complementary polyhedron
$d_{\Ad(\Fr(E))_{\eta},\Tbf_{\eta}}$ to determine a Jordan-Hölder
parabolic $\Pbf$. For this, we recall how vector bundles split into subbundles.

\begin{remark}[Serre vanishing]\label{rem_vectorbundlenonsplit}
    For a short exact sequence of vector bundles over $X$:
    \begin{equation*}
        0\rightarrow F\rightarrow E\rightarrow F'\rightarrow 0
    \end{equation*}
    let $\beta\in \Ext^1(F',F)$ be the \textit{extension
    class}. For an \textit{effective divisor} $D\neq0$ on $X$,
    \textit{Serre's vanishing theorem} implies there exists $N\in\Nbb$,
    so that for all $n\geq N$, we have:
    \begin{equation*}
        0=H^1(X,(F'^*\otimes F)(nD))=H^1(X,(F'(-nD))^*\otimes
        F)=\Ext^1(F'(-nD),F)
    \end{equation*}
    so every short exact sequence with $F$ in front and $F'(-nD)$ at
    the end splits. Using
    \cite[{\href{https://stacks.math.columbia.edu/tag/010I}{010I}}]{stacks-project},
    through the inclusion of coherent sheaves $\iota:F'^*\otimes
    F\hookrightarrow (F'^*\otimes F)(nD)$, we induce the pullback
    extension $\iota^*\beta\in\Ext^1(F'(-nD),F)=0$ which splits,
    implying that the coherent sheaf $E\times_{F'}F'(-nD)$ is
    isomorphic to $F\oplus F'(-nD)$. Thus, $F'(-nD)$ naturally embeds
    as a subsheaf of $E$, such that its \textit{saturation}
    $F'(-nD)^{\sat}$ in $E$ is a subbundle such that
    $E_\eta\cong F_\eta\oplus (F'(-nD)^{\sat})_\eta$ splits
    over $\eta$. The saturation $F'(-nD)^{\sat}$ is unique, and
    does not depend on the choice of $n\geq N$. Furthermore, $N=0$
    occurs if and only if the original sequence splits, i.e., $\beta=0$.
\end{remark}

\begin{example}[Jordan-Hölder filtrations of a rank 3
    bundle]\label{ex_mainexamplevectorbundle}
    By modifying the example of Huybrechts-Lehn in \cite[Example
    1.2.10]{huybrechts-lehn_geometryofmodulispaces},
    we wish to construct a rank $3$ vector bundle $E$ over $X$, with
    two different Jordan-Hölder filtrations $0\subset F\subset E$, and
    $0\subset L\subset E$, where $L$ is a line subbundle, and $F$ is a
    rank $2$ slope-stable subbundle.
    We construct $F$ as follows: Let $L_i$ denote a line bundle over
    $X$ of degree $i$, then there exists a non-split extension:
    \begin{equation*}
        0\rightarrow L_0\rightarrow F\rightarrow L_2\rightarrow0
    \end{equation*}
    Following Remark \ref{rem_vectorbundlenonsplit}, for any effective
    divisor $D\neq0$ on $X$, there exists $N\in\Nbb$, such that for all
    $n\geq N$, we have $\Ext^1(L_2(-nD),L_0)=0$. Then let
    $L':=L_2(-nD)^{\sat}$ be the saturation inside $F$, with
    degree $l':=\deg(L')$, then $F_\eta\cong (L_0)_\eta\oplus L'_\eta$.
    We follow that $l'\leq\deg(L_2)=2$, as $L_2$ is slope-stable and
    $L'$ embeds into $L_2$ as a subsheaf through $F\rightarrow L_2$.
    Since $F_\eta\cong (L_0)_\eta\oplus L'_\eta$, $F$ is slope-stable
    if and only if $l'\leq0$, since $\mu(F)=1/2$, so we assume that $l'\leq0$.
    We choose $L:=L_1$ to have degree $1$ and define $E:=F\oplus L$.
    By construction, $E$ splits such that $E_{\eta}\cong F_{\eta}\oplus
    L_{\eta}\cong (L_0)_{\eta}\oplus(L')_{\eta}\oplus  L_{\eta}$.

    We follow Wißdorf's method in \cite[3.3.11 Remark]{wissdorf_phdthesis} to
    calculate the complementary polyhedron of $\Ad(E):=\Ad(\Fr(E))$
    with respect to the maximal torus $\Tbf_{\eta}$ induced from the
    splitting of $E_{\eta}$ above. Just as in \cite[3.3.12 Example
    and 3.A]{wissdorf_phdthesis}, we associate to Borel subgroup
    schemes $\Bbf$ of $\Ad(E)$, with $\Bbf_{\eta}$ containing
    $\Tbf_{\eta}$, vectors in $\Rbb^3$ that record the degrees of the
    quotient bundles in the induced vector bundle filtrations of $E$.
    We adopt the notation of Example \ref{ex_complementarypolyhedra},
    and fix a choice of simple roots
    $\triangle:=\{\alpha,\beta\}\subset\Phi(\Ad(E)_{\eta},\Tbf_{\eta})$,
    inducing a Borel subgroup scheme
    $\Bbf:=\underline{R}^{-1}(\cfrak_{\alpha+\beta})$ of $\Ad(E)$.
    Wißdorf requires that the degrees of the filtration corresponding
    to $\Bbf'$ appear permuted, as vectors in $\Rbb^3$, by the
    permutation in $S_3$ corresponding to the element of the Weyl group
    mapping $\Bbf'$ to $\Bbf$. We assign to the simple roots:
    \begin{equation*}
        \alpha,\beta:\Ad(\Fr(L_0))_\eta\times\Ad(\Fr(L'))_{\eta}\times\Ad(\Fr(L))_{\eta}\rightarrow
        \Gbb_{m,\eta}
    \end{equation*}
    the vectors $(1,-1,0)$ and $(0,1,-1)$, and by assigning the
    filtration $0\subset L_0\subset F\subset E$ to $\Bbf$, we have:
    \begin{equation*}
        \begin{tabular}{|c|c|c|c|}
            \hline
            \textit{Borel subgroup}                      &
            \textit{Filtration of $E$}                 &
            \textit{Permutation in $S_3$} & \textit{Vector in $\Rbb^3$} \\
            \hline
            $\underline{R}^{-1}(\cfrak_{\alpha+\beta})$  & $0\subset
            L_0\subset F\subset E$           & $\id$
            & $(0,2,1)$                   \\
            \hline
            $\underline{R}^{-1}(\cfrak_{\beta})$         & $0\subset
            L'\subset F\subset E$            & (12)
            & $(2-l',l',1)$               \\
            \hline
            $\underline{R}^{-1}(\cfrak_{\alpha})$        & $0\subset
            L_0\subset L_0\oplus L\subset E$ & (23)
            & $(0,2,1)$                   \\
            \hline
            $\underline{R}^{-1}(\cfrak_{-\alpha-\beta})$ & $0\subset
            L\subset L'\oplus L\subset E$    & (13)
            & $(2-l',l',1)$               \\
            \hline
            $\underline{R}^{-1}(\cfrak_{-\beta})$        & $0\subset
            L\subset L_0\oplus L\subset E$   & (123)
            & $(0,2,1)$                   \\
            \hline
            $\underline{R}^{-1}(\cfrak_{-\alpha})$       & $0\subset
            L'\subset L'\oplus L\subset E$   & (132)
            & $(2-l',l',1)$               \\
            \hline
        \end{tabular}
    \end{equation*}
    Following \cite[3.3.11 Remark]{wissdorf_phdthesis}, we project
    these vectors in $\Rbb^3$ down to the hyperplane
    $H:=\{(x,y,z)\in\Rbb^3\ |\ x+y+z=0\}$ which maps
    $(0,2,1)\mapsto(-1,1,0)$ and $(2-l',l',1)\mapsto(1-l',l'-1,0)$.
    Through the isomorphism $V_{\Ad(E)_{\eta},\Tbf_{\eta}}^*\cong H$
    given by $\widecheck{\alpha}\mapsto(1,-1,0)$ and
    $\widecheck{\beta}\mapsto(0,1,-1)$, the convex hull of
    $d_{\Ad(E)_{\eta},\Tbf_{\eta}}$ contains $0$ for every possible
    $l'\leq0$, confirming that $E$ is slope-semistable by Lemmas
    \ref{lem_stabilityofcomplementarypolyhedra}
    \ref{lem_stabilityofcomplementarypolyhedra1} and
    \ref{lem_ramanathanreductivestabilityequivalence}
    \ref{lem_ramanathanreductivestabilityequivalence1}. For $l'=0$,
    $d_{\Ad(E)_{\eta},\Tbf_{\eta}}$ is given by the complementary
    polyhedron $d_b$ from Example \ref{ex_complementarypolyhedra}. It
    is clear that the Jordan-Hölder filtrations $0\subset F\subset E$
    and $0\subset L\subset E$ of $E$, with graded bundle $L\oplus
    F$, correspond to the Jordan-Hölder facets $P_1$ and $P_2$ from
    Example \ref{ex_jh} \ref{ex_jh1}.
\end{example}

\subsection{Parahoric torsors}\label{subsec_parahorictorsors}

From the introduction (\ref{intro_applications} \ref{intro_applications5}),
we apply our Jordan-Hölder theory to \textit{parahoric torsors}.
Studied in
\cite{heinloth_uniformizationofG-bundles,pappas-rapoport_someqnsaboutG-bundles,balaji-sesh_parahorictorsors},
they unite the constructions of \textit{principal parabolic
bundles} and \textit{Hecke transforms} very naturally. These torsors
are torsors of \textit{parahoric group
schemes} $\Gcal$ over $X$ (Definition \ref{def_parahoricgroup}),
which are generically reductive group schemes, but over finitely many
\textit{ramification} or \textit{bad points} $\Rcal\subset X$, they
look like the \textit{parahorics} defined by Bruhat-Tits in
\cite{bruhat-tits_groupesreductifsI,bruhat-tits_groupesreductifsII}.

Let $\Gsf$ be a split reductive $K$-group.
With the setup of Balaji-Seshadri in \cite[Theorem
5.3.1]{balaji-sesh_parahorictorsors}, imposing that the genus $g$ of
$X$ is greater than $1$, and that $\Gcal$ is generically split (with
respect to $\Gsf$), semisimple, and simply connected, we have that
parahoric $\Gcal$-torsors are naturally isomorphic to
\textit{$(\Gamma,\Gsf)$-bundles}, as defined in \cite[2.2.3.
Definition]{balaji-sesh_parahorictorsors}. Thus, parahoric torsors
are the right notion to study how Galois covers $Y\rightarrow X$,
with ramification at $\Rcal$, impose on compatible
$\Gsf$-representations $\rho$ of $\pi_1(X)$  conjugacy classes
$(\rho_{x^i})_{x^i\in\Rcal}$ at ramification points, as seen in
\cite[1.0.1. Definition]{balaji-sesh_parahorictorsors}.

\subsubsection{Definition and stability of parahoric
torsors}\label{subsubsec_definitionandstabilityofparahoric}

We first look at the local situation around a $K$-point $x\in X$,
with a coordinate $t$ that vanishes at $x$. For a smooth affine
$K$-group $\Psf$, the \textit{loop group} is
$L\Psf:=\Psf(K\llparenthesis t\rrparenthesis)$, and the
\textit{positive loop group} is
$L^+\Psf:=\Psf(K\llbracket t\rrbracket)$.

\begin{definition}[Parahoric group schemes]\label{def_parahoricgroup}
    \leavevmode
    \begin{enumerate}[label=(\alph*)]
        \item\label{def_parahoricgroup1} Let $\Tsf$ be a maximal
            torus of $\Gsf$, and for all roots
            $\alpha\in\Phi(\Gsf,\Tsf)$, let $\Usf_\alpha$
            denote the corresponding \textit{root group}. Let $\theta\in
            V_{\Gsf,\Tsf}:=\Xcal^*(\Tsf)\otimes\Rbb$,
            from (\ref{eq}), and for all
            $\alpha\in\Phi(\Gsf,\Tsf)$, let
            $\theta_\alpha:=\widecheck{\alpha}(\theta)$. The
            \textit{parahoric subgroup} $\Pcal_\theta$ is the subgroup of
            $L\Gsf$ generated as follows:
            \begin{equation*}
                \Pcal_\theta:=\left\langle
                L^+\Tsf,\Usf_\alpha\left(t^{-\lfloor\theta_\alpha\rfloor}K\llbracket
                t\rrbracket\right)\ \middle\vert\ \alpha\in\Phi(\Gsf,\Tsf)\right\rangle
            \end{equation*}

        \item\label{def_parahoricgroup2} A subgroup $\Pcal$ of
            $L\Gsf$, which is conjugate by an element in
            $L\Gsf$ to $\Pcal_\theta$, for some $\theta$, from
            \ref{def_parahoricgroup1}, is called a \textit{parahoric
            subgroup} of $L\Gsf$.

        \item\label{def_parahoricgroup3} A smooth affine group scheme
            $\Gcal$ over $\Spec(K\llbracket t\rrbracket)$ is called a
            \textit{parahoric group scheme} if:
            \begin{enumerate}[label=(\roman*)]
                \item\label{def_parahoricgroup31} $\Gcal(\Spec(K\llbracket
                    t\rrbracket))$ is isomorphic to a parahoric subgroup
                    $\Pcal$ of $L\Gsf$, from \ref{def_parahoricgroup2}.

                \item\label{def_parahoricgroup32} $\Gcal_{\Spec(K\llparenthesis
                    t\rrparenthesis)}$ is isomorphic to $\Spec(K\llparenthesis
                    t\rrparenthesis)\times\Gcal$.
            \end{enumerate}

        \item\label{def_parahoricgroup4} Let $\Rcal$ be a finite
            collection of $K$-points in $X$. A smooth affine group scheme
            $\Gcal$ over $X$ is called a \textit{parahoric group scheme} if:
            \begin{enumerate}[label=(\roman*)]
                \item\label{def_parahoricgroup41} $\Gcal_{X\setminus\Rcal}$ is a
                    reductive group scheme over $X\setminus\Rcal$.

                \item\label{def_parahoricgroup42} For all $x^i\in\Rcal$, and
                    a local coordinate $t$ of $X$ vanishing at $x^i$,
                    $\Gcal_{\Spec(K\llbracket t\rrbracket)}$ is a
                    \textit{parahoric group scheme} over $\Spec(K\llbracket
                    t\rrbracket)$, from \ref{def_parahoricgroup3}.
            \end{enumerate}
    \end{enumerate}
\end{definition}

Definition \ref{def_parahoricgroup} \ref{def_parahoricgroup3} is the
\textit{schematization} of the parahoric subgroup in
\ref{def_parahoricgroup2}, which exists due to
\cite[§4.6]{bruhat-tits_groupesreductifsII}. Gluing schematizations
together in the sense of \ref{def_parahoricgroup4} is possible due to
\cite[5.2.2 Lemma]{balaji-sesh_parahorictorsors}. We now define
\textit{parahoric torsors}. For a finite collection of $K$-points
$\Rcal$ in $X$, let $\Gcal$ be a parahoric group scheme over $X$.

\begin{definition}[Parahoric torsors]\label{def_parahorictorsor}
    A \textit{parahoric $\Gcal$-torsor} $\xi$ over $X$ is a flat morphism
    $\xi\rightarrow X$, locally of finite presentation, with a
    $\Gcal$-action $\sigma:\xi\times\Gcal\rightarrow\xi$ such that:
    \begin{equation*}
        (\pr_1,\sigma):\xi\times\Gcal\rightarrow\xi\times\xi
    \end{equation*}
    is an isomorphism, where $\pr_1$ is the projection onto the first
    factor. We denote the moduli stack of parahoric $\Gcal$-torsors by
    $\Mcal\Bun_{\Gcal}$, a stack over $\Spec(K)$.
\end{definition}

We wish to understand reductions of parahoric torsors to parabolics.
For a parabolic subgroup $\Psf$ of $\Gsf$, the preimage
of $\Pcal_P:=\ev_0^{-1}(\Psf)$ under
$\ev_0:L^+\Gsf\rightarrow\Gsf$, sending $t$ to $0$, is a
parahoric subgroup of $L\Gsf$. We now define \textit{parabolic
subgroups} of parahoric groups, using that after fixing simple roots
$\triangle\subset\Phi(\Gsf,\Tsf)$, every parahoric
subgroup $\Pcal$ of $L\Gsf$ is conjugate to $\Pcal_\theta$,
where $\theta$ lies in the closure of the \textit{standard Weyl
alcove}, i.e., $\theta_\alpha\geq0$ for all $\alpha\in\triangle$, and
$\theta_\beta\leq1$ for \textit{highest roots} $\beta$ with respect
to $\triangle$.

\begin{definition}[Parabolic subgroups]\label{def_parabolicsubgroupsparahorics}
    Let $\triangle\subset\Phi(\Gsf,\Tsf)$ be simple roots,
    and let $\Psf$ be a parabolic subgroup of $\Gsf$
    containing $\Tsf$, such that $\triangle$ is in the parabolic
    subset $R(\Psf)$ from Lemma
    \ref{lem_rootsystemreductivegroupscheme}
    \ref{lem_rootsystemreductivegroupscheme1}.
    \begin{enumerate}[label=(\alph*)]
        \item\label{def_parabolicsubgroupsparahorics1} In the setup of
            Definition \ref{def_parahoricgroup} \ref{def_parahoricgroup1},
            let $\theta$ lie in the standard Weyl alcove. The
            \textit{parabolic subgroup} of $\Pcal_\theta$ corresponding to
            $\Psf$ is
            $\Pcal_{\Psf,\theta}:=\ev_0^{-1}(\Psf)\cap\Pcal_\theta$.

        \item\label{def_parabolicsubgroupsparahorics2} Let $\Pcal$ be a
            parahoric subgroup of $L\Gsf$ conjugate to
            $\Pcal_\theta$, where $\theta$ lies in the standard Weyl
            alcove, as in \ref{def_parabolicsubgroupsparahorics1}. The
            \textit{parabolic subgroup} of $\Pcal$ corresponding to
            $\Psf$ is $\Pcal_{\Psf}$, which is conjugate to
            $\Pcal_{\Psf,\theta}$, from
            \ref{def_parabolicsubgroupsparahorics1}.

        \item\label{def_parabolicsubgroupsparahorics3} For a parahoric
            group scheme $\Gcal$ over $\Spec(K\llbracket t\rrbracket)$, the
            \textit{parabolic subgroup} $\Gcal_{\Psf}$ of $\Gcal$
            corresponding to $\Psf$ is the subgroup scheme of
            $\Gcal$ such that:
            \begin{enumerate}[label=(\roman*)]
                \item $\Gcal_{\Psf}(\Spec(K\llbracket t\rrbracket))$ is
                    isomorphic to the parabolic subgroup $\Pcal_{\Psf}$
                    of $\Gcal(\Spec(K\llbracket t\rrbracket))$, from
                    \ref{def_parabolicsubgroupsparahorics2}.

                \item $(\Gcal_{\Psf})_{\Spec(K\llparenthesis
                    t\rrparenthesis)}$ is isomorphic to $\Spec(K\llparenthesis
                    t\rrparenthesis)\times\Psf$.
            \end{enumerate}

        \item\label{def_parabolicsubgroupsparahorics4} Let $\Rcal$ be a
            finite collection of $K$-points in $X$, and let $\Gcal$ be a
            parahoric group scheme over $X$. The \textit{parabolic
            subgroup} $\Gcal_{\Psf}$ of $\Gcal$ corresponding to
            $\Psf$ is the subgroup scheme of $\Gcal$ such that:
            \begin{enumerate}[label=(\roman*)]
                \item $(\Gcal_{\Psf})_{X\setminus\Rcal}$ is a parabolic subgroup
                    scheme of $\Gcal_{X\setminus\Rcal}$.
                \item For all $x^i\in\Rcal$, and a local coordinate $t$ of
                    $X$ vanishing at $x^i$,
                    $(\Gcal_{\Psf})_{\Spec(K\llbracket t\rrbracket)}$ is the
                    parabolic subgroup of $\Gcal_{\Spec(K\llbracket
                    t\rrbracket)}$, from
                    \ref{def_parabolicsubgroupsparahorics3}.
            \end{enumerate}
    \end{enumerate}
\end{definition}
Due to \cite[5.2.2 Lemma]{balaji-sesh_parahorictorsors}, the
schemes in Definitions \ref{def_parabolicsubgroupsparahorics}
\ref{def_parabolicsubgroupsparahorics3} and
\ref{def_parabolicsubgroupsparahorics4} exist, and for a parahoric
$\Gcal$-torsor $\xi$, a \textit{reduction to a parabolic} is a section
$s:X\rightarrow\xi/\Gcal_{\Psf}$ for a parabolic subgroup
$\Psf$ of $\Gsf$.

$\Ad(\xi)$ is a rationally split reductive group scheme due to
\cite[Lemmas 1.13 and 1.14]{heinloth_hilbertmumfordstability}.
Thus, since the scheme $\Par(\Ad(\xi))$ over $X$, whose global
sections are parabolic subgroup schemes of $\Ad(\xi)$, is projective,
due to \cite[Exp. XXVI, 4.1.1]{sga3}, Heinloth uses that
$\Gcal_\eta:=\pi^*\Gcal$ is a reductive group scheme over
$\eta:=\Spec(K(X))$ to prove the following.

\begin{lemma}[Rees construction {\cite[Lemmas 1.14 and
    3.8]{heinloth_hilbertmumfordstability}}]\label{lem_reesconstructionparahoric}
    Let $\pi:X_\eta\rightarrow X$ be the Weil-restriction, and let
    $\pi_Y:Y\rightarrow X_\eta$ be a finite étale cover over which
    $\Gcal_Y:=\pi_Y^*\Gcal_\eta$ splits with maximal torus $\Tcal_Y$, which
    exists due to \cite[Lemma 5.1.3]{conrad_reductivegroupschemes}.
    Let $\Tsf$ be a maximal torus of $\Gsf$ induced by
    $\Tcal_Y$, then we have the following:
    \begin{enumerate}[label=(\alph*)]
        \item\label{lem_reesconstructionparahoric1} Parabolic subgroups
            $\Psf$ of $\Gsf$, containing $\Tsf$,
            correspond to rational cocharacters
            $\boldsymbol{\lambda}_\eta:\Gbb_{m,\eta}\rightarrow\Gcal_\eta$,
            whose restriction to $Y$ maps into $\Tcal_Y$, such that for any
            $K$-scheme $T$, we have:
            \begin{equation*}
                \Pcal(\boldsymbol{\lambda}_\eta)_{\eta}(T):=\{\gbf\in\Gcal_\eta(T)\ \vert\ \lim_{a\rightarrow0}\boldsymbol{\lambda}_\eta(a)\gbf\boldsymbol{\lambda}_\eta(a)^{-1}\text{
                exists}\}=(\Gcal_{\Psf})_\eta(T)
            \end{equation*}

        \item\label{lem_reesconstructionparahoric2} For a parahoric
            $\Gcal$-torsor $\xi$, the following are in bijection:
            \begin{enumerate}[label=(\roman*)]
                \item\label{lem_reesconstructionparahoric21} Filtrations
                    $f:\Theta_K\rightarrow\Mcal\Bun_{\Gcal}$ of $\xi$.
                \item\label{lem_reesconstructionparahoric22} Reductions
                    $s:X\rightarrow\xi/\Gcal_{\Psf}$ of $\xi$ to parabolic
                    subgroups $\Gcal_{\Psf}$ of $\Gcal$.
            \end{enumerate}
    \end{enumerate}
\end{lemma}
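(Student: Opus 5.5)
Part (a) reduces to the split case over $Y$ and then descends. Over $Y$, the group scheme $\Gcal_Y$ is split with maximal torus $\Tcal_Y$, and $\Tcal_Y$ induces $\Tsf\subset\Gsf$. By Lemma \ref{lem_rootsystemreductivegroupscheme}, parabolic subgroup schemes of $\Gcal_\eta$ whose pullback contains $\Tcal_Y$ correspond bijectively to facets of $\Phi(\Gsf,\Tsf)$, via the parabolic subsets $R(\Psf)$. The correspondence with rational cocharacters goes as follows.

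A rational cocharacter $\boldsymbol{\lambda}_\eta$ landing in $\Tcal_Y$ after pullback gives a point of $\Xcal_*(\Tsf)\otimes\Rbb\cong V_{\Gsf,\Tsf}$. Here we use Galois invariance, since $\boldsymbol{\lambda}_\eta$ is defined over $\eta$. The dynamic subgroup $\Pcal(\boldsymbol{\lambda}_\eta)$ is then computed on $Y$ by \cite[Proposition 5.2.3]{conrad_reductivegroupschemes}: its Lie algebra is $\Lie(\Tcal_Y)$ plus the root spaces with $\langle\alpha,\lambda\rangle\geq0$. This is exactly the parabolic subset of the facet containing $\lambda$.

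Conversely, given $\Psf\supset\Tsf$, choose a cocharacter in the relative interior of the corresponding facet; rationality allows scaling into the lattice. Its dynamic subgroup, descended along the étale cover, equals $(\Gcal_{\Psf})_\eta$. Cocharacters in the same facet give the same parabolic, so the correspondence is between parabolics and facets, i.e.\ cocharacters up to this equivalence.

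Part (b) is the Rees construction, following Heinloth. A filtration $f:\Theta_K\to\Mcal\Bun_{\Gcal}$ with $f(1)=\xi$ amounts to a $\Gbb_m$-equivariant $\Gcal$-torsor $\widetilde{\xi}$ on $X\times\Abb^1$ whose restriction to $X\times\{1\}$ is $\xi$. Restricting to the generic point, $\widetilde{\xi}_\eta$ is an equivariant torsor for the reductive group $\Gcal_\eta$ over $\Abb^1_\eta$. By the split-case Rees construction (\cite[Lemma 1.10]{heinloth_hilbertmumfordstability} and \cite[Lemma 1.14]{heinloth_hilbertmumfordstability}), such data correspond to a cocharacter of $\Ad(\xi)_\eta$ up to conjugation by its attracting parabolic, hence to a reduction of $\xi_\eta$ to $\Pcal(\boldsymbol{\lambda}_\eta)$. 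By (a), $\Pcal(\boldsymbol{\lambda}_\eta)$ is of the form $(\Gcal_{\Psf})_\eta$. Since $\xi/\Gcal_{\Psf}\to X$ is proper (projectivity of $\Par$, \cite[Exp.~XXVI, 4.1.1]{sga3}) and $X$ is a smooth curve, the generic reduction extends uniquely to a section $s:X\to\xi/\Gcal_{\Psf}$.

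For the inverse, take a reduction $s$ and a cocharacter $\boldsymbol{\lambda}$ with $\Pcal(\boldsymbol{\lambda})=\Gcal_{\Psf}$. Form $\widetilde{\xi}:=s^*\xi\times^{\Gcal_{\Psf}}\Gcal$ twisted over $\Abb^1$ by $\boldsymbol{\lambda}$, using the map $\Gcal_{\Psf}\times\Abb^1\to\Gcal\times\Abb^1$, $(g,a)\mapsto\boldsymbol{\lambda}(a)g\boldsymbol{\lambda}(a)^{-1}$. This extends over $a=0$ exactly because limits exist in $\Gcal_{\Psf}$. The result is a filtration of $\xi$ whose Levi-factor is the extension of $s^*\xi$ to the Levi quotient. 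Checking that the two constructions are mutually inverse can be done generically, by uniqueness of extension over $X$.

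The main obstacle is the points of $\Rcal$. There, $\Gcal$ is not reductive, so the Rees construction must be checked on the parahoric model over $\Spec K\llbracket t\rrbracket$. Two things have to be verified: the limit $\lim_{a\to0}\boldsymbol{\lambda}(a)g\boldsymbol{\lambda}(a)^{-1}$ should exist in $\Gcal(K\llbracket t\rrbracket)$ exactly for $g\in\Pcal_{\Psf,\theta}$, and equivariant torsors over $\Spec K\llbracket t\rrbracket\times\Abb^1$ should extend uniquely. I would first compute this on the generators $L^+\Tsf$ and $\Usf_\alpha(t^{-\lfloor\theta_\alpha\rfloor}K\llbracket t\rrbracket)$ of Definition \ref{def_parahoricgroup}, where conjugation by $\boldsymbol{\lambda}(a)$ scales $\Usf_\alpha$ by $a^{\langle\alpha,\lambda\rangle}$. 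Then I would glue with the generic description via \cite[5.2.2 Lemma]{balaji-sesh_parahorictorsors}, as in \cite[3.C]{heinloth_hilbertmumfordstability}.
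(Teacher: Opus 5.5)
Your part (a) is the paper's argument. Identify $\Phi(\Gcal_Y,\Tcal_Y)\cong\Phi(\Gsf,\Tsf)$, match parabolics containing the torus with facets through $R(\Psf)$, and read off the dynamic subgroup from Conrad's Proposition 5.2.3.

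The gap is in (b), at the points of $\Rcal$ you flag, but it is not the one you plan to check. Your forward direction keeps only the generic reduction and extends it over $X$ by properness of $\xi/\Gcal_{\Psf}\to X$, citing projectivity of $\Par$. That projectivity is a statement about reductive group schemes, so it only covers $X\setminus\Rcal$. At a non-hyperspecial point it fails. Take $\Gsf=\SLbf_2$, let $\mathcal{I}$ be the Iwahori group scheme over $K\llbracket t\rrbracket$, and let $\mathcal{B}$ be the closure of the upper Borel $\mathsf{B}$. Then $\mathcal{I}/\mathcal{B}$ has generic fibre $\mathbb{P}^1$ but special fibre $\Abb^1$. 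Correspondingly, $\SLbf_2(K\llparenthesis t\rrparenthesis)/\mathsf{B}(K\llparenthesis t\rrparenthesis)$ splits into two $\mathcal{I}(K\llbracket t\rrbracket)$-orbits. Only one orbit consists of generic Borels that extend to reductions to $\mathcal{B}$; the others extend only to reductions to the closure of the opposite Borel. So the extended section need not land in the $\Gcal_{\Psf}$ produced by (a). Checking limits on generators, or uniqueness of equivariant extensions, does not repair this.

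The paper's proof of (b) never extends a generic reduction. It uses the cocharacter $\boldsymbol{\lambda}:\Gbb_{m,X}\to\Ad(\xi)$ that the filtration induces over all of $X$ and takes its dynamic subgroup integrally; the dynamic construction makes sense for smooth affine group schemes over any base, so no properness is needed. For the converse it cites Heinloth's Lemma 3.8, which is where the Bruhat--Tits case is actually handled. You should likewise keep the integral cocharacter rather than its generic fibre.

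Two further points in your local plan need correcting.

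First, the claim that $\lim_{a\to0}\boldsymbol{\lambda}(a)g\boldsymbol{\lambda}(a)^{-1}$ exists exactly for $g\in\Pcal_{\Psf,\theta}=\ev_0^{-1}(\Psf)\cap\Pcal_\theta$ is false as stated. That group contains $\Usf_\alpha(tK\llbracket t\rrbracket)$ for every negative root $\alpha$, and when $\langle\alpha,\lambda\rangle<0$ no limit exists. Your generator computation, combined with the Iwahori factorization of $\Pcal_\theta$ with respect to $\Tsf$, yields $\Pcal_\theta\cap\Psf(K\llparenthesis t\rrparenthesis)$. These are the points of the schematic closure of $\Psf$, which is what condition (ii) in the definition of $\Gcal_{\Psf}$ forces anyway, and that is the identity to aim for.

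Second, your inverse construction depends on choosing $\boldsymbol{\lambda}$ with $\Pcal(\boldsymbol{\lambda})=\Gcal_{\Psf}$. The cocharacters $\boldsymbol{\lambda}$ and $\boldsymbol{\lambda}^n$ give different filtrations with the same reduction. So the two constructions are mutually inverse only between filtrations and pairs (reduction, cocharacter), as in Heinloth's Lemma 1.14, or between reductions and filtrations modulo $\sim_{\Psc}$. They are not literally inverse between filtrations and reductions.
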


\begin{proof}
    For \ref{lem_reesconstructionparahoric1},
    due to the  splitting by $\pi_Y$, there exists a canonical
    isomorphism between the root systems
    $\Phi(\Gcal_Y,\Tcal_Y)\cong\Phi(\Gsf,\Tsf)$. Then, for any
    parabolic subgroup
    $\Psf$ of $\Gsf$ containing $\Tsf$, using Lemma
    \ref{lem_rootsystemreductivegroupscheme}, the parabolic subgroup
    $\Gcal_\Psf$ of $\Gcal$ is precisely the parabolic subgroup of $\Gcal$
    whose restriction $\pi_Y^*\pi^*\Gcal_\Psf$ is a parabolic
    subgroup scheme of $\Gcal_Y$, containing $\Tcal_Y$, whose type is
    isomorphic to $t(\Psf)$ through
    $\Phi(\Gcal_Y,\Tcal_Y)\cong\Phi(\Gsf,\Tsf)$. By applying
    \cite[Proposition 5.2.3]{conrad_reductivegroupschemes} and the
    fact that $\Gcal_\eta$ is a reductive group scheme, the claim follows.

    For \ref{lem_reesconstructionparahoric2}, \cite[Lemmas 1.14 and
    3.8]  {heinloth_hilbertmumfordstability} give the reverse
    direction. To prove the forward direction, a filtration
    $f:\Theta_K\rightarrow\Mcal\Bun_{\Gcal}$ induces a cocharacter
    $\boldsymbol{\lambda}:\Gbb_{m,X}\rightarrow\Ad(\xi)$ on the
    automorphism groups, then \cite[Proposition
    5.2.3]{conrad_reductivegroupschemes} and \cite[Lemma
    1.14]{heinloth_hilbertmumfordstability} imply the claim.
\end{proof}

To define stability conditions for parahoric torsors, Heinloth in
\cite[§3.B]{heinloth_hilbertmumfordstability} uses a collection
$\underline{\chi}:=(\chi^i:\Gcal_{x^i}\rightarrow\Gbb_{m,x^i})_{x^i\in\Rcal}$
of characters, called \textit{stability parameters}, to define a line
bundle $\Lcal_{\det,\underline{\chi}}$ over $\Mcal\Bun_{\Gcal}$ such that
$\Lcal_{\det,\underline{\chi}}$-stability conditions recover the
usual parabolic stability conditions from \cite[Definition
1.13]{mehta-sesh_parabolic}. This uses Definition
\ref{def_Lstability}, which is applicable due to the following remark.

\begin{remark}[Geometry of $\Mcal\Bun_{\Gcal}$ {\cite[Proposition
    1]{heinloth_uniformizationofG-bundles}}]\label{rem_geometryofparahoricbunG}
    The stack $\Mcal\Bun_{\Gcal}$ is smooth, algebraic, and locally of finite
    type, due to \cite[Proposition
    1]{heinloth_uniformizationofG-bundles}. In Heinloth's proof, he
    proves that for $\Pcal,\Qcal\in\Mcal\Bun_{\Gcal}(T)$, where $T$ is a
    $K$-scheme, the sheaf $\Iso_{\Gcal}(\Pcal,\Qcal)$ whose
    sections are $\Gcal$-equivariant isomorphisms $\Pcal$ to $\Qcal$ is
    representable by an affine scheme $(\Pcal\times\Qcal)/\Gcal$ over
    $X\times T$. Because $X$ is proper, the Weil-restriction of
    $(\Pcal\times\Qcal)/\Gcal$ to $T$ is also affine, so $\Mcal\Bun_{\Gcal}$ has
    affine diagonal.
\end{remark}

For a parahoric $\Gcal$-torsor $\xi$, let
$s:X\rightarrow\xi/\Gcal_{\Psf}$ be a reduction to a parabolic, and
let $\Tsf$ be a maximal torus inside
$\Psf$. We first define the
\textit{$\underline{\chi}$-numerical invariant}
$n_{\underline{\chi}}(\Pbf,\vfrak)$ for $\Pbf:=\Ad(s^*\xi)$ with a
vertex $\vfrak\in\pi_0(t(\Pbf_\eta))$: By construction, $\Pbf_{\eta}$
contains the maximal torus $\Tbf_{\eta}$ induced by $\Tsf$, so
for any vertex $\vfrak\in\pi_0(t(\Pbf_{\eta}))$, Lemma
\ref{lem_reesconstructionparahoric} induces a corresponding vertex
$\vsf\in\pi_0(t((\Gcal_{\Psf})_{\eta}))$. For all
$x^i\in\Rcal$, this uniquely induces a root
$\chi_{\vsf}^i:\Gcal_{x^i}\rightarrow\Gbb_{m,x^i}$ in
$\Phi(\Gcal_{x^i},\Tcal_{x^i})$, so we define for all characters
$\chi^i:\Gcal_{x^i}\rightarrow\Gbb_{m,x^i}$:
\begin{equation*}
    \chi^i(\vfrak):=\chi^i(\chi^i_{\vsf})
\end{equation*}
with the standard dual-pairing that generalizes the pairing in
\cite[Remark
4.1.2]{heinloth-schmitt_cohomologyringsofmodulistacksprincbundles}.
This pairing is independent of the choice of $\Tsf$.
Using the elementary vector bundle $W(\Pbf,\vfrak)$ as discussed in
Subsection \ref{subsec_JHreductivegroupschemes}, we
define the \textit{$\underline{\chi}$-numerical invariant}:
\begin{equation*}
    n_{\underline{\chi}}(\Pbf,\vfrak):=\deg(W(\Pbf,\vfrak))+\sum_{x^i\in\Rcal}\chi^i(\vfrak)
\end{equation*}
Using this, Heinloth constructs complementary polyhedra for parahoric
torsors with $\Lcal_{\det,\underline{\chi}}$-stability conditions.
\begin{lemma}[Complementary polyhedra of parahoric torsors
        {\cite[Lemma
    3.15]{heinloth_hilbertmumfordstability}}]\label{lem_complentarypolyhedraparahoric}
    Let $\underline{\chi}$ be an admissible stability parameter in the
    sense of \cite[§3.F]{heinloth_hilbertmumfordstability}, and let
    $\xi$ be a parahoric $\Gcal$-torsor. Let $\Tbf_{\eta}$ be a maximal
    torus of $\Ad(\xi)_{\eta}$, then the following is a complementary
    polyhedron:
    \begin{align*}
        d^{\underline{\chi}}_{\Ad(\xi)_{\eta},\Tbf_{\eta}}:\Wfrak_{\Phi(\Ad(\xi)_{\eta},\Tbf_{\eta})}
        & \rightarrow V^*_{\Ad(\xi)_{\eta},\Tbf_{\eta}}
        \\
        \cfrak
        & \mapsto
        \sum_{\vfrak\in\pi_0(t(\underline{R}^{-1}(\cfrak)))}n_{\underline{\chi}}(\underline{R}^{-1}(\cfrak),\vfrak
        )\widecheck{\lambda}_\vfrak
    \end{align*}
\end{lemma}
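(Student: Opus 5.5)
The plan is to reduce the statement to Behrend's Proposition 6.6, which is Lemma \ref{lem_Gdcomplementarypolyhedron} here. The only change is that each numerical invariant $n(\Pbf,\vfrak)=\deg(W(\Pbf,\vfrak))$ is shifted by the constant $c(\vfrak):=\sum_{x^i\in\Rcal}\chi^i(\vfrak)$. Since $\Ad(\xi)$ is rationally split by \cite[Lemmas 1.13 and 1.14]{heinloth_hilbertmumfordstability}, Lemma \ref{lem_Gdcomplementarypolyhedron} applies. It gives that
\begin{equation*}
    d_0:\cfrak\mapsto\sum_{\vfrak}n(\underline{R}^{-1}(\cfrak),\vfrak)\widecheck{\lambda}_\vfrak
\end{equation*}
is a complementary polyhedron. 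We then write $d^{\underline{\chi}}=d_0+d_1$ with
\begin{equation*}
    d_1(\cfrak):=\sum_{\vfrak\in\pi_0(t(\underline{R}^{-1}(\cfrak)))}c(\vfrak)\widecheck{\lambda}_\vfrak .
\end{equation*}
Both axioms \ref{def_complementarypolyhedra1} and \ref{def_complementarypolyhedra2} are linear in $d$: the first is an equality and the second an inequality that is stable under sums. So it suffices to show that $d_1$ is itself a complementary polyhedron.

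For \ref{def_complementarypolyhedra1} applied to $d_1$, take two chambers $\cfrak,\dfrak$ sharing a vertex $\lambda=\lambda_\vfrak$. Using the dual basis relation $(\widecheck{\lambda}_{\vfrak'}$ paired with simple roots$)$, the value $d_1(\cfrak)(\lambda)$ is determined by the coefficient $c(\vfrak)$ attached to that vertex. This coefficient only depends on the vertex, because $\chi^i_{\vsf}$ is determined by the vertex $\vsf$ of the type via Lemma \ref{lem_reesconstructionparahoric} and not by the ambient Borel. This is the same reason $n(P,\lambda)$ is chamber-independent. More carefully, I would compute $d_1(\cfrak)(\lambda)$ through the inner product, as Behrend does in his Note 3.8, to confirm that it depends only on $c$ restricted to the vertex.

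For \ref{def_complementarypolyhedra2}, let $\cfrak,\dfrak$ be $\alpha$-conjugate. Reducing along a gallery, as in Behrend's proof, we may assume they are adjacent across the wall $\mathrm{Hy}_\alpha$ with $\dfrak=s_\alpha\cfrak$. Then $d_1(\dfrak)-d_1(\cfrak)$ is a multiple of $\widecheck{\alpha}$, and the needed inequality $d_1(\cfrak)(\alpha)\le d_1(\dfrak)(\alpha)$ becomes a sign condition on $\chi^i$ evaluated on $\pm\alpha$ at each $x^i$. Concretely, it is the condition $\sum_i\chi^i(\widecheck{\alpha}\text{-contribution})\ge -(\text{bound})$. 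This is exactly where admissibility of $\underline{\chi}$ in the sense of \cite[§3.F]{heinloth_hilbertmumfordstability} enters: the parameter lies in the (closed) alcove, so $0\le\theta_\alpha$-type pairings are at most $1$.

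This is the main obstacle. A pure sign condition on $d_1$ alone may fail, since $d_1$ need not be complementary by itself. If so, I would not split $d^{\underline{\chi}}=d_0+d_1$. Instead I would verify \ref{def_complementarypolyhedra2} for $d^{\underline{\chi}}$ directly, using Behrend's local computation at $x^i$. There the jump $\deg W$ across the wall in the parahoric setting differs from the reductive case by at most $\lfloor\theta_\alpha\rfloor$-type corrections, via the Hecke-modification description of $\Gcal$ near $x^i$ in Definition \ref{def_parahoricgroup}. Admissibility of $\underline{\chi}$ then absorbs exactly this correction. This is the content of \cite[Lemma 3.15]{heinloth_hilbertmumfordstability}, which I would follow.
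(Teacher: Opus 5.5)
The paper gives no proof of this lemma beyond quoting \cite[Lemma 3.15]{heinloth_hilbertmumfordstability}, and your fallback is that same citation. Everything you add on top of it either fails or is left unproved, so your argument has a gap at the decisive step \ref{def_complementarypolyhedra2}. The splitting $d^{\underline{\chi}}=d_0+d_1$ cannot work, because the parameter part $d_1$ violates \ref{def_complementarypolyhedra2} in general; there is no ``sign condition on $\chi^i$'' to extract. The simplest case already shows this, stated in the language of parabolic bundles (Iwahori level). Let $E$ have rank $2$, with a line $\ell\subset E_x$ at one point and weights $a_1<a_2$, where $a_2$ is attached to $\ell$. Then $\mathrm{pardeg}\,L=\deg L+a_2$ if $L_x=\ell$ and $\deg L+a_1$ otherwise, and $\mathrm{pardeg}\,E=\deg E+a_1+a_2$. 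Two generically transversal line subbundles $L,L'$ give the two chambers of $A_1$, and \ref{def_complementarypolyhedra2} amounts to $\mathrm{pardeg}\,L+\mathrm{pardeg}\,L'\leq\mathrm{pardeg}\,E$. If $L_x=L'_x=\ell$, the weight part alone would need $2a_2\leq a_1+a_2$, which is false. The inequality holds only because $L\oplus L'\to E$ then has cokernel of length at least $1$ at $x$, so $\deg L+\deg L'\leq\deg E-1$, together with $a_2-a_1\leq 1$. In general the missing step is this joint wall-crossing estimate. Across a wall, the excess of the parameter term must be bounded by the slack the degree term gains from the relative position at $x^i$ of the two adjacent Borel reductions with respect to $\Gcal_{x^i}$; this is where admissibility enters. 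You name this estimate but do not prove it.

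The parts you do argue also have problems.
\begin{itemize}
\item Lemma \ref{lem_Gdcomplementarypolyhedron} needs a group scheme that is reductive over all of $X$. Near each $x^i\in\Rcal$, however, $\Ad(\xi)$ is a parahoric group scheme, with non-reductive special fibre unless $\Gcal$ is hyperspecial there. So $d_0$ is not supplied by Behrend: it is the trivial-$\underline{\chi}$ case of the very statement you are proving (the trivial parameter is admissible, Example \ref{ex_parahorics}).
\item Your reasoning for \ref{def_complementarypolyhedra1} is incorrect. The $\widecheck{\lambda}_{\vfrak'}$ are dual, up to normalisation, to the simple roots, not to the vertices. The number $\widecheck{\lambda}_{\vfrak'}(\lambda_\vfrak)$ is a Gram-matrix entry of fundamental weights, which is positive within an irreducible component. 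Hence $d_1(\cfrak)(\lambda_\vfrak)$ involves the coefficients at all vertices of $\cfrak$. For the $A_2$ chambers with $\vertices(\cfrak)=\{\eta,\mu\}$ and $\vertices(\dfrak)=\{\mu,\lambda\}$ of Example \ref{ex_facetvertices}, condition \ref{def_complementarypolyhedra1} for $d_1$ at $\mu$ reads $c(\eta)=c(\lambda)$, which a vertex-only coefficient generally violates.
\item In any case the coefficients are not vertex-only. $n(P,\lambda)$ is independent of the chamber below a fixed facet $P$, but $n(\cfrak,\lambda)$ varies among chambers sharing $\lambda$. Likewise $\chi^i(\vfrak)$ depends on the root attached to $\vfrak$ by the given Borel and on its position at $x^i$.
\end{itemize}
The right reason for \ref{def_complementarypolyhedra1} is as follows. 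Let $Q$ be the $1$-dimensional facet with vertex $\lambda$. Then $d^{\underline{\chi}}(\cfrak)(\lambda)$ is a positive multiple of $\sum_{\alpha\in R(Q)}d^{\underline{\chi}}(\cfrak)(\alpha)$. This sum must be identified with the intrinsic $\underline{\chi}$-degree of the corresponding maximal parabolic reduction, which requires additivity of the $\underline{\chi}$-degree along the Borel filtration.
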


Similar to Lemma \ref{lem_ramanathanreductivestabilityequivalence}
for principal bundles,
$d^{\underline{\chi}}_{\Ad(\xi)_{\eta},\Tbf_{\eta}}$ detects
$\Lcal_{\det,\underline{\chi}}$-stability conditions.

\begin{lemma}[Stability conditions for parahoric torsors {\cite[Lemma
    3.15]{heinloth_hilbertmumfordstability}}]\label{lem_reductivestabilityequivalenceparahoric}
    Let $\underline{\chi}$ be an admissible stability parameter, and
    let $\xi$ be a parahoric $\Gcal$-torsor.
    \begin{enumerate}[label=(\roman*)]
        \item\label{lem_reductivestabilityequivalenceparahoric1} $\xi$ is
            $\Lcal_{\det,\underline{\chi}}$-(semi)-stable if and only if
            $\xi$ is
            $d^{\underline{\chi}}_{\Ad(\xi)_{\eta},\Tbf_{\eta}}$-(semi)-stable.

        \item\label{lem_reductivestabilityequivalenceparahoric2} $\xi$ is
            $\Lcal_{\det,\underline{\chi}}$-polystable if and only if
            $\xi$ is
            $d^{\underline{\chi}}_{\Ad(\xi)_{\eta},\Tbf_{\eta}}$-polystable.
    \end{enumerate}
\end{lemma}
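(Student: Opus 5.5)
The plan is to mirror the proof of Lemma \ref{lem_ramanathanreductivestabilityequivalence}, using Lemma \ref{lem_reesconstructionparahoric} in place of the correspondence of \cite[Exp. XXVI, 3.20]{sga3}.

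\emph{Reduction to parabolics containing $\Tbf_\eta$.} By Lemma \ref{lem_reesconstructionparahoric} \ref{lem_reesconstructionparahoric2}, filtrations $f$ of $\xi$ correspond to reductions $s:X\rightarrow\xi/\Gcal_{\Psf}$, hence to parabolic subgroup schemes $\Pbf=\Ad(s^*\xi)$ of $\Ad(\xi)$. As in the first paragraph of the proof of Lemma \ref{lem_Gdstableequivalence}, one conjugates generically so that $\Pbf_\eta$ contains the fixed maximal torus $\Tbf_\eta$. The generic conjugation extends over $X$ because $\Ad_{\Tbf_\eta}(\Ad(\xi))$ is projective, and the degrees and weights are preserved. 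Then $\underline{R}(\Pbf_\eta)$ is a facet, and by Remark \ref{rem_maximalparabolicssuffice} it suffices to treat one-dimensional facets, i.e.\ maximal parabolic reductions.

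\emph{Semi-stability.} The key input is \cite[Lemma 3.15]{heinloth_hilbertmumfordstability}. For the filtration $f$ attached to $s$, $\wt(f^*\Lcal_{\det,\underline{\chi}})$ is, up to a positive factor, $\sum_{\alpha\in R(P)}d^{\underline{\chi}}(\cfrak)(\alpha)=\deg(P)$ with $P=\underline{R}(\Pbf_\eta)$. This is obtained by pairing the $\underline{\chi}$-numerical invariants $n_{\underline{\chi}}(\Pbf,\vfrak)$ with the vertices of $P$, exactly as in Behrend's Propositions 1.9 and 3.2. So the signs of the weight and of the degree agree, and \ref{lem_reductivestabilityequivalenceparahoric1} follows from Definition \ref{def_Lstability} \ref{def_Lstability1}, \ref{def_Lstability2} and Definition \ref{def_stabilityofcomplementarypolyhedra} \ref{def_stabilityofcomplementarypolyhedra2}. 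Since every facet arises from some $\Gcal_{\Psf}$, all facets are admissible here.

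\emph{Polystability.} Take $f$ with weight $0$, equivalently $\deg(P)=0$. The two conditions are matched separately.

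For the Levi-factor condition: by the Rees construction, $f(0)$ is the extension of $s^*\xi$ to the Levi-factor of $\Gcal_{\Psf}$. Its automorphism group is $\Lbf=\Pbf/\Rbf_u(\Pbf)$, whose root system is $\Phi_P$ by \cite[Proposition 6.9]{behrend_semi-stabilityofreductivegroupschemesovercurves}. Its complementary polyhedron is $d_P$, because numerical invariants are compatible with projection (\cite[Lemma 3.9]{behrend_semi-stabilityofreductivegroupschemesovercurves}) and the parahoric terms $\chi^i(\vfrak)$ restrict compatibly. Applying \ref{lem_reductivestabilityequivalenceparahoric1} to $f(0)$ then shows that ``$f(0)$ is $\Lcal_{\det,\underline{\chi}}$-semistable'' is equivalent to \ref{def_stabilityofcomplementarypolyhedra31}.

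For the opposite-filtration condition: an opposite filtration $f'$ gives, via the map $g:\Phi_K\rightarrow\Mcal\Bun_{\Gcal}$, a reduction $s'$ to $\Gcal_{\Psf'}$ with $\Psf'$ opposite to $\Psf$ and a common Levi-factor. Hence $\underline{R}(\Pbf'_\eta)=-P$, and since $f'$ also has weight $0$ (both have Levi-factor $f(0)$), we get $\deg(-P)=0$. Conversely, if $\deg(-P)=0$, then $-P$ is admissible, and the remark after Definition \ref{def_bundlemoduliproblem} (equal $P^\perp$ means equal Levi-factor, and $-P$ gives opposite filtrations) produces $f'$.

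I expect the main obstacle to be this last step: producing an honest morphism $g:\Phi_K\rightarrow\Mcal\Bun_{\Gcal}$ from opposite reductions. The first thing I would try is to use the cocharacter $\boldsymbol{\lambda}$ of Lemma \ref{lem_reesconstructionparahoric} \ref{lem_reesconstructionparahoric1}, which lands in the common Levi $\Pbf\cap\Pbf'$. Via the Rees construction, $\boldsymbol{\lambda}$ and $\boldsymbol{\lambda}^{-1}$ give the two branches of $\Phi_K$, glued along the shared Levi-factor torsor, following \cite[Example 4.9]{zhang_Sequivalenceforalgebraicstacks}.
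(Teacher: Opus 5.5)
The first step of your proposal fails. For (i), your core step is the paper's entire proof: on a one-dimensional facet, the weight $\wt(f^*\Lcal_{\det,\underline{\chi}})$ and the degree have the same sign, by the definition of $n_{\underline{\chi}}$. Restricting to one-dimensional facets is also the right move, since for higher-dimensional $P$ the weight of $f\in[P]_{\Psc}$ depends on the cocharacter and not only on $P$.

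The failing step is your reduction to parabolics containing the fixed $\Tbf_\eta$, borrowed from the proof of Lemma \ref{lem_Gdstableequivalence}, where it fails as well. Degrees are not invariant under conjugation over $\eta$, so such a conjugation cannot extend to an automorphism of $\Ad(\xi)$ over $X$. Take $\Rcal=\emptyset$, $\Gcal=X\times\SLbf(2,K)$ and $\xi=\Fr(E)$ with $E=M\oplus M^{-1}$, $\deg(M)=1$. Consider the Borel subgroup schemes stabilizing $M$, resp.\ any other line subbundle $N$; for the latter, $N\rightarrow M^{-1}$ is nonzero, so $\deg(N)\leq-1$. These Borels are conjugate over $\eta$, but their elementary bundles $M^{\otimes2}$ and $N^{\otimes2}$ have degrees $2$ and $\leq-2$. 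Worse, suppose $\Tbf_\eta$ comes from a splitting $E_\eta=\ell_1\oplus\ell_2$ with $\ell_i\notin\{M_\eta,M^{-1}_\eta\}$. Then both Borels containing $\Tbf_\eta$ are of the second kind, so $d^{\underline{\chi}}_{\Ad(\xi)_\eta,\Tbf_\eta}$ is stable although $E$ is unstable.

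A fixed torus only sees filtrations whose parabolic contains it, so (i) can only hold with $\Tbf_\eta$ ranging over all maximal tori of $\Ad(\xi)_\eta$. With that reading you need no conjugation: given $f$, pick a maximal torus inside $\Pbf_\eta$ and run your ray-by-ray comparison for that torus. This is also all that the paper's sign comparison actually establishes.

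In (ii) there are three further problems.

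(a) $f(0)$ is a point of $\Mcal\Bun_{\Gcal}$, namely the $\Gcal$-torsor obtained by extending $(s^*\xi)(\Lcal_{\Psf})$ back along $\Lcal_{\Psf}\hookrightarrow\Gcal$. Its automorphism group scheme has root system $\Phi$, not $\Phi_P$, and contains $\Lbf$ only as the common Levi-factor of two opposite parabolics. So applying (i) to $f(0)$ gives a condition on a complementary polyhedron of $\Phi$, not on $d_P$. You would still have to prove that, for $\xi$ semistable and $\deg(P)=0$, semistability of this $\Gcal$-torsor is equivalent to semistability of $d_P$. On the polyhedron side, \ref{def_stabilityofcomplementarypolyhedra31} is then automatic by \cite[Lemma 3.4]{behrend_semi-stabilityofreductivegroupschemesovercurves}.

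(b) The opposite parabolic $\Pbf'$ coming from $f'$ need not contain $\Tbf_\eta$, so $\underline{R}(\Pbf'_\eta)=-P$ does not follow. For example, $E=L\oplus L^{-1}$ with $\deg(L)=0$ and $L^{\otimes2}\not\cong\Ocal_X$ is polystable. Yet for the torus of a splitting $E_\eta=L_\eta\oplus\ell$ with generic $\ell$, the facet $P$ of $L$ has $\deg(P)=0$ while $\deg(-P)<0$.

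(c) For the converse you cannot use the remark after Definition \ref{def_bundlemoduliproblem}. The bundle-moduli-problem axioms for $\Mcal\Bun_{\Gcal}$ are derived from this very lemma in Remark \ref{rem_lconstructionnew}, so that would be circular. Your direct construction is the right replacement. A reduction of $\xi$ to the common Levi-factor, together with a central cocharacter $\lambda$, gives $B\Gbb_{m,K}\rightarrow\Mcal\Bun_{\Gcal}$. Composing with $\Phi_K\rightarrow B\Gbb_{m,K}$ yields $g$ with branches $f_\lambda$ and $f_{\lambda^{-1}}$. However, $\underline{R}^{-1}(P)$ and $\underline{R}^{-1}(-P)$ are only known to be opposite over $\eta$. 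The construction needs them to be opposite at every point of $X$, and that requires an argument.
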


\begin{proof}
    The equivalences follow directly by the definition of
    $\underline{\chi}$-numerical invariants, since by construction,
    degrees induced by
    $d^{\underline{\chi}}_{\Ad(\xi)_{\eta},\Tbf_{\eta}}$ have the same
    sign as the weights $\wt(f^*\Lcal_{\det,\underline{\chi}})$ tested
    in Definition \ref{def_Lstability}.
\end{proof}

\subsubsection{Jordan-Hölder theory for parahoric
torsors}\label{subsubsec_JHparahoric}

We can now state the theorem we aim to prove, constructing
Jordan-Hölder reductions for parahoric torsors. First, we recall the
following: For a filtration $f:\Theta_K\rightarrow\Mcal\Bun_{\Gcal}$ of
$\xi$, and the induced rational cocharacter
$\boldsymbol{\lambda}_\eta:\Gbb_{m,\eta}\rightarrow\Gcal_\eta$ from
Lemma \ref{lem_reesconstructionparahoric}
\ref{lem_reesconstructionparahoric1}, we have the parahoric group
scheme $\Lcal(\boldsymbol{\lambda}_\eta)$ over $X$ given by the
following for all $K$-schemes $T$:
\begin{equation*}
    \Lcal(\boldsymbol{\lambda}_\eta)_\eta(T)=\{\gbf\in\Gcal_\eta(T)\ \vert\ \lim_{a\rightarrow0}\boldsymbol{\lambda}_\eta(a)\gbf\boldsymbol{\lambda}_{\eta}(a)^{-1}=\gbf\}
\end{equation*}
Let $s:\xi\rightarrow\xi/\Gcal_{\Psf}$ be the reduction
corresponding to the filtration $f$, from Lemma
\ref{lem_reesconstructionparahoric}
\ref{lem_reesconstructionparahoric2}. From
\cite[§3.D]{heinloth_hilbertmumfordstability}, we have that $f(0)$ is
the extension of $s^*\xi$, first through the \textit{Levi-factor}
$\Gcal_{\Psf}\rightarrow\Lcal_{\Psf}$, explained in \cite[4.6.10.
Proposition, 4.6.12. Corollaire]{bruhat-tits_groupesreductifsII} and
\cite[Theorem 3.7]{rega_phdthesis},
to obtain $(s^*\xi)(\Lcal_{\Psf})$, then second through the
isomorphism and inclusion
$\Lcal_{\Psf}\cong\Lcal(\boldsymbol{\lambda}_{\eta})\hookrightarrow\Gcal$.
In short, $f(0)$ is the extension of Levi-factor of the reduction induced by
Lemma \ref{lem_reesconstructionparahoric}.

\begin{theorem}[Jordan-Hölder theory for parahoric
    torsors]\label{thm_jhreductionparahoric}
    Let $\underline{\chi}$ be an admissible stability parameter in the
    sense of \cite[§3.F]{heinloth_hilbertmumfordstability}, and let
    $\xi$ be a $\Lcal_{\det,\underline{\chi}}$-semistable parahoric
    $\Gcal$-torsor. There exists a reduction
    $s:X\rightarrow\xi/\Gcal_{\Psf}$ to a parabolic subgroup
    $\Gcal_{\Psf}$ of $\Gcal$ such that:
    \begin{enumerate}[label=(\roman*)]
        \item\label{thm_jhreductionparahoric1} The reduction $s$ is
            \textit{admissible}, i.e., for every vertex
            $\vfrak\in\pi_0(t(\Ad(s^*\xi)))$, we have
            $n_{\underline{\chi}}(\Ad(s^*\xi),\vfrak)=0$.

        \item\label{thm_jhreductionparahoric2} The extension to the
            Levi-factor $(s^*\xi)(\Lcal_{\Psf})$, through the
            quotient $\Gcal_{\Psf}\rightarrow\Lcal_{\Psf}$, is
            $\Lcal_{\det,\underline{\chi}^{\Lcal_\Psf}}$-stable, with
            $\underline{\chi}^{\Lcal_\Psf}$ induced by the
            isomorphism
            $\Lcal_\Psf\cong\Lcal(\boldsymbol{\lambda}_\eta)$ as the
            restriction of $\underline{\chi}$.
    \end{enumerate}
    These reductions have the same extension
    $(s^*\xi)(\Lcal_{\Psf})$ to the Levi-factor, up to an
    isomorphism.
\end{theorem}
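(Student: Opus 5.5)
The plan is to follow the proof of Theorem \ref{thm_theoremR}, with $\Ad(\xi)$ and the complementary polyhedron $d^{\underline{\chi}}:=d^{\underline{\chi}}_{\Ad(\xi)_\eta,\Tbf_\eta}$ from Lemma \ref{lem_complentarypolyhedraparahoric} in place of $d_{\Ad(\xi)_\eta,\Tbf_\eta}$.

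First, $\Ad(\xi)$ is rationally split by \cite[Lemmas 1.13 and 1.14]{heinloth_hilbertmumfordstability}. Fix a maximal torus $\Tbf_\eta$ of $\Ad(\xi)_\eta$, coming from a finite étale $Y\rightarrow X_\eta$ as in Lemma \ref{lem_reesconstructionparahoric}. By Lemma \ref{lem_reductivestabilityequivalenceparahoric} \ref{lem_reductivestabilityequivalenceparahoric1}, the $\Lcal_{\det,\underline{\chi}}$-semistability of $\xi$ means $d^{\underline{\chi}}$ is semistable. Theorem \ref{thm_jhlevireductivegroupscheme} \ref{thm_jhlevireductivegroupscheme1} then gives a Jordan-Hölder parabolic $\Pbf$ of $\Ad(\xi)$ with respect to $d^{\underline{\chi}}$; that is, $\underline{R}(\Pbf_\eta)$ is a Jordan-Hölder facet $P$ by Theorem \ref{thm_jhfacetexistence}.

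Next, convert $\Pbf$ into a reduction of $\xi$. Lemma \ref{lem_rootsystemreductivegroupscheme} \ref{lem_rootsystemreductivegroupscheme2} identifies the facet $P$ with a parabolic $\Psf\supset\Tsf$ of $\Gsf$ via $\Phi(\Gcal_Y,\Tcal_Y)\cong\Phi(\Gsf,\Tsf)$. Lemma \ref{lem_reesconstructionparahoric} \ref{lem_reesconstructionparahoric1} gives a rational cocharacter $\boldsymbol{\lambda}_\eta$, and Lemma \ref{lem_reesconstructionparahoric} \ref{lem_reesconstructionparahoric2} turns the corresponding filtration into a reduction $s:X\rightarrow\xi/\Gcal_\Psf$ with $\Ad(s^*\xi)=\Pbf$.

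With $s$ in hand, the two properties are checked as follows.
\begin{enumerate}[label=(\roman*)]
\item Condition \ref{def_jhfacet1} says $n(P,\lambda_\vfrak)=0$ for all vertices. By the construction of $d^{\underline{\chi}}$ and Behrend's identity $d(\cfrak)=\sum n(\cfrak,\lambda)\widecheck{\lambda}$, these numerical invariants are exactly $n_{\underline{\chi}}(\Ad(s^*\xi),\vfrak)$, which is admissibility.
\item Condition \ref{def_jhfacet2} says the Levi-factor $d^{\underline{\chi}}_P$ is stable. We need $d^{\underline{\chi}}_P=d^{\underline{\chi}^{\Lcal_\Psf}}_{\Ad((s^*\xi)(\Lcal_\Psf))_\eta,\Tbf_\eta}$, after which Lemma \ref{lem_reductivestabilityequivalenceparahoric} applied to the parahoric group $\Lcal_\Psf\cong\Lcal(\boldsymbol{\lambda}_\eta)$ gives $\Lcal_{\det,\underline{\chi}^{\Lcal_\Psf}}$-stability.
\end{enumerate}

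I expect (ii), this compatibility of Levi-factors, to be the main obstacle. It has two parts.
\begin{itemize}
\item The unramified degrees must project correctly under $\pr_P$, as in Behrend's Lemma 3.9 and Proposition 6.9, giving $\Phi_P\cong\Phi(\Lbf_\eta,\Tbf_\eta)$.
\item The local terms $\chi^i(\vfrak)$ must restrict correctly: a vertex of the projected facet corresponds to a root $\chi^i_{\vsf}$ of $\Lcal_{\Psf,x^i}$, so the restriction of $\chi^i$ to $\Lcal_\Psf$ reproduces the same pairing.
\end{itemize}
I would check the second part by hand from the definition of $\chi^i(\vfrak)$ and the identification $\Lcal_\Psf\cong\Lcal(\boldsymbol{\lambda}_\eta)$ from \cite[§3.D]{heinloth_hilbertmumfordstability}. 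I would also verify that the restricted parameter stays admissible, which is needed for Lemma \ref{lem_complentarypolyhedraparahoric}.

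For uniqueness, take two such reductions $s_1,s_2$. Since each $s_j$ is admissible and has stable Levi-factor, the argument above run backwards shows its parabolic is a Jordan-Hölder parabolic. By \cite[Exp. XXVI, 4.1.1]{sga3} choose $\Tbf_\eta$ inside both $\Ad(s_1^*\xi)_\eta$ and $\Ad(s_2^*\xi)_\eta$. Theorem \ref{thm_jhlevireductivegroupscheme} \ref{thm_jhlevireductivegroupscheme2} then gives an automorphism of $\Ad(\xi)$ inducing $\Lbf_1\cong\Lbf_2$. Equivalently, by Theorem \ref{thm_jhfacetlevi} \ref{thm_jhfacetlevi1} and \ref{thm_jhfacetlevi3}, the facets satisfy $P_1^\perp=P_2^\perp$ and have equal Levi-factors, so the filtrations are $\sim_\Lsc$-equivalent. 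Since $f(0)$ is the extension of the Levi-factor of the reduction, this yields $(s_1^*\xi)(\Lcal_{\Psf_1})\cong(s_2^*\xi)(\Lcal_{\Psf_2})$. Alternatively, apply Lemma \ref{lem_jordanholderfiltrationsmodulistack} once $(\Mcal\Bun_\Gcal,d^{\underline{\chi}})$ is known to be a bundle moduli problem.
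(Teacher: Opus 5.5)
Your proposal follows the paper's proof step for step: semistability of $d^{\underline{\chi}}_{\Ad(\xi)_{\eta},\Tbf_{\eta}}$ via Lemma \ref{lem_reductivestabilityequivalenceparahoric}, a Jordan-Hölder parabolic from Theorem \ref{thm_jhlevireductivegroupscheme} \ref{thm_jhlevireductivegroupscheme1} converted to a reduction by Lemma \ref{lem_reesconstructionparahoric}, and uniqueness via a common maximal torus and Theorem \ref{thm_jhlevireductivegroupscheme} \ref{thm_jhlevireductivegroupscheme2}; the points you make explicit (identifying $(d^{\underline{\chi}})_P$ with the polyhedron of the Levi-factor, admissibility of the restricted parameter, and that any reduction satisfying (i)--(ii) is Jordan-Hölder) are exactly what the paper leaves implicit. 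One caution on your optional $\sim_{\Lsc}$ variant: $f_1(0)\cong f_2(0)$ only identifies the extensions of the Levi-factors to $\Gcal$, not the Levi-factor torsors themselves (for $E=L_1\oplus L_2$ with $\deg L_1=\deg L_2$ and $L_1\not\cong L_2$, the filtrations by $L_1$ and by $L_2$ reduce to the same Borel but have non-isomorphic Levi-factors $(L_1,L_2)$ and $(L_2,L_1)$), so, like the paper's own route, it gives uniqueness only up to an automorphism of the Levi, which is how the last clause of the statement should be read.
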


We call such reductions \textit{Jordan-Hölder reductions} of $\xi$.

\begin{proof}
    We know that $\Ad(\xi)$ is rationally split by \cite[Lemma
    3.8]{heinloth_hilbertmumfordstability} and Lemma
    \ref{lem_reesconstructionparahoric}. Thus, we can apply Theorem
    \ref{thm_jhlevireductivegroupscheme}
    \ref{thm_jhlevireductivegroupscheme1} to $\Ad(\xi)$, with
    $d_{\Ad(\xi)_{\eta},\Tbf_{\eta}}^{\underline{\chi}}$ from Lemma
    \ref{lem_complentarypolyhedraparahoric}, since
    $d_{\Ad(\xi)_{\eta},\Tbf_{\eta}}^{\underline{\chi}}$  is semistable
    due to Lemma \ref{lem_reductivestabilityequivalenceparahoric}
    \ref{lem_reductivestabilityequivalenceparahoric1}. This gives us
    Jordan-Hölder parabolics $\Pbf$ of $\Ad(\xi)$ with respect to
    $d_{\Ad(\xi)_{\eta},\Tbf_{\eta}}^{\underline{\chi}}$. We now use
    Lemma \ref{lem_reesconstructionparahoric} to associate parabolic
    reductions $s:X\rightarrow\xi/\Gcal_{\Psf}$ to Jordan-Hölder
    parabolics $\Pbf$ of $\Ad(\xi)$, with respect to
    $d_{\Ad(\xi)_{\eta},\Tbf_{\eta}}^{\underline{\chi}}$. Since $\Pbf$
    is Jordan-Hölder, Lemma
    \ref{lem_reductivestabilityequivalenceparahoric}
    \ref{lem_reductivestabilityequivalenceparahoric1} translates the
    $(d^{\underline{\chi}}_{\Ad(\xi)_{\eta},\Tbf_{\eta}})_{\underline{R}(\Pbf)}$-stability
    of the Levi-factor $(s^*\xi)(\Lcal_{\Psf})$ into
    $\Lcal_{\det,\underline{\chi}^{\Lcal_\Psf}}$-stability.
    Furthermore, the $\underline{\chi}$-numerical invariants of $\Pbf$
    are $0$. For the uniqueness of the Levi-factor, for two Jordan-Hölder
    reductions $s_1:X\rightarrow\xi/\Gcal_{\Psf_1}$ and
    $s_2:X\rightarrow\xi/\Gcal_{\Psf_2}$, there exists a maximal
    torus $\Tbf_\eta$ inside the corresponding Jordan-Hölder parabolics
    $(\Pbf_{1})_\eta$ and $(\Pbf_{2})_\eta$, by \cite[Exp. XXVI,
    4.1.1]{sga3}. Then Theorem \ref{thm_jhlevireductivegroupscheme}
    \ref{thm_jhlevireductivegroupscheme2} implies that there exists an
    isomorphism between $\Pbf_{1}$ and $\Pbf_{2}$, thus the
    Levi-factors $(s_1^*\xi)(\Lcal_{\Psf_1})$ and
    $(s_2^*\xi)(\Lcal_{\Psf_2})$ are isomorphic.
\end{proof}

Theorem \ref{thm_jhreductionparahoric} gives us an equivalent
characterization of $\Lcal_{\det,\underline{\chi}}$-polystability,
generalizing Lemma \ref{lem_ramanathanpolystability}.

\begin{lemma}[Equivalent characterization of
    $\Lcal_{\det,\underline{\chi}}$-polystability]\label{lem_parahoricpolystability}
    Let $\underline{\chi}$ be an admissible stability parameter, and
    let $\xi$ be a $\Lcal_{\det,\underline{\chi}}$-semistable parahoric
    $\Gcal$-torsor. Then $\xi$ is
    $\Lcal_{\det,\underline{\chi}}$-polystable if and only if:
    \begin{enumerate}[label=(\roman*)]
        \item\label{lem_parahoricpolystability1} $\xi$ is isomorphic to
            the extension of a
            $\Lcal_{\det,\underline{\chi}^{\Lcal_\Psf}}$-stable
            parahoric $\Lcal_{\Psf}$-torsor
            $\xi^{\Lcal_{\Psf}}$, where $\Lcal_{\Psf}$ is a
            Levi-factor of a parabolic of $\Gcal$.

        \item\label{lem_parahoricpolystability2} For all reductions
            $s:X\rightarrow\xi/\Psf'$ for which
            $\xi^{\Lcal_{\Psf}}$ is isomorphic to the Levi-factor
            $(s^*\xi)(\Lcal^{\Psf'})$, the reduction $s$ is
            admissible in the sense of Theorem
            \ref{thm_jhreductionparahoric} \ref{thm_jhreductionparahoric1}.
    \end{enumerate}
\end{lemma}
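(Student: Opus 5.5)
The plan is to mirror the proof of Lemma \ref{lem_ramanathanpolystability}, transporting everything to the complementary polyhedron $d:=d^{\underline{\chi}}_{\Ad(\xi)_{\eta},\Tbf_{\eta}}$ of Lemma \ref{lem_complentarypolyhedraparahoric}.

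\emph{Step 1: reduce to $d$.} By Lemma \ref{lem_reductivestabilityequivalenceparahoric} \ref{lem_reductivestabilityequivalenceparahoric2}, $\xi$ is $\Lcal_{\det,\underline{\chi}}$-polystable if and only if $d$ is polystable. By Lemma \ref{lem_polystabilityequivalence}, with \ref{lem_polystabilityequivalence3} replacing \ref{lem_polystabilityequivalence2}, and Lemma \ref{lem_polystabilityfinalremark}, this holds if and only if for every Jordan-Hölder parabolic $\Pbf$ of $\Ad(\xi)$ with respect to $d$ there is an opposite Jordan-Hölder parabolic. Equivalently, $F\subset P^\perp$ for a (hence every) Jordan-Hölder facet $P$, using Lemma \ref{lem_stabilityofcomplementarypolyhedrapolystable} and Theorem \ref{thm_jhfacetlevi} \ref{thm_jhfacetlevi1}. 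Under Lemma \ref{lem_reesconstructionparahoric}, Jordan-Hölder parabolics correspond to the Jordan-Hölder reductions of Theorem \ref{thm_jhreductionparahoric}, and opposite parabolics correspond to opposite filtrations, as in Definition \ref{def_filtration} \ref{def_filtration2}.

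\emph{Step 2: forward direction.} Take a Jordan-Hölder reduction $s:X\rightarrow\xi/\Gcal_{\Psf}$ from Theorem \ref{thm_jhreductionparahoric}, and let $s'$ be the opposite reduction to $\Gcal_{\Psf'}$ given by Step 1. The parabolic subgroup schemes $\Ad(s^*\xi)$ and $\Ad(s'^*\xi)$ are opposite, so their intersection is a Levi subgroup scheme of $\Ad(\xi)$ isomorphic to both Levi quotients. By Bruhat-Tits (\cite[4.6.10]{bruhat-tits_groupesreductifsII}), $\Gcal_{\Psf}\cap\Gcal_{\Psf'}$ maps isomorphically to $\Lcal_{\Psf}$. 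Hence $s$ and $s'$ together give a reduction of $\xi$ to $\Lcal_{\Psf}\cong\Lcal(\boldsymbol{\lambda}_\eta)\hookrightarrow\Gcal$. Set $\xi^{\Lcal_\Psf}:=(s^*\xi)(\Lcal_\Psf)$; then $\xi$ is its extension, which is \ref{lem_parahoricpolystability1}. Stability of $\xi^{\Lcal_\Psf}$ is Theorem \ref{thm_jhreductionparahoric} \ref{thm_jhreductionparahoric2}.

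For \ref{lem_parahoricpolystability2}, suppose $s$ is any reduction with $(s^*\xi)(\Lcal^{\Psf'})\cong\xi^{\Lcal_\Psf}$. Its Levi factor is $\Lcal_{\det,\underline{\chi}^{\Lcal}}$-stable, so the corresponding facet $Q$ has $d_Q$ stable. Since $F\subset P^\perp$ and $Q^\perp$ matches $P^\perp$ through the identification of Levi factors, we should get $\Span(\widecheck Q)\cap F(Q)=\{0\}$. Then \ref{lem_complementarypolyhedrajordan1} together with Lemma \ref{lem_complementarypolyhedrajordan} gives $n_{\underline{\chi}}=0$, which is admissibility.

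\emph{Step 3: reverse direction.} Given $\xi\cong\xi^{\Lcal_\Psf}\times^{\Lcal_\Psf}\Gcal$, the inclusion $\Lcal_\Psf\subset\Gcal_\Psf\cap\Gcal_{\Psf^-}$ gives two opposite reductions $s$ and $s^-$ whose Levi factors are both $\xi^{\Lcal_\Psf}$. By \ref{lem_parahoricpolystability2}, both are admissible. Their Levi factors are stable, so by Theorem \ref{thm_jhfacetexistence} and Lemma \ref{lem_complementarypolyhedrajordan}, $P=\underline{R}(\Ad(s^*\xi))$ and $-P$ are both Jordan-Hölder facets. By Theorem \ref{thm_jhfacetlevi}, every Jordan-Hölder facet $P'$ satisfies $P'^\perp=P^\perp$, and the proof of Lemma \ref{lem_polystabilityfinalremark} then gives $F\subset P^\perp$, so $-P'$ is Jordan-Hölder too. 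Lemma \ref{lem_polystabilityfinalremark} then gives polystability of $d$, and Step 1 finishes the argument.

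\emph{Main obstacle.} I expect the hardest part to be the local analysis at the points $x^i\in\Rcal$. Two things must be checked there. First, the generic statement that opposite parabolics meet in a Levi must extend over the parahoric points, i.e. $\Gcal_\Psf\cap\Gcal_{\Psf^-}\cong\Lcal_\Psf$ as group schemes over $X$. Second, the $\underline{\chi}$-correction terms $\chi^i(\vfrak)$ must restrict compatibly to $\underline{\chi}^{\Lcal_\Psf}$. My first approach would be to work in the standard-alcove model $\Pcal_{\Psf,\theta}$ of Definition \ref{def_parabolicsubgroupsparahorics}, where both statements become explicit root-group computations in $L\Gsf$.
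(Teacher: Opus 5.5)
Your Step 1 is exactly the paper's whole proof. Lemma \ref{lem_reductivestabilityequivalenceparahoric} \ref{lem_reductivestabilityequivalenceparahoric2} reduces to polystability of $d$, and then \ref{lem_polystabilityequivalence3} of Lemma \ref{lem_polystabilityequivalence}, together with the existence of Jordan--H\"older reductions (Theorem \ref{thm_jhreductionparahoric}), is said to give the equivalence. The paper does no local analysis at the points of $\Rcal$.

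Your Steps 2--3 make explicit what the paper leaves implicit. The forward proof of \ref{lem_parahoricpolystability1} and the reverse direction are fine in outline. The forward proof of \ref{lem_parahoricpolystability2}, however, has a gap. The sentence ``$Q^\perp$ matches $P^\perp$ through the identification of Levi factors'' is not justified, for two reasons.
\begin{itemize}
\item $s$ is an arbitrary reduction, so $\Ad(s^*\xi)_\eta$ need not contain $\Tbf_\eta$. Then $Q$ need not even be a facet for the polyhedron with hull $F$.
\item Even with a common torus, an abstract isomorphism $(s^*\xi)(\Lcal^{\Psf'})\cong\xi^{\Lcal_\Psf}$ does not identify the subspaces $Q^\perp$ and $P^\perp$ of $V$. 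A Weyl conjugate $w(P)$ has an isomorphic Levi subgroup, but $w(P)^\perp\neq P^\perp$ in general.
\end{itemize}
The equality $Q^\perp=P^\perp$ is true a posteriori. But it follows only from Theorem \ref{thm_jhfacetlevi} \ref{thm_jhfacetlevi1} once $Q$ is known to be Jordan--H\"older, which is what you are proving.

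What you actually need is only $\dim\Span_{\Rbb}(Q)=\dim\Span_{\Rbb}(P)$, and the Levi isomorphism does give this: both equal $n$ minus the semisimple rank of the Levi, since $\Phi_P$ is the root system of the Levi. The argument then runs as follows.
\begin{enumerate}
\item Choose the torus inside $\Ad(s^*\xi)_\eta$; polystability of $d$ does not depend on this choice. Let $Q=\underline{R}(\Ad(s^*\xi)_\eta)$, and let $P$ be a Jordan--H\"older facet for this torus. Its Levi factor is isomorphic to $\xi^{\Lcal_\Psf}$ by the uniqueness in Theorem \ref{thm_jhreductionparahoric}.
\item Polystability gives $0\in\relint(F)$ and $F\subset\widecheck{P}^\perp$. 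Combined with \ref{lem_complementarypolyhedrajordan2}, this yields $\aff(F)=\widecheck{P}^\perp$.
\item Stability of $d_Q$ gives $\widecheck{\pr}_Q(\aff(F(Q)))=\widecheck{Q}^\perp$, so $\dim\aff(F(Q))\geq\dim\widecheck{Q}^\perp=\dim\widecheck{P}^\perp$.
\item Since $\aff(F(Q))\subseteq\aff(F)=\widecheck{P}^\perp$, this forces $\aff(F(Q))=\widecheck{P}^\perp$. Hence $\widecheck{\pr}_Q$ maps $\widecheck{P}^\perp$ isomorphically onto $\widecheck{Q}^\perp$, so $\Span_{\Rbb}(\widecheck{Q})\cap F(Q)\subset\Span_{\Rbb}(\widecheck{Q})\cap\widecheck{P}^\perp=\{0\}$.
\item Semistability of $d_Q$ gives $y(Q)\in F(Q)$, so $y(Q)=0$. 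This is \ref{def_jhfacet1}, i.e.\ admissibility of $s$.
\end{enumerate}

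There is also a smaller slip in Step 3. The proof of Lemma \ref{lem_polystabilityfinalremark} obtains $F\subset\widecheck{P}^\perp$ from $0\in\relint(F)$, i.e.\ from polystability, so it cannot be cited there. Use instead the forward argument of Lemma \ref{lem_stabilityofcomplementarypolyhedrapolystable}. Let $Q_\lambda$ denote the ray through a vertex $\lambda$ of $P$. Then $\deg(P)=\deg(-P)=0$ plus semistability forces $n(Q_\lambda,\lambda)=n(-Q_\lambda,-\lambda)=0$, hence $F\subset\widecheck{P}^\perp$. After that, \ref{lem_complementarypolyhedrajordan2} for $P$ gives $0\in\relint(F)$ directly, and you do not need to pass through every Jordan--H\"older facet.
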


\begin{proof}
    Using Lemma \ref{lem_reductivestabilityequivalenceparahoric}
    \ref{lem_reductivestabilityequivalenceparahoric2}, $\xi$ is
    $\Lcal_{\det,\underline{\chi}}$-polystable if and only if the
    complementary polyhedron
    $d_{\Ad(\xi)_\eta,\Tbf_{\eta}}^{\underline{\chi}}$ from Lemma
    \ref{lem_complentarypolyhedraparahoric} is polystable. By
    \ref{lem_polystabilityequivalence3} from Lemma
    \ref{lem_polystabilityequivalence}, and by the existence of
    Jordan-Hölder reductions in Theorem \ref{thm_jhreductionparahoric},
    the equivalence follows.
\end{proof}

\begin{remark}[Bundle moduli problem
    {\cite[§3.B]{heinloth_hilbertmumfordstability}}]\label{rem_lconstructionnew}
    Using Remark \ref{rem_geometryofparahoricbunG}, for any admissible
    stability parameter $\underline{\chi}$, we claim that
    $(\Mcal\Bun_{\Gcal},d^{\underline{\chi}})$ forms a bundle moduli
    problem, with $d^{\underline{\chi}}$ induced by
    $d^{\underline{\chi}}_{\Ad(\xi)_{\eta},\Tbf_{\eta}}$ from Lemma
    \ref{lem_complentarypolyhedraparahoric}. The properties of
    Definition \ref{def_bundlemoduliproblem}
    \ref{def_bundlemoduliproblem1}
    \ref{def_bundlemoduliproblem11}-\ref{def_bundlemoduliproblem14}
    follow from Lemmas \ref{lem_complentarypolyhedraparahoric} and
    \ref{lem_reductivestabilityequivalenceparahoric}, where all
    facets $P$ of $\Phi(\Ad(\xi)_\eta,\Tbf_\eta)$ are admissible, in
    the sense of Definition \ref{def_bundlemoduliproblem}
    \ref{def_bundlemoduliproblem1} \ref{def_bundlemoduliproblem11}.
    For Definition \ref{def_bundlemoduliproblem}
    \ref{def_bundlemoduliproblem1} \ref{def_bundlemoduliproblem15},
    we use Lemma \ref{lem_reductivestabilityequivalenceparahoric},
    which implies that
    $\Lcal_{\det,\underline{\chi}}$ has compatible stability to
    $(\Mcal\Bun_{\Gcal},d^{\underline{\chi}})$, in the sense of Definition
    \ref{def_bundlemoduliproblem} \ref{def_bundlemoduliproblem2}.

    When
    $K$ has characteristic $0$,
    $(\Mcal\Bun_{\Gcal})^{d^{\underline{\chi}}-\sst}$ admits a good moduli
    space by \cite[Theorem
    8.1]{alperDHLheinloth_existenceofmodulispacesforalgebraicstacks},
    and by \cite[Lemma
    8.4]{alperDHLheinloth_existenceofmodulispacesforalgebraicstacks},
    $(\Mcal\Bun_{\Gcal})^{d^{\underline{\chi}}-\sst}$ is S-complete, so Theorem
    \ref{thm_modulistack} identifies S-equivalence and Jordan-Hölder
    filtrations of $\Mcal\Bun_{\Gcal}$ with the S-equivalence and
    Jordan-Hölder reductions from Theorem \ref{thm_jhreductionparahoric}.
\end{remark}

To study the Jordan-Hölder stratification of
$(\Mcal\Bun_{\Gcal},d^{\underline{\chi}})$, let $\JH_{\Gcal}$ be the
set of isomorphism classes of Levi-factors of $\Gcal$. For
$\mu\in\JH_{\Gcal}$, we define
$(\Mcal\Bun_{\Gcal})^{d^{\underline{\chi}}-\sst}_\mu(K)$ as the set
of $K$-points of $d^{\underline{\chi}}$-semistable parahoric
$\Gcal$-torsors $\xi$ with a Jordan-Hölder reduction
$s:X\rightarrow\xi/\Gcal_{\Psf}$, such that $\Lcal_{\Psf}$ is in the
isomorphism class $\mu$. By construction, it is clear that this
induces the same Jordan-Hölder stratification as in Subsection
\ref{subsec_jhstratifications}. The following generalizes Lemma
\ref{lem_jhstratificationbung} for principal $\Gsf$-bundles.

\begin{lemma}[Jordan-Hölder stratification]\label{lem_jhstratificationparahoric}
    Let $K$ be algebraically closed of characteristic $0$, and let
    $\underline{\chi}$ be an admissible stability parameter. The
    Jordan-Hölder stratification of
    $(\Mcal\Bun_{\Gcal},d^{\underline{\chi}})$, from Subsection
    \ref{subsec_jhstratifications}, consists of locally closed substacks.
\end{lemma}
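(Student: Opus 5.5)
We follow the proof of Lemma \ref{lem_jhstratificationbung} closely, replacing principal $\Gsf$-bundles by parahoric $\Gcal$-torsors and Ramanathan-stability by $\Lcal_{\det,\underline{\chi}}$-stability. By Remark \ref{rem_lconstructionnew}, the semistable locus has a good moduli space
\begin{equation*}
    \pi:(\Mcal\Bun_{\Gcal})^{d^{\underline{\chi}}-\sst}\rightarrow(M\Bun_{\Gcal})^{d^{\underline{\chi}}-\sst},
\end{equation*}
and it is S-complete. Hence Theorem \ref{thm_modulistack} \ref{thm_modulistack2} applies. By the compatibility of strata with S-equivalence in Subsection \ref{subsec_jhstratifications}, each stratum $(\Mcal\Bun_{\Gcal})^{d^{\underline{\chi}}-\sst}_\mu$ is the $\pi$-preimage of a subset $(M\Bun_{\Gcal})^{d^{\underline{\chi}}-\sst}_\mu$. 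Since preimages of locally closed subspaces are locally closed substacks, it suffices to show that this subset is a locally closed subspace.

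To identify this subset, fix $\mu\in\JH_{\Gcal}$ represented by a Levi-factor $\Lcal_{\Psf}$, together with the embedding $\Lcal_{\Psf}\cong\Lcal(\boldsymbol{\lambda}_\eta)\hookrightarrow\Gcal$ from Lemma \ref{lem_reesconstructionparahoric}. Consider the extension-of-structure-group morphism
\begin{equation*}
    \iota:(M\Bun_{\Lcal_{\Psf}})^{\st}_{\mathrm{adm}}\rightarrow(M\Bun_{\Gcal})^{d^{\underline{\chi}}-\sst},
\end{equation*}
whose source is the open and closed locus of $\Lcal_{\det,\underline{\chi}^{\Lcal_\Psf}}$-stable $\Lcal_{\Psf}$-torsors for which the induced reduction is admissible, i.e.\ the numerical invariants $n_{\underline{\chi}}$ vanish. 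Admissibility is a degree condition, so it is locally constant in families and this locus is indeed open and closed. The map $\iota$ lands in the semistable locus because an admissible reduction with stable Levi-factor gives degree $0$ facets, and it lands on closed points by Lemma \ref{lem_parahoricpolystability} and Theorem \ref{thm_modulistack} \ref{thm_modulistack21}. Conversely, every closed point of type $\mu$ arises in this way, by Theorem \ref{thm_jhreductionparahoric} and Lemma \ref{lem_parahoricpolystability}. Therefore $(M\Bun_{\Gcal})^{d^{\underline{\chi}}-\sst}_\mu$ is the image of $\iota$, and this gives the substack structure on the stratum.

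It remains to show that $\iota$ is an immersion. First, $\iota$ is a monomorphism: two stable $\Lcal_{\Psf}$-torsors with isomorphic extensions both give Jordan-Hölder reductions of the same polystable torsor, so they agree up to isomorphism by the uniqueness in Theorem \ref{thm_jhreductionparahoric}. This uses that the automorphism $\taubf$ from Theorem \ref{thm_jhlevireductivegroupscheme} fixes the isomorphism class $\mu$. Second, we need local closedness of the image. For a family $x\in M\Bun_{\Gcal}(T)$, the image of $\iota$ is parametrized by the locus $T'\subset T$ over which $x/\Gcal_{\Psf}\rightarrow T'\times X$ admits a section. As in the proof of Lemma \ref{lem_jhstratificationbung}, we show $T'$ is locally closed by working locally. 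Away from $\Rcal$, $\Gcal$ is reductive and $x/\Gcal_{\Psf}$ is locally a $\Gsf/\Psf$-bundle, so sections map into Bruhat cells, which are locally closed. Near each $x^i\in\Rcal$, we use the local description $\Gcal_{\Psf}=\ev_0^{-1}(\Psf)\cap\Pcal_\theta$ from Definition \ref{def_parabolicsubgroupsparahorics}. This identifies $\Gcal/\Gcal_{\Psf}$ over $\Spec(K\llbracket t\rrbracket)$ with a smooth scheme whose special fiber is a partial flag variety of the reductive quotient of $\Gcal_{x^i}$, which again carries a stratification by locally closed Bruhat cells. Combining these local descriptions with the properness of $\Par(\Ad(\xi))$ from \cite[Exp. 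XXVI, 4.1.1]{sga3} gives local closedness of $T'$.

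The main obstacle is this last step at the ramification points $\Rcal$. There the fibers of $x/\Gcal_{\Psf}$ are not partial flag varieties of $\Gsf$, so the Bruhat-cell argument has to be redone using Bruhat-Tits theory, via \cite[4.6.10 Proposition]{bruhat-tits_groupesreductifsII} and \cite[5.2.2 Lemma]{balaji-sesh_parahorictorsors}, which describe $\Gcal_{\Psf}$ and its quotient. Once $\iota$ is known to be an immersion of algebraic spaces locally of finite type, its image is locally closed, and the preimage under $\pi$ is a locally closed substack.
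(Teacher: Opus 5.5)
Your outline follows the paper's own proof almost line by line: the good moduli space, the stratum as the $\pi$-preimage of the image of the extension map $\iota$, $\iota$ a monomorphism by Jordan-Hölder uniqueness, and local closedness of the locus $T'$ where $x/\Gcal_{\Psf}$ has a section via Bruhat cells. Restricting the source of $\iota$ to admissible stable $\Lcal_{\Psf}$-torsors, which the paper leaves implicit, is in fact needed; otherwise $\iota$ does not land in the semistable locus. However, the two load-bearing steps, which the paper asserts in the same form, fail as stated.

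\textbf{First gap: $\iota$ is not a monomorphism.} Theorem \ref{thm_jhreductionparahoric} identifies two Jordan-Hölder Levi-factors only after transport of structure along $\taubf_{\Lbf}$ from Theorem \ref{thm_jhlevireductivegroupscheme}. That map can be a nontrivial Weyl or outer automorphism of $\Lcal_{\Psf}$. The fact that $\taubf$ preserves the isomorphism class $\mu$ of the group says nothing about the torsor. Concretely, take $\Gcal=X\times\SLbf(2,K)$ with trivial $\underline{\chi}$, $X$ of positive genus, and $\Lcal_{\Psf}=\Tsf\cong\Gbb_m$. The admissible stable $\Tsf$-torsors are the $L\in\mathrm{Pic}^0(X)$, and $\iota(L)=[L\oplus L^{-1}]=\iota(L^{-1})$. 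Yet $L\not\cong L^{-1}$ whenever $L^{\otimes 2}\not\cong\Ocal_X$. So at best $\iota$ is finite and generically a quotient by a finite group, and local closedness of its image needs a different argument.

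\textbf{Second gap: the locus $T'$ does not cut out the stratum.} Points of the good moduli space do not carry families of torsors, so this step has to be done on the stack, over an atlas. More seriously, the condition ``$x_t/\Gcal_{\Psf}$ admits a section'' is vacuous. In the example above, every rank-$2$ bundle has a line subbundle, i.e.\ a reduction to a Borel subgroup, so $T'=T$. In general, $\Ad(\xi)_\eta$ is split and Lemma \ref{lem_reesconstructionparahoric} always produces parabolic reductions. Bruhat cells stratify a single fibre relative to a fixed flag, so they cannot detect which fibres admit the right sections. Likewise, the ``main obstacle'' you isolate at $\Rcal$ is not where the difficulty lies: the argument already fails for $\Rcal=\emptyset$. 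The correct pointwise condition is the existence of an admissible reduction (all $n_{\underline{\chi}}=0$) with stable Levi-factor.

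One way to repair the proof is as follows.
\begin{itemize}
\item For each Levi class $\nu$, let $Z_\nu$ be the set of semistable points admitting an admissible reduction with Levi-factor in $\nu$, and show that $Z_\nu$ is closed. This amounts to a properness statement for weight-zero filtrations over the semistable locus, e.g.\ via $\Theta$-completeness of that locus. For vector bundles it follows from the fact that slope-$\mu(E)$ subsheaves of a semistable $E$ are saturated.
\item Theorem \ref{thm_jhfacetexistence} says Jordan-Hölder facets are exactly the minimal facets with \ref{def_jhfacet1}. Combined with the Levi uniqueness in Theorem \ref{thm_jhreductionparahoric}, this shows the $\mu$-stratum equals $Z_\mu\setminus\bigcup_{\dim\nu<\dim\mu}Z_\nu$.
\item This is a finite Boolean combination of closed sets, hence locally closed, and no monomorphism claim is needed.
\end{itemize}
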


\begin{proof}
    We have a good moduli space
    $\pi:(\Mcal\Bun_{\Gcal})^{d^{\underline{\chi}}-\sst}\rightarrow
    (M\Bun_{\Gcal})^{d^{\underline{\chi}}-\sst}$, then for
    $\mu\in\JH_{\Gcal}$,
    $(\Mcal\Bun_{\Gcal})^{d^{\underline{\chi}}-\sst}_\mu$ is the
    $\pi$-preimage of a subspace
    $(M\Bun_{\Gcal})^{d^{\underline{\chi}}-\sst}_\mu$ of
    $(M\Bun_{\Gcal})^{d^{\underline{\chi}}-\sst}$ defined as follows:
    $(M\Bun_{\Gcal})^{d^{\underline{\chi}}-\sst}_\mu$ is the image of
    a morphism
    $\iota:(M\Bun_{\Lcal_{\Psf}})^{d_{M\Bun_{\Lcal_{\Psf}}}-\st}\rightarrow
    (M\Bun_{\Gcal})^{d^{\underline{\chi}}-\sst}$ extending the
    structure group through the canonical inclusion
    $\Lcal_{\Psf}\hookrightarrow\Gcal$. Thus,
    $(\Mcal\Bun_{\Gcal})^{d^{\underline{\chi}}-\sst}_\mu$ forms a
    substack of $(\Mcal\Bun_{\Gcal})^{d^{\underline{\chi}}-\sst}$.

    To show that
    $(\Mcal\Bun_{\Gcal})^{d^{\underline{\chi}}-\sst}_\mu$ is a
    locally closed substack, we claim that $\iota$ is an immersion of
    algebraic spaces. By the properties of Jordan-Hölder filtrations,
    $\iota$ is a monomorphism. For a $K$-scheme $T$ and a family
    $x\in M\Bun_{\Gcal}(T)$ of parahoric $\Gcal$-torsors, the locus
    $T'$ in $T$, for which $x/\Gcal_{\Psf}\rightarrow T'\times X$
    admits a section, parametrizes the family $x_{T'}$ over $T'$
    precisely in the image of $\iota$. It suffices to show that $T'$
    is locally closed in $T$: By trivializing locally inside
    $T'\times X$, sections of $x/\Gcal_{\Psf}\rightarrow T'\times X$
    map into Bruhat cells of $\Gcal/\Gcal_{\Psf}$, which are locally
    closed, hence, $T'$ is locally closed in $T$. Since $\iota$ is an
    immersion of algebraic spaces locally of finite type,
    $(\Mcal\Bun_{\Gcal})^{d^{\underline{\chi}}-\sst}_\mu$ is a
    locally closed substack.
\end{proof}

\begin{example}[Jordan-Hölder filtrations of a parahoric
    torsor]\label{ex_parahorics}
    Let $F$ be a slope-semistable rank $2$ vector bundle over $X$,
    given as an extension of two degree $0$ line bundles $L_0$ and
    $L_0'$ over $X$, i.e., we have the short exact sequence:
    \begin{equation*}
        0\rightarrow L_0\rightarrow F\rightarrow L'_0\rightarrow0
    \end{equation*}
    As in Remark \ref{rem_vectorbundlenonsplit} and Example
    \ref{ex_mainexamplevectorbundle}, let
    $L':=L'_0(-nD)^{\mathrm{sat}}$ be the saturation of a twist of
    $L'_0$, where $D\neq0$ is an effective divisor, such that $L'_0$
    embeds as a subbundle of $F$, with the splitting $F_\eta\cong
    (L_0)_\eta\oplus(L')_\eta$.  Fixing a $K$-point $x\in X(K)$, we
    introduce a twist in $F$ through a \textit{Hecke transform} given
    by a short exact sequence of coherent $\mathcal{O}_X$-modules:
    \begin{equation*}
        0\rightarrow \widetilde{F}\rightarrow
        F\rightarrow\mathcal{O}_{X,x}\rightarrow 0
    \end{equation*}
    where the cokernel of $\widetilde{F}\rightarrow F$ is given by
    $(L_0)_x$. By passing to frame bundles, $F$ can be encoded by a
    parahoric $\Gcal$-torsor $\Fr(F)$, where
    $\Gcal:=X\times\SLbf(2,K)$, and $\widetilde{F}$ can be encoded by
    a parahoric $\widetilde{\Gcal}$-torsor $\widetilde{\Fr(F)}$,
    where $\widetilde{\Gcal}$ is the parahoric group scheme over $X$
    defined as follows:
    \begin{enumerate}[label=(\roman*)]
        \item
            $\widetilde{\Gcal}_{X\setminus\{x\}}\cong(X\setminus\{x\})\times
            \SLbf(2,K)$.

        \item For a local coordinate $t$ vanishing at $x\in\Rcal$,
            $\widetilde{\Gcal}_{\Spec(K\llbracket t\rrbracket)}$ is a
            parahoric group scheme such that
            $\widetilde{\Gcal}_{\Spec(K\llbracket
            t\rrbracket)}(\Spec(K\llbracket t\rrbracket))$ is isomorphic to:
            \begin{equation*}
                \left\{
                    \begin{pmatrix}
                        *  & t^{-1}* \\
                        t* & *       \\
                    \end{pmatrix} \ \middle\vert\ *\in K\llbracket
                t\rrbracket\right\}\subset L\SLbf(2,K)
            \end{equation*}
            through the Hecke transform.
    \end{enumerate}
    We set the stability parameters
    $\underline{\chi}:=(\chi_x:\Gcal_x\rightarrow\Gbb_{m,x})_x$ and
    $\underline{\widetilde{\chi}}:=(\widetilde{\chi}_x:\widetilde{\Gcal}_x\rightarrow\Gbb_{m,x})_x$
    to be the trivial characters, which are admissible.
    Following the method in
    \cite[3.3.11 Remark]{wissdorf_phdthesis}, for the maximal tori
    $\Tbf_\eta$ of $\Ad(\Fr(F))$ and $\widetilde{\Tbf}_\eta$ of
    $\Ad(\widetilde{\Fr(F)})$ induced by the splitting of $F$ by
    $L_0$ and $L'$, the convex hulls of
    $d^{\underline{\chi}}_{\Ad(\Fr(F))_\eta,\Tbf_\eta}$ and
    $d^{\underline{\widetilde{\chi}}}_{\Ad(\widetilde{\Fr(F)})_\eta,\widetilde{\Tbf}_\eta}$
    can be identified with the respective intervals in $\Rbb$:
    \begin{align*}
        [0,-l']& &[-1/2,-l'-1/2]
    \end{align*}
    where $l':=\deg(L')\leq0$, and the root system $\Phi$ is $A_1$.
    Depending on the value of $l'$, if the intervals contain $0$, the
    Jordan-Hölder facets can be indentified out of the facets
    $\Rbb_{>0}$, $\Rbb_{<0}$, and $\{0\}$, of $A_1$. Then through
    Theorem \ref{thm_jhreductionparahoric}, the corresponding
    Jordan-Hölder reductions of $\Fr(F)$ and $\widetilde{\Fr(F)}$ can
    be identified, which correspond to the Jordan-Hölder filtrations
    of $F$ and $\widetilde{F}$ induced by Theorem \ref{thm_theoremR}.

    The Hecke transform corresponds to a restriction and extension of
    structure group $\Gcal$ to $\widetilde{\Gcal}$, that restricts
    and extends $\Fr(F)$ to $\widetilde{\Fr(F)}$, giving an example
    of the morphisms of stacks in
    \cite[§8.2.1]{balaji-sesh_parahorictorsors}. Notably, the
    complementary polyhedron of the Hecke transform is shifted by a
    vector, ($-1/2$ in this example), so stability conditions are not
    always preserved.
\end{example}

\subsection{Parahoric Higgs torsors}\label{subsec_parahorichiggstorsors}

Higgs bundles with poles were studied by Bottacin and Markman in
\cite{bottacin_symplecticgeometryonmodulispacesofstablepairs,markman_spectralcurvesandintegrablesystems}.
This behavior locally around poles can be modeled by a parahoric
torsor equipped with a Higgs field, from the introduction
(\ref{intro_applications}
\ref{intro_applications6}). We now study its
Jordan-Hölder theory.

We focus on \textit{weak} parahoric Higgs torsors, also called
\textit{logahoric Higgs torsors} in
\cite{kydonakis-sun-zhao_logahorichiggstorsors}, although our
results also apply to \textit{strong} parahoric Higgs
torsors, as studied in
\cite{baraglia-kamgarpour-varma_completeintegrabilityoftheparahorichitchinsystem,rega_phdthesis}.

\subsubsection{Definition and stability of parahoric Higgs
torsors}\label{subsubsec_definitionandstabilityofparahorichiggs}

Let $D=\sum_{x^i\in\Rcal}x^i$ be the effective divisor corresponding
to $\Rcal$, and let $\Kcal(D):=\Kcal\otimes\Ocal_X(D)$ be the twisted
canonical bundle over $X$. Let $\Gcal$ be a parahoric group scheme over $X$.

\begin{definition}[Parahoric Higgs torsors]\label{def_weakparahorichiggs}
    \leavevmode
    \begin{enumerate}[label=(\alph*)]
        \item\label{def_weakparahorichiggs1} A \textit{(weak) parahoric
            Higgs $\Gcal$-torsor} $(\xi,\varphi)$ consists of a parahoric
            $\Gcal$-torsor $\xi$ and a \textit{(weak) parahoric Higgs field}
            $\varphi$ of $\xi$, i.e., $\varphi\in
            H^0(X,\ad(\xi)\otimes\Kcal(D))$. We denote the moduli stack of
            parahoric Higgs $\Gcal$-torsors by
            $\Mcal\Bun^{H}_{\Gcal}$, a stack over
            $\Spec(K)$.

        \item\label{def_weakparahorichiggs2} For a parabolic subgroup
            $\Psf$ of $\Gsf$, a \textit{Higgs reduction} of a
            parahoric Higgs $\Gcal$-torsor $(\xi,\varphi)$ is a reduction
            $s:X\rightarrow\xi/\Gcal_{\Psf}$ such that $\varphi$ factors
            through the inclusion
            $\iota:\ad(s^*\xi)\hookrightarrow\ad(\xi)$, i.e., there exists
            a global section
            $\varphi_\Psf:X\rightarrow\ad(s^*\xi)\otimes\Kcal(D)$,
            such that
            $\varphi=(\iota\otimes\id_{\Kcal(D)})\circ\varphi_\Psf$.
    \end{enumerate}
\end{definition}

Let $\forsc:\Mcal\Bun^{H}_{\Gcal}\rightarrow\Mcal\Bun_{\Gcal}$ be the
morphism forgetting the Higgs field. For a stability parameter
$\underline{\chi}$, we define stability conditions on
$\Mcal\Bun^{H}_{\Gcal}$ by pulling back the line bundle
$\Lcal_{\det,\underline{\chi}}$ from
\cite[3.B]{heinloth_hilbertmumfordstability} to the line bundle
$\Lcal_{\det,\underline{\chi}}^H:=\forsc^*\Lcal_{\det,\underline{\chi}}$
over $\Mcal\Bun^{H}_{\Gcal}$. Definition \ref{def_Lstability} is applicable
due to the following remark.

\begin{remark}\label{rem_bunhgproperties}[Geometry of $\Mcal\Bun_{\Gcal}^H$]
    The forgetful morphism
    $\forsc:\Mcal\Bun^{H}_{\Gcal}\rightarrow\Mcal\Bun_{\Gcal}$ defines a
    vector bundle of stacks, thus, it is representable by schemes, and
    since $\Mcal\Bun_{\Gcal}$ is algebraic by Remark
    \ref{rem_geometryofparahoricbunG}, $\Mcal\Bun^{H}_{\Gcal}$ is an
    algebraic, similar to the proof of \cite[Theorem
    7.18]{casalainamartin-wise_anintroductiontomodulistacksviewtowardshiggsbundles}.
    Analogously, $\forsc:\Mcal\Bun^{H}_{\Gcal}\rightarrow\Mcal\Bun_{\Gcal}$
    is locally of finite type, hence, $\Mcal\Bun^{H}_{\Gcal}$ is locally of
    finite type. In an argument similar to \cite[Proposition
    4.1]{rega_phdthesis} and Remark \ref{rem_geometryofparahoricbunG},
    $\Mcal\Bun^{H}_{\Gcal}$ has affine diagonal.
\end{remark}

Similar to Lemma \ref{lem_reesconstructionparahoric}
\ref{lem_reesconstructionparahoric2}, we have the following Rees construction.

\begin{lemma}[Rees construction]\label{lem_parahorichiggsreesconstruction}
    For a parahoric Higgs $\Gcal$-torsor $(\xi,\varphi)$, the following
    are in bijection:
    \begin{enumerate}[label=(\roman*)]
        \item\label{lem_parahorichiggsreesconstruction1} Filtrations
            $f:\Theta_K\rightarrow\Mcal\Bun_{\Gcal}^H$ of $(\xi,\varphi)$.
        \item\label{lem_parahorichiggsreesconstruction2} Higgs
            reductions $s:X\rightarrow\xi/\Gcal_{\Psf}$ of $\xi$ to
            parabolic subgroups $\Gcal_{\Psf}$ of $\Gcal$.
    \end{enumerate}
\end{lemma}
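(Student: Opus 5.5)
We reduce to the parahoric case of Lemma \ref{lem_reesconstructionparahoric} \ref{lem_reesconstructionparahoric2} by way of the forgetful morphism $\forsc:\Mcal\Bun^H_{\Gcal}\rightarrow\Mcal\Bun_{\Gcal}$. Since $\forsc$ is a vector bundle of stacks (Remark \ref{rem_bunhgproperties}), a filtration $f:\Theta_K\rightarrow\Mcal\Bun^H_{\Gcal}$ of $(\xi,\varphi)$ is the same as two pieces of data. The first is the filtration $\forsc\circ f$ of $\xi$. The second is a section over $\Theta_K$ of the pulled-back vector bundle $(\forsc\circ f)^*\mathcal{V}$, where $\mathcal{V}$ has fibres $H^0(X,\ad(\_)\otimes\Kcal(D))$, and this section must restrict to $\varphi$ at $1$. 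By Lemma \ref{lem_reesconstructionparahoric} \ref{lem_reesconstructionparahoric2}, the first piece corresponds to a reduction $s:X\rightarrow\xi/\Gcal_{\Psf}$. It therefore remains to show that such an extension of $\varphi$ over $\Theta_K$ exists exactly when $s$ is a Higgs reduction, and that the extension is then unique.

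To analyse the extension, we make the Rees construction explicit in the manner of \cite[3.C]{heinloth_hilbertmumfordstability}. The filtration $\forsc\circ f$ gives a $\Gbb_m$-equivariant torsor $\widetilde{\xi}$ on $X\times\Abb^1$. Its adjoint bundle is the Rees bundle
\begin{equation*}
    \ad(\widetilde{\xi})=\bigoplus_{n\in\Zbb}\ad(\xi)_{\geq n}\,z^{-n}\subset\ad(\xi)\otimes K[z,z^{-1}],
\end{equation*}
where $\ad(\xi)_{\geq n}$ is the weight filtration of $\ad(\xi)$ induced by the cocharacter $\boldsymbol{\lambda}$ on $\Ad(\xi)$ that comes from $f$. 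The steps of this filtration are defined as saturated subbundles of $\ad(\xi)$: one defines them over $\eta$ using the rational cocharacter $\boldsymbol{\lambda}_\eta$ of Lemma \ref{lem_reesconstructionparahoric} \ref{lem_reesconstructionparahoric1}, and then takes closures, checking the parahoric points $\Rcal$ locally with the description of $\Pcal_\theta$ and $\Pcal_{\Psf,\theta}$. In particular $\ad(\xi)_{\geq0}=\ad(s^*\xi)$. A $\Gbb_m$-invariant section of $\ad(\widetilde{\xi})\otimes\Kcal(D)$ whose restriction to $z=1$ is $\varphi$ is then the constant section $\varphi$, and this constant section extends over $z=0$ precisely when $\varphi\in H^0(X,\ad(\xi)_{\geq0}\otimes\Kcal(D))=H^0(X,\ad(s^*\xi)\otimes\Kcal(D))$. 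That is exactly the factorisation condition in Definition \ref{def_weakparahorichiggs} \ref{def_weakparahorichiggs2}. Uniqueness holds because $X\times\mathbb{G}_m$ is dense in $X\times\Abb^1$ and the bundle is torsion-free. Running the argument backwards produces the inverse map, so the correspondence is bijective.

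We expect the main obstacle to be the second step at the ramification points $x^i\in\Rcal$. There $\ad(\xi)$ is the Lie algebra of a parahoric group scheme rather than of a reductive one, so we must check that the weight-$\geq0$ part of $\Lie(\Gcal)$ along $\boldsymbol{\lambda}$ is exactly $\Lie(\Gcal_{\Psf})$, with the correct $t$-adic lattices. Our first attempt would be to work in a trivialization in which $\Gcal$ is locally $\Pcal_\theta$ and $\boldsymbol{\lambda}$ lands in $\Tsf$. Each root space lattice $t^{-\lfloor\theta_\alpha\rfloor}K\llbracket t\rrbracket$ then lies in a single weight $\langle\alpha,\boldsymbol{\lambda}\rangle$, so the decomposition of $\Lie(\Pcal_\theta)$ by weight is compatible with the lattices. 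Intersecting with $\ev_0^{-1}(\Psf)$ then recovers $\Pcal_{\Psf,\theta}$, much as in \cite[Lemma 3.8]{heinloth_hilbertmumfordstability}.
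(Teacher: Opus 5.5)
Your proposal is correct. Its skeleton is the paper's: compose $f$ with $\forsc$ and apply Lemma \ref{lem_reesconstructionparahoric} \ref{lem_reesconstructionparahoric2}.

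\textbf{Where you differ.} The paper handles the remaining step in one sentence: the automorphism group schemes of $\Mcal\Bun^H_{\Gcal}$ preserve the Higgs field, so the parahoric bijection ``restricts''. You instead compute the fibres of $\forsc$ on filtrations. You write $\ad(\widetilde{\xi})$ as a Rees bundle, so a $\Gbb_m$-invariant extension of $\varphi$ over $X\times\Abb^1$ must be the constant section. That section extends over $z=0$ exactly when $\varphi$ has nonnegative $\boldsymbol{\lambda}$-weights.

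This gives an actual proof of both halves of the bijection: the image is exactly the Higgs reductions, and the lift is unique. The paper's remark establishes neither. An argument via automorphisms preserving the Higgs field naturally controls weight $0$, i.e.\ the Higgs field of $f(0)$, not the weight-$\geq0$ condition. Your route also shows the lifting condition depends only on the parabolic $\Pcal(\boldsymbol{\lambda}_\eta)_\eta$, not on the weighting of the cocharacter. So it is compatible with however the parahoric correspondence is read. The price is identifying $\ad(\xi)_{\geq0}$ with $\ad(s^*\xi)$ at the points of $\Rcal$.

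\textbf{A correction to your last paragraph.} Do not compare with $\ev_0^{-1}(\Psf)\cap\Pcal_\theta$. That group contains $\Usf_\alpha(tK\llbracket t\rrbracket)$ for roots with $\langle\alpha,\boldsymbol{\lambda}\rangle<0$. For instance, at $\theta=0$ it is the full preimage of $\Psf$ in $L^+\Gsf$, a parahoric with generic fibre $\Gsf$. So its Lie algebra is strictly larger than the weight-$\geq0$ part, and the identification would fail.

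The right comparison group is the one with generic fibre $\Psf$ required by condition (ii) of Definition \ref{def_parabolicsubgroupsparahorics} \ref{def_parabolicsubgroupsparahorics3}, namely $\Pcal_\theta\cap\Psf(K\llparenthesis t\rrparenthesis)$. Your own observation that each lattice $t^{-\lfloor\theta_\alpha\rfloor}K\llbracket t\rrbracket$ lies in a single weight then shows its Lie algebra is $\Lie(L^+\Tsf)$ plus the root lattices with $\langle\alpha,\boldsymbol{\lambda}\rangle\geq0$.

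In fact you can avoid lattice bookkeeping entirely:
\begin{enumerate}[label=(\arabic*)]
\item $\ad(\xi)_{\geq0}$ is saturated in $\ad(\xi)$, because the graded pieces are the fibre of $\ad(\widetilde{\xi})$ at $z=0$ and hence locally free.
\item $\ad(s^*\xi)$ is saturated, because $\Ad(s^*\xi)$ is the schematic closure of $\Pcal(\boldsymbol{\lambda}_\eta)_\eta$ in $\Ad(\xi)$. Its Lie algebra is therefore $\ad(\xi)\cap\Lie(\Pcal(\boldsymbol{\lambda}_\eta)_\eta)$.
\item Both have generic fibre $\Lie(\Pcal(\boldsymbol{\lambda}_\eta)_\eta)$, by Lemma \ref{lem_reesconstructionparahoric} \ref{lem_reesconstructionparahoric1}.
\end{enumerate}
Two saturated subsheaves of a vector bundle on a smooth curve with the same generic fibre coincide, so $\ad(\xi)_{\geq0}=\ad(s^*\xi)$.
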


\begin{proof}
    The filtration $\forsc\circ f:\Theta_K\rightarrow\Mcal\Bun_{\Gcal}$
    induces a reduction $s:X\rightarrow\xi/\Gcal_{\Psf}$ due to
    Lemma \ref{lem_reesconstructionparahoric}
    \ref{lem_reesconstructionparahoric2}. Since the automorphism group
    schemes of $\Mcal\Bun_{\Gcal}^H$ preserve the Higgs field, the bijection
    in Lemma \ref{lem_reesconstructionparahoric}
    \ref{lem_reesconstructionparahoric2} restricts to the claimed
    bijection in this lemma.
\end{proof}

Similarly to the case of Higgs bundles studied by Wißdorf in
\cite[§3.5]{wissdorf_phdthesis}, for a parahoric Higgs $\Gcal$-torsor
$(\xi,\varphi)$,
we can modify the complementary polyhedron
$d^{\underline{\chi}}_{\Ad(\xi)_\eta,\Tbf_\eta}$ from Lemma
\ref{lem_complentarypolyhedraparahoric} in order to detect
$\Lcal_{\det,\underline{\chi}}^H$-stability conditions. For
$\eta:=\Spec(K(X))$ and a maximal torus $\Tbf_{\eta}$ of
$\ad(\xi)_{\eta}$, we have the root space decomposition:
\begin{equation*}
    \ad(\xi)_{\eta}=\Lie(\Tbf_{\eta})\oplus\bigoplus_{\alpha\in\Phi(\Ad(\xi)_{\eta},\Tbf_{\eta})}(\ad(\xi)_{\eta})_\alpha
\end{equation*}
By
\cite[{\href{https://stacks.math.columbia.edu/tag/0AY7}{0AY7}}]{stacks-project},
the bundles $(\ad(\xi)_{\eta})_\alpha\otimes\Kcal(D)_{\eta}$ extend
uniquely to $\ad(\xi)_\alpha\otimes\Kcal(D)$ and
$\varphi:X\rightarrow\ad(\xi)\otimes\Kcal(D)$ can be projected down
to $\varphi_\alpha:X\rightarrow\ad(\xi)_\alpha\otimes\Kcal(D)$.
Following \cite[§3.5.1]{wissdorf_phdthesis}, by looking at
decompositions of $\alpha$ into indecomposable roots, Wißdorf defines for
all $\alpha\in\Phi(\Ad(\xi)_{\eta},\Tbf_{\eta})$:
\begin{equation*}
    \epsilon(\varphi,\alpha):=
    \begin{cases}
        0 \text{ if for all decompositions $\alpha=\sum_{i=1}^s\alpha_i$,
        there is an $i$ with }\varphi_{\alpha_i}=0                 \\
        1 \text{ if there exists a decomposition
            $\alpha=\sum_{i=1}^s\alpha_i$ such that
        }\varphi_{\alpha_i}\neq0\text{ for all $i$} \\
    \end{cases}
\end{equation*}
that depends on the Higgs field. Réga then proves the following.

\begin{lemma}[Complementary polyhedra for parahoric Higgs torsors
    {\cite[§5.2.2]{rega_phdthesis}}]\label{lem_higgspolyhedronparahoric}
    Let $\underline{\chi}$ be an admissible stability parameter, and
    let $(\xi,\varphi)$ be a parahoric Higgs $\Gcal$-torsor. Let
    $\Tbf_{\eta}$ be a maximal torus of $\Ad(\xi)_{\eta}$, and let
    $d_{\Ad(\xi)_{\eta},\Tbf_{\eta}}^{\underline{\chi}}$ be the
    complementary polyhedron constructed in Lemma
    \ref{lem_complentarypolyhedraparahoric}. For all $C\geq0$, the
    following is a complementary polyhedron:
    \begin{align*}
        d_{\Ad(\xi)_{\eta},\Tbf_{\eta}}^{\underline{\chi},\varphi,C}:\Wfrak_{\Phi(\Ad(\xi)_{\eta},\Tbf_{\eta})}\rightarrow
        V^*_{\Ad(\xi)_{\eta},\Tbf_{\eta}} &  & \cfrak\mapsto
        d_{\Ad(\xi)_{\eta},\Tbf_{\eta}}^{\chi}(\cfrak)+C\sum_{\substack{\alpha\in\Phi(\Ad(\xi)_{\eta},\Tbf_{\eta})
        \\ \alpha\notin R(\cfrak)}}\epsilon(\varphi,\alpha)\widecheck{\alpha}
    \end{align*}
\end{lemma}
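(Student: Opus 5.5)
Since $d^{\underline{\chi}}:=d^{\underline{\chi}}_{\Ad(\xi)_{\eta},\Tbf_{\eta}}$ is already a complementary polyhedron by Lemma \ref{lem_complentarypolyhedraparahoric}, and both axioms \ref{def_complementarypolyhedra1} and \ref{def_complementarypolyhedra2} are linear or convex in $d$ (the first is an equality, the second an inequality of linear functionals), I would reduce to the correction term. Set
\begin{equation*}
    e(\cfrak):=\sum_{\alpha\notin R(\cfrak)}\epsilon(\varphi,\alpha)\widecheck{\alpha},
\end{equation*}
so that $d^{\underline{\chi},\varphi,C}=d^{\underline{\chi}}+Ce$. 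The map $e$ need not be a complementary polyhedron on its own. It suffices to show that $e$ satisfies \ref{def_complementarypolyhedra1} with equality and \ref{def_complementarypolyhedra2} with $e(\cfrak)(\alpha)\leq e(\dfrak)(\alpha)$. Then, because $C\geq0$, both axioms pass to the sum.

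For \ref{def_complementarypolyhedra1}, take Weyl chambers $\cfrak,\dfrak$ with a common vertex $\lambda$, and compute $e(\cfrak)(\lambda)=\sum_{\alpha\in-R(\cfrak)}\epsilon(\varphi,\alpha)\widecheck{\alpha}(\lambda)$, where $R(\cfrak)$ is the set of positive roots of $\cfrak$. Under the pairing identification only roots with $\langle\alpha,\lambda\rangle\neq0$ contribute. Since $\lambda\in\cl(\cfrak)$, a negative root $\alpha$ of $\cfrak$ has $\langle\alpha,\lambda\rangle\leq0$. So the sum runs over $\{\alpha\in\Phi : \langle\alpha,\lambda\rangle<0\}$, a set that depends only on $\lambda$ and not on the chamber. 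This proves the equality $e(\cfrak)(\lambda)=e(\dfrak)(\lambda)$.

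For \ref{def_complementarypolyhedra2}, I would first reduce to adjacent chambers. By a gallery argument in the Weyl group one may take $\cfrak$ and $\dfrak=s_\alpha\cfrak$ separated by the single wall of a simple root $\alpha$ of $\cfrak$. Then the negative roots satisfy $-R(\dfrak)=(-R(\cfrak)\setminus\{-\alpha\})\cup\{\alpha\}$, which gives
\begin{equation*}
    e(\dfrak)-e(\cfrak)=\epsilon(\varphi,\alpha)\widecheck{\alpha}-\epsilon(\varphi,-\alpha)\widecheck{-\alpha}=(\epsilon(\varphi,\alpha)+\epsilon(\varphi,-\alpha))\widecheck{\alpha}.
\end{equation*}
Evaluating at $\alpha$ gives $2(\epsilon(\varphi,\alpha)+\epsilon(\varphi,-\alpha))\geq0$, which is the required inequality for adjacent chambers.

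The main obstacle is the general, non-adjacent $\alpha$-conjugate case. Here I would instead compare $e(\dfrak)(\alpha)-e(\cfrak)(\alpha)$ directly as a sum over roots $\beta$ that change sign between the two chambers, weighted by $\langle\widecheck{\beta},\alpha\rangle$. These terms can have either sign, and this is exactly where the special form of $\epsilon$ must be used. Because $\epsilon$ is defined via decompositions into roots where $\varphi$ is nonzero, the set $\{\beta : \epsilon(\varphi,\beta)=1\}$ is closed under addition of roots. I would try to exploit this closedness, via Behrend's reduction in \cite[Proposition 1.9]{behrend_semi-stabilityofreductivegroupschemesovercurves}, to restrict \ref{def_complementarypolyhedra2} to chambers sharing a codimension-one face, or alternatively to pair each negative contribution with a positive one. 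If that fails, I would defer to Réga's computation in \cite[§5.2.2]{rega_phdthesis}, which follows Wißdorf \cite[§3.5]{wissdorf_phdthesis}.
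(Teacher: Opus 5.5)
Your reduction to the correction term $e$, your check of \ref{def_complementarypolyhedra1} for $e$, and your computation for two chambers separated by the single wall $L_\alpha$ are all correct. Together they already form a complete proof; the paper itself gives no argument for this lemma, only the citation to Réga. What you are missing is that the gloss of \ref{def_complementarypolyhedra2} in Definition~\ref{def_complementarypolyhedra} (``$\cfrak$ positive, $\dfrak$ negative with respect to $\alpha$'') is too loose and should not be read literally. The inequality is meant only for $\alpha$-conjugate pairs, where $\dfrak=s_\alpha\cfrak$ is the mirror image of $\cfrak$ in $L_\alpha$; this pair is adjacent when $\alpha$ is simple for $\cfrak$. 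The adjacent case is exactly what you computed. The general mirror pair also follows root by root. Let $e_\gamma(\cfrak)$ be $\widecheck{\gamma}$ if $\gamma\notin R(\cfrak)$ and $0$ otherwise. Then $e_\gamma(\cfrak)(\alpha)\leq e_\gamma(s_\alpha\cfrak)(\alpha)$, because $\gamma\in R(s_\alpha\cfrak)$ if and only if $s_\alpha\gamma\in R(\cfrak)$, and $s_\alpha\gamma=\gamma-n\alpha$ with $n$ of the same sign as $\langle\gamma,\alpha\rangle$, while $\alpha$ is positive on $\cfrak$. Summing with the weights $\epsilon(\varphi,\gamma)\geq 0$ gives the inequality for $e$.

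The ``main obstacle'' you isolate is not just hard but false. So the routes you sketch for it (a gallery reduction, closedness of $\{\gamma:\epsilon(\varphi,\gamma)=1\}$ under addition, pairing of contributions) cannot succeed. Here is a counterexample.
\begin{itemize}
\item Take $\Phi=A_2$ with simple roots $\alpha,\beta$, and let $\varphi$ be generically supported only in the root space of $\alpha+\beta$.
\item For instance, take $\xi=\Fr(\Ocal_X^{\oplus 3})$ for $\SLbf(3,K)$ with the diagonal torus, $\Rcal=\emptyset$, $X$ of genus $\geq 1$, and $\varphi$ the $(1,3)$ entry times a nonzero section of $\Kcal$.
\item Then $\{\gamma:\epsilon(\varphi,\gamma)=1\}=\{\alpha+\beta\}$, which is trivially closed under addition. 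Also $d^{\underline{\chi}}\equiv 0$, so $d^{\underline{\chi},\varphi,C}=Ce$.
\item Take $\cfrak$ with $R(\cfrak)=\{\alpha,-\beta,-\alpha-\beta\}$ and $\dfrak=-\cfrak$. The root $\alpha$ is positive on $\cfrak$ and negative on $\dfrak$, yet $e(\cfrak)(\alpha)=\widecheck{\alpha+\beta}(\alpha)=1>0=e(\dfrak)(\alpha)$.
\end{itemize}
So the literal inequality fails for every $C>0$. The gallery idea fails for the same reason: crossing a wall $L_\gamma$ with $\gamma\neq\pm\alpha$ changes the value at $\alpha$ by a multiple of $\widecheck{\gamma}(\alpha)$, and that multiple can have either sign. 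Drop your last paragraph and state explicitly that you verify Behrend's condition for $\alpha$-conjugate (mirror) pairs. With that, your argument is complete.
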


For $C\geq0$ large enough, we identify
$d_{\Ad(\xi)_{\eta},\Tbf_{\eta}}^{\underline{\chi}}$-stability
conditions with $\Lcal_{\det,\underline{\chi}}^H$-stability
conditions, generalizing Lemmas
\ref{lem_ramanathanreductivestabilityequivalence} and
\ref{lem_reductivestabilityequivalenceparahoric}.

\begin{lemma}[Stability conditions for parahoric Higgs
    torsors]\label{lem_reductivestabilityequivalencehiggsparahoric}
    Let $\underline{\chi}$ be an admissible stability parameter, and
    let $(\xi,\varphi)$ be a parahoric Higgs $\Gcal$-torsor. There exists
    $C\geq0$ such that we have the following:
    \begin{enumerate}[label=(\roman*)]
        \item\label{lem_reductivestabilityequivalencehiggsparahoric1}
            $(\xi,\varphi)$ is
            $\Lcal_{\det,\underline{\chi}}^H$-(semi)-stable if and only if
            $(\xi,\varphi)$ is
            $d_{\Ad(\xi)_{\eta},\Tbf_{\eta}}^{\underline{\chi},\varphi,C}$-(semi)-stable.

        \item\label{lem_reductivestabilityequivalencehiggsparahoric2}
            $(\xi,\varphi)$ is
            $\Lcal_{\det,\underline{\chi}}^H$-polystable if and
            only if $(\xi,\varphi)$ is
            $d_{\Ad(\xi)_{\eta},\Tbf_{\eta}}^{\underline{\chi},\varphi,C}$-polystable.
    \end{enumerate}
\end{lemma}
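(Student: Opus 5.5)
The plan is to reduce both statements to a comparison of signs, facet by facet, between the degree $\deg(P)$ computed from $d^{\underline{\chi},\varphi,C}:=d_{\Ad(\xi)_{\eta},\Tbf_{\eta}}^{\underline{\chi},\varphi,C}$ and the weight $\wt(f^*\Lcal^H_{\det,\underline{\chi}})$ of the corresponding filtration. This follows the pattern of Lemma \ref{lem_reductivestabilityequivalenceparahoric}, but with the Higgs correction term added. As in the proof of Lemma \ref{lem_Gdstableequivalence}, conjugating a parabolic so that it contains $\Tbf_\eta$ does not change degrees. By Remark \ref{rem_maximalparabolicssuffice}, it therefore suffices to consider facets $P$ of $\Phi(\Ad(\xi)_\eta,\Tbf_\eta)$, and even only $1$-dimensional ones.

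First I would compute the degree of a facet $P$. For any Weyl chamber $\cfrak$ with $P\geq\cfrak$, we have
\begin{equation*}
\deg^{\varphi,C}(P)=\deg^{\underline{\chi}}(P)+C\sum_{\beta\in R(P)}\sum_{\alpha\notin R(\cfrak)}\epsilon(\varphi,\alpha)\widecheck{\alpha}(\beta).
\end{equation*}
The inner sum should depend only on whether $\varphi$ factors through $\ad(\Pbf)$, where $\Pbf=\underline{R}^{-1}(P)$. If $\varphi$ factors, every $\alpha$ with $\epsilon(\varphi,\alpha)=1$ lies in $R(P)$, since $R(P)$ is closed under sums of roots and $\varphi_{\alpha_i}\neq0$ forces $\alpha_i\in R(P)$. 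Hence $\alpha\notin R(\cfrak)$ forces $\alpha\in R(P)\setminus R(\cfrak)$, i.e.\ $-\alpha\in R(P)$, so $\alpha$ lies in the Levi part. Summing $\widecheck{\alpha}(\beta)$ over $\beta\in R(P)$ then gives $\widecheck{\alpha}\bigl(\sum_{\beta\in R(P)}\beta\bigr)=0$, because $\sum_{\beta\in R(P)}\beta\in P$ by \cite[Proposition 1.9]{behrend_semi-stabilityofreductivegroupschemesovercurves} and $\alpha$ vanishes on $P$. So the correction vanishes for Higgs reductions. If $\varphi$ does not factor, some $\alpha\notin R(P)$ has $\epsilon(\varphi,\alpha)=1$, and then $\widecheck{\alpha}\bigl(\sum_{\beta\in R(P)}\beta\bigr)<0$ strictly. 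Every other term contributes $\leq0$, since the $\epsilon$-support is closed under the relevant decompositions. So the correction is a strictly negative multiple of $C$.

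Next I would compare with the stack side. By Lemma \ref{lem_parahorichiggsreesconstruction}, filtrations of $(\xi,\varphi)$ are exactly the Higgs reductions, and $\wt(f^*\Lcal^H_{\det,\underline{\chi}})=\wt((\forsc\circ f)^*\Lcal_{\det,\underline{\chi}})$. This has the sign of $\deg^{\underline{\chi}}(P)$ by Lemma \ref{lem_reductivestabilityequivalenceparahoric}, hence the sign of $\deg^{\varphi,C}(P)$ by the first case above. For non-Higgs facets, which have no filtration, I must show that $\deg^{\varphi,C}(P)<0$ once $C$ is large. There are finitely many facets and $\deg^{\underline{\chi}}(P)$ is a fixed number for each, so a single $C$ exceeding $\max_P\deg^{\underline{\chi}}(P)/|\text{correction coefficient}|$ works. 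With this $C$, the equivalence in \ref{lem_reductivestabilityequivalencehiggsparahoric1} for semistability and stability follows from Lemma \ref{lem_stabilityofcomplementarypolyhedra} and Definition \ref{def_Lstability}.

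For \ref{lem_reductivestabilityequivalencehiggsparahoric2}, facets with $\deg^{\varphi,C}(P)=0$ are exactly the Higgs reductions of weight $0$. The Levi-factor $d_P$ is the complementary polyhedron attached to the Higgs torsor $(s^*\xi)(\Lcal_\Psf)$ with Higgs field $\varphi_\Psf$, and $\epsilon$ restricts compatibly. Applying part \ref{lem_reductivestabilityequivalencehiggsparahoric1} to the Levi (after possibly enlarging $C$, again by finiteness) gives \ref{def_stabilityofcomplementarypolyhedra31}$\Leftrightarrow$ semistability of $f(0)$. For \ref{def_stabilityofcomplementarypolyhedra32}, $\deg(-P)=0$ forces $-P$ to be a Higgs reduction, so that $\varphi$ lies in the Levi part. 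This is exactly the existence of an opposite filtration in $\Mcal\Bun^H_\Gcal$, matching Definition \ref{def_filtration}.

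I expect the main obstacle to be the strict negativity of the correction term for non-Higgs facets, in particular controlling the signs of all the $\epsilon$-terms simultaneously. I would first try to prove it for $1$-dimensional facets by writing it as a weight computation on $\ad(\xi)$, namely the weight of the action of the cocharacter of $P$ on the nonzero components of $\varphi$, following \cite[§3.5]{wissdorf_phdthesis}.
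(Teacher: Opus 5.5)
Your proposal is correct and is essentially the paper's proof. The paper also splits facets into those admitting a Higgs reduction, whose degree is unchanged (citing \cite[3.5.13 Lemma]{wissdorf_phdthesis}), and the rest, for which $\deg^{\varphi,C}(P)\leq\deg(P)-C$ (citing \cite[3.5.14 Remark]{wissdorf_phdthesis}), and then picks $C$ by finiteness of the set of facets. You simply derive these two facts by hand from $\sum_{\beta\in R(P)}\beta\in P$, and that derivation is sound.

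The sign control you flag as the main obstacle is in fact immediate and needs no closure property of the $\epsilon$-support: every $\alpha\notin R(\cfrak)$ has $\langle\alpha,v\rangle\leq0$ for $v\in\cl(\cfrak)\supset P$, with strict inequality on $P$ when $\alpha\notin R(P)$. Be aware, though, that your conjugation step does not by itself produce a Higgs facet for a Higgs reduction whose parabolic does not contain $\Tbf_\eta$, because conjugating into $\Tbf_\eta$ need not preserve the Higgs condition. The paper's appeal to Lemma \ref{lem_reductivestabilityequivalenceparahoric} has the same gap.
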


\begin{proof}
    Due to the Rees construction in Lemma
    \ref{lem_parahorichiggsreesconstruction},
    $\Lcal_{\det,\underline{\chi}}^H$-stability conditions are
    equivalent to testing $\Lcal_{\det,\underline{\chi}}$-stability
    conditions over Higgs reductions, in the sense of Definition
    \ref{def_weakparahorichiggs} \ref{def_weakparahorichiggs2}.

    Let $\deg(P)$ denote the degree of $P$ with respect to
    $d_{\Ad(\xi)_{\eta},\Tbf_{\eta}}^{\underline{\chi}}$ from Lemma
    \ref{lem_complentarypolyhedraparahoric}, and let
    $\deg^{\varphi,C}(P)$ denote the degree of $P$ with respect to
    $d_{\Ad(\xi)_{\eta},\Tbf_{\eta}}^{\underline{\chi},\varphi,C}$. If
    $P$ admits a Higgs reduction, through Lemmas
    \ref{lem_rootsystemreductivegroupscheme} and
    \ref{lem_parahorichiggsreesconstruction}, then
    $\deg(P)=\deg^{\varphi,C}(P)$ by \cite[3.5.13
    Lemma]{wissdorf_phdthesis}. Hence, by Lemma
    \ref{lem_reductivestabilityequivalenceparahoric}, it suffices to
    show that for some $C\geq0$, if a facet $P$ of
    $\Phi(\Ad(\xi)_{\eta},\Tbf_{\eta})$ does not admit a Higgs
    reduction, then $\deg(P)<0$.  By \cite[3.5.14
    Remark]{wissdorf_phdthesis}, we have:
    \begin{equation*}
        \deg^{\varphi,C}(P)\leq\deg(P)-C
    \end{equation*}
    Since there are finitely many facets $P$ not admitting Higgs
    reductions, we can pick $C$ large enough such that this always
    ensures $\deg^{\varphi,C}(P)<0$.
\end{proof}

Essentially, we are shifting the degrees of facets not admitting
Higgs reductions far enough so that we ignore stability conditions on
such reductions.

\subsubsection{Jordan-Hölder theory for parahoric Higgs
torsors}\label{subsubsec_JHparahorichiggs}

We can now state and prove the Jordan-Hölder theorem for parahoric
Higgs torsors.

\begin{theorem}[Jordan-Hölder theory for parahoric Higgs
    torsors]\label{thm_jhreductionparahorichiggs}
    Let $\underline{\chi}$ be an admissible stability parameter, in the
    sense of \cite[§3.F]{heinloth_hilbertmumfordstability}, and let
    $(\xi,\varphi)$ be a $\Lcal_{\det,\underline{\chi}}^H$-semistable
    parahoric Higgs $\Gcal$-torsor. There exists a Higgs reduction
    $s:X\rightarrow\xi/\Gcal_{\Psf}$ to a parabolic subgroup
    $\Gcal_{\Psf}$ of $\Gcal$ such that:
    \begin{enumerate}[label=(\roman*)]
            \sloppy
        \item\label{thm_jhreductionparahorichiggs1} The reduction $s$ is
            \textit{admissible}, i.e., for every vertex
            $\vfrak\in\pi_0(t(\Ad(s^*\xi)))$, we have
            $n_{\underline{\chi}}(\Ad(s^*\xi),\vfrak)=0$.

        \item\label{thm_jhreductionparahorichiggs2} The extension to the
            Levi-factor
            $((s^*\xi)(\Lcal_{\Psf}),\varphi_{(s^*\xi)(\Lcal_{\Psf})})$,
            through the quotient
            $\Gcal_{\Psf}\rightarrow\Lcal_{\Psf}$, is
            $\Lcal_{\det,\underline{\chi}^{\Lcal_{\Psf}}}^H$-stable,
            with $\underline{\chi}^{\Lcal_\Psf}$ induced by the
            isomorphism
            $\Lcal_\Psf\cong\Lcal(\boldsymbol{\lambda}_\eta)$ as the
            restriction of $\underline{\chi}$.
    \end{enumerate}
    These reductions have the same extension
    $((s^*\xi)(\Lcal_{\Psf}),\varphi_{(s^*\xi)(\Lcal_{\Psf})})$ to
    the Levi-factor, up to an isomorphism.
\end{theorem}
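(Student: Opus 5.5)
We follow the proof of Theorem \ref{thm_jhreductionparahoric} verbatim, replacing the complementary polyhedron $d^{\underline{\chi}}_{\Ad(\xi)_\eta,\Tbf_\eta}$ by the Higgs-modified polyhedron $d^{\underline{\chi},\varphi,C}_{\Ad(\xi)_\eta,\Tbf_\eta}$ from Lemma \ref{lem_higgspolyhedronparahoric}. Here $C\geq0$ is chosen as in Lemma \ref{lem_reductivestabilityequivalencehiggsparahoric}.

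\textbf{Step 1: existence of a Jordan-Hölder parabolic.} Since $(\xi,\varphi)$ is $\Lcal^H_{\det,\underline{\chi}}$-semistable, Lemma \ref{lem_reductivestabilityequivalencehiggsparahoric} \ref{lem_reductivestabilityequivalencehiggsparahoric1} shows that $d^{\underline{\chi},\varphi,C}$ is semistable. By \cite[Lemma 3.8]{heinloth_hilbertmumfordstability}, $\Ad(\xi)$ is rationally split, so Theorem \ref{thm_jhlevireductivegroupscheme} \ref{thm_jhlevireductivegroupscheme1} gives a Jordan-Hölder parabolic $\Pbf$ with respect to $d^{\underline{\chi},\varphi,C}$.

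\textbf{Step 2: the parabolic is a Higgs reduction.} Let $P=\underline{R}(\Pbf_\eta)$. By \ref{def_jhfacet1} and \cite[Propositions 1.9 and 3.2]{behrend_semi-stabilityofreductivegroupschemesovercurves}, we have $\deg^{\varphi,C}(P)=0$. If $P$ did not admit a Higgs reduction, the choice of $C$ in the proof of Lemma \ref{lem_reductivestabilityequivalencehiggsparahoric} would force $\deg^{\varphi,C}(P)<0$. Hence $P$ admits a Higgs reduction, and Lemma \ref{lem_parahorichiggsreesconstruction} together with Lemma \ref{lem_reesconstructionparahoric} produces a Higgs reduction $s:X\rightarrow\xi/\Gcal_{\Psf}$ with $\Ad(s^*\xi)=\Pbf$.

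\textbf{Step 3: admissibility.} This is the main obstacle. Condition \ref{def_jhfacet1} gives the vanishing of the numerical invariants of the \emph{modified} polyhedron, whereas \ref{thm_jhreductionparahorichiggs1} asks for vanishing of the unmodified $n_{\underline{\chi}}$. To bridge this, I would use that for facets admitting Higgs reductions the Higgs correction vanishes, \cite[3.5.13 Lemma]{wissdorf_phdthesis}. The first thing to try is to show that the correction term $C\sum_{\alpha\notin R(\cfrak)}\epsilon(\varphi,\alpha)\widecheck{\alpha}$, restricted to the vertices of $P$, is zero whenever $\varphi$ lies in $\ad(s^*\xi)\otimes\Kcal(D)$. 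If $\alpha\in\Psi(P,\lambda)$ with $\langle\alpha,\lambda\rangle=1$, then $-\alpha\notin R(P)$, so $\varphi_{-\alpha}=0$, and hence $\epsilon(\varphi,-\alpha)=0$ for the relevant roots. Evaluating the correction on each $\Psi(P,\lambda)$ should then give $n^{\varphi,C}(P,\lambda)=n_{\underline{\chi}}(P,\lambda)$. The delicate point is controlling roots that decompose through $R(P)$; I expect to check that every such decomposition must contain a summand outside $R(P)$, for which $\varphi$ vanishes.

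\textbf{Step 4: stability of the Levi-factor.} Condition \ref{def_jhfacet2} says that $(d^{\underline{\chi},\varphi,C})_P$ is stable. By the Levi-compatibility in Definition \ref{def_bundlemoduliproblem} \ref{def_bundlemoduliproblem1} \ref{def_bundlemoduliproblem14}, this projected polyhedron is the Higgs polyhedron of the Levi-factor $((s^*\xi)(\Lcal_{\Psf}),\varphi_{(s^*\xi)(\Lcal_{\Psf})})$ with parameter $\underline{\chi}^{\Lcal_\Psf}$. The Higgs field $\varphi_{\Psf}$ projects to the Levi, and the numbers $\epsilon$ restrict accordingly. Applying Lemma \ref{lem_reductivestabilityequivalencehiggsparahoric} to the Levi then gives $\Lcal^H_{\det,\underline{\chi}^{\Lcal_\Psf}}$-stability, possibly after enlarging $C$, which is harmless since only finitely many facets are involved.

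\textbf{Step 5: uniqueness.} For two such reductions, I would choose a common maximal torus $\Tbf_\eta$ inside both $(\Pbf_i)_\eta$ by \cite[Exp. XXVI, 4.1.1]{sga3}. Theorem \ref{thm_jhlevireductivegroupscheme} \ref{thm_jhlevireductivegroupscheme2} then gives an automorphism $\taubf$ identifying the Levi-factors. It remains to check that $\taubf$ carries $\varphi_{\Lcal_1}$ to $\varphi_{\Lcal_2}$. Here I would use that, by Theorem \ref{thm_jhfacetlevi}, $d_{P_1}=d_{P_2}$, together with the construction of $\tau$ as the identity on $Q_1^\perp$ in its proof, so that the projected Higgs components agree. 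This compatibility is the second point where I expect care to be needed.
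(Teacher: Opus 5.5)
Your overall route matches the paper's. You fix $C$ as in Lemma~\ref{lem_reductivestabilityequivalencehiggsparahoric}, take a Jordan-H\"older parabolic for $d^{\underline{\chi},\varphi,C}_{\Ad(\xi)_\eta,\Tbf_\eta}$, and check that it is a Higgs reduction; your Step 2 spells out what the paper leaves to ``the proof of Lemma~\ref{lem_reductivestabilityequivalencehiggsparahoric}''. You then transfer admissibility, Levi stability and uniqueness.

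The mechanism you sketch in Step 3, however, does not work. Your closure argument (any decomposition of a root outside $R(P)$ has a summand outside $R(P)$) does give $\epsilon(\varphi,\alpha)=0$ for all $\alpha\notin R(P)$. But for $P\geq\cfrak$ the correction still contains the terms $C\,\epsilon(\varphi,\alpha)\widecheck{\alpha}$ with $\alpha\in R(P)\setminus R(\cfrak)$, i.e.\ the negative roots of the Levi. These can be nonzero for a Higgs reduction, and they do not vanish on individual $\beta\in\Psi(P,\lambda)$. For example, in $A_2$ with Levi root $\alpha$ and $\Psi(P,\lambda)=\{\beta,\alpha+\beta\}$, the term $-C\widecheck{\alpha}$ (present when $\varphi_{-\alpha}\neq0$) contributes $+C$ on $\beta$ and $-C$ on $\alpha+\beta$. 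So checking the roots $-\beta$ one at a time proves nothing.

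The missing observation is that each Levi reflection $s_\alpha$ fixes $\vertices(P)$ and therefore permutes $\Psi(P,\lambda)$. Hence $\widecheck{\alpha}\bigl(\sum_{\beta\in\Psi(P,\lambda)}\beta\bigr)=0$, and summing over $\Psi(P,\lambda)$ gives $n^{\varphi,C}(P,\lambda)=n_{\underline{\chi}}(P,\lambda)$. This is the fact the paper takes from \cite[3.5.13 Lemma]{wissdorf_phdthesis} together with \cite[Propositions 6.8 and Lemma 7.1]{behrend_semi-stabilityofreductivegroupschemesovercurves}.

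In Step 5 you correctly notice that compatibility with the Higgs fields needs an argument; the paper reads the isomorphism of Levi-factors directly off Theorem~\ref{thm_jhlevireductivegroupscheme}~\ref{thm_jhlevireductivegroupscheme2}. Your justification does not supply it, for three reasons:
\begin{enumerate}
\item The equality $d_{P_1}=d_{P_2}$ is purely numerical (degrees and the $0/1$ pattern of $\epsilon(\varphi,\cdot)$), so it cannot identify the sections $\varphi_{\Lbf_1}$ and $\varphi_{\Lbf_2}$.
\item In the proof of Theorem~\ref{thm_jhfacetlevi} it is $\rho$, not $\tau$, that fixes $Q_1^\perp$; on $Q_1^\perp$, $\tau$ is the inductively obtained $\tau'$.
\item A lift $\taubf$ of $\tau$ is only determined up to automorphisms acting trivially on the root datum, such as conjugation by sections of the torus, and these rescale the root components of $\varphi$.
\end{enumerate}
What actually controls the Higgs field is part~\ref{thm_jhfacetlevi1} of Theorem~\ref{thm_jhfacetlevi}. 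With $\Tbf_\eta$ contained in both parabolics, $P_1^\perp=P_2^\perp$ means $\Pbf_1$ and $\Pbf_2$ contain the same Levi subgroup, namely the one through $\Tbf_\eta$ with roots $\Phi\cap P_1^\perp$. The field $\varphi$ lies in both $\Lie(\Pbf_i)\otimes\Kcal(D)$, and each Levi Higgs field is the image of $\varphi$ under the projection onto the Lie algebra of this common Levi that kills the root spaces outside $\Phi\cap P_1^\perp$. That projection is the same for $i=1,2$, and comparing over $\eta$ suffices.
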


\begin{proof}
    Let $C\geq0$ be chosen as in Lemma
    \ref{lem_reductivestabilityequivalencehiggsparahoric}.
    We know that $\Ad(\xi)$ is rationally split by
    Lemma \ref{lem_reesconstructionparahoric} and \cite[Lemma
    3.8]{heinloth_hilbertmumfordstability}. Thus, we can apply Theorem
    \ref{thm_jhlevireductivegroupscheme}
    \ref{thm_jhlevireductivegroupscheme1} to $\Ad(\xi)$, with
    $d_{\Ad(\xi)_{\eta},\Tbf_{\eta}}^{\underline{\chi},\varphi,C}$ from
    Lemma \ref{lem_higgspolyhedronparahoric}, since
    $d_{\Ad(\xi)_{\eta},\Tbf_{\eta}}^{\underline{\chi},\varphi,C}$  is
    semistable due to Lemma
    \ref{lem_reductivestabilityequivalencehiggsparahoric}
    \ref{lem_reductivestabilityequivalencehiggsparahoric1}. This gives
    us Jordan-Hölder parabolics $\Pbf$ of $\Ad(\xi)$ with respect to
    $d_{\Ad(\xi)_{\eta},\Tbf_{\eta}}^{\underline{\chi},\varphi,C}$. We
    now use Lemma \ref{lem_reesconstructionparahoric} to associate
    parabolic reductions $s:X\rightarrow\xi/\Gcal_{\Psf}$ to
    Jordan-Hölder parabolics $\Pbf$ of $\Ad(\xi)$, with respect to
    $d_{\Ad(\xi)_{\eta},\Tbf_{\eta}}^{\underline{\chi},\varphi,C}$. Due
    to the proof of Lemma
    \ref{lem_reductivestabilityequivalencehiggsparahoric},
    $s:X\rightarrow\xi/\Gcal_{\Psf}$ is a Higgs reduction, so Lemma
    \ref{lem_reductivestabilityequivalencehiggsparahoric}
    \ref{lem_reductivestabilityequivalencehiggsparahoric1} translates
    the stability of
    $(d^{\underline{\chi},\varphi,C}_{\Ad(\xi)_{\eta},\Tbf_{\eta}})_{\underline{R}(\Pbf)}$
    into the
    $\Lcal_{\det,\underline{\chi}^{\Lcal_{\Psf}}}^H$-stability of
    the Levi-factor
    $((s^*\xi)(\Lcal_{\Psf}),\varphi_{(s^*\xi)(\Lcal_{\Psf})})$.
    Furthermore, we know that the $\underline{\chi}$-numerical
    invariants of $\Pbf$
    are $0$, using \cite[Propositions 6.8 and Lemma
    7.1]{behrend_semi-stabilityofreductivegroupschemesovercurves} and
    \cite[3.5.13 Lemma]{wissdorf_phdthesis}. For the uniqueness of
    the Levi-factor, for two Jordan-Hölder Higgs reductions
    $s_1:X\rightarrow\xi/\Psf_1$ and
    $s_2:X\rightarrow\xi/\Psf_2$, there exists a maximal torus
    $\Tbf_\eta$ inside the corresponding Jordan-Hölder parabolics
    $(\Pbf_{1})_\eta$ and $(\Pbf_{2})_\eta$, by \cite[Exp. XXVI,
    4.1.1]{sga3}. Then Theorem \ref{thm_jhlevireductivegroupscheme}
    \ref{thm_jhlevireductivegroupscheme2} implies that there exists an
    isomorphism between $\Pbf_{1}$ and $\Pbf_{2}$, thus the
    Levi-factors
    $((s_1^*\xi)(\Lsf_1),\varphi_{(s_1^*\xi)(\Lsf_1)})$ and
    $((s_2^*\xi)(\Lsf_2),\varphi_{(s_2^*\xi)(\Lsf_2)})$ are
    isomorphic.
\end{proof}

Theorem \ref{thm_jhreductionparahorichiggs} gives us an equivalent
characterization of $\Lcal_{\det,\underline{\chi}}$-polystability,
generalizing Lemmas \ref{lem_ramanathanpolystability} and
\ref{lem_parahoricpolystability}.

\begin{lemma}[Equivalent characterization of
    $\Lcal_{\det,\underline{\chi}}^H$-polystability]\label{lem_parahorichiggspolystability}
    Let $\underline{\chi}$ be an admissible stability parameter, and
    let $(\xi,\varphi)$ be a
    $\Lcal_{\det,\underline{\chi}}^H$-semistable parahoric Higgs
    $\Gcal$-torsor. Then $(\xi,\varphi)$ is
    $\Lcal_{\det,\underline{\chi}}^H$-polystable if and only if:
    \begin{enumerate}[label=(\roman*)]
        \item\label{lem_parahorichiggspolystability1} $(\xi,\varphi)$ is
            isomorphic to the extension of a
            $\Lcal_{\det,\underline{\chi}^{\Lcal_\Psf}}^H$-stable
            parahoric Higgs $\Lcal_{\Psf}$-torsor
            $(\xi^{\Lcal_{\Psf}},\varphi^{\Lcal_{\Psf}})$,
            where $\Lcal_{\Psf}$ is a Levi-factor of a parabolic of $\Gcal$.

        \item\label{lem_parahorichiggspolystability2} For all Higgs
            reductions $s:X\rightarrow\xi/\Psf'$ for which
            $(\xi^{\Lcal_{\Psf}},\varphi^{\Lcal_{\Psf}})$ is
            isomorphic to the Levi-factor
            $((s^*\xi)(\Lcal^{\Psf'}),\varphi_{(s^*\xi)(\Lcal^{\Psf'})})$,
            the reduction $s$ is admissible in the sense of Theorem
            \ref{thm_jhreductionparahorichiggs}
            \ref{thm_jhreductionparahorichiggs1}.
    \end{enumerate}
\end{lemma}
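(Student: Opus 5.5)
The plan mirrors the proofs of Lemmas \ref{lem_ramanathanpolystability} and \ref{lem_parahoricpolystability}: we translate polystability of $(\xi,\varphi)$ into polystability of a complementary polyhedron, and then apply the Jordan-Hölder characterization.

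First, fix $C\geq0$ as in Lemma \ref{lem_reductivestabilityequivalencehiggsparahoric}. By part \ref{lem_reductivestabilityequivalencehiggsparahoric2} of that lemma, $(\xi,\varphi)$ is $\Lcal_{\det,\underline{\chi}}^H$-polystable if and only if $d:=d_{\Ad(\xi)_{\eta},\Tbf_{\eta}}^{\underline{\chi},\varphi,C}$ is polystable. Since $(\xi,\varphi)$ is semistable, $d$ is semistable. Lemma \ref{lem_polystabilityequivalence}, with condition \ref{lem_polystabilityequivalence3}, then says $d$ is polystable if and only if every Jordan-Hölder parabolic $\Pbf$ of $\Ad(\xi)$ with respect to $d$ has an opposite Jordan-Hölder parabolic $\Pbf'$. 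Equivalently, by Lemma \ref{lem_polystabilityfinalremark}, $-P$ is a Jordan-Hölder facet whenever $P$ is.

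For the forward direction, take a Jordan-Hölder Higgs reduction $s$ from Theorem \ref{thm_jhreductionparahorichiggs}, with facet $P$.
\begin{itemize}
\item Polystability gives the opposite facet $-P$. The two facets $P$ and $-P$ correspond to opposite parabolics whose intersection is $\Lcal(\boldsymbol{\lambda}_\eta)$.
\item Both are Higgs reductions. Jordan-Hölder facets have degree $0$, and by the proof of Lemma \ref{lem_reductivestabilityequivalencehiggsparahoric} facets without Higgs reductions have negative degree.
\item Hence $\varphi$ lies in $\ad$ of the Levi, so $(\xi,\varphi)$ is the extension of the Levi-factor $((s^*\xi)(\Lcal_{\Psf}),\varphi_{(s^*\xi)(\Lcal_{\Psf})})$.
\item This Levi-factor is stable by Theorem \ref{thm_jhreductionparahorichiggs} \ref{thm_jhreductionparahorichiggs2}, which gives \ref{lem_parahorichiggspolystability1}.
\end{itemize}
For \ref{lem_parahorichiggspolystability2}, a Higgs reduction $s'$ with the same Levi-factor has a facet $Q$. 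By polystability and Lemma \ref{lem_stabilityofcomplementarypolyhedrapolystable}, $0\in\relint(F)$ and $F\subset P^\perp$. I would argue that $Q^\perp=P^\perp$, using that the Levi-factors agree and the remark after Definition \ref{def_bundlemoduliproblem}. Then $y(Q)=0$, i.e.\ the numerical invariants of $Q$ vanish, which is admissibility.

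For the reverse direction, suppose $(\xi,\varphi)\cong$ the extension of a stable $(\xi^{\Lcal_\Psf},\varphi^{\Lcal_\Psf})$. Choosing $\Tbf_\eta$ inside the Levi gives two opposite reductions $s$ and $s'$ to $\Gcal_\Psf$ and its opposite parabolic. Both are Higgs reductions because $\varphi$ lies in the Levi. Both have Levi-factor $(\xi^{\Lcal_\Psf},\varphi^{\Lcal_\Psf})$, so by \ref{lem_parahorichiggspolystability2} both are admissible.
\begin{itemize}
\item Their facets $P$ and $-P$ then satisfy \ref{def_jhfacet1}.
\item Their Levi-factors are stable, so by Lemma \ref{lem_reductivestabilityequivalencehiggsparahoric} applied to the Levi they satisfy \ref{def_jhfacet2}.
\item So $P$ and $-P$ are both Jordan-Hölder facets, and $P$ is Jordan-Hölder for $d$.
\end{itemize}
By Theorem \ref{thm_jhfacetlevi}, any other Jordan-Hölder facet $P_2$ satisfies $P_2^\perp=P^\perp$ and is related to $P$ by an automorphism $\tau$. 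Since $-P$ is Jordan-Hölder and $\tau(-P)=-P_2$, it follows that $-P_2$ is also Jordan-Hölder. This gives \ref{lem_polystabilityequivalence3}, hence polystability.

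The main obstacle is the bookkeeping between reductions $s'$ of $\xi$ and facets for a \emph{fixed} torus $\Tbf_\eta$. A reduction with the prescribed Levi-factor need not contain the chosen torus. I would handle this by re-choosing $\Tbf_\eta$ inside the relevant parabolics via \cite[Exp. XXVI, 4.1.1]{sga3}, as in the proof of Theorem \ref{thm_jhreductionparahorichiggs}. I would then use the independence of stability conditions from the choice of torus in Definition \ref{def_bundlemoduliproblem} \ref{def_bundlemoduliproblem14}.
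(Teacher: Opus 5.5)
Your overall route is the paper's. You pass via Lemma \ref{lem_reductivestabilityequivalencehiggsparahoric} \ref{lem_reductivestabilityequivalencehiggsparahoric2} to polystability of $d:=d^{\underline{\chi},\varphi,C}_{\Ad(\xi)_\eta,\Tbf_\eta}$, then combine \ref{lem_polystabilityequivalence3} (equivalently Lemma \ref{lem_polystabilityfinalremark}) with the existence of Jordan-Hölder Higgs reductions; you just write out the two directions that the paper compresses into one sentence. One step you add is not valid, though. At the end of the reverse direction you conclude that $-P_2$ is a Jordan-Hölder facet for every Jordan-Hölder facet $P_2$, because $\tau(-P)=-P_2$ for the $\tau$ of Theorem \ref{thm_jhfacetlevi}. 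That theorem only gives a linear orthogonal $\tau$ with $\tau(\Phi)=\Phi$ and $\tau(P)=P_2$, together with $P^\perp=P_2^\perp$ and $d_P=d_{P_2}$. It does not say that $d$ is $\tau$-equivariant. Being a Jordan-Hölder facet is a property relative to $d$: conditions \ref{def_jhfacet1} and \ref{def_jhfacet2} for $-P_2$ involve the values $d(\cfrak)$ on chambers $\cfrak$ with $-P_2\geq\cfrak$, and $\tau$ says nothing about these. So as written this inference is a non sequitur.

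The conclusion is still true, and you can reach polystability without quantifying over all Jordan-Hölder facets. You have a Jordan-Hölder facet $P$ whose opposite $-P$ satisfies \ref{def_jhfacet1}, by admissibility of the opposite reduction, so $\deg(P)=\deg(-P)=0$. This is exactly the input used in the forward direction of the proof of Lemma \ref{lem_stabilityofcomplementarypolyhedrapolystable}. There, semistability forces $\deg(\pm Q)=0$ for the $1$-dimensional facets $Q$ through the vertices of $P$, which squeezes $F$ into $\widecheck{P}^\perp$. Hence $F_P=F(P)\subset F\subset\widecheck{P}^\perp$. Stability of $d_P$ then gives $0\in\interior(F_P)$ inside $\widecheck{P}^\perp$, so $0\in\relint(F)$, and $d$ is polystable by Lemma \ref{lem_stabilityofcomplementarypolyhedrapolystable}. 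With this replacement the reverse direction no longer needs Theorem \ref{thm_jhfacetlevi}, nor \ref{def_jhfacet2} for $-P$. It also settles a point the paper's short proof leaves implicit: the paper invokes \ref{lem_polystabilityequivalence3} without saying why every Jordan-Hölder parabolic, not just the one coming from the given Levi reduction, has an opposite.
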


\begin{proof}
    Using Lemma \ref{lem_reductivestabilityequivalencehiggsparahoric}
    \ref{lem_reductivestabilityequivalencehiggsparahoric2}, $\xi$ is
    $\Lcal_{\det,\underline{\chi}}^H$-polystable if and only if the
    complementary polyhedron
    $d_{\Ad(\xi)_\eta,\Tbf_{\eta}}^{\underline{\chi}}$ from Lemma
    \ref{lem_higgspolyhedronparahoric} is polystable. By
    \ref{lem_polystabilityequivalence3} from Lemma
    \ref{lem_polystabilityequivalence}, and by the existence of
    Jordan-Hölder Higgs reductions in Theorem
    \ref{thm_jhreductionparahorichiggs}, the equivalence follows.
\end{proof}

\begin{remark}[Bundle moduli problem]\label{rem_lconstructionnewhiggs}
    Using Remark \ref{rem_bunhgproperties}, for any
    admissible stability parameter $\underline{\chi}$, we claim that
    $(\Mcal\Bun_{\Gcal}^H,d^{\underline{\chi},H})$ forms a bundle moduli
    problem where $d^{\underline{\chi},H}$ is induced by
    $(d^{\underline{\chi},C}_{\Ad(\xi)_{\eta},\Tbf_{\eta}})_{C}$ from
    Lemma \ref{lem_higgspolyhedronparahoric},
    where $C\geq0$ is parametrized by values where Lemma
    \ref{lem_reductivestabilityequivalencehiggsparahoric} applies.
    The properties of Definition \ref{def_bundlemoduliproblem}
    \ref{def_bundlemoduliproblem1}
    \ref{def_bundlemoduliproblem11}-\ref{def_bundlemoduliproblem14}
    follow from Lemmas \ref{lem_higgspolyhedronparahoric} and
    \ref{lem_reductivestabilityequivalencehiggsparahoric}, where we
    define facets $P$ of $\Phi(\Ad(\xi)_\eta,\Tbf_\eta)$ to be
    admissible, in the sense of Definition
    \ref{def_bundlemoduliproblem} \ref{def_bundlemoduliproblem1}
    \ref{def_bundlemoduliproblem11}, if they admit a Higgs reduction.
    For Definition \ref{def_bundlemoduliproblem}
    \ref{def_bundlemoduliproblem1} \ref{def_bundlemoduliproblem15}, we use Lemma
    \ref{lem_reductivestabilityequivalencehiggsparahoric}, which implies that
    $\Lcal_{\det,\underline{\chi}}^H$ has compatible stability to
    $(\Mcal\Bun_{\Gcal}^H,d^{\underline{\chi},H})$, in the sense of Definition
    \ref{def_bundlemoduliproblem} \ref{def_bundlemoduliproblem2}.

    When
    $K$ has characteristic $0$,
    $(\Mcal\Bun_{\Gcal}^H)^{d^{\underline{\chi},H}-\sst}$ admits a good moduli
    space by \cite[Theorem 5.26]{rega_phdthesis}, and by
    \cite[Proposition 5.8]{rega_phdthesis} together with \cite[Theorem
    C]{alperDHLheinloth_existenceofmodulispacesforalgebraicstacks},
    $(\Mcal\Bun_{\Gcal}^H)^{d^{\underline{\chi},H}-\sst}$ is S-complete, so
    Theorem \ref{thm_modulistack} identifies S-equivalence and
    Jordan-Hölder filtrations of $\Mcal\Bun_{\Gcal}^H$ with the S-equivalence
    and Jordan-Hölder reductions from Theorem
    \ref{thm_jhreductionparahorichiggs}.
\end{remark}

To study the Jordan-Hölder stratification of
$(\Mcal\Bun_{\Gcal}^H,d^{\underline{\chi},H})$, let $\JH_{\Gcal}$ be
the set of isomorphism classes of Levi-factors of $\Gcal$. For
$\mu\in\JH_{\Gcal}$, we define
$(\Mcal\Bun_{\Gcal}^H)^{d^{\underline{\chi},H}-\sst}_\mu(K)$ as the
set of $K$-points of $d^{\underline{\chi},H}$-semistable parahoric
Higgs $\Gcal$-torsors $(\xi,\varphi)$ with a Jordan-Hölder Higgs
reduction $s:X\rightarrow\xi/\Gcal_{\Psf}$, such that $\Lcal_{\Psf}$
is in the isomorphism class $\mu$. By construction, it is clear that
this induces the same Jordan-Hölder stratification as in Subsection
\ref{subsec_jhstratifications}. The following generalizes Lemmas
\ref{lem_jhstratificationbung} and \ref{lem_jhstratificationparahoric}.

\begin{lemma}[Jordan-Hölder
    stratification]\label{lem_jhstratificationparahorichiggs}
    Let $K$ be algebraically closed of characteristic $0$, and let
    $\underline{\chi}$ be an admissible stability parameter. The
    Jordan-Hölder stratification of
    $(\Mcal\Bun_{\Gcal}^H,d^{\underline{\chi},H})$, from Subsection
    \ref{subsec_jhstratifications}, consists of locally closed substacks.
\end{lemma}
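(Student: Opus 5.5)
We follow the proofs of Lemmas \ref{lem_jhstratificationbung} and \ref{lem_jhstratificationparahoric}, adapted to the Higgs setting. By Remark \ref{rem_lconstructionnewhiggs}, when $K$ is algebraically closed of characteristic $0$ there is a good moduli space
\begin{equation*}
    \pi:(\Mcal\Bun_{\Gcal}^H)^{d^{\underline{\chi},H}-\sst}\rightarrow(M\Bun_{\Gcal}^H)^{d^{\underline{\chi},H}-\sst},
\end{equation*}
and the stack is S-complete, so Theorem \ref{thm_modulistack} applies. The first step is to express each stratum as a preimage. Because the strata are compatible with S-equivalence (Subsection \ref{subsec_jhstratifications}), each stratum $(\Mcal\Bun_{\Gcal}^H)^{d^{\underline{\chi},H}-\sst}_\mu$ is the $\pi$-preimage of a subset $(M\Bun_{\Gcal}^H)^{d^{\underline{\chi},H}-\sst}_\mu$. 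By Theorem \ref{thm_modulistack} and Lemma \ref{lem_parahorichiggspolystability}, this subset is the image of the morphism
\begin{equation*}
    \iota:(M\Bun^H_{\Lcal_{\Psf}})^{\st}\rightarrow(M\Bun_{\Gcal}^H)^{d^{\underline{\chi},H}-\sst},
\end{equation*}
which extends a stable parahoric Higgs $\Lcal_{\Psf}$-torsor (with restricted parameter $\underline{\chi}^{\Lcal_\Psf}$) along $\Lcal_{\Psf}\cong\Lcal(\boldsymbol{\lambda}_\eta)\hookrightarrow\Gcal$. The Higgs field is pushed forward along $\ad(\xi^{\Lcal_\Psf})\hookrightarrow\ad(\xi)$. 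Closed points of the stack are exactly the polystable points, so points of the moduli space correspond to polystable objects. By Lemma \ref{lem_parahorichiggspolystability}, a polystable object lies in stratum $\mu$ exactly when it is such an extension satisfying admissibility. We therefore restrict $\iota$ to the open-and-closed locus where admissibility holds, the degree conditions being locally constant. This shows the stratum is a substack.

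It then suffices to prove that $\iota$ is an immersion of algebraic spaces, since the preimage of a locally closed subspace under $\pi$ is locally closed. We prove this in two steps.

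\emph{Monomorphism.} The uniqueness statement of Theorem \ref{thm_jhreductionparahorichiggs} says the Jordan-Hölder Levi-factor $((s^*\xi)(\Lcal_{\Psf}),\varphi_{(s^*\xi)(\Lcal_{\Psf})})$ is determined up to isomorphism. Hence two stable Levi-type objects with the same extension are isomorphic, and $\iota$ is a monomorphism.

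\emph{Locally closed image.} For a family $x\in M\Bun^H_{\Gcal}(T)$, we consider the locus $T'\subset T$ over which the family admits a Higgs reduction to $\Gcal_{\Psf}$ of the right type. By Lemma \ref{lem_parahorichiggsreesconstruction}, this is the locus where $\iota$ is hit. Existence of a section of $x/\Gcal_{\Psf}\rightarrow T\times X$ is locally closed by the same Bruhat-cell argument as in Lemma \ref{lem_jhstratificationparahoric}, now applied to $\Gcal/\Gcal_{\Psf}$. Within that locus, the Higgs condition $\varphi\in H^0(\ad(s^*\xi)\otimes\Kcal(D))$ asks that the image of $\varphi$ in the quotient $(\ad(\xi)/\ad(s^*\xi))\otimes\Kcal(D)$ vanish. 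Vanishing of a section of a vector bundle is a closed condition, and after pushing forward along the proper curve direction it remains closed in $T$. So $T'$ is locally closed. Combining the two steps, $\iota$ is an immersion of algebraic spaces locally of finite type, and the stratum is locally closed.

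The main obstacle is making the Higgs condition closed uniformly in families. The reduction $s$ varies with the point of $T$, and the reduction need not be unique: only its Levi-factor is unique. The plan is to work on the space of sections, namely the Weil restriction of $x/\Gcal_{\Psf}$ along $T\times X\to T$. On this space, the Higgs condition cuts out a closed subspace. The image of that subspace in $T$ is then controlled using the uniqueness of the Levi-factor together with properness of the partial flag fibres.
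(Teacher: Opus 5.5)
Your outline follows the paper's own proof almost step for step: good moduli space, the stratum as the $\pi$-preimage of the image of $\iota$, then ``$\iota$ is a monomorphism with locally closed image''. Restricting $\iota$ to the admissible components is a sensible refinement the paper leaves implicit. However, the monomorphism claim fails, in your write-up and in the paper's argument alike.

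The uniqueness in Theorem~\ref{thm_jhreductionparahorichiggs} only identifies two Jordan-Hölder Levi-factors through the automorphism $\taubf$ of Theorem~\ref{thm_jhlevireductivegroupscheme}~\ref{thm_jhlevireductivegroupscheme2}. That automorphism need not be inner (Remark~\ref{rem_basedrootdata}), and even when $\Psf_1=\Psf_2$ it can act nontrivially on $\Lsf$. So uniqueness does not say that two $\Lcal_{\Psf}$-objects with isomorphic extensions are isomorphic. Concretely, take $\Gcal=X\times\GLbf(3,K)$ with $\Rcal=\emptyset$ and $\Lsf=\Tsf$ the diagonal torus. Every $\Tsf$-Higgs torsor is stable, and $\iota$ sends $(L_i,\varphi_i)_{i=1,2,3}$ to $(L_1\oplus L_2\oplus L_3,\mathrm{diag}(\varphi_1,\varphi_2,\varphi_3))$. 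Hence $(\Ocal_X,A,A^{-1})$ and $(A,\Ocal_X,A^{-1})$, with zero Higgs fields and $A\in\mathrm{Pic}^0(X)$ nontrivial, are distinct admissible points with the same image. So $\iota$ is constant on orbits of $N_{\Gsf}(\Tsf)/\Tsf\cong S_3$ and is not even injective on $K$-points. Pointwise injectivity would not give a monomorphism anyway without unramifiedness. The route ``monomorphism $+$ locally closed image $\Rightarrow$ immersion'' is therefore unavailable.

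The locus step also needs repair. $T$-points of a good moduli space do not carry families of torsors, so the argument has to be run on the stack, e.g.\ on a smooth atlas of $(\Mcal\Bun^H_{\Gcal})^{d^{\underline{\chi},H}-\sst}$. There, the space of sections of $x/\Gcal_{\Psf}$ is the right object, as you suspect. But properness of the flag fibres does not make its image closed: reductions of fixed degree can degenerate, gaining degree at points of indeterminacy. What gives closedness is semistability. Over a DVR, an admissible Higgs reduction of the generic fibre extends in codimension one. Its limit on the special fibre could only have larger parabolic degree, which semistability of the special fibre forbids. So the limit is again an admissible Higgs reduction with no indeterminacy, as for slope-$\mu$ quotients of a semistable bundle in a Quot scheme (equivalently, $\Theta$-reductivity).

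Together with boundedness of reductions of fixed degree, this shows the following. For each parabolic type $t$, the locus $Z_t$ of semistable points admitting an admissible Higgs reduction of type $t$ is closed, and no uniqueness of reductions is needed. By Theorems~\ref{thm_jhfacetexistence} and \ref{thm_jhfacetlevi}~\ref{thm_jhfacetlevi1}, all Jordan-Hölder facets have the same dimension, and they are exactly the facets with \ref{def_jhfacet1} of maximal dimension. Hence the stratum $\mu$ equals $\bigl(\bigcup_{t\in\mu}Z_t\bigr)\setminus\bigcup_{\dim t'>\dim\mu}Z_{t'}$, which is locally closed. This bypasses $\iota$ altogether.
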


\begin{proof}
    We have a good moduli space
    $\pi:(\Mcal\Bun_{\Gcal}^H)^{d^{\underline{\chi},H}-\sst}\rightarrow
    (M\Bun_{\Gcal}^H)^{d^{\underline{\chi},H}-\sst}$, then for
    $\mu\in\JH_{\Gcal}$,
    $(\Mcal\Bun_{\Gcal}^H)^{d^{\underline{\chi},H}-\sst}_\mu$ is the
    $\pi$-preimage of a subspace
    $(M\Bun_{\Gcal}^H)^{d^{\underline{\chi},H}-\sst}_\mu$ of
    $(M\Bun_{\Gcal}^H)^{d^{\underline{\chi},H}-\sst}$ defined as
    follows: $(M\Bun_{\Gcal}^H)^{d^{\underline{\chi},H}-\sst}_\mu$ is
    the image of a morphism
    $\iota:(M\Bun_{\Lcal_{\Psf}}^H)^{d_{M\Bun_{\Lcal_{\Psf}}^H}-\st}\rightarrow
    (M\Bun_{\Gcal}^H)^{d^{\underline{\chi},H}-\sst}$ extending the
    structure group through the canonical inclusion
    $\Lcal_{\Psf}\hookrightarrow\Gcal$. Thus,
    $(\Mcal\Bun_{\Gcal}^H)^{d^{\underline{\chi},H}-\sst}_\mu$ forms a
    substack of $(\Mcal\Bun_{\Gcal}^H)^{d^{\underline{\chi},H}-\sst}$.

    To show that
    $(\Mcal\Bun_{\Gcal}^H)^{d^{\underline{\chi},H}-\sst}_\mu$ is a
    locally closed substack, we claim that $\iota$ is an immersion of
    algebraic spaces. By the properties of Jordan-Hölder filtrations,
    $\iota$ is a monomorphism. For a $K$-scheme $T$ and a family
    $(x,\varphi)\in M\Bun_{\Gcal}^H(T)$ of parahoric Higgs
    $\Gcal$-torsors, let $T'$ be the locus in $T$, for which
    $x/\Gcal_{\Psf}\rightarrow T'\times X$ admits a section such that
    $\varphi:T'\times X\rightarrow\ad(x)\otimes\Kcal(D)$ factorizes
    through $\varphi_{\Psf}:T'\times
    X\rightarrow\ad(s^*x)\otimes\Kcal(D)$, then
    $(x_{T'},\varphi_{T'})$ over $T'$ is precisely in the image of
    $\iota$. It suffices to show that $T'$ is locally closed in $T$:
    By trivializing locally inside $T'\times X$, sections of
    $x/\Gcal_{\Psf}\rightarrow T'\times X$ map into Bruhat cells of
    $\Gcal/\Gcal_{\Psf}$, which are locally closed. Furthermore, the
    factorization of $\varphi$ into $\varphi_{\Psf}$ can be viewed as
    the kernel of the quotient
    $\ad(x)\otimes\Kcal(D)\rightarrow(\ad(x)\otimes\Kcal(D))/(\ad(s^*x)\otimes\Kcal(D))$,
    which is closed in $\ad(x)\otimes\Kcal(D)$. Hence, $T'$ is
    locally closed in $T$. Since $\iota$ is an immersion of algebraic
    spaces locally of finite type,
    $(\Mcal\Bun_{\Gcal}^H)^{d^{\underline{\chi},H}-\sst}_\mu$ is a
    locally closed substack.
\end{proof}

\begin{example}[Jordan-Hölder filtrations of a parahoric Higgs
    torsor]\label{ex_parahorichiggs}
    We return to Example \ref{ex_parahorics} and introduce a Higgs
    field $\widetilde{\varphi}$ to the parahoric
    $\widetilde{\Gcal}$-torsor $\widetilde{\Fr(F)}$. The
    $\Lcal_{\det,\underline{\widetilde{\chi}}}$-stability conditions
    of $\widetilde{\Fr(F)}$ and the
    $\Lcal_{\det,\underline{\widetilde{\chi}}}^H$-stability
    conditions of $(\widetilde{\Fr(F)},\widetilde{\varphi})$
    completely depend on whether $\widetilde{\varphi}$ preserves
    $L_0$ or $L'$, giving the following cases:
    \begin{enumerate}[label=(\roman*)]
        \item $\widetilde{\varphi}$ preserves $L_0$ and $L'$, so all
            parabolic reductions of $\widetilde{\Fr(F)}$ are Higgs
            reductions. Thus, we have the same stability conditions
            as in Example \ref{ex_parahorics}, i.e., $C:=0$ from
            Lemma
            \ref{lem_reductivestabilityequivalencehiggsparahoric} can
            be chosen, and the convex hull of
            $d^{\underline{\widetilde{\chi}},\widetilde{\varphi},C}_{\Ad(\widetilde{\Fr(F)})_\eta,\widetilde{\Tbf}_\eta}$
            can be identified with the interval in $\Rbb$:
            \begin{equation*}
                [-1/2,-l'-1/2]
            \end{equation*}

        \item $\widetilde{\varphi}$ only preserves $L_0$, then the
            convex hull of
            $d^{\underline{\widetilde{\chi}},\widetilde{\varphi},C}_{\Ad(\widetilde{\Fr(F)})_\eta,\widetilde{\Tbf}_\eta}$
            can be identified with the interval in $\Rbb$:
            \begin{equation*}
                [-1/2,-l'-1/2+C]
            \end{equation*}
            where $C>l'+1/2$ for $C$ from Lemma
            \ref{lem_reductivestabilityequivalencehiggsparahoric}.
        \item $\widetilde{\varphi}$ only preserves $L'$, then the
            convex hull of
            $d^{\underline{\widetilde{\chi}},\widetilde{\varphi},C}_{\Ad(\widetilde{\Fr(F)})_\eta,\widetilde{\Tbf}_\eta}$
            can be identified with the interval in $\Rbb$:
            \begin{equation*}
                [-1/2-C,-l'-1/2]
            \end{equation*}
            where there is no restriction on $C\geq0$ from Lemma
            \ref{lem_reductivestabilityequivalencehiggsparahoric}.
        \item $\widetilde{\varphi}$ does not preserve $L_0$ nor $L'$,
            then the convex hull of
            $d^{\underline{\widetilde{\chi}},\widetilde{\varphi},C}_{\Ad(\widetilde{\Fr(F)})_\eta,\widetilde{\Tbf}_\eta}$
            can be identified with the interval in $\Rbb$:
            \begin{equation*}
                [-1/2-C,-l'-1/2+C]
            \end{equation*}
            where $C>l'+1/2$ for $C$ from Lemma
            \ref{lem_reductivestabilityequivalencehiggsparahoric}.
    \end{enumerate}
    From these cases, the only difference between
    $\Lcal_{\det,\underline{\widetilde{\chi}}}$-stability conditions
    of $\widetilde{\Fr(F)}$ and the
    $\Lcal_{\det,\underline{\widetilde{\chi}}}^H$-stability
    conditions of $(\widetilde{\Fr(F)},\widetilde{\varphi})$ arises
    from whether $L_0$ is preserved by $\widetilde{\varphi}$ or not.
\end{example}

{\sloppy
    \printbibliography[heading=bibnumbered]
}

\end{document}